\documentclass[10pt,oneside,a4paper]{article}
\setlength{\textwidth}{15cm}
\setlength{\textheight}{24cm}
\setlength{\oddsidemargin}{0.7cm}
\setlength{\topmargin}{-1.5cm}

\usepackage{titlesec}
\titleformat{\subsection}[runin]{\normalfont\bfseries}{\thesubsection.}{.5em}{}[.~ ]
\titlespacing{\subsection}{0pt}{1.5ex plus .1ex minus .2ex}{0pt}
\titlespacing*{\section}{0pt}{8pt}{5pt}
\usepackage{graphicx}
\usepackage{caption}
\captionsetup{belowskip=-5pt}
\usepackage{subcaption}
\usepackage{wrapfig}
\usepackage{amsfonts,amssymb,amsmath,amsthm}
\usepackage{enumerate}
\usepackage{url}
\usepackage[mathcal,mathscr]{euscript}
\usepackage[dvipsnames]{xcolor}
\usepackage{hyperref}
\hypersetup{colorlinks=true,linkcolor=Magenta,citecolor=PineGreen}

\let\OLDthebibliography\thebibliography \renewcommand\thebibliography[1]{\OLDthebibliography{#1}   \setlength{\parskip}{0pt}\setlength{\itemsep}{0pt plus 0.3ex}}

\newtheoremstyle{thmstyle}
{\topsep}
{\topsep}
{\itshape}
{0pt}
{\bfseries}
{.}
{5pt plus 1pt minus 1pt}
{#2.\hspace{3pt}#1#3}
\newtheoremstyle{defistyle}
{\topsep}
{\topsep}
{}
{0pt}
{\bfseries}
{.}
{5pt plus 1pt minus 1pt}
{#2.\hspace{3pt}#1#3}

\theoremstyle{thmstyle}
\newtheorem{thm}[subsection]{Theorem}
\newtheorem{lemma}[subsection]{Lemma}
\newtheorem{prop}[subsection]{Proposition}
\newtheorem{cor}[subsection]{Corollary}
\theoremstyle{defistyle}
\newtheorem{defi}[subsection]{Definition}
\newtheorem{rmk}[subsection]{Remark}
\setcounter{secnumdepth}{2}

\newcommand{\trace}{\mathop{\mathrm{tr}}}
\newcommand{\tance}{\mathop{\mathrm{ta}}}
\newcommand{\arc}{\mathop{\mathrm{arc}}}
\newcommand{\Arg}{\mathop{\mathrm{Arg}}}
\newcommand{\area}{\mathop{\mathrm{area}}}
\newcommand{\Area}{\mathop{\mathrm{Area}}}
\newcommand{\SU}{\mathop{\mathrm{SU}}}
\newcommand{\PU}{\mathop{\mathrm{PU}}}
\newcommand{\BV}{{\mathrm B}\,V}
\newcommand{\SV}{{\mathrm S}\,V}
\newcommand{\EV}{{\mathrm E}\,V}
\newcommand{\real}{\mathop{\mathrm{Re}}}
\newcommand{\imag}{\mathop{\mathrm{Im}}}
\newcommand{\PCV}{\mathbb{P}V}
\newcommand{\dist}{\mathop{\mathrm{dist}}}

\title{Special elliptic isometries, relative $\SU(2,1)$-character varieties, and bendings}
\author{Felipe A.~Franco\footnote{Supported by grant 2014/00582-2, S\~ao Paulo Research Foundation (FAPESP), and by CNPq.} \and Carlos H.~Grossi}
\date{}

\begin{document}

\maketitle

\begin{abstract}
We study relations between special elliptic isometries in the complex hyperbolic plane. Relations of lengths $2$, $3$, and $4$ are fully classified. Some relative $\SU(2,1)$-character varieties of the quadruply punctured sphere are described and applied to the study of length $5$ relations.
\end{abstract}

\section{Introduction}

Relations between automorphisms of a given geometric structure play an important role in the construction of manifolds/orbifolds endowed with that geometric structure. Consider, for instance, Poincar\'e's Polyhedron Theorem, which is one of the few known tools for the construction of manifolds/orbifolds equipped with some model geometry (typically, a simply-connected Riemannian manifold). Roughly speaking, the theorem specifies conditions on a polyhedron with side-pairing isometries in the model space $X$ such that the group $H$ generated by these isometries is discrete and $X/H$ is a manifold/orbifold $M$ modelled on $X$. The group $H$ is isomorphic to the fundamental group $\pi_1(M)$ and the theorem provides an explicit presentation of $H$ that comes from the combinatorial structure of the polyhedron with face-pairing isometries. This means that, in a certain sense, in order to construct a polyhedron with side-pairing isometries that have a chance of succeeding as a fundamental polyhedron, some relations between those isometries of $X$ that will play the role of side-pairing isometries must be known {\it a priori.}\footnote{For example, the study of short relations between isometries in the complex hyperbolic plane plays an important role in the construction of complex hyperbolic disc bundles in \cite{SGG2011} and in \cite{SashaGusevskii2005}.}

More generally, the space of representations of the fundamental group $\pi_1(M)$ in some group $G$ of automorphisms of the model space modulo conjugation, i.e., the $G$-character variety of $M$, is closely related to the geometric structures on $M$ inherited from the model space. Hence, it is natural to expect that (relative) character varieties are ubiquitous objects in geometry and that the many questions related to its structure (topology, Hitchin components, nature of the action of the mapping class group, etc.) are sources of great interest. They have been investigated by several authors, and an exhaustive list of references would be too long to compile; so, we only cite a few ones \cite{ALS2018}, \cite{ABL2017},  \cite{GB2012}, \cite{FL2008}, \cite{La2008}, \cite{MPP-T2013}, \cite{Will2017} which are closer to this paper.

Here, our model space is the complex hyperbolic plane $\mathbb H^2_\mathbb C$ with orientation-preserving isometries or, equivalently, the holomorphic $2$-ball with its complex automorphisms; the corresponding group is the projective unitary group $\PU(2,1)$. A rough classification of nontrivial orientation-preserving isometries in the complex hyperbolic plane resembles that of constant curvature hyperbolic geometry: they either have a fixed point in $\mathbb H^2_\mathbb C$ (elliptic isometries), exactly one fixed point in the ideal boundary of $\mathbb H^2_\mathbb C$ (parabolic isometries), or exactly two fixed points in this ideal boundary (loxodromic isometries). Each of these isometry types are divided into several subtypes whose geometric behaviour can be quite different from each other (see Subsection \ref{subsec:isometries}). Of central interest in this paper is the subtype of elliptic isometries known as the {\it special\/} ones. This subtype includes the holomorphic involutions.

Holomorphic involutions generate the group of orientation-preserving isometries of $\mathbb H^2_\mathbb C$. They come in two conjugacy classes: reflections in (negative) points and reflections in complex geodesics (or in positive points). The decomposition of orientation-preserving isometries into the product of involutions is considered in \cite{Sasha2012} and in \cite{Will2017}. An interesting question is to understand to what extent such a decomposition is unique. This naturally leads to the study of relative character varieties that encode all the possible decompositions, modulo conjugation, of a given isometry into the product of involutions \cite[Section 4]{Sasha2012} and to the concept of {\it bendings.} In a nutshell, bendings provide natural coordinates in the mentioned relative character varieties. More precisely, let $R^p$ stand for the reflection in a negative or positive point $p$ and consider a relation $R^{p_n}\ldots R^{p_2}R^{p_1}=1$ between holomorphic involutions in $\PU(2,1)$. If we move the points $p_{i-1},p_i$ along a geodesic that joins them without altering their distance, we obtain new points $q_{i-1},q_i$ satisfying $R^{q_i}R^{q_{i-1}}=R^{p_i}R^{p_{i-1}}$. This alters the original relation $R^{p_n}\ldots R^{p_i}R^{p_{i-1}}\ldots R^{p_2}R^{p_1}=1$ into the new one $R^{p_n}\ldots R^{q_i}R^{q_{i-1}}\ldots R^{p_1}=1$ and is the same as taking an element $C$ in the centralizer of $R^{p_i}R^{p_{i-1}}$ and writing $R^{p_i}R^{p_{i-1}}=(CR^{p_i}C^{-1})(CR^{p_{i-1}}C^{-1})
=R^{Cp_i}R^{Cp_{i-1}}=R^{q_i}R^{q_{i-1}}$.

Sometimes, a relation between holomorphic involutions of the above form can be simplified by bending it and applying afterwards the length $2$ relation $R^pR^p=1$ (a {\it cancellation\/} which can appear when neighbouring involutions in the relation become equal after a bending) or a length $3$ relation known as an {\it orthogonal relation\/} \cite{Sasha2012}. It is worthwhile mentioning that length $5$ relations between holomorphic involutions that cannot be simplified in such a way, that is, {\it basic\/} length $5$ relations, have been linked to discreteness \cite{Sasha2012}, \cite{ABG2007}.

In this paper, we consider relations between special elliptic isometries. Special elliptic isometries can be seen as rotations around (negative) points or rotations around complex geodesics (equivalently, around positive points). Since every orientation-preserving isometry has three lifts to $\SU(2,1)$ that differ by a cube root of unity, a nontrivial special elliptic isometry is determined, at the level of $\SU(2,1)$, by a (negative or positive) point $p$, its {\it centre,} and by a unit complex number $\alpha$ distinct from a cube root of unity, its {\it parameter.} Throughout the paper, we deal with elements in $\SU(2,1)$; so, we write a relation between special elliptic isometries in the form $R_{\alpha_n}^{p_n}\ldots R_{\alpha_2}^{p_2}R_{\alpha_1}^{p_1}=\delta$, where $\delta$ is a cube root of unity, and refer to $R_{\alpha_n}^{p_n}\ldots R_{\alpha_2}^{p_2}R_{\alpha_1}^{p_1}=\delta$ as a length $n$ relation.

Relations between special elliptic isometries of lengths $2$ and $3$, as well as the length $4$ ones obtained through bendings, are quite similar to those between holomorphic involutions and are described in Sections  \ref{sec:basic} and \ref{sec:bendings}. On the other hand, the full description of length $4$ relations in Theorem \ref{thm:alll4}, one of the main results of the paper, is much more involved and requires a new set of tools which are particularly technical. For that reason, it is postponed until Section \ref{sec:n-o-solutions}.

Equipped with bendings, we are able to consider the decompositions of regular orientation-preserving isometries (see Definition \ref{defi:regular}) in the product of three special elliptic ones. (This is in fact the main part of the study of length $5$ relations between special elliptic isometries since such relations can be written in the form $R_{\alpha_3}^{p_3}R_{\alpha_2}^{p_2}R_{\alpha_1}^{p_1}=R_{\beta_2}^{q_2}R_{\beta_1}^{q_1}$ and $R_{\beta_2}^{q_2}R_{\beta_1}^{q_1}$ is a regular orientation-preserving isometry.) These decompositions naturally lead to the description, given in Theorem \ref{thm:charactervariety} (see also Theorem \ref{thm:surfaceS}) of some relative $\SU(2,1)$-character varieties consisting of representations $\rho:\pi_1(\Sigma)\to\SU(2,1)$, modulo conjugation, of the rank $3$ free group $\pi_1(\Sigma):=\langle\iota_1,\iota_2,\iota_3,\iota_4\mid\iota_4\iota_3\iota_2\iota_1=
1\rangle$ (the fundamental group of the quadruply punctured sphere~$\Sigma$), where the conjugacy classes of $\rho(\iota_i)$ are those of special elliptic isometries, $i=1,2,3$, and the conjugacy class of $\rho(\iota_4)$ is that of a regular isometry in $\SU(2,1)$. These relative character varieties are (as is typical) semialgebraic surfaces $S$ whose nature, studied in Theorem \ref{thm:sbendingconnect}, allows us to obtain a simple condition guaranteeing that a couple of given points in $S$ lie in a same connected component and, in particular, can be connected, modulo conjugation, by finitely many bendings (Corollary \ref{cor:connectSlox}). Incidently, an unexpected consequence of Theorem \ref{thm:sbendingconnect} is a criterion determining the type of an isometry of the form $R_{\alpha_2}^{p_2}R_{\alpha_1}^{p_1}$ in terms of centres and parameters.

Some experimental observations regarding the semialgebraic surface $S$, as well as many pictures illustrating its behaviour, can be seen on Subsection \ref{subsec:experiments}.

It is worthwhile mentioning that the study of the possible conjugacy classes of the product of a pair of isometries have been considered in \cite{FW2009}, \cite{Paupert2007}, and \cite{Will2017}. It is formulated in terms of the product map $\mathcal C_1\times\mathcal C_2\to\mathcal G$, $(A,B)\mapsto[AB]$, where $\mathcal G$ denotes the space of all $\PU(2,1)$-conjugacy classes, $\mathcal C_1,\mathcal C_2\in\mathcal G$, and $[AB]$ stands for the $\PU(2,1)$-conjugacy class of the product $AB$. In our context, the vertical and horizontal slices in Theorem \ref{thm:sbendingconnect} can be seen as fibres of the product map. Moreover, the image by the product map of a {\it degenerate\/} slice (see Definition \ref{defi:degenerate}) is a point in a reducible wall (see \cite{Paupert2007} for the definition of reducible wall).

Finally, in Section \ref{sec:n-o-solutions}, we complete the classification of length $4$ relations between special elliptic isometries. The \hbox{$f$-{\it bendings\/}} play a major role in this classification. Similarly to bendings, they can also be seen as one-parameter deformations of a given product $R_{\alpha_2}^{p_2}R_{\alpha_1}^{p_1}$ of special elliptic isometries; such deformation is geometrically described in Theorem \ref{thm:geometricinter}. However, during an \hbox{$f$-bending,} both the centres and the parameters change. An $f$-bending preserves the signs of the centres as well as the {\it components\/} (see Definition \ref{defi:samecomponent}) and the product of the parameters. Every (generic) length $4$ relation of the form $R_{\alpha_2}^{p_2}R_{\alpha_1}^{p_1}=R_{\beta_2}^{q_2}R_{\beta_1}^{q_1}$, where $p_i,q_i$ are points of a same sign and $\alpha_i,\beta_i$ are parameters in a same component, is a consequence of bendings and $f$-bendings (Theorem~\ref{thm:all4relations}). Dropping these restrictions on signs and components, we found it necessary to develop some new tools in order to complete the classification of (generic) length $4$ isometries (in total, there are $7$ ``basic'' relations of length $4$). These tools regard the behaviour of some naturally parameterized lines tangent to Goldman's deltoid (Proposition \ref{prop:gdd}) as well as a characterization allowing to determine when regular elliptic isometries of the same trace written as products of two special elliptic isometries belong to the same $\SU(2,1)$-conjugacy class (Proposition \ref{prop:nochangesofsign}).

As applications of the techniques developed in Sections \ref{sec:relative} and \ref{sec:n-o-solutions}, we show that {\it special elliptic pentagons,} i.e., some length $5$ relations between special elliptic isometries, can be connected modulo conjugation by finitely many bendings (see Theorem~\ref{thm:connectpent}) as well as by finitely many bendings and $f$-bendings (see Theorem \ref{thm:connectspecpent}), as long as the appropriate natural conditions are required in each case.

\smallskip

\noindent
{\bf Acknowledgments.}~We are very grateful to the anonymous referee whose careful suggestions have greatly improved the paper.

\section{Complex hyperbolic geometry}
\label{sec:hyperbolicgeometry}

In this section we briefly discuss some basic aspects of plane complex hyperbolic geometry. Our approach essentially follows \cite{SashaGrossi2011}, \cite{SGG2011}, and \cite{Goldman1999}.

Let $V$ be a 3-dimensional $\mathbb{C}$-linear space equipped with a Hermitian form $\left<-,-\right>$ of signature $++-$. We frequently use the same letter to denote both a point in the complex projective plane $\mathbb PV$ and a representative in $V$.

The complex projective plane $\PCV$ is divided into \textit{negative,} \textit{isotropic,} and \textit{positive} points:
$$\BV := \left\{p\in{\mathbb P}V \,|\, \left<p,p\right> <0\right\},\ \ \SV := \left\{p\in{\mathbb P}V \,|\, \left<p,p\right> =0\right\},\ \EV:=\{p\in\mathbb PV\,|\, \left<p,p\right>>0\}.$$
The {\it signature\/} $\sigma p$ of a point $p\in\mathbb PV$ is respectively $-1,0,1$ when $p$ is negative, isotropic, positive. It is easy to see that $\BV$ is a (real) $4$-dimensional open ball whose boundary $\SV$ is a $3$-sphere.

Let $p\in\PCV\setminus\SV$ be a nonisotropic point. There is a well-known natural identification
\begin{equation}\label{eq:tangentspace}
\mathrm T_p\mathbb PV\simeq{\mathrm{Lin}}_{\mathbb C}(\mathbb Cp,p^\bot)=\left<-,p\right>p^\bot
\end{equation}
where $p^\perp$ stands for the linear subspace orthogonal to $p$ and $\langle-,p\rangle$ denotes the linear functional $x\mapsto\langle x,p\rangle$.

Both $\BV$ and $\EV$ are endowed with the Hermitian metric defined by
\begin{equation*}\label{eq:metric}
\langle t_1,t_2\rangle:=-\frac{\big\langle t_1(p),t_2(p)\big\rangle}{\langle p,p\rangle}
\end{equation*}
where $t_1,t_2\in\mathrm{Lin}(\mathbb Cp,p^\perp)$ are tangent vectors at the nonisotropic point $p$. This Hermitian metric is positive-definite on $\BV$ and of signature $+-$ on $\EV$. In particular, we obtain a Riemannian metric on $\BV$. Equipped with such metric, $\BV$ is called the {\it complex hyperbolic plane\/} and is denoted by $\mathbb H_\mathbb C^2$. Its ideal boundary, also known as the {\it absolute,} is the $3$-sphere $\SV$ of isotropic points. Note that $\EV$ is a pseudo-Riemannian manifold; a simple duality, discussed below, shows that it is the space of complex lines intersecting $\BV$.

Let $L\subset\mathbb PV$ be a complex line, i.e., the projectivization $\mathbb PW$ of a complex $2$-dimensional subspace $W\leqslant V$. The point $c:=\mathbb PW^\perp$ is called the {\it polar\/} of $L$. By Sylvester's criterion, the signature of the Hermitian form restricted to $W$ can be $+-$, $++$, or $+0$. The corresponding complex line is respectively called {\it hyperbolic,} \textit{spherical,} or \textit{Euclidean.}
Clearly, a projective line is hyperbolic, spherical, or Euclidean exactly when its polar point is positive, negative, or isotropic. The negative part $L\cap\BV$ of a hyperbolic complex line $L$ is often called a \textit{complex geodesic.} Given two distinct points $p_1,p_2\in\mathbb PV$, the (unique) complex line $\mathbb P(\mathbb Cp_1+\mathbb Cp_2)$ containing $p_1,p_2$ is denoted $\mathrm L(p_1,p_2)$. The following simple facts concerning complex lines will be regularly used throughout the paper:

The restriction $L\cap\BV$ of a hyperbolic line to $\BV$ is a totally geodesic subspace of constant curvature (a Poincar\'e disc). The same holds for $L\cap\EV$. The geometry of a spherical complex line is that of a round sphere.

Arbitrary complex lines $L_1,L_2$ are either equal or have a single common point in $\mathbb PV$. A pair of complex lines $L_1,L_2$ is said to be {\it orthogonal\/} iff the polar point of $L_1$ belongs to $L_2$ (or, equivalently, the polar point of $L_2$ belongs to $L_1$). When the lines are noneuclidean, this means that they are orthogonal in the sense of the Hermitian metric. Given a point $p$ in a complex line $L$, there exists a unique point $\tilde p\in L$ such that $\langle p,\tilde p\rangle=0$ (in the Euclidean case, $p=\tilde p$).

A useful criterion to decide the type of the complex line $L:=\mathrm{L}(p_1,p_2)$ in terms of nonisotropic, nonorthogonal, distinct spanning points $p_1,p_2$ involves the {\it tance}
$$\tance(p_1,p_2):=\frac{\left<p_1,p_2\right>\left<p_2,p_1\right>}
{\left<p_1,p_1\right>\left<p_2,p_2\right>}.$$
By Sylvester's criterion, the line $L$ is hyperbolic when $\tance(p_1,p_2)>1$ or $\tance(p_1,p_2)<0$; Euclidean when $\tance(p_1,p_2)=1$; spherical when $0<\tance(p_1,p_2)<1$. Note that, for $p_1,p_2\in\BV$, we have $\tance(p_1,p_2)\geqslant1$ and $\tance(p_1,p_2)=1$ iff $p_1=p_2$.

An {\it extended geodesic\/} in $\mathbb PV$ is, by definition, the (complex) projectivization $\mathbb PW$ of an $\mathbb R$-linear subspace $W$ of $V$, $\dim_\mathbb RW=2$, such that the Hermitian form, being restricted to $W$, is real and does not vanish ($\mathbb PW$ stands for $\pi(W\setminus\{0\})$, where $\pi:V\setminus\{0\}\to\mathbb PV$ is the canonical projection). Every extended geodesic is a topological circle contained in a unique complex line. The usual Riemannian geodesics in~$\mathbb H_\mathbb C^2$ are the restrictions $\mathbb PW\cap\BV$ \cite[Corollary~5.5]{SashaGrossi2011} (the same holds for the usual pseudo-Riemannian geodesics in $\EV$).

The following simple facts concerning extended geodesics will be used later:

Let $p_1,p_2$ be distinct nonorthogonal points. There exists a unique extended geodesic containing $p_1,p_2$ (it is given by $\mathbb PW$ with $W:=\mathbb Rp_1+\mathbb R\langle p_1,p_2\rangle p_2$). This extended geodesic is denoted by $\mathrm{G}{\wr}p_1,p_2{\wr}$. In what follows, we will refer to an extended geodesic simply as a geodesic.

Let $G_1,G_2$ be geodesics in a same noneuclidean complex line $L$.
Assume that $G_1,G_2$ intersect at a nonisotropic point $p$. The counterclockwise oriented angle from $G_1$ to $G_2$ at $p$ is denoted $\angle_pG_1G_2$. Note that $\angle_{\tilde p}G_1G_2=\angle_pG_2G_1$ where $\tilde p$ stands for the point in $L$ orthogonal to $p$.
We~will only measure oriented angles at intersecting points of geodesics that lie in a same noneuclidean complex line.

Let $p,\tilde p\in L$ be orthogonal points in a projective line $L$. Every geodesic in $L$ containing $p$ also contains $\tilde p$. In particular, every geodesic in a Euclidean line contains the isotropic point which is the polar point of the line. Moreover, if geodesics $G_1,G_2$ in $L$ intersect at a nonisotropic point $q$, then they also intersect at the point $\tilde q$ in $L$ orthogonal to $q$.

Let $p_1,p_2$ be distinct nonisotropic nonorthogonal points of the same signature in a noneuclidean projective line. The {\it geodesic segment\/} from $p_1$ to $p_2$ is the arc in $\mathrm{G}{\wr}p_1,p_2{\wr}$ that joins $p_1$ and $p_2$ and does not contain the point $\tilde p_1\in L:=\mathrm{L}(p_1,p_2)$ orthogonal to $p_1$. We denote the geodesic segment from $p_1$ to $p_2$ by $\mathrm G[p_1,p_2]$. Note that $\mathrm G[p_1,p_2]$ consists of a usual geodesic segment in a Poincar\'e disc $L\cap\BV$ (or $L\cap\EV$) when $L$ is hyperbolic or of a usual minimal geodesic segment in the round sphere when $L$ is spherical. (The definition also works when one allows $p_1,p_2$ to have opposite signatures but we do not need this case.)

Given pairwise distinct pairwise nonorthogonal nonisotropic points $p_1,p_2,p_3$ of the same signature in a noneuclidean projective line $L$, let $\Delta(p_1,p_2,p_3)\subset L$ stand for the oriented geodesic triangle whose vertices are $p_1,p_2,p_3$ and whose sides are the geodesic segments $\mathrm G[p_1,p_2]$, $\mathrm G[p_2,p_3]$, and $\mathrm G[p_3,p_1]$. These are usual oriented geodesic triangles in a Poincar\'e disc $L\cap\BV$ (or $L\cap\EV$) when $L$ is hyperbolic or usual oriented geodesic triangles in the round sphere when $L$ is spherical. We denote by $\Area\Delta(p_1,p_2,p_3)$ the oriented area of the triangle $\Delta(p_1,p_2,p_3)$ (counterclockwise oriented triangles have positive area) and by $\area\Delta(p_1,p_2,p_3):=\big|\!\Area\Delta(p_1,p_2,p_3)\big|$ its area.

The other ``linear'' geometric objects in the complex hyperbolic plane that will be used later are the metric circles, hypercycles, and horocycles. These are obtained projectivizing an $\mathbb R$-linear subspace $W$ of $V$, $\dim_\mathbb RW=2$, such that the symmetric bilinear form $\real\langle-,-\rangle$, being restricted to $W$, is respectively of signatures $--$/$++$ (metric circles), $-+$ (hypercycles), or $0+$/$0-$ (horocycles)~\cite{SashaGrossi2011}. These linear objects are topological circles that give rise, in the obvious way, to the usual metric circles/hypercycles/horocycles in the hyperbolic discs of the forms $L\cap\BV$ and $L\cap\EV$, where $L$ is a hyperbolic complex line, as well as to the usual metric circles in spherical complex lines.

\subsection{Conjugacy classes and the geometry of isometries}
\label{subsec:isometries}

The group of orientation-preserving isometries of the complex hyperbolic plane $\mathbb H_{\mathbb C}^2$ is $\PU(2,1)$, i.e., the projectivization of
$$\mathrm U(2,1):=\{I\in\mathrm{GL}(3,\mathbb C)\mid\langle Iv,Iw\rangle=\langle v,w\rangle\;\text{for every }v,w\in V\}.$$
Let $\SU(2,1)$ stand for the subgroup in $\mathrm U(2,1)$ consisting of elements of determinant $1$. Clearly,
$$\PU(2,1)=\SU(2,1)/\{1,\omega,\omega^2\},$$
where $\omega:=e^{2\pi i/3}$ is a cube root of unity.
Abusing notation, we will also refer to elements in~$\SU(2,1)$ as isometries and will call its eigenvectors fixed points.

Any isometry in $\PU(2,1)$ fixes at least one point in $\overline{\mathbb H}_{\mathbb C}^2:=\BV\cup\SV$. A rough classification of nonidentical orientation-preserving isometries is obtained by observing that exactly one of the following must occur. The isometry has a negative fixed point, exactly one isotropic fixed point, or exactly two isotropic fixed points; it is respectively called \textit{elliptic,} \textit{parabolic,} and \textit{loxodromic.} As is well-known, each of these rough classes can be refined. We will use several subtypes of elliptic and parabolic isometries in the paper and the geometry of such subtypes is briefly explained below, beginning with the elliptic case.

Let $I$ be an elliptic isometry and let $c\in\BV$ be an $I$-fixed point. Then $I$ stabilizes the spherical complex line with polar point $c$. Clearly, $I$ has a pair $p,\tilde p$ of mutually orthogonal fixed points in the spherical line $\mathbb Pc^\perp$. Hence, we have an orthogonal basis given by eigenvectors of $I$. Let $\mu_1,\mu_2,\mu_3\in{\mathbb C}$ with $\mu_1\mu_2\mu_3=1$ be the eigenvalues of $c,p,\tilde p$, respectively. Since none of $c,p,\tilde p$ is isotropic, we have $|\mu_i|=1$ for $i=1,2,3$. An elliptic isometry is \textit{regular\/} if its eigenvectors have pairwise distinct eigenvalues; otherwise, it is called \textit{special.}

Assume that $I$ is regular elliptic. In $\mathbb H^2_\mathbb C$, this isometry fixes the single point $c$ and stabilizes the pair of orthogonal complex geodesics with polar points $p,\tilde p$. These complex geodesics intersect at~$c$ and it is easy to see that $I$ acts on~$\mathbb Pp^\perp\cap\BV$ as a rotation around $c$ by the angle $\Arg(\mu_1^{-1}\mu_3)$, where the function $\Arg$ takes values in $[0,2\pi)$ (an analogous statement holds for the action of $I$ on $\mathbb P\,\tilde p^\perp\cap\BV$).

Suppose that $I$ is special elliptic. We can rewrite the eigenvalues of $c,p,\tilde p$ as $\alpha^{-2},\alpha,\alpha$ or $\alpha,\alpha^{-2},\alpha$ or $\alpha,\alpha,\alpha^{-2}$ with $|\alpha|=1$ and $\alpha^3\neq 1$. In the first case, the spherical complex line $\mathbb Pc^\bot$ is pointwise fixed by $I$. This implies that each hyperbolic complex line passing through $c$ is~$I$-stable; the isometry acts on the corresponding complex geodesic as a rotation around $c$ by the angle $\Arg(\alpha^3)$. In other words, $I$ can be seen as a rotation around the point $c$. In the second case, the hyperbolic complex line $L:=\mathbb Pp^\perp$ is pointwise fixed by $I$. This implies that every complex line intersecting $L$ orthogonally (i.e., containing $p$) is $I$-stable; the isometry acts on a complex geodesic orthogonal to $L$ as a rotation around the intersection point by the angle $\Arg(\alpha^{-3})$. Such special elliptic isometry can be seen as a rotation around the fixed axis $L$. The third case is similar to the second one.

Every special elliptic isometry can be written in the form
\begin{equation}\label{eq:specialelliptic}
R_\alpha^p:x\mapsto (\alpha^{-2}-\alpha){\frac{\left<x,\,p\right>}{\left<p,\,p\right>}}p+\alpha x
\end{equation}
(see \cite{Mostow80}) for some $p\in\mathbb PV\setminus\SV$ and $\alpha\in\mathbb C$, $|\alpha|=1$. The point $p$ will be called the {\it centre\/} of $R_\alpha^p$ and, $\alpha$, its {\it parameter.} Note that, in the complex hyperbolic plane, $R_{-1}^p$ is a reflection in $p$ when $p\in\BV$ or a reflection in the complex geodesic $\mathbb Pp^\bot\cap\BV$ when $p\in\EV$.

A remark about terminology. In \cite{Mostow80}, a special elliptic $R_{\alpha}^p$ of finite order with positive $p$ is called a {\it complex reflection\/} ($\mathbb C$-reflection). This is in line with calling a finite order element of the general linear group of a complex vector space with a pointwise fixed hyperplane a complex reflection, which is usual nomenclature. Later, the term complex reflection began to be used in the literature to specify any special elliptic with positive $p$ (not necessarily the finite order ones) and those with negative $p$ are sometimes called complex reflections about a point. We chose not to use the terminology complex reflection/complex reflection about a point because we often consider products $R_{\alpha_n}^{p_n}\dots R_{\alpha_1}^{p_1}$ of special elliptic isometries such that some of the $p_i$'s are positive and some are negative. Besides, we would like to emphasize the geometric nature of the isometry, which is (not necessarily that of a reflection but) that of a rotation about a fixed axis (positive $p$) or a rotation about a fixed point (negative $p$).

A parabolic isometry can be either \textit{unipotent\/} or {\it ellipto-parabolic.} Being parabolic unipotent means that the isometry can be lifted to a unipotent element of $\SU(2,1)$. There are two kinds of parabolic unipotent isometries. The first is $3$-step unipotent and possesses no fixed point in~$\mathbb PV$ besides the isotropic one (in a certain sense, it is a ``pure'' parabolic isometry). The second is $2$-step unipotent and has a pointwise fixed Euclidean complex line whose polar point is the isotropic fixed one. Therefore, it stabilizes every complex geodesic passing through its fixed point. In each such complex line (a Poincar\'e disc) it acts as a plane parabolic isometry. So, a $2$-step unipotent isometry looks a little bit like a special elliptic isometry whose pointwise fixed complex line is Euclidean. A parabolic isometry that is not unipotent is called \textit{ellipto-parabolic.} It stabilizes exactly two complex lines: the Euclidean line whose polar point is the isotropic fixed point and a hyperbolic line containing the isotropic fixed point. In the latter, it acts as a plane parabolic isometry. So, in a certain sense, an ellipto-parabolic isometry resembles a regular elliptic isometry as it has a couple of orthogonal stable complex lines.

A useful tool in the study of $\SU(2,1)$-conjugacy classes of an orientation-preserving isometry involves the polynomial $f:\mathbb C\to\mathbb R$ defined by
\begin{equation}\label{eq:goldmandeltoid}
f(z)=|z|^4-8\real(z^3)+18|z|^2-27.
\end{equation}
The preimage $f^{-1}(0)$, known as {\it Goldman's deltoid,} has the parameterization $\zeta^{-2}+2\zeta$, where $\zeta\in\mathbb C$, $|\zeta|=1$. A nonidentical isometry $I\in\SU(2,1)$ is regular elliptic iff $f(\trace I)<0$, loxodromic iff $f(\trace I)>0$, and parabolic unipotent iff $f(\trace I)\in\{3,3\omega,3\omega^2\}$, where $\omega:=e^{2\pi i/3}$. When $\trace I\in f^{-1}(0)\setminus\{3,3\omega,3\omega^2\}$, the isometry can be either special elliptic or ellipto-parabolic. A picture involving Goldman's deltoid can be found in page \pageref{fig:deltoid}.

The description of the $\SU(2,1)$-conjugacy classes of a nonidentical orientation-preserving isometry is as follows \cite{Will2016}. Take $z\in\mathbb C$.

\smallskip

$\bullet$ If $f(z)<0$, there exist exactly three distinct $\SU(2,1)$-conjugacy
classes of isometries of trace~$z$. They are all regular elliptic and each
conjugacy class is determined by the eigenvalue of the negative fixed point.

$\bullet$ If $f(z)=0$ and $z\notin\{3,3\omega,3\omega^2\}$, there exist exactly three distinct
$\SU(2,1)$-conjugacy
classes of isometries of trace $z$. Two of them are special elliptic and they are determined by the signature of the centres. The remaining one is ellipto-parabolic.

$\bullet$ If $z\in\{3,3\omega,3\omega^2\}$, then ($f(z)=0$ and) there exist exactly three distinct $\SU(2,1)$-conjugacy
classes of isometries of trace $z$. One is $3$-step unipotent. The other two are $2$-step unipotent and are determined by their actions on the stable complex geodesics (one moves the nonfixed ideal points in stable complex geodesics in the clockwise sense and, the other, in the counterclockwise sense).

$\bullet$ If $f(z)>0$, there exists exactly one $\SU(2,1)$-conjugacy class of isometries of trace $z$ and it is loxodromic.

\begin{defi}\label{defi:regular}
We call an isometry $I\in\SU(2,1)$ {\it regular\/} if each eigenspace has dimension $=1$, i.e., if $I$ does not have a pointwise fixed complex line.
\end{defi}

The above definition is equivalent to the one in \cite{steinberg1974}. This class of isometries contains the regular elliptic, ellipto-parabolic, $3$-step unipotent, and loxodromic ones. It is particularly useful because the trace of a regular isometry determines its type (and, except for the regular elliptic case, also determines its $\SU(2,1)$-conjugacy class).

Finally we express, in terms of centres and parameters, the trace of the product of two and of three special elliptic isometries since these traces will be needed later. The formulae in the next remark follow from \cite[pg.~195]{Mostow80} (see also \cite{Pratoussevitch2005}).

\begin{rmk}
\label{rmk:partform} Let $p_1,p_2,p_3$ be nonisotropic points and let $\alpha_1,\alpha_2,\alpha_3$ be unit complex numbers. Then
$$\trace R_{\alpha_2}^{p_2}R_{\alpha_1}^{p_1}=
\alpha_1\alpha_2+\alpha_1^{-2}\alpha_2+\alpha_1\alpha_2^{-2}+(\alpha_1^{-2}-\alpha_1)(\alpha_2^{-2}-\alpha_2)\tance(p_1,p_2),$$
\begin{equation*}
\begin{split}
\trace R_{\alpha_3}^{p_3}R_{\alpha_2}^{p_2}R_{\alpha_1}^{p_1}= \alpha_1^{-2}\alpha_2\alpha_3+\alpha_1\alpha_2^{-2}\alpha_3+\alpha_1\alpha_2\alpha_3^{-2}
+ (\alpha_1^{-2}-\alpha_1)(\alpha_2^{-2}-\alpha_2)\alpha_3\tance(p_1,p_2)\\
+(\alpha_1^{-2}-\alpha_1)(\alpha_3^{-2}-\alpha_3)\alpha_2\tance(p_1,p_3)
+(\alpha_2^{-2}-\alpha_2)(\alpha_3^{-2}-\alpha_3)\alpha_1\tance(p_2,p_3)\\
+(\alpha_1^{-2}-\alpha_1)(\alpha_2^{-2}-\alpha_2)(\alpha_3^{-2}-\alpha_3) {\frac{g_{12}g_{23}g_{31}}{g_{11}g_{22}g_{33}}},
\end{split}
\end{equation*}
where $g_{ij}:=\langle p_i,p_j\rangle$.
\end{rmk}

Unless otherwise stated, we consider only isometries in $\SU(2,1)$ and only their $\SU(2,1)$-conjugacy classes.

\section{Relations of length $\leqslant$ $3$}\label{sec:basic}

This section is devoted to the classification of (generic) lengths $2$ and $3$ relations between special elliptic isometries.

In what follows, we will denote the circle of unit complex numbers by $\mathbb S^1\subset\mathbb C$ and the set of cube roots of the unity by $\Omega:=\{1,\omega,\omega^2\}\subset\mathbb S^1$, $\omega:=e^{2\pi i/3}$. When a unit complex number $\alpha\in\mathbb S^1\setminus\Omega$ is meant to play the role of the parameter of a special elliptic isometry we will also call it a parameter.

We begin with a simple remark that will be used throughout the paper without reference.

\begin{rmk}

$\bullet$ Let $R_\alpha^p$ be a special elliptic isometry, $\alpha\in\mathbb S^1\setminus\Omega$. A point $q\neq p$ is fixed by $R_\alpha^p$ iff $\left<p,q\right>=0$. In this case, $R_\alpha^pq=\alpha q$.

\smallskip

$\bullet$ Let $R_{\alpha_1}^{p_1},R_{\alpha_2}^{p_2}$ be special elliptic isometries, $p_1\ne p_2$, and let $c$ be the polar point of the line $\mathrm L(p_1,p_2)$. Then $c$ is fixed by $R:=R_{\alpha_2}^{p_2}R_{\alpha_1}^{p_1}$ since $\mathbb Pp_1^\perp\cap\mathbb Pp_2^\perp=\{c\}$. The $R$-eigenvalue of $c$ equals $\alpha_1\alpha_2$ and the line $\mathrm L(p_1,p_2)$ is $R$-stable.

\smallskip

$\bullet$ We have $R_{\delta\alpha}^p=\delta R_{\alpha}^p$ whenever $\delta\in\Omega$.

\smallskip

$\bullet$ $R_\beta^p R_\alpha^p =R_{\alpha\beta}^p$\/ for every $p\in \mathbb PV \setminus\SV$ and every $\alpha,\beta\in\mathbb S^{1}$.

\end{rmk}

The classification of length $2$ relations between special elliptic isometries is a simple consequence of the above remark. Indeed, on one hand, for nonisotropic $p$ and $\alpha\in\mathbb S^1\setminus\Omega$, we have $R_\alpha^pR_{\delta\overline\alpha}^p=\delta$, where $\delta\in\Omega$. On the other hand, if $R:=R_{\alpha_2}^{p_2}R_{\alpha_1}^{p_1}=\delta$ for some nonisotropic points $p_1,p_2$ and parameters $\alpha_1,\alpha_2\in\mathbb S^1\setminus\Omega$, then either $p_1=p_2$ (and $\alpha_1\alpha_2=\delta$) or $\langle p_1,p_2\rangle=0$. In order to see that the latter is impossible it suffices to apply the isometry $R$ to $p_1$ and to $p_2$ as this leads to $\alpha_1^3=\alpha_2^3=1$, a contradiction. We arrive at the following definition.

\begin{defi}\label{defi:cancellations} The length $2$ relations between special elliptic isometries are called {\it cancellations.} They are of the form $R_\alpha^pR_{\delta\overline\alpha}^p=\delta$ where $p$ is nonisotropic, $\alpha\in\mathbb S^1\setminus\Omega$, and $\delta\in\Omega$.
\end{defi}

In order to obtain all (generic) length 3 relations, we need to understand when the product of two special elliptic isometries is special elliptic.

\begin{lemma}\label{lemma:eigenvalue}
Let\/ $p_1,p_2\in\mathbb PV\setminus\SV$ be distinct nonisotropic points
and let\/ $\alpha_1,\alpha_2\in\mathbb S^1\setminus\Omega$ be parameters. The isometry\/
$R_{\alpha_2}^{p_2}R_{\alpha_1}^{p_1}$ has a\/ fixed point in the line\/
$\mathrm L(p_1,p_2)$ with eigenvalue\/ $\alpha_1\alpha_2$ iff\/ $\mathrm L(p_1,p_2)$
is~Euclidean.
\end{lemma}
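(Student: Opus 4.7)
The plan is to leverage two facts that are already at hand: (i) by the opening remark of the section, the polar point $c$ of $L:=\mathrm L(p_1,p_2)$ is always fixed by $R:=R_{\alpha_2}^{p_2}R_{\alpha_1}^{p_1}$ with eigenvalue $\alpha_1\alpha_2$, and the line $L$ is $R$-stable; and (ii) $L$ is Euclidean iff its polar point is isotropic, and this is in turn equivalent to $c\in L$ (because the null direction of the degenerate Hermitian form on $W:=\mathbb Cp_1+\mathbb Cp_2$ then coincides with $W^\perp$).

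The reverse implication is then immediate: if $L$ is Euclidean, (ii) gives $c\in L$, so $c$ itself is a fixed point of $R$ lying on $L$ with eigenvalue $\alpha_1\alpha_2$.

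For the direct implication I would argue by contradiction. Suppose there is $q\in L$ with $Rq=\alpha_1\alpha_2 q$ but that $L$ is not Euclidean, so that $c\notin W$. Then $V=W\oplus\mathbb Cc$ is an $R$-equivariant decomposition, and since $\det R=1$ and the eigenvalue at $c$ is $\alpha_1\alpha_2$, one reads off $\det R|_W=(\alpha_1\alpha_2)^{-1}$. Combining this with the eigenvalue $\alpha_1\alpha_2$ at $q$ pins down the remaining eigenvalue of $R|_W$ as $\alpha_1^{-2}\alpha_2^{-2}$, whence
\[
\trace R=2\alpha_1\alpha_2+\alpha_1^{-2}\alpha_2^{-2}.
\]

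Now I would equate this with the trace formula in Remark \ref{rmk:partform}. Grouping by powers of $\alpha_2$, the difference factors as $(\alpha_1^{-2}-\alpha_1)(\alpha_2^{-2}-\alpha_2)\bigl(1-\tance(p_1,p_2)\bigr)$. The hypothesis $\alpha_1,\alpha_2\in\mathbb S^1\setminus\Omega$ ensures $\alpha_i^{-2}\ne\alpha_i$, so this identity forces $\tance(p_1,p_2)=1$; by Sylvester's criterion $L$ is then Euclidean, contradicting the assumption $c\notin W$. The only mildly technical step is the factorization of the trace identity, but it is a one-line grouping that I would not grind out in a sketch; the conceptual content of the proof is the eigenvalue count on the $R$-stable line together with the Sylvester characterization of Euclidean lines via the tance.
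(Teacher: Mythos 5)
Your proof is correct and follows essentially the same route as the paper: both directions rest on the observation that the polar point $c$ and the hypothesized fixed point in $\mathrm L(p_1,p_2)$ share the eigenvalue $\alpha_1\alpha_2$, so that $\det R=1$ forces $\trace R=2\alpha_1\alpha_2+\alpha_1^{-2}\alpha_2^{-2}$, and comparison with the trace formula of Remark \ref{rmk:partform} yields $\tance(p_1,p_2)=1$, i.e.\ a Euclidean line. Your bookkeeping via the splitting $V=W\oplus\mathbb Cc$ and $\det R|_W$ is just a slightly more explicit packaging of the paper's one-line eigenvalue count.
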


\begin{proof}
Assume that $R:=R_{\alpha_2}^{p_2}R_{\alpha_1}^{p_1}$ fixes a point $p\in L:=\mathrm L(p_1,p_2)=\mathbb Pc^\perp$ with eigenvalue $\alpha_1\alpha_2$ and that $L$ is noneuclidean. Then $p\ne c$, $Rp=\alpha_1\alpha_2p$, and $Rc=\alpha_1\alpha_2c$; hence, $\trace R=2\alpha_1\alpha_2+\alpha_1^{-2}\alpha_2^{-2}$. It follows from Remark \ref{rmk:partform} that $\tance(p_1,p_2)=1$ which implies that $L$ is Euclidean, a contradiction. The converse is immediate. \end{proof}

\begin{prop}\label{prop:r2r1special}
Let\/ $p_1,p_2\in \mathbb PV\setminus\SV$ be distinct nonisotropic points
such that the line\/ $\mathrm L(p_1,p_2)$ is noneuclidean. Let\/
$\alpha_1,\alpha_2\in\mathbb S^1\setminus\Omega$ be parameters. Then\/
$R_{\alpha_2}^{p_2}R_{\alpha_1}^{p_1}$ is special elliptic iff\/
$\langle p_1,p_2\rangle=0$ and\/ $\alpha_1=\delta\alpha_2$ for some\/ $\delta\in\Omega$.
\end{prop}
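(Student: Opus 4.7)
For the converse, I would suppose $\langle p_1,p_2\rangle=0$ and $\alpha_1=\delta\alpha_2$ for some $\delta\in\Omega$. Letting $c$ denote the polar of $\mathrm L(p_1,p_2)$, the triple $c,p_1,p_2$ is an orthogonal basis of $V$, and I would read off from \eqref{eq:specialelliptic} that both $R_{\alpha_2}^{p_1}$ and $R_{\alpha_2}^{p_2}$ are diagonal in this basis. Rewriting $R:=R_{\alpha_2}^{p_2}R_{\alpha_1}^{p_1}=\delta R_{\alpha_2}^{p_2}R_{\alpha_2}^{p_1}$ and multiplying the diagonal matrices, I would observe that the resulting eigenvalue triple has the $(\beta^{-2},\beta,\beta)$ pattern characteristic of a special elliptic isometry with centre $c$.

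For the forward direction, I would assume $R$ is special elliptic and write its eigenvalues as $\gamma^{-2},\gamma,\gamma$ with $\gamma\in\mathbb S^1\setminus\Omega$. By the initial remark of this section, the polar $c$ of $\mathrm L(p_1,p_2)$ is an $R$-eigenvector with eigenvalue $\alpha_1\alpha_2$, so $\alpha_1\alpha_2\in\{\gamma,\gamma^{-2}\}$. The argument then proceeds by a case split on which of these holds: the first case will be eliminated by Lemma \ref{lemma:eigenvalue}, and the second will yield the conclusion through direct computation.

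In the case $\alpha_1\alpha_2=\gamma$, the point $c$ lies in the pointwise-fixed complex line $M$ of $R$. Since $\mathrm L(p_1,p_2)$ is noneuclidean, $c$ is nonisotropic and in particular does not lie on $\mathrm L(p_1,p_2)$, so $M\ne\mathrm L(p_1,p_2)$; the two lines then meet in a single point $q\in\mathrm L(p_1,p_2)$ fixed by $R$ with eigenvalue $\alpha_1\alpha_2$, contradicting Lemma \ref{lemma:eigenvalue}. Hence $\alpha_1\alpha_2=\gamma^{-2}$, which makes $c$ polar to the pointwise-fixed line of $R$, forcing this line to equal $\mathrm L(p_1,p_2)$; in particular $p_1$ and $p_2$ are $R$-fixed. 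Expanding $Rp_1$ via \eqref{eq:specialelliptic} leaves a $p_2$-component proportional to $(\alpha_2^{-2}-\alpha_2)\langle p_1,p_2\rangle$, which must vanish; because $\alpha_2\notin\Omega$, this gives $\langle p_1,p_2\rangle=0$. With this orthogonality the identities $Rp_1=\gamma p_1$ and $Rp_2=\gamma p_2$ reduce to $\gamma=\alpha_1^{-2}\alpha_2=\alpha_1\alpha_2^{-2}$, whence $\alpha_1^3=\alpha_2^3$, i.e., $\alpha_1=\delta\alpha_2$ for some $\delta\in\Omega$. The main obstacle is recognizing that the repeated-eigenvalue case produces, via intersecting $M$ with $\mathrm L(p_1,p_2)$, exactly the kind of eigenvector forbidden by Lemma \ref{lemma:eigenvalue}.
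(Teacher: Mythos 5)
Your proof is correct and follows essentially the same route as the paper's: the polar point $c$ of $\mathrm L(p_1,p_2)$ is an eigenvector with eigenvalue $\alpha_1\alpha_2$, Lemma \ref{lemma:eigenvalue} rules out the case where the pointwise fixed line of $R$ differs from $\mathrm L(p_1,p_2)$, and the remaining case forces $\langle p_1,p_2\rangle=0$ and $\alpha_1^{-2}\alpha_2=\alpha_1\alpha_2^{-2}$. The only difference is cosmetic: you spell out the converse (which the paper calls immediate) and extract the orthogonality from $Rp_1\in\mathbb Cp_1$ rather than from $p_2$ being an eigenvector of $R_{\alpha_1}^{p_1}$.
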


\begin{proof}
Let $R:=R_{\alpha_2}^{p_2}R_{\alpha_1}^{p_1}$ and let $c$ be the polar point of the
$R$-stable line $L:=\mathrm L(p_1,p_2)$. Assume that the isometry $R$ is special
elliptic and let $L'$ be its pointwise fixed line. If $L'\ne L$, then $c\in L'$ and
the intersection $L\cap L'$ is a fixed point of $R$ with eigenvalue $\alpha_1\alpha_2$.
This is impossible by Lemma~\ref{lemma:eigenvalue}. So, $L'=L$. Being a fixed point of
$R$, the point $p_2$ is also a fixed point of $R_{\alpha_1}^{p_1}$. Since $p_1\ne p_2$,
we obtain $\langle p_1,p_2\rangle=0$. The $R$-eigenvalues of $p_1,p_2$ are respectively
$\alpha_1^{-2}\alpha_2$, $\alpha_1\alpha_2^{-2}$. These eigenvalues are equal, that is,
$\alpha_1^3=\alpha_2^3$. The converse is immediate.
\end{proof}

Let $p_1,p_2,p_3\in\mathbb PV\setminus\SV$ be nonisotropic pairwise distinct points such that the lines $\mathrm L(p_i,p_j)$ are noneuclidean for distinct $i,j\in\{1,2,3\}$. Let $\alpha_1,\alpha_2,\alpha_3\in\mathbb S^1\setminus\Omega$ be parameters. It is easy to see that, if $p_1,p_2,p_3$ are pairwise orthogonal and
$\alpha_1^{-2}\alpha_2\alpha_3=\alpha_1\alpha_2^{-2}\alpha_3=\alpha_1\alpha_2\alpha_3^{-2}=\delta$,
then $R_{\alpha_3}^{p_3}R_{\alpha_2}^{p_2}R_{\alpha_1}^{p_1}=\delta$. Conversely, assuming $R:=R_{\alpha_3}^{p_3}R_{\alpha_2}^{p_2}R_{\alpha_1}^{p_1}=\delta$, it follows from Proposition \ref{prop:r2r1special} that $p_1,p_2,p_3$ are pairwise orthogonal; applying $R$ to $p_1,p_2,p_3$, we obtain $\alpha_1^{-2}\alpha_2\alpha_3=\alpha_1\alpha_2^{-2}\alpha_3=\alpha_1\alpha_2\alpha_3^{-2}=\delta$. We have just arrived at the following definition.

\begin{defi}
The length $3$ (generic) relations between special elliptic isometries are called {\it{\rm(}length\/ $3${\rm)} orthogonal relations.} They are of the form
$R_{\alpha}^{p_3}R_{\delta_2\alpha}^{p_2}R_{\delta_1\alpha}^{p_1}=\delta$
with pairwise orthogonal nonisotropic $p_1,p_2,p_3$, where $\alpha\in\mathbb S^1\setminus\Omega$, $\delta_1,\delta_2,\delta\in\Omega$, and $\delta_1\delta_2=\delta$.
\end{defi}

\section{Relations of length $4$: bendings}\label{sec:bendings}

As in the case of relations of lengths $2$ and $3$, a length $4$ relation imposes restrictive conditions on centres and parameters.

Given a length $4$ relation, we write it in the form
$R_{\alpha_2}^{p_2}R_{\alpha_1}^{p_1}=\delta R_{\beta_2}^{q_2}R_{\beta_1}^{q_1}$
(of course, we assume that $p_1\neq p_2$ and $q_1\neq q_2$; as usual, $\delta\in\Omega$
is a cube root of unity). If $p_1,p_2,q_1,q_2$ lie in a same complex line $L$, then
$\alpha_1\alpha_2=\delta\beta_1\beta_2$ because the polar point $c$ of $L$ is a fixed
point of $R_{\alpha_2}^{p_2}R_{\alpha_1}^{p_1}$ with eigenvalue $\alpha_1\alpha_2$ and
a fixed point of $\delta R_{\beta_2}^{q_2}R_{\beta_1}^{q_1}$ with eigenvalue $\delta\beta_1\beta_2$.
Generically, the converse also holds.

\begin{lemma}\label{lemma:ortnotortl4}
Let\/ $p_i,q_i \in\mathbb PV\setminus\SV$ be nonisotropic points and
let\/ $\alpha_i,\beta_i\in\mathbb S^1\setminus\Omega$ be parameters, $i=1,2$. Assume
that\/ $p_1\neq p_2$ and\/ $q_1\neq q_2$, that
\begin{equation}\label{eq:relationl4}
R_{\alpha_2}^{p_2}R_{\alpha_1}^{p_1}=\delta R_{\beta_2}^{q_2}R_{\beta_1}^{q_1}
\end{equation}
for some\/ $\delta\in\Omega$, and that at least one of the lines\/ $L_1:=\mathrm{L}(p_1,p_2)$, $L_2:=\mathrm{L}(q_1,q_2)$ is noneuclidean. Then\/ $L_1$ and\/ $L_2$ are either equal\/ {\rm(}in which case\/ $\alpha_1\alpha_2=\delta\beta_1\beta_2${\rm)} or orthogonal\/ {\rm(}in which case\/ $\alpha_1\alpha_2\neq\delta\beta_1\beta_2${\rm).}
\end{lemma}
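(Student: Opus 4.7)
The plan is to analyse the common isometry $R:=R_{\alpha_2}^{p_2}R_{\alpha_1}^{p_1}=\delta R_{\beta_2}^{q_2}R_{\beta_1}^{q_1}$ via its action on the polar points $c_1,c_2$ of $L_1,L_2$. By the remark at the beginning of Section \ref{sec:basic}, $Rc_1=(\alpha_1\alpha_2)c_1$ and, from the second presentation, $Rc_2=(\delta\beta_1\beta_2)c_2$. If $L_1=L_2$ then $c_1=c_2$, and comparing eigenvalues gives $\alpha_1\alpha_2=\delta\beta_1\beta_2$. Suppose instead $L_1\neq L_2$, so that $c_1,c_2$ are linearly independent. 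Invariance of the Hermitian form under $R$ yields
\[\langle c_1,c_2\rangle=\langle Rc_1,Rc_2\rangle=(\alpha_1\alpha_2)\overline{(\delta\beta_1\beta_2)}\langle c_1,c_2\rangle,\]
so either $\langle c_1,c_2\rangle=0$ (which is $L_1\perp L_2$) or $\alpha_1\alpha_2=\delta\beta_1\beta_2$.

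The remaining task, and the main obstacle, is to exclude the combination $L_1\neq L_2$ together with $\mu:=\alpha_1\alpha_2=\delta\beta_1\beta_2$, under the hypothesis that WLOG $L_1$ is noneuclidean. Assuming this, $c_1$ and $c_2$ both lie in the $\mu$-eigenspace of $R$, which therefore has dimension $\geqslant 2$; since $\det R=1$, the remaining eigenvalue is $\mu^{-2}$. Moreover $R$ is not a scalar: the discussion preceding Definition \ref{defi:cancellations} shows that $R\in\Omega$ would force $p_1=p_2$.

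The argument then splits on whether $\mu$ lies in $\Omega$. If $\mu\notin\Omega$, then $R$ is diagonalizable with eigenvalues $\mu,\mu,\mu^{-2}$ and is therefore special elliptic; Proposition \ref{prop:r2r1special} is applicable since $L_1$ is noneuclidean, and a direct eigenvalue computation (with $\alpha_1=\delta'\alpha_2$, $\delta'\in\Omega$, as dictated by the proposition) shows that the polar $c_1$ carries the \emph{distinct} eigenvalue $\alpha_1\alpha_2=\mu$, so that the $\mu$-eigenspace is the line $\mathbb Cc_1$, which cannot contain $c_2$. If instead $\mu\in\Omega$, all three eigenvalues of $R$ equal $\mu$, $R$ is non-diagonalizable, and $\trace R=3\mu$; here substituting $\alpha_2=\mu\alpha_1^{-1}$ (and using $\mu^3=1$) in the trace formula of Remark \ref{rmk:partform} and simplifying yields
\[(2-t)\bigl(\tance(p_1,p_2)-1\bigr)=0,\qquad t:=\alpha_1^3+\alpha_1^{-3}.\]
Since $\alpha_1\notin\Omega$ forces $t\neq 2$, one gets $\tance(p_1,p_2)=1$, making $L_1$ Euclidean by the tance criterion of Section \ref{sec:hyperbolicgeometry}---contradicting the hypothesis. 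With both sub-cases excluded, $L_1\neq L_2$ forces $\alpha_1\alpha_2\neq\delta\beta_1\beta_2$ and hence $L_1\perp L_2$.
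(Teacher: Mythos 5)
Your proof is correct. It follows the paper's opening move (reading off the eigenvalues $\alpha_1\alpha_2$ and $\delta\beta_1\beta_2$ of the two polar points) but then departs from the paper in two places, so a comparison is worthwhile. For the dichotomy ``$L_1\perp L_2$ or $\alpha_1\alpha_2=\delta\beta_1\beta_2$'' you apply invariance of the Hermitian form to the pair $(c_1,c_2)$ of polar points; the paper instead expands $R_{\alpha_2}^{p_2}R_{\alpha_1}^{p_1}d$ via \eqref{eq:specialelliptic} and compares coefficients in the linearly independent triple $d,p_1,p_2$ --- your version is shorter and more conceptual. For the exclusion of ``$L_1\neq L_2$ together with $\mu:=\alpha_1\alpha_2=\delta\beta_1\beta_2$'', the paper notes that the pointwise fixed projective line $\mathrm L(c_1,c_2)$ must meet $L_1$, which produces a fixed point in $L_1$ of eigenvalue $\alpha_1\alpha_2$ and lets it invoke Lemma \ref{lemma:eigenvalue} to force $L_1$ Euclidean in one stroke. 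You instead split on whether $\mu\in\Omega$, handling one branch through Proposition \ref{prop:r2r1special} plus an eigenvalue count and the other through the trace identity $(2-t)\bigl(\tance(p_1,p_2)-1\bigr)=0$; both branches are sound (the latter computation is essentially the proof of Lemma \ref{lemma:eigenvalue} specialized to $\alpha_1\alpha_2\in\Omega$), but the case split could have been avoided: your two-dimensional $\mu$-eigenspace $\mathbb Cc_1+\mathbb Cc_2$ necessarily meets the two-dimensional subspace underlying $L_1$, which is precisely the hypothesis of Lemma \ref{lemma:eigenvalue}.
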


\begin{proof}
Let $c$, $d$ denote respectively the polar points of $L_1$,	$L_2$. As observed above, $L_1=L_2$ implies $\alpha_1\alpha_2=\delta\beta_1\beta_2$. Conversely, assume $\alpha_1\alpha_2=\delta\beta_1\beta_2$ and $L_1\ne L_2$ (i.e., $c\neq d$). The relation
$R_{\alpha_2}^{p_2}R_{\alpha_1}^{p_1}=\delta R_{\beta_2}^{q_2}R_{\beta_1}^{q_1}$
implies that the line $L:=\mathrm L(c,d)$ is pointwise fixed by
$R_{\alpha_2}^{p_2}R_{\alpha_1}^{p_1}$ with eigenvalue $\alpha_1\alpha_2$.
In particular, the intersections $L_1\cap L$ provides a point in~$L_1$ which is
fixed by $R_{\alpha_2}^{p_2}R_{\alpha_1}^{p_1}$ with eigenvalue
$\alpha_1\alpha_2$. Similarly, we obtain a point in $L_2$ which is fixed by
$\delta R_{\beta_2}^{q_2}R_{\beta_1}^{q_1}$ with eigenvalue
$\delta\beta_1\beta_2$. Thus, by Lemma~\ref{lemma:eigenvalue}, both lines $L_1$,
$L_2$ are Euclidean, a contradiction.

Assume $L_1$ orthogonal to $L_2$. Hence, $L_1\ne L_2$ because at least one of these lines is noneuclidean and, therefore, $\alpha_1\alpha_2\ne\delta\beta_1\beta_2$. Finally, assume that $L_1$ is not orthogonal to $L_2$. By \eqref{eq:specialelliptic}, the relation
$R_{\alpha_2}^{p_2}R_{\alpha_1}^{p_1}=\delta R_{\beta_2}^{q_2}R_{\beta_1}^{q_1}$
implies
\begin{equation*}
\delta\beta_1\beta_2d=R_{\alpha_2}^{p_2}R_{\alpha_1}^{p_1}d=
\alpha_1\alpha_2d+\kappa_1\frac{\left<d,p_1\right>}{\left<p_1,p_1\right>}p_1+
\bigg(\kappa_2\frac{\left<d,p_2\right>}{\left<p_2,p_2\right>}+
\kappa_3\frac{\left<d,p_1\right>\left<p_1,p_2\right>}
{\left<p_1,p_1\right>\left<p_2,p_2\right>}\bigg)p_2,
\end{equation*}
where $\kappa_1=(\alpha_1^{-2}-\alpha_1)\alpha_2$, $\kappa_2=(\alpha_2^{-2}-\alpha_2)\alpha_1$, and $\kappa_3=(\alpha_1^{-2}-\alpha_1)(\alpha_2^{-2}-\alpha_2)$. Since $d\notin L_1$, the points $d,p_1,p_2$ are $\mathbb C$-linearly independent and $\alpha_1\alpha_2=\delta\beta_1\beta_2$.
\end{proof}

Note that, in the previous lemma, $L_1=L_2$ is equivalent to the projective lines $M_1:=\mathbb Pp_1^\perp$, $M_2:=\mathbb Pp_2^\perp$, $M_3:=\mathbb Pq_1^\perp$, $M_4:=\mathbb Pq_2^\perp$ having a common point in $\mathbb PV$. Similarly, $L_1$ orthogonal to $L_2$ is equivalent to the orthogonality of $m_{i_1i_2}$ and $m_{i_3i_4}$, where $m_{ij}:=M_i\cap M_j$ and $\{i_1,i_2,i_3,i_4\}=\{1,2,3,4\}$.

\smallskip

In this section, we focus on \textit{nonorthogonal\/} relations of length $4$.
In view of Lemma~\ref{lemma:ortnotortl4}, this means that we will study relations of the form \eqref{eq:relationl4} satisfying $\mathrm{L}(p_1,p_2)=\mathrm{L}(q_1,q_2)$ or, equivalently, $\alpha_1\alpha_2=\delta\beta_1\beta_2$. We~also assume $\delta=1$, $\alpha_i=\beta_i$, and $\sigma p_i=\sigma q_i$, $i=1,2$, where the signature $\sigma p$ of a nonisotropic point $p$ equals $1$ when $p\in\mathrm{E}V$ and $-1$ when $p\in\mathrm{B}V$ (see the beginning of Section~\ref{sec:hyperbolicgeometry}). In Section \ref{sec:n-o-solutions} we will consider the general case.

Let us apply a known recipe to produce length $4$ relations \cite{Sasha2012}, \cite{ABG2007}. Take an isometry $C$ in the centralizer of $R_{\alpha_2}^{p_2}R_{\alpha_1}^{p_1}$. Then
$$R_{\alpha_2}^{p_2}R_{\alpha_1}^{p_1}=(R_{\alpha_2}^{p_2}R_{\alpha_1}^{p_1})^C=(CR_{\alpha_2}^{p_2}C^{-1})(CR_{\alpha_1}^{p_1}C^{-1})=R_{\alpha_2}^{Cp_2}R_{\alpha_1}^{Cp_1}.$$
Relations of this form are called {\it bending relations.} All length $4$ relations of the form \eqref{eq:relationl4} with $\delta=1$, $\alpha_1=\beta_1,\alpha_2=\beta_2$, and $\sigma p_i=\sigma q_i$, $i=1,2$, are bending relations (Theorem \ref{thm:bendingsarebendings}).

The fact that the line $L:=\mathrm L(p_1,p_2)$ is stable under the isometry $R:=R_{\alpha_2}^{p_2}R_{\alpha_1}^{p_1}$ will allow us to prove that it is not necessary to consider the full centralizer of $R$ in order to obtain all bending relations (indeed, it suffices to take a one-parameter subgroup of this centralizer). First, we need the following lemma.

\begin{lemma}
\label{lemma:prodspecreg}
Let\/ $p_1,p_2\in\mathbb PV\setminus \SV$ be distinct nonorthogonal
points with noneuclidean\/ $L:=\mathrm L(p_1,p_2)$ and let\/ $\alpha_1,\alpha_2\in\mathbb S^1\setminus\Omega$ be parameters. Then
$R:=R_{\alpha_2}^{p_2}R_{\alpha_1}^{p_1}$ is regular {\rm(}and is not\/ $3$-step unipotent\/{\rm).}
\end{lemma}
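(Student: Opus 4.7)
The plan is to argue both claims by contradiction, leveraging the fact (noted in the remark opening Section \ref{sec:basic}) that the polar point $c$ of $L$ is a fixed point of $R := R_{\alpha_2}^{p_2}R_{\alpha_1}^{p_1}$ with eigenvalue $\alpha_1\alpha_2$. Since $L$ is noneuclidean, $c$ is nonisotropic, and this single observation will drive both arguments.

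For regularity, suppose $R$ pointwise fixes a complex line $L' = \mathbb PW$, where $W\leqslant V$ is a $2$-dimensional eigenspace of $R$ with eigenvalue $\lambda$. I would split according to whether $c \in W$ or not. If $c\notin W$, then $V = W\oplus\mathbb Cc$ and $R$ is diagonalizable with eigenvalues $\lambda,\lambda,\alpha_1\alpha_2$. When $\lambda = \alpha_1\alpha_2$ the isometry $R$ is scalar, making $R_{\alpha_2}^{p_2}R_{\alpha_1}^{p_1} = \delta$ a length $2$ relation and forcing $p_1 = p_2$ by Definition \ref{defi:cancellations}, a contradiction. When $\lambda \neq \alpha_1\alpha_2$ the determinant constraint yields $\lambda^{-2} = \alpha_1\alpha_2$, so $R$ agrees with the special elliptic $R_\lambda^c$; then Proposition \ref{prop:r2r1special} forces $\langle p_1,p_2\rangle = 0$, again a contradiction.

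If instead $c \in W$, I first rule out $L' = L$: were it the case, $W = \mathbb Cp_1 + \mathbb Cp_2$, and $c\in W$ together with $c\perp p_1,p_2$ would render the Gram matrix of $(p_1,p_2)$ singular, i.e., $\tance(p_1,p_2) = 1$, making $L$ Euclidean. Thus $L'\neq L$, and $L\cap L' = \{q\}$ is a single point. As $c\in W$, the eigenvalue on $W$ equals $\lambda = \alpha_1\alpha_2$, so $q\in L$ is a fixed point of $R$ with eigenvalue $\alpha_1\alpha_2$. Lemma \ref{lemma:eigenvalue} then forces $L$ Euclidean, contradicting the hypothesis. This subcase is precisely what handles the potential $2$-step unipotent behaviour of $R$, since all eigenvectors of a $2$-step unipotent isometry are confined to its $2$-dimensional eigenspace.

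The non-$3$-step-unipotent claim is immediate from the setup: a $3$-step unipotent isometry has a single fixed point in $\mathbb PV$, and that point is isotropic, whereas $c$ is a nonisotropic fixed point of $R$. The main obstacle is the $c \in W$ case of the regularity argument, where one must carefully exclude $L' = L$ via the Gram-matrix computation and then exploit Lemma \ref{lemma:eigenvalue} a second time at the intersection point $q$ in order to extract a contradiction from the noneuclidean hypothesis on $L$.
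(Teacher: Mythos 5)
Your proof is correct, and it reaches the conclusion by a genuinely different decomposition than the paper's. The paper runs through the elliptic/parabolic/loxodromic trichotomy: Proposition \ref{prop:r2r1special} rules out special elliptic, a spherical $L$ forces regular elliptic, and in the parabolic case Lemma \ref{lemma:eigenvalue} shows the isotropic fixed point in $L$ has eigenvalue different from $\alpha_1\alpha_2$, so $R$ has two distinct eigenvalues and cannot be unipotent (which simultaneously kills the $2$-step and $3$-step cases). You instead attack the definition of regularity head-on, splitting on whether the putative two-dimensional eigenspace $W$ contains the polar point $c$; the same two ingredients (Lemma \ref{lemma:eigenvalue} and Proposition \ref{prop:r2r1special}) appear, but your argument is more self-contained in that it never needs the finer description of parabolic conjugacy classes (e.g.\ that a parabolic $R$ has its isotropic fixed point inside $L$, or that its eigenvalues are unitary), only linear algebra on eigenspaces plus the length-$2$ classification for the scalar subcase. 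One step you should make explicit: in the subcase $c\notin W$ with $\lambda\neq\alpha_1\alpha_2$, the identification of $R$ with $R_\lambda^c$ requires $W=c^\perp$, which follows because $R$ preserves the Hermitian form and unit eigenvalues $\lambda\neq\alpha_1\alpha_2$ force $\langle w,c\rangle=\lambda\,\overline{\alpha_1\alpha_2}\langle w,c\rangle=0$ for $w\in W$; this is a one-line verification but it is doing real work there. With that inserted, the argument is complete, and your observation that the $c\in W$ branch is exactly what excludes $2$-step unipotents is accurate.
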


\begin{proof}
Proposition~\ref{prop:r2r1special} implies that $R$ is not special elliptic. We can assume that $L$ is hyperbolic since, otherwise, $R$ is regular elliptic. Suppose that $R$ is parabolic with isotropic fixed point $v\in L$. By Lemma \ref{lemma:eigenvalue}, the eigenvalue of $v$ is not $\alpha_1\alpha_2$. Hence, $R$ has two distinct eigenvalues (that of $v$ and that of the polar point of $L$) with distinct eigenvectors. It follows that $R$ cannot be parabolic unipotent.
\end{proof}

\begin{prop}\label{prop:bendings}
Let\/ $p_1,p_2 \in\mathbb PV\setminus\SV$ be distinct
nonorthogonal points such that\/ $L:=\mathrm L(p_1,p_2)$ is noneuclidean and
let\/ $\alpha_1,\alpha_2\in\mathbb S^1\setminus\Omega$ be parameters. There exists
a one-parameter subgroup\break $B:\mathbb R\to\SU(2,1)$ such that\/ $B(s)$
commutes with\/ $R_{\alpha_2}^{p_2}R_{\alpha_1}^{p_1}$ and\/
$R_{\alpha_2}^{B(s)p_2}R_{\alpha_1}^{B(s)p_1}=
R_{\alpha_2}^{p_2}R_{\alpha_1}^{p_1}$ for every\/ $s\in\mathbb R$.
Furthermore, given\/ $I$ in the centralizer of\/
$R_{\alpha_2}^{p_2}R_{\alpha_1}^{p_1}$, there exists\/ $s\in\mathbb R$ such
that the equality\/ $Ip=B(s)p$ holds in\/ $\mathbb PV$ for every\/
$p\in L$.
\end{prop}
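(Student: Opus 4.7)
The plan is to take $B:\mathbb R\to\SU(2,1)$ as a one-parameter subgroup of the centralizer $Z(R)\subset\SU(2,1)$ of $R:=R_{\alpha_2}^{p_2}R_{\alpha_1}^{p_1}$. Once $B(s)\in Z(R)$, the bending identity follows at once from the fact that conjugation by any $B\in\SU(2,1)$ sends $R_\alpha^p$ to $R_\alpha^{Bp}$, which is immediate from formula~\eqref{eq:specialelliptic}: indeed,
$$R_{\alpha_2}^{B(s)p_2}R_{\alpha_1}^{B(s)p_1}=\bigl(B(s)R_{\alpha_2}^{p_2}B(s)^{-1}\bigr)\bigl(B(s)R_{\alpha_1}^{p_1}B(s)^{-1}\bigr)=B(s)RB(s)^{-1}=R.$$
Thus the content of the proposition reduces to exhibiting an appropriate $B$ and to understanding the action of $Z(R)$ on $L$.

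First I would invoke Lemma~\ref{lemma:prodspecreg} to reduce to three cases for $R$: regular elliptic, loxodromic, or ellipto-parabolic. In each, the polar $c$ of $L$ is a fixed point of $R$ with eigenvalue $\alpha_1\alpha_2$ and $L=\mathbb Pc^\perp$ is $R$-stable, and I would pin down $Z(R)$ in adapted coordinates. In the regular elliptic and loxodromic cases, $R$ is diagonalizable in an eigenbasis $\{c,v_2,v_3\}$ with $v_2,v_3\in L$ and $Z(R)$ is the group of matrices diagonal in this basis that preserve $\langle-,-\rangle$ and have unit determinant. In the ellipto-parabolic case, the only nonisotropic fixed point of $R$ is $c$, the isotropic fixed point of $R$ lies in $L$, and $R|_L$ is a parabolic $2{\times}2$ Jordan block; here $Z(R)$ consists of the block-diagonal matrices whose $L$-block is upper-triangular with equal diagonal entries. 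A short calculation shows in each case that $Z(R)$ is a connected two-dimensional abelian Lie subgroup of $\SU(2,1)$.

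Every element of $Z(R)$ permutes the generalized eigenspaces of $R$ and therefore projectively fixes $c$ and preserves $L$, so there is a restriction homomorphism $\rho:Z(R)\to\mathrm{PGL}(L)$. A direct case-by-case computation shows that $\ker\rho$ (the elements of $Z(R)$ acting on $L$ as a scalar) is one-dimensional: it is the subgroup $\mu_2=\mu_3$ in the diagonal cases and the subgroup where the off-diagonal entry of the Jordan block vanishes in the ellipto-parabolic case. Hence $\rho(Z(R))$ is a one-dimensional connected Lie subgroup of $\mathrm{PGL}(L)$, and I would define $B$ as a one-parameter subgroup of $Z(R)$ obtained by exponentiating any Lie algebra element of $Z(R)$ transverse to $\mathrm{Lie}(\ker\rho)$; this guarantees that $\rho\circ B:\mathbb R\to\rho(Z(R))$ is surjective.

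With this $B$, the first assertion of the proposition is the displayed identity above. For the second, given $I\in Z(R)$, the projective map $\rho(I)$ lies in $\rho(Z(R))=\rho(B(\mathbb R))$, so $\rho(I)=\rho(B(s))$ for some $s\in\mathbb R$, which is exactly the required equality $Ip=B(s)p$ in $\mathbb PV$ for every $p\in L$. The main technical obstacle is the ellipto-parabolic case, in which $Z(R)$ is not a maximal torus: one has to carefully impose the Hermitian and determinant constraints on a Jordan block and verify both that $\dim_\mathbb R Z(R)=2$ and that $\rho$ has one-dimensional image, in order for the transverse one-parameter subgroup $B$ to cover all of $\rho(Z(R))$.
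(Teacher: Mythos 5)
Your proposal is correct and follows essentially the same route as the paper: the same three-case division (regular elliptic, loxodromic, ellipto-parabolic) via Lemma \ref{lemma:prodspecreg}, the same explicit description of the centralizer in an adapted eigenbasis (diagonal, resp.\ block-Jordan, matrices in $\SU(2,1)$), and the same conclusion that a one-parameter subgroup transverse to the elements acting trivially on $L$ realizes the entire action of the centralizer on $L$. The only difference is cosmetic: where you establish surjectivity of $\rho\circ B$ onto $\rho(Z(R))$ by a Lie-theoretic transversality/dimension count, the paper writes $B$ down explicitly in each case and exhibits the parameter $s$ with $I|_L=B(s)|_L$ (e.g.\ $s=\Arg(\mu_1\mu_2^2)$ in the elliptic case), citing the literature for the centralizer description you derive by hand.
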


\begin{proof}
Let $c$ stand for the polar point of $L:=\mathrm L(p_1,p_2)$ and let
$R:=R_{\alpha_2}^{p_2}R_{\alpha_1}^{p_1}$. We will use the description of the full centralizer of $R$ given in \cite[Corollary~8.2]{BasmajianMiner1998} and \cite[Theorem~1.1]{CaoG2011} to explicitly obtain the required one-parameter subgroup.

Assume that the isometry $R$ is elliptic. By Lemma \ref{lemma:prodspecreg}, it is regular elliptic. Then $R$ fixes $c$ as well as two other points $p,q\in L$, $\langle p,q\rangle=0$. An isometry $I\in\SU(2,1)$ commutes with $R$ iff it is an elliptic isometry that fixes $c,p,q$. So, in the orthogonal basis $c,p,q$, such an isometry can be written in the form $I=I(\mu_1,\mu_2):=\left[\begin{smallmatrix} \mu_1 & 0 & 0 \\ 0 & \mu_2 & 0\\ 0 & 0 & \mu_1^{-1}\mu_2^{-1}\end{smallmatrix}\right]$
with $\mu_i\in\mathbb C$, $|\mu_i|=1$. It is not difficult to see that the actions
of $I(\mu_1,\mu_2)$ and $I(\mu_1',\mu_2')$ on $L$ are equal iff
$\mu_1\mu_2^2=\mu_1'\mu_2'^2$. So, it suffices to take the one-parameter subgroup
$B:\mathbb R\to\SU(2,1)$ defined, in the orthogonal basis $c,p,q$, by
$B(s):=\left[ \begin{smallmatrix} 1 & 0 & 0 \\ 0 & e^{\frac{s}{2}i} & 0 \\
0 & 0 & e^{-\frac{s}{2}i} \end{smallmatrix}\right]$.
The equality $I(\mu_1,\mu_2)|_L=B\big(\Arg(\mu_1\mu_2^2)\big)|_L$ holds in
$\mathbb PV$. Indeed, every point in $L\setminus\{q\}$ has a representative of the form $p+\lambda q$ for some $\lambda\in\mathbb C$  and, in $\mathbb PV$, we have
$$I(p+\lambda q)=\mu_2 p+\mu_1^{-1}\mu_2^{-1}\lambda q=\mu_1\mu_2^2p+\lambda q=e^{i\Arg(\mu_1\mu_2^2)}p+\lambda q=B\big(\Arg(\mu_1\mu_2^2)\big)(p+\lambda q).$$

Suppose that $R$ is loxodromic. Then it fixes two isotropic points $v_1,v_2\in L$.
An isometry $I\in\SU(2,1)$ commutes with $R$ iff
it is loxodromic or special elliptic and
$R\big(\mathrm{fix}(I)\big)\subset\mathrm{fix}(I)$. Thus,
$I=I(\mu):=\left[\begin{smallmatrix} \overline{\mu}\mu^{-1} & 0 & 0 \\
0 & \mu & 0\\ 0 & 0 & \overline{\mu}^{-1}\end{smallmatrix}\right]$
in the basis $c,v_1,v_2$, where $0\neq\mu\in\mathbb C$. The actions of
$I(\mu_1)$ and $I(\mu_2)$ on $L$ are equal iff
$\mu_1\overline\mu_1=\mu_2\overline\mu_2$. Hence, we can take the one-parameter
subgroup $B:\mathbb R\to\SU(2,1)$ defined, in the basis $c,v_1,v_2$, by
$B(s):=\left[ \begin{smallmatrix} 1 & 0 & 0 \\ 0 & e^s & 0 \\
0 & 0 & e^{-s}\end{smallmatrix}\right]$.
Clearly, $I(\mu)|_L=B\big(\ln|\mu|\big)|_L$ in $\mathbb PV$.

It remains to consider the case when $R$ is parabolic. By Lemma \ref{lemma:prodspecreg}, $R$ is regular and is not $3$-step unipotent. It fixes an isotropic point $v_1$ and the polar point $c$ of the noneuclidean complex line $L$. We obtain $v_1\in L$. Hence, this line must be hyperbolic and $c$, positive. Let $v_2\in L\cap\SV$, $v_2\neq v_1$. An isometry $I\in\SU(2,1)$ commutes
with $R$ iff $I$ is parabolic with $\mathrm{fix}(R)=\mathrm{fix}(I)$ or $I$ is
special elliptic with $R\big(\mathrm{fix}(I)\big)\subset\mathrm{fix}(I)$. Moreover, by
\cite[Corollary 3.3]{Parker2012}, every eigenvalue of $I$ is unitary. It follows that
$I=I(\mu,t):=\left[\begin{smallmatrix} \mu & it\mu & 0 \\
0 & \mu & 0\\ 0 & 0 & \overline{\mu}^{2}\end{smallmatrix}\right]$
in the basis $v_1,v_2,c$, where $\mu\in\mathbb C$, $|\mu|=1$, and $t\in\mathbb R$.
The actions of $I(\mu_1,t_1)$ and $I(\mu_2,t_2)$ on $L$ are the same iff $t_1=t_2$.
So, we can consider the one-parameter subgroup $B:\mathbb R\to\SU(2,1)$ defined,
in the basis $v_1,v_2,c$, by
$B(s)=\left[ \begin{smallmatrix} 1 & is & 0 \\ 0 & 1 & 0
\\ 0 & 0 & 1\end{smallmatrix}\right]$. We~have $I(\mu,t)|_L=B(t)|_L$ in
$\mathbb PV$.
\end{proof}

We call the elements of the one-parameter subgroups $B$ introduced in Proposition~\ref{prop:bendings} {\it bendings\/} (see Figure \ref{fig:bendings}).

\begin{figure}[h!]
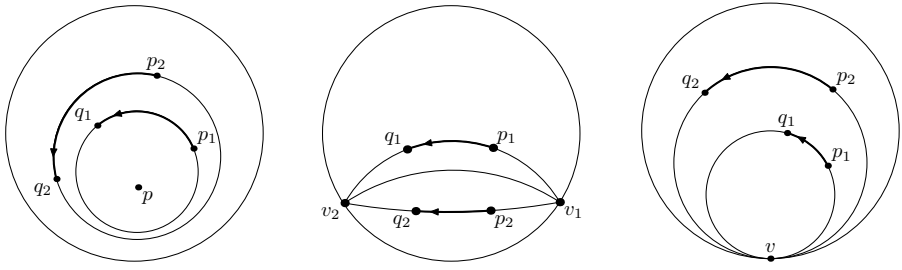

\centering
\includegraphics[scale=.77]{figs/bre01-1.mps}
\hspace{0.5cm}
\includegraphics[scale=.77]{figs/bh01-1.mps}
\hspace{0.5cm}
\includegraphics[scale=.77]{figs/bp01-1.mps}
\caption{Action of bendings in $\mathrm L(p_1,p_2)$ in the elliptic, loxodromic, and parabolic cases. The orbits of $p_1,p_2$ are respectively metric circles, hypercycles, and horocycles.}
\label{fig:bendings}
\end{figure}

In order to prove in Theorem \ref{thm:bendingsarebendings} that bendings provide all length 4 relations of the form \eqref{eq:relationl4} with $\delta=1$, $\alpha_1=\beta_1,\alpha_2=\beta_2$, and $\sigma p_i=\sigma q_i$, $i=1,2$, we express the action of $R_{\alpha_2}^{p_2}R_{\alpha_1}^{p_1}$ on the line $\mathrm L(p_1,p_2)$ as a product of reflections on geodesics. First, we introduce some notation and terminology.

Let $\omega:=e^{2\pi i/3}$, let
$I_0:=\big\{e^{i\theta}\mid0<\theta<2\pi/3\big\}\subset\mathbb S^1$, let $I_1:=\omega I_0$, and let $I_2:=\omega I_1=\omega^2 I_0$. Let $J\subset I_0$ stand for the open arc limited by $1$ and $-\omega^2$. Note that $J^2=I_0$ and $J^6=I_0^3=\mathbb S^1\setminus\{1\}$.

\begin{defi}
\label{defi:samecomponent}
Given a parameter $\alpha\in\mathbb S^1\setminus\Omega$, the (unique) complex number $a\in J$ satisfying $(a^2)^3=\alpha^3$ is called the {\it primitive\/} of $\alpha$. Two parameters $\alpha,\beta\in\mathbb S^1\setminus\Omega$ are in a {\it same component\/} when they lie in a same arc $I_j$. In this case, we write $\alpha\sim\beta$.
\end{defi}

Obviously, being on a same component
is an equivalence relation. Moreover, whenever $\alpha\sim\beta$, we have
$\overline\alpha\sim\overline\beta$ and $\omega^k\alpha\sim\omega^k\beta$
for $k\in\mathbb Z$.

\begin{lemma}\label{lemma:construction}
Let\/ $p_1,p_2\in\mathbb PV\setminus\SV$ be distinct nonorthogonal points, let\/
$\alpha_1,\alpha_2\in\mathbb S^1\setminus\Omega$ be parameters, and let
$a_1,a_2\in J$ be the primitives of~$\alpha_1,\alpha_2$.
Let\/ $L:=\mathrm L(p_1,p_2)$ and let\/ $G\subset L$ stand for the geodesic
through $p_1,p_2$. Then\/ $R_{\alpha_2}^{p_2}R_{\alpha_1}^{p_1}$ acts on\/
$L$ as the product\/ $r_2r_1$, where\/ $r_i$ is the reflection in the
geodesic\/ $G_i$ through $p_i$ such that\/ $\angle_{p_1}G_1G=\Arg(a_1^3)$ and
$\angle_{p_2}GG_2=\Arg(a_2^3)$, $i=1,2$.
\end{lemma}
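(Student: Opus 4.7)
The plan is to use the classical planar identity that a rotation by angle $\theta$ around a point $p$ equals the product of two reflections in geodesics through $p$ whose oriented angle is $\theta/2$. Concretely, I will write $R_{\alpha_1}^{p_1}|_L=r_G\circ r_1$ and $R_{\alpha_2}^{p_2}|_L=r_2\circ r_G$ where $r_G$ is the reflection in $G$ and $r_1,r_2$ are reflections in suitable geodesics $G_1,G_2$; the cancellation $r_G\circ r_G=\mathrm{id}$ then gives $R_{\alpha_2}^{p_2}R_{\alpha_1}^{p_1}|_L=r_2\circ r_1$.

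The key preliminary is to identify the action of $R_{\alpha_i}^{p_i}$ on $L$ as the rotation around $p_i$ by the angle $\Arg(\alpha_i^3)$. Since $L$ is noneuclidean (which is anyway needed to speak of reflections and oriented angles in $L$), there is a unique $\tilde p_i\in L$ orthogonal to $p_i$, and \eqref{eq:specialelliptic} gives $R_{\alpha_i}^{p_i}p_i=\alpha_i^{-2}p_i$ and $R_{\alpha_i}^{p_i}\tilde p_i=\alpha_i\tilde p_i$. In the projective coordinate on $L\setminus\{\tilde p_i\}$ defined by $[p_i+\mu\tilde p_i]\mapsto\mu$, the restriction becomes $\mu\mapsto\alpha_i^3\mu$, i.e., rotation around $p_i$ by $\Arg(\alpha_i^3)$ in the counterclockwise orientation induced by $\mu$. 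This matches the paper's convention because the identity $\angle_{\tilde p}G_1G_2=\angle_pG_2G_1$ is exactly the statement that the $\mu$-coordinate orientation at $p_i$ and the $1/\mu$-coordinate orientation at $\tilde p_i$ are opposite.

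Now let $r_G$ denote the reflection in $G$ and set $r_1:=r_G\circ R_{\alpha_1}^{p_1}|_L$. As the composition of an orientation-reversing map with a nontrivial rotation, $r_1$ is orientation-reversing and, moreover, fixes $p_1$ (since $p_1\in G$ and $p_1$ is a fixed point of $R_{\alpha_1}^{p_1}$); hence it is the reflection in a unique geodesic $G_1$ through $p_1$. By construction $R_{\alpha_1}^{p_1}|_L=r_G\circ r_1$, and since the product of two reflections in geodesics through $p_1$ is the rotation around $p_1$ by twice the counterclockwise angle from the first factor's axis to the second factor's axis, we obtain $2\angle_{p_1}G_1G\equiv\Arg(\alpha_1^3)\pmod{2\pi}$. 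As the primitive $a_1\in J$ satisfies $\Arg(a_1)\in(0,\pi/3)$ and $a_1^6=\alpha_1^3$, one has $\Arg(a_1^3)\in(0,\pi)$ and $2\Arg(a_1^3)=\Arg(\alpha_1^3)$; therefore $\angle_{p_1}G_1G=\Arg(a_1^3)$. An analogous construction with $r_2:=R_{\alpha_2}^{p_2}|_L\circ r_G$ produces a reflection in a geodesic $G_2$ through $p_2$ satisfying $\angle_{p_2}GG_2=\Arg(a_2^3)$, and $R_{\alpha_2}^{p_2}R_{\alpha_1}^{p_1}|_L=(r_2\circ r_G)\circ(r_G\circ r_1)=r_2\circ r_1$ concludes the argument.

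The main subtlety lies in the sign/branch bookkeeping of the preliminary step: pinning down $\Arg(\alpha_i^3)$ (rather than $\Arg(\alpha_i^{-3})$) as the rotation angle uniformly in the signature of $p_i$, and choosing the correct half-angle among $\Arg(\alpha_i^3)/2$ and $\Arg(\alpha_i^3)/2+\pi$. These two issues are resolved, respectively, by the paper's orientation convention at orthogonal points and by the specific arc $J$ in which the primitive $a_i$ is required to lie.
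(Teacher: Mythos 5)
Your proof is correct and follows essentially the same route as the paper's: both decompose each $R_{\alpha_i}^{p_i}|_L$ as a product of two reflections sharing the common mirror $G$, cancel $r_Gr_G$, and use $a_i\in J$ (so $0<\Arg a_i^3<\pi$) to pin down the half-angle. The only difference is one of direction — you define $r_i$ from the isometry and derive the angle condition, while the paper defines $G_i$ by the angle condition and checks the product is the right rotation — and you spell out the rotation-angle/orientation bookkeeping that the paper imports from its Section 2 discussion of special elliptic isometries.
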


\begin{proof}
Let $r$ stand for the reflection in the geodesic $G$. Since the restriction of $R_{\alpha_1}^{p_1}$ to $L$ is a usual rotation (in a plane geometry) around $p_1$ by the angle $\Arg(\alpha_1^3)$, the isometry $R_{\alpha_1}^{p_1}$ acts on $L$ as $rr_1$: this product of reflections is a rotation with centre $p_1$ and angle $2\Arg(a_1^3)=\Arg(a_1^3)+\Arg(a_1^3)=\Arg(a_1^2)^3=\Arg(\alpha_1^3)$. (Note that $a_1\in J$ implies $a_1^2\in I_0$; so, $0<\Arg a_1<\pi/3$, that is, $0<\Arg a_1^3<\pi$.) Analogously, $R_{\alpha_2}^{p_2}$ acts on $L$ as $r_2r$. Therefore, being restricted to $L$, the isometry $R_{\alpha_2}^{p_2}R_{\alpha_1}^{p_1}$ equals $r_2rrr_1=r_2r_1$.
\end{proof}

\begin{defi}\label{defi:geoassociated}
We will refer to the geodesics $G, G_1, G_2$ obtained in the previous lemma as those {\it associated\/} to $R_{\alpha_2}^{p_2}R_{\alpha_1}^{p_1}$.
\end{defi}

\begin{thm}
\label{thm:bendingsarebendings}
Let $p_1,p_2$ and $q_1,q_2$ be pairs of distinct nonisotropic nonorthogonal points and
let\/ $\alpha_1,\alpha_2\in\mathbb S^1\setminus\Omega$ be parameters.
Assume that $\mathrm L(p_1,p_2)$ is noneuclidean and that\/ $\sigma p_i=\sigma q_i$, $i=1,2$.
The relation\/ $R_{\alpha_2}^{p_2}R_{\alpha_1}^{p_1}=
R_{\alpha_2}^{q_2}R_{\alpha_1}^{q_1}$ implies that there exists\/ $s\in\mathbb R$
such that\/ $B(s)p_1=q_1$ and\/ $B(s)p_2=q_2$, where\/ $B$ stands for the
one-parameter subgroup introduced in\/ {\rm Proposition~\ref{prop:bendings}.}
\end{thm}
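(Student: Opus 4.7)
My plan is to find $\phi \in \SU(2,1)$ in the centralizer of $R := R_{\alpha_2}^{p_2}R_{\alpha_1}^{p_1}$ with $\phi p_i = q_i$ in $\mathbb PV$; the second part of Proposition~\ref{prop:bendings} will then deliver $s\in\mathbb R$ with $B(s)p_i = \phi p_i = q_i$. To produce $\phi$ it suffices to verify that the pairs $(p_1,p_2)$ and $(q_1,q_2)$ share signature data and the same tance, since any two such pairs are $\SU(2,1)$-equivalent.

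That the lines $\mathrm L(p_1,p_2)$ and $\mathrm L(q_1,q_2)$ coincide follows from Lemma~\ref{lemma:ortnotortl4} with $\delta=1$ and $\beta_i=\alpha_i$: the two lines are either equal or orthogonal, and the orthogonal alternative is incompatible with $\alpha_1\alpha_2 = \delta\beta_1\beta_2$. To extract $\tance(p_1,p_2) = \tance(q_1,q_2)$ I would equate the two expressions for $\trace R$ provided by Remark~\ref{rmk:partform}; the coefficient $(\alpha_1^{-2}-\alpha_1)(\alpha_2^{-2}-\alpha_2)$ of $\tance$ is nonzero since $\alpha_i \notin \Omega$.

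Constructing $\phi$ is the main step. Choose representatives with $\langle p_i, p_i\rangle = \langle q_i, q_i\rangle = \sigma p_i$; the tance equality together with the sign matching yields $|\langle p_1,p_2\rangle| = |\langle q_1,q_2\rangle|$, and rescaling $q_2$ by a suitable unit phase forces $\langle q_1,q_2\rangle = \langle p_1,p_2\rangle$. The linear map $p_i\mapsto q_i$ is then a Hermitian isomorphism between the two-dimensional subspaces $\mathbb Cp_1 + \mathbb Cp_2$ and $\mathbb Cq_1+\mathbb Cq_2$; any Hermitian isomorphism between their orthogonal complements (which share a common signature) extends it to an element of $\mathrm U(2,1)$, which after multiplication by a suitable unit scalar lands in $\SU(2,1)$ without altering its action on $\mathbb PV$. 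Because $R_\alpha^p$ depends only on the projective class of $p$, one has $\phi R\phi^{-1} = R_{\alpha_2}^{\phi p_2}R_{\alpha_1}^{\phi p_1}=R_{\alpha_2}^{q_2}R_{\alpha_1}^{q_1} = R$, so $\phi$ centralizes $R$, and Proposition~\ref{prop:bendings} closes the argument. The main obstacle is the construction of $\phi$; the rest merely packages the information in the relation \eqref{eq:relationl4}.
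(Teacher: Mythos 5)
Your argument is correct, and it is a genuinely different (and considerably shorter) route than the one in the paper. The two pivots are sound: equality of traces in Remark \ref{rmk:partform} forces $\tance(p_1,p_2)=\tance(q_1,q_2)$ because $(\alpha_1^{-2}-\alpha_1)(\alpha_2^{-2}-\alpha_2)\ne0$ for $\alpha_i\notin\Omega$; and once $\phi\in\SU(2,1)$ with $\phi p_i=q_i$ in $\mathbb PV$ is produced, the hypothesis itself certifies $\phi R\phi^{-1}=R_{\alpha_2}^{\phi p_2}R_{\alpha_1}^{\phi p_1}=R_{\alpha_2}^{q_2}R_{\alpha_1}^{q_1}=R$, so $\phi$ lies in the centralizer and the final clause of Proposition \ref{prop:bendings} hands you $s$ with $B(s)|_L=\phi|_L$, hence $B(s)p_i=q_i$. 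The Witt-type extension step is legitimate: the matched Gram matrices are nondegenerate because $\mathrm L(p_1,p_2)$ is noneuclidean (and Lemma \ref{lemma:ortnotortl4} even gives $\mathbb Cq_1+\mathbb Cq_2=\mathbb Cp_1+\mathbb Cp_2$, so you may take $\phi$ to be the identity on the common orthogonal complement up to the unit scalar normalizing the determinant, which affects neither the projective action nor the conjugation). By contrast, the paper works entirely inside $L$: it decomposes $R|_L$ as a product of reflections in the geodesics associated to each factorization, bends until the geodesics coincide, and then uses area/angle bookkeeping in the Poincar\'e disc or sphere to force $p_i=q_i$, with separate cases for hyperbolic versus spherical $L$ and for $\sigma p_1=\sigma p_2$ versus $\sigma p_1\ne\sigma p_2$. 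What the longer synthetic proof buys is not the theorem itself but the auxiliary geometric control it establishes along the way --- that the aligning bending can be chosen so that no fixed point of $R$ lies on the segments $\mathrm G[p_i,q_i]$ --- which is exactly what is recycled later in the definition of $f$-bendings and in the proof of Theorem \ref{thm:all4relations}; your linear-algebraic argument does not yield that refinement, but the statement as given does not require it.
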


\noindent{\it Proof.}
Let $L_1:=\mathrm L(p_1,p_2)$ and $L_2:=\mathrm L(q_1,q_2)$. By Lemma~\ref{lemma:ortnotortl4}, $L_1=L_2=:L$. Let $G,G_1,G_2$ and $H,H_1,H_2$ be the geodesics respectively associated to $R_{\alpha_2}^{p_2}R_{\alpha_1}^{p_1}$ and to $R_{\alpha_2}^{q_2}R_{\alpha_1}^{q_1}$, (see Definition~\ref{defi:geoassociated}). Let $r_i$ and $r_j'$ stand for the reflections on $G_i$ and $H_j$, $i,j=1,2$. By Proposition~\ref{prop:r2r1special}, $R:=R_{\alpha_2}^{p_2}R_{\alpha_1}^{p_1}=R_{\alpha_2}^{q_2}R_{\alpha_1}^{q_1}$ cannot be special elliptic.

Since $r_2r_1$ and $r_2'r_1'$ are equal on $L$, by bending $R_{\alpha_2}^{q_2}R_{\alpha_1}^{q_1}$ (moving the pair of points $q_1,q_2$ and the geodesics $H,H_1,H_2$) we arrive at the configuration where $H_i=G_i$ and, in particular, $q_i\in G_i$, $i=1,2$. If, in this new configuration, $q_1=p_1$ or $q_2=p_2$, we are done because the relation implies that $q_1=p_1$ iff $q_2=p_2$.

Suppose that $L$ is hyperbolic.

Bending $R_{\alpha_2}^{p_2}R_{\alpha_1}^{p_1}$ if necessary, we can assume $\mathrm G[p_1,q_1]\cap\mathrm G[p_2,q_2]=\varnothing$ (here, $\mathrm G[p_i,q_i]$ stands for the geodesic segment joining $p_i$ and $q_i$, see Section \ref{sec:hyperbolicgeometry}). There are three cases to consider.

First case: $\sigma p_1=\sigma p_2$ and $\mathrm G[p_1,p_2]\cap\mathrm G[q_1,q_2]=\varnothing$. Note that neither $\mathrm G[p_1,q_1]$ nor $\mathrm G[p_2,q_2]$ contains a fixed point of $R$. Indeed, assuming the contrary, a fixed point $x$ of $R$ should be in exactly one of the segments $\mathrm G[p_1,q_1]$, $\mathrm G[p_2,q_2]$ since $\mathrm G[p_1,q_1]\cap\mathrm G[p_2,q_2]=\varnothing$ and $x\in G_1\cap G_2$. This leads to
$$\area\Delta(x,p_1,p_2)+\area\Delta(x,q_1,q_2)=\pi-(A_1+A_2+\pi-B)+\pi-(\pi-A_1+\pi-A_2+B)=-\pi,$$
where $A_1=\angle_{p_1}G_1G=\angle_{q_1}H_1H$, $A_2=\angle_{p_2}GG_2=\angle_{q_2}HH_2$,  $B:=\angle_x{G_1}{G_2}$, and ``$\area$'' stands for the usual hyperbolic area (see Section \ref{sec:hyperbolicgeometry}), a contradiction. It follows that we are in the configuration illustrated in item~({\it a\/}) of the figure below and
$$\area(p_1,p_2,q_2,q_1)=2\pi-(A_1+\pi-A_1+A_2+\pi-A_2)=0,$$
where $A_1$, $A_2$ are defined as above and $\area(p_1,p_2,q_2,q_1)$ stands for the usual hyperbolic area of the geodesic quadrilateral with vertices $p_1,p_2,q_2,q_1$. This implies $p_i=q_i$, $i=1,2$.

\begin{figure}[!htb]
\centering
\includegraphics[scale=.8]{figs/hypercase1.mps}
\hspace{.5cm}
\includegraphics[scale=.8]{figs/hypercase2.mps}
\hspace{0.5cm}
\includegraphics[scale=.8]{figs/hypercase3.mps}
\hspace{.5cm}
\includegraphics[scale=.8]{figs/sphericalcase.mps}
\label{fig:firstcase}
\end{figure}

Second case: $\sigma p_1=\sigma p_2$ and $\mathrm G[p_1,p_2]\cap\mathrm G[q_1,q_2]=\{y\}$. As in the proof of the first case, neither $\mathrm G[p_1,q_1]$ nor $\mathrm G[p_2,q_2]$ contains a fixed point of $R$ and we are in the configuration illustrated in item~({\it b\/}) of the above figure. Let $B:=\angle_yHG$ (respectively, $B:=\angle_yGH$) when the triangle of vertices $p_1,q_1,y$ is clockwise (respectively, counterclockwise) oriented. We have
$$\area\Delta(p_1,q_1,y)=\pi-(A_1+\pi-A_1+B)=-B$$
which implies $B=0$, that is, $p_i=q_i$, $i=1,2$.

Third case: $\sigma p_1\neq \sigma p_2$. Take the points $\tilde p_2,\tilde q_2\in L$ respectively orthogonal to $p_2,q_2$ and proceed as above taking into account that $\angle_{\tilde p_2}G_2G=\angle_{p_2}GG_2$, $\angle_{\tilde q_2}H_2H=\angle_{q_2}HH_2$ (see item~({\it c\/}) of the above figure).
		
Finally, suppose that $L$ is spherical.

Let $x\in L$ be the fixed point of $R$ such that the triangle $\Delta(x,p_1,p_2)$ is clockwise oriented; note that $x\ne p_i$ and that $\langle x,p_i\rangle\ne0$, $i=1,2$, so $\Delta(x,p_1,p_2)$ is well-defined. For the same reasons, the triangle $\Delta(x,q_1,q_2)$ is also well-defined. Assume that $\Delta(x,q_1,q_2)$ is counterclockwise oriented. This implies (see item ({\it d\/}) of the above figure) that $\Delta(x,q_1,q_2)$ is congruent to the polar triangle of $\Delta(x,p_1,p_2)$ (by definition, a vertex of the triangle polar to the triangle $T$ is the pole of a side $s$ of $T$ in the hemisphere which contains the vertex opposite to $s$). A well-known fact from spherical geometry states that the length of a side in a triangle and the length of the corresponding side in its polar triangle add to $\pi$. Together with the fact that $\tance(p_1,p_2)=\tance(q_1,q_2)$ (see Remark \ref{rmk:partform}), this implies that $\mathrm{dist}(p_1,p_2)=\pi/2$, where $\mathrm{dist}$ stands for the usual distance in the sphere $L$. It follows that the triangles $\Delta(x,q_1,q_2)$ and $\Delta(x,\tilde p_1,p_2)$ are congruent (note that $\dist(q_1,q_2)=\dist(\tilde p_1,p_2)=\pi/2$), where $\tilde p_1\in L$ stands for the point in $L$ orthogonal do $p_1$. Therefore, $\dist(q_2,x)=\dist(x,p_2)$ which implies that, bending $R_{\alpha_2}^{q_2}R_{\alpha_1}^{q_1}$, we can make $p_2=q_2$ and, consequently, $p_1=q_1$. This contradicts the orientation of $\Delta(x,q_1,q_2)$. We conclude that both $\Delta(x,q_1,q_2)$ and $\Delta(x,p_1,p_2)$ have the same orientation, so $\Delta(x,q_1,q_2)$ and $\Delta(x,p_1,p_2)$ have the same internal angles and are therefore congruent.~\hfill$\square$

\begin{prop}
\label{prop:sameanglepoints}
Let\/ $B$ stand for the one-parameter subgroup introduced in\/
{\rm Proposition \ref{prop:bendings}.} Nonisotropic distinct points\/ $p_1$
and\/ $p_2$ with the same signature, not lying in a same Euclidean line, 
are in a same\/ $B$-orbit of a bending of\/
$R_{\alpha_2}^{p_2}R_{\alpha_1}^{p_1}$ iff\/ $\alpha_1=\delta\alpha_2$ for
some\/ $\delta\in\Omega$.
\end{prop}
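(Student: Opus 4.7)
The plan is to translate the algebraic condition $\alpha_1=\delta\alpha_2$ into a geometric condition on the angles formed by the geodesics associated to $R_{\alpha_2}^{p_2}R_{\alpha_1}^{p_1}$ via Lemma~\ref{lemma:construction}, and then relate this to the $B$-orbit structure from Proposition~\ref{prop:bendings}. Since $\alpha_1=\delta\alpha_2$ is equivalent to $\alpha_1^3=\alpha_2^3$, i.e., to $a_1^6=a_2^6$ for the primitives $a_1,a_2\in J$, and since the map $a\mapsto a^6$ is injective on $J$ (as $\Arg a\in(0,\pi/3)$ forces $\Arg(a^6)\in(0,2\pi)$), this reduces to $a_1=a_2$ and hence to $\Arg(a_1^3)=\Arg(a_2^3)$. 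By Lemma~\ref{lemma:construction}, this last condition is precisely
\begin{equation*}
\angle_{p_1}G_1G=\angle_{p_2}GG_2,
\end{equation*}
where $G,G_1,G_2$ are the associated geodesics. Therefore the proposition reduces to showing that this angle equality is equivalent to $p_1,p_2$ lying in a common $B$-orbit.

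By Lemma~\ref{lemma:prodspecreg}, $R:=R_{\alpha_2}^{p_2}R_{\alpha_1}^{p_1}$ is regular; depending on whether $R|_L$ is regular elliptic, loxodromic, or parabolic, its fixed set on $L$ is an orthogonal pair $y,\tilde y\in G_1\cap G_2$, a pair of isotropic points joined by the common perpendicular of the ultraparallel $G_1,G_2$, or a single isotropic point $v\in G_1\cap G_2\cap\SV$. By Proposition~\ref{prop:bendings}, the $B$-orbits on $L$ are, respectively, concentric metric circles around $y$, hypercycles equidistant from the axis of $R$, and horocycles at $v$. In each case there is a unique involutive isometry $T$ of $L$ that swaps $G_1\leftrightarrow G_2$ and preserves the fixed set of $R|_L$: the reflection in the internal angle bisector of $G_1,G_2$ at $y$ (elliptic), the reflection in the geodesic perpendicular to the axis of $R$ at the midpoint of its feet on $G_1,G_2$ (loxodromic), or the reflection in the bisector at $v$ (parabolic); the relation $TB(s)T^{-1}=B(-s)$ then ensures that $T$ preserves each $B$-orbit setwise.

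I will then show that the angle equality is equivalent to $T(p_1)=p_2$, which is equivalent to $p_1,p_2$ lying in a common $B$-orbit. For the forward direction, the angle equality forces the configuration $(p_1,G_1,G)$ to be the $T$-image of $(p_2,G_2,G)$, so $T(p_1)=p_2$ and hence $p_2$ lies on the $B$-orbit through $p_1$. For the converse, if $p_2=B(s_0)p_1$ then Proposition~\ref{prop:bendings} gives $R=R_{\alpha_2}^{p_3}R_{\alpha_1}^{p_2}$ with $p_3:=B(s_0)p_2$, and Lemma~\ref{lemma:construction} applied to this bended product provides geodesics $G',G_1',G_2'$ with $G_1'$ through $p_2$; since $G_1'$ meets the fixed set of $R|_L$ in the same way $G_1$ meets it, we have $G_1'=G_2$. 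The resulting identity $\angle_{p_2}G_2G'=\Arg(a_1^3)$ combines with $\angle_{p_2}GG_2=\Arg(a_2^3)$ and the $T$-symmetry relating $G$ and $G'=B(s_0)G$ to force $\Arg(a_1^3)=\Arg(a_2^3)$.

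The main obstacle is the handling of orientation: because geodesics in $L$ are unoriented, the oriented angle equalities must be tracked consistently, and the correct ``side'' of the fixed set of $R|_L$ must be chosen (in the elliptic case, for instance, the correct element of $\{y,\tilde y\}$, so that the interior angles of $\Delta(y,p_1,p_2)$ equal $\Arg(a_1^3)$ and $\Arg(a_2^3)$ rather than their $\pi$-complements). Once these bookkeeping matters are settled, the isosceles criterion for hyperbolic or spherical triangles, and its parabolic and loxodromic analogues for horocycles and hypercycles, completes the argument.
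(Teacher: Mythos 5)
Your proposal is correct and takes essentially the same route as the paper: both reduce the condition $\alpha_1=\delta\alpha_2$ to the equality of the angles $\Arg(a_1^3)=\Arg(a_2^3)$ that the associated geodesics $G_1,G_2$ make with $G$, and then treat the elliptic, loxodromic, and parabolic cases by the isosceles/reflection criterion identifying the $B$-orbits with metric circles, hypercycles, and horocycles. The paper's argument uses exactly the reflections you package into $T$ (the perpendicular $\Gamma$ in the parabolic and loxodromic cases, and the isosceles triangle with apex at the appropriate fixed point in the elliptic case), so the uniform presentation via a single involution is only a cosmetic difference.
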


\noindent{\it Proof.}
First, assume that $R:=R_{\alpha_2}^{p_2}R_{\alpha_1}^{p_1}$ is regular elliptic and
let $x$ be the $R$-fixed point in the line $L:=\mathrm L(p_1,p_2)$ with
$\sigma x=\sigma p_1=\sigma p_2$.
So, we can consider the triangle $\Delta(p_1,x,p_2)$ in the line $L$. 
Moreover, the internal angles of $\Delta(p_1,x,p_2)$ at $p_1$ and at $p_2$ are equal 
iff $\dist(x,p_1)=\dist(x,p_2)$, that is, iff $p_1,p_2$ lie in a same $B$-orbit of $R$ 
(a metric circle centred at $x$). But the mentioned internal angles are equal
exactly when $\alpha_1=\delta\alpha_2$ because the primitive angles $a(\beta_1)$ and
$a(\beta_2)$ (see Definition~\ref{defi:samecomponent}) are equal iff
$\beta_1\beta_2^{-1}\in\Omega$, where $\beta_1,\beta_2\in\mathbb S^1\setminus\Omega$.

Assume that $R$ is parabolic. In this case, the line $L$ is hyperbolic.
Consider the ideal triangle $\Delta(p_1,x,p_2)$, where $x\in L$ is the isotropic
$R$-fixed point. Let $\Gamma$ be the geodesic through $x$ orthogonal to 
$\mathrm{G}{\wr}p_1,p_2{\wr}$. If~$\alpha_1=\delta\alpha_2$, the internal angles at 
$p_1$ and $p_2$ are equal. Therefore, by continuity, $\Gamma$ intersects the geodesic 
segment $\mathrm G[p_1,p_2]$; let $y$ stand for such intersection point. 
AAA congruence implies that $\dist(y,p_1)=\dist(y,p_2)$. So, the reflection in 
$\Gamma$ sends $p_1$ to $p_2$. But the reflection in $\Gamma$ stabilizes 
every horocycle centred at $x$. This implies that $p_1$ and $p_2$ lie in a same 
$B$-orbit of $R$. Conversely, assume that $p_1,p_2$ lie in a same horocycle 
centered at $x$. Clearly, $p_1$ and $p_2$ belong to distinct half-spaces determined
by $\Gamma$. So, the reflection in $\Gamma$ sends $p_1$ to $p_2$ and this implies
that the internal angles of $\Delta (p_1,x,p_2)$ at $p_1$ and at $p_2$ are equal.

\begin{wrapfigure}{r}{0.25\textwidth}
\centering
\includegraphics[scale=.9]{figs/loxodromic-fig1.mps}\\
\vspace{.4cm}
\includegraphics[scale=.9]{figs/loxodromic-fig4.mps}
\end{wrapfigure}

Finally, suppose that $R$ is loxodromic. Again, the line $L$ is hyperbolic.
Consider the geodesics $G,G_1,G_2$ associated to $R$ (see 
Definition~\ref{defi:geoassociated}).
By Lemma~\ref{lemma:construction}, the geodesics $G_1,G_2$ are ultraparallel.
Let $H$ be the geodesics in $L$ that is simultaneously orthogonal to both $G_1$ 
and $G_2$, and let $x,y\in L$ be such that $x\in H\cap G_1$, $y\in H\cap G_2$, and
$\sigma x=\sigma y=\sigma p_1=\sigma p_2$.  
If $\alpha_1=\delta\alpha_2$, it is not difficult to see that the quadrilateral 
$p_1,p_2,y,x$ is convex (otherwise, $A_1=A_2$ is impossible). Let $\Gamma$ be the 
geodesic intersecting the geodesic segment $\mathrm G[x,y]$ orthogonally in its middle 
point $q_1$. Continuity implies that $\Gamma\cap\mathrm G[p_1,p_2]\ne\varnothing$.
Let $q_2$ stand for the point $\Gamma\cap\mathrm G[p_1,p_2]$.
The triangles $\Delta (q_1,x,q_2)$ and $\Delta (q_1,y,q_2)$ are congruent by
SAS congruence. So, the internal angles at $x$ and $y$ of the triangles
$\Delta (q_2,x,p_1)$ and $\Delta(q_2,y,p_2)$ are equal. By SAA congruence, these
triangles are congruent and
$\dist(p_1,\mathrm G{\wr}x,y{\wr})=\dist(p_1,x)=\dist(p_2,y)
=\dist(p_2,\mathrm G{\wr}x,y{\wr})$. Conversely, if $p_1$ and $p_2$ are in the same 
hypercycle of $\mathrm G{\wr}x,y{\wr}$, we again consider the geodesic
$\Gamma$ orthogonal to $\mathrm G{\wr}x,y{\wr}$ and the points
$q_1,q_2$ which are necessarily on distinct half-planes determined by $\Gamma$.
The triangles $\Delta (q_1,x,q_2)$ and $\Delta (q_1,y,q_2)$ are congruent. Thus,
by SAS congruence, $\Delta (p_1,x,q_2)$ and $\Delta (p_2,y,q_2)$ are congruent
and the corresponding internal angles at $p_1$ and $p_2$ are equal.\hfill$\square$

\section{Relative character varieties and bendings}\label{sec:relative}

In this section, we will see how bendings naturally provide ``coordinates'' in the relative $\SU(2,1)$-character varieties introduced in Definition \ref{defi:charactervariety}.

Let $F\in\SU(2,1)$ be an isometry. Assume that $F:=R_{\alpha_3}^{p_3}R_{\alpha_2}^{p_2}R_{\alpha_1}^{p_1}$ is a decomposition of $F$ into the product of special elliptic isometries.
If $p_2$ is distinct from and nonorthogonal to $p_1$ and $p_3$, we can modify the triple $p_1,p_2,p_3$ by composing bendings involving $p_1,p_2$ and $p_2,p_3$, obtaining a new decomposition of $F$.
In this section, we determine all such decompositions for fixed parameters $\alpha_i$'s, fixed signs $\sigma_i$'s of points, $\sigma p_i:=\sigma_i$, and fixed trace of $F$.

\begin{defi}
\label{defi:stronglyregular}
Let $\pmb\alpha=(\alpha_1,\alpha_2,\alpha_3)$, $\alpha_i\in\mathbb S^1\setminus\Omega$, be a triple of parameters, let $\pmb\sigma=(\sigma_1,\sigma_2,\sigma_3)$, $\sigma_i\in\{-1,1\}$, be a triple of signs such that at most one is positive, $i=1,2,3$, and let $\tau\in\mathbb C$. The triple $p_1,p_2,p_3\in\mathbb PV\setminus\SV$ is {\it strongly regular\/} with respect to $\pmb\alpha,\pmb\sigma,\tau$ if $\sigma p_i=\sigma_i$; $p_1,p_2,p_3$ are pairwise distinct; $p_2$ is not orthogonal to $p_1$ nor to $p_3$; $p_1,p_2,p_3$ are not in a same complex line; and $\trace R_{\alpha_3}^{p_3}R_{\alpha_2}^{p_2}R_{\alpha_1}^{p_1}=\tau$. When $p_1,p_2,p_3$ is strongly regular with respect to $\pmb\alpha,\pmb\sigma,\tau$, we sometimes say that $R_{\alpha_3}^{p_3}R_{\alpha_2}^{p_2}R_{\alpha_1}^{p_1}$ is strongly regular.
\end{defi}

This definition of strong regularity is closely related to the one in \cite{Sasha2012}, where the case $\alpha_i=-1$ is considered.

In the above definition, we require that at most one of the points $p_1,p_2,p_3$ is positive because, otherwise, it could happen that, after some suitable bendings involving $p_1,p_2$ and $p_2,p_3$, we arrive at the situation where one of the lines $\mathrm L(p_1,p_2)$ or $\mathrm L(p_2,p_3)$ is Euclidean. Moreover, if $p_1,p_2,p_3$ are in the same complex geodesic, it could happen that, after finitely many bendings involving $p_1,p_2$ and $p_2,p_3$, we arrive at a situation where $p_1=p_2$ or $p_2=p_3$ (same signature case) or $\langle p_1,p_2\rangle=0$ or $\langle p_2,p_3\rangle=0$ (in the presence of a positive point).

Our objective is to describe geometrically all strongly regular triples with respect to given $\pmb\alpha=(\alpha_1,\alpha_2,\alpha_3)$, $\pmb\sigma=(\sigma_1,\sigma_2,\sigma_3)$, and $\tau$. We define
\begin{equation*}
\kappa:=\frac{\tau-\alpha_1^{-2}\alpha_2\alpha_3-\alpha_1\alpha_2^{-2}\alpha_3-\alpha_1
\alpha_2\alpha_3^{-2}}{(\alpha_1^{-2}-\alpha_1)(\alpha_2^{-2}-\alpha_2)
(\alpha_3^{-2}-\alpha_3)},\quad \chi_i:=\imag\Big(\frac{\alpha_i}{\alpha_i^{-2}-\alpha_i}\Big),
\end{equation*}
and denote
\begin{equation}
\label{eq:chi}
\arraycolsep=1.4pt\def\arraystretch{2.2}
\begin{array}{c}
\displaystyle
t_1:=\tance(p_1,p_2),\quad t_2:=\tance(p_2,p_3),\quad t_3:=\tance(p_1,p_3),\\
\displaystyle\beta:=\frac{\det[g_{ij}]}{g_{11}g_{22}g_{33}},
\quad\eta:=\frac{g_{12}g_{23}g_{31}}{g_{11}g_{22}g_{33}},\quad t:=\real\eta,
\end{array}
\end{equation}
where $[g_{ij}]$ stands for the Gram matrix of $p_1,p_2,p_3$. By Remark~\ref{rmk:partform},
$$\kappa=\frac{\alpha_3}{\alpha_3^{-2}-\alpha_3}t_1
+\frac{\alpha_1}{\alpha_1^{-2}-\alpha_1}t_2
+\frac{\alpha_2}{\alpha_2^{-2}-\alpha_2}t_3
+\eta;$$
the previous equation in $t_1,t_2,t_3,\eta$ is nothing but $\trace R_{\alpha_3}^{p_3}R_{\alpha_2}^{p_2}R_{\alpha_1}^{p_1}=\tau$. Note that $\real\frac{\alpha_i}{\alpha_i^{-2}-\alpha_i}=-\frac12$ because
$$\real\frac\alpha{\alpha^{-2}-\alpha}=\frac12\Big(\frac\alpha{\alpha^{-2}-\alpha}+
\frac{\overline\alpha}{\overline\alpha^{-2}-\overline\alpha}\Big)=\frac12\Big(\frac{\alpha^3}
{1-\alpha^3}+\frac1{\alpha^3-1}\Big)=-\frac12$$
for every $\alpha\in\mathbb S^1\setminus\Omega$ and that
$t_1t_2t_3=\frac{g_{12}g_{21}}{g_{11}g_{22}}\,\frac{g_{23}g_{32}}{g_{22}g_{33}}\,
\frac{g_{31}g_{13}}{g_{11}g_{33}}=|\eta|^2$
by the definition of tance (see Section \ref{sec:hyperbolicgeometry}).

We will shortly see that the space of strongly regular triples in question can be described in terms of a real algebraic equation in the parameters $t_1,t_2,t$ plus a few inequalities related to the signature of the Hermitian form (see Section \ref{sec:hyperbolicgeometry}). Note that $\kappa$ and $\chi_i$, $i=1,2,3$, are prescribed. We are going to express $t_3$ in terms of $t_1,t_2,t$. This will be accomplished by writing $|\eta|^2$ in terms of $t_1,t_2,t$ and by applying the relation $|\eta|^2=t_1t_2t_3$.

Since
\begin{equation}\label{eq:beta}
\beta=1+2t-t_1-t_2-t_3
\end{equation}
we obtain
$$\real\kappa=-\frac{1}{2}(t_1+t_2+t_3)+t
=-\frac{1}{2}(1+2t-\beta)+t=\frac{1}{2}(\beta-1).$$
On the other hand, it follows from \eqref{eq:beta} that
\begin{align*}
\imag\kappa&=\chi_3t_1+\chi_1t_2+\chi_2t_3+\imag\eta=\chi_3t_1+\chi_1t_2+
\chi_2(1-\beta+2t-t_1-t_2)+\imag\eta\\
&=(\chi_3-\chi_2)t_1+(\chi_1-\chi_2)t_2+2\chi_2t+\chi_2(1-\beta)+\imag\eta.
\end{align*}
The above expressions for $\imag\kappa$ and $\real\kappa$ imply that
\begin{equation}\label{eq:imeta}
\imag\eta=\imag\kappa+(\chi_2-\chi_3)t_1+(\chi_2-\chi_1)t_2-2\chi_2t+2\chi_2\real\kappa.
\end{equation}
So, defining
\begin{equation}
\label{eq:constantsci}
\begin{array}{ll}
c_1:=1+4\chi_2^2&
c_6:=(\chi_1-\chi_2)^2\\
c_2:=-4\chi_2(2\chi_2\real\kappa+\imag\kappa)&
c_7:=2\big((\chi_1-\chi_2)(\chi_3-\chi_2)+\real\kappa\big)\\
c_3:=4\chi_2(\chi_3-\chi_2)&
c_8:=2(\chi_2-\chi_3)(2\chi_2\real\kappa+\imag\kappa)\\
c_4:=4\chi_2(\chi_1-\chi_2)&
c_9:=2(\chi_2-\chi_1)(2\chi_2\real\kappa+\imag\kappa)\\
c_5:=(\chi_3-\chi_2)^2&
c_{10}:=(2\chi_2\real\kappa+\imag\kappa)^2
\end{array}
\end{equation}
and using \eqref{eq:imeta}, we arrive at
$$|\eta|^2=c_1t^2+c_2t+c_3t_1t+c_4t_2t+c_5t_1^2+c_6t_2^2+(c_7-2\real\kappa)t_1t_2+c_8t_1+c_9t_2+c_{10}.$$
It follows from $|\eta|^2=t_1t_2t_3$ and from equation \eqref{eq:beta} that
\begin{multline}
\label{eq:sufacesrt}
2t-t_1-t_2-\frac{c_1t^2+c_2t+c_3t_1t+c_4t_2t+c_5t_1^2+c_6t_2^2+c_7t_1t_2+c_8t_1+c_9t_2+c_{10}}{t_1t_2}=0.
\end{multline}

Summarizing, given the strongly regular triple $p_1,p_2,p_3$ with respect to $\pmb\alpha,\pmb\sigma,\tau$, the parameters $t_1,t_2,t$ satisfy the above real algebraic equation whose coefficients are determined solely by the $\alpha_i$'s and by $\tau$. Moreover, by Sylvester's Criterion, the inequalities
\begin{equation}
\label{eq:surfaceineq}
\sigma_1\sigma_2t_1>0,\ \ \sigma_1\sigma_2t_1>\sigma_1\sigma_2,\ \ \sigma_2\sigma_3t_2>0,\ \ \sigma_2\sigma_3t_2>\sigma_2\sigma_3,\ \ \sigma_1\sigma_2\sigma_3(2\real\kappa+1)<0
\end{equation}
must also hold.

Conversely, assume that $(t_1,t_2,t)$ satisfies \eqref{eq:sufacesrt} and \eqref{eq:surfaceineq}. We take $g_{ii}:=\sigma_i$, $g_{12}=g_{21}:=\sqrt{\sigma_1\sigma_2t_1}$, $g_{23}=g_{32}:=\sqrt{\sigma_2\sigma_3t_2}$,
$$g_{13}:=\sigma_1\sigma_2\sigma_3\frac{t-i\left[ \imag\kappa+(\chi_2-\chi_3)t_1+(\chi_2-\chi_1)t_2-2\chi_2t+2\chi_2\real\kappa \right]}{\sqrt{\sigma_1\sigma_2t_1}\sqrt{\sigma_2\sigma_3t_2}},$$
and $g_{31}=\overline{g_{13}}$. We arrive at a Gram matrix $[g_{ij}]$ that satisfies $\frac{\det[g_{ij}]}{g_{11}g_{22}g_{33}}=2\real\kappa+1$. By Sylvester's Criterion and by the explicit construction of the Gram matrix, there exists a geometrically unique strongly regular triple $p_1,p_2,p_3$ with respect to $\pmb\alpha,\pmb\sigma,\tau$ whose Gram matrix equals $[g_{ij}]$. We have just proved the following theorem.

\begin{thm}
\label{thm:surfaceS} Let\/ $\pmb\alpha=(\alpha_1,\alpha_2,\alpha_3)$, $\alpha_i\in\mathbb S^1\setminus\Omega$, be a triple of parameters, let\/ $\pmb\sigma=(\sigma_1,\sigma_2,\sigma_3)$, $\sigma_i\in\{-1,1\}$, be a triple of signs such that at most one is positive, $i=1,2,3$, and let\/ $\tau\in\mathbb C$. Geometrically, all strongly regular triples\/ $p_1,p_2,p_3\in\mathbb PV\setminus\SV$ with respect to\/ $\pmb\alpha,\pmb\sigma,\tau$ are parameterized by the real semialgebraic surface $S_{\pmb\alpha,\pmb\sigma,\tau}\subset\mathbb R^3(t_1,t_2,t)$ given by the equation
\begin{multline}
\label{eq:surfaceS}
t_1^2t_2+t_1t_2^2-2t_1t_2t+c_5t_1^2+c_6t_2^2+c_1t^2+\\+c_7t_1t_2+c_3t_1t+c_4t_2t+c_8t_1+c_9t_2+c_2t+c_{10}=0
\end{multline}
and by the inequalities \eqref{eq:surfaceineq}.
\end{thm}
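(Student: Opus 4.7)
The plan is to follow the two directions outlined in the text preceding the statement, giving a careful bookkeeping of the algebra and of the signature conditions, and then invoking Sylvester's criterion twice — once for necessity, once for sufficiency.

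\textbf{Necessity.} Starting from a strongly regular triple $p_1,p_2,p_3$ with parameters $\pmb\alpha$, signs $\pmb\sigma$, and trace $\tau$, I would introduce the quantities $t_1,t_2,t_3,\beta,\eta,t$ as in \eqref{eq:chi}. The trace formula of Remark~\ref{rmk:partform} together with the definition of $\kappa$ gives
\[
\kappa=\tfrac{\alpha_3}{\alpha_3^{-2}-\alpha_3}t_1+\tfrac{\alpha_1}{\alpha_1^{-2}-\alpha_1}t_2+\tfrac{\alpha_2}{\alpha_2^{-2}-\alpha_2}t_3+\eta.
\]
Using the computation $\real\bigl(\alpha/(\alpha^{-2}-\alpha)\bigr)=-\tfrac12$ and the Gram identity $\beta=1+2t-t_1-t_2-t_3$, I separate real and imaginary parts of $\kappa$: the real part yields $\real\kappa=\tfrac12(\beta-1)$, and the imaginary part yields equation~\eqref{eq:imeta}, expressing $\imag\eta$ as an $\mathbb R$-linear function of $t_1,t_2,t$. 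Squaring and adding $(\real\eta)^2=t^2$ to $(\imag\eta)^2$ produces $|\eta|^2$ as the explicit polynomial in $t_1,t_2,t$ appearing in \eqref{eq:sufacesrt}, with coefficients $c_1,\ldots,c_{10}$ as in~\eqref{eq:constantsci}. The key tance identity $|\eta|^2=t_1t_2t_3$ (immediate from the definition of tance) combined again with $\beta=1+2t-t_1-t_2-t_3$ eliminates $t_3$ and $\beta$ and gives~\eqref{eq:sufacesrt}; multiplying through by $t_1t_2$ delivers~\eqref{eq:surfaceS}. The inequalities~\eqref{eq:surfaceineq} are forced by Sylvester's criterion applied to the Gram matrix $[g_{ij}]$: strong regularity forbids collinearity, so $\det[g_{ij}]\ne 0$, and the prescribed signature $++-$ of $\langle-,-\rangle$ together with $\sigma p_i=\sigma_i$ and at most one positive sign among the $\sigma_i$'s translates into the signs of $t_1,t_2$, the ``$>1$'' conditions (ruling out that a pair of points of equal sign spans a Euclidean or spherical line in a way incompatible with noncollinearity), and the sign of $2\real\kappa+1=\det[g_{ij}]/(g_{11}g_{22}g_{33})$.

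\textbf{Sufficiency.} Given $(t_1,t_2,t)$ satisfying \eqref{eq:surfaceS} and~\eqref{eq:surfaceineq}, I would reverse-engineer the Gram matrix exactly as displayed in the paragraph preceding the theorem: set $g_{ii}:=\sigma_i$, $g_{12}:=\sqrt{\sigma_1\sigma_2 t_1}$, $g_{23}:=\sqrt{\sigma_2\sigma_3 t_2}$, and take
\[
g_{13}:=\sigma_1\sigma_2\sigma_3\,\frac{t-i\bigl[\imag\kappa+(\chi_2-\chi_3)t_1+(\chi_2-\chi_1)t_2-2\chi_2 t+2\chi_2\real\kappa\bigr]}{\sqrt{\sigma_1\sigma_2 t_1}\sqrt{\sigma_2\sigma_3 t_2}},
\]
with $g_{31}=\overline{g_{13}}$. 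A direct computation (essentially the one performed in reverse in the necessity argument) shows that with these choices one has $|g_{13}|^2/(g_{11}g_{33})=t_3$ where $t_3$ is determined from~\eqref{eq:surfaceS}, and that $\det[g_{ij}]/(g_{11}g_{22}g_{33})=2\real\kappa+1$. The inequalities~\eqref{eq:surfaceineq} then correspond term by term to the leading principal minors needed by Sylvester's criterion to guarantee that $[g_{ij}]$ is the Gram matrix, in some orthogonal basis of $V$, of three points $p_1,p_2,p_3$ with $\sigma p_i=\sigma_i$, noncollinear, with $p_2$ nonorthogonal to $p_1,p_3$ and $p_1\ne p_2\ne p_3$. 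Finally, plugging the resulting $g_{ij}$'s into Remark~\ref{rmk:partform} recovers $\trace R_{\alpha_3}^{p_3}R_{\alpha_2}^{p_2}R_{\alpha_1}^{p_1}=\tau$ by construction of $\kappa$. Uniqueness up to isometry of $V$ follows from the standard fact that the Gram matrix determines an ordered triple in $\mathbb PV$ up to $\mathrm U(2,1)$.

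\textbf{Main obstacle.} The only real difficulty is the bookkeeping: making sure the algebraic identity $|\eta|^2=t_1t_2t_3$ survives the substitution of the $\mathbb R$-linear expression for $\imag\eta$ so that~\eqref{eq:sufacesrt} is genuinely equivalent to the conjunction of the trace condition and the Gram-determinant identity, and that each of the five inequalities in~\eqref{eq:surfaceineq} matches exactly one of Sylvester's minor conditions (with the correct sign, given the constraint that at most one $\sigma_i$ is positive). Once this translation is pinned down, both directions reduce to Sylvester plus the elementary reconstruction of points from a Gram matrix, and no further geometric input is needed.
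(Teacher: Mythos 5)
Your proposal is correct and follows essentially the same route as the paper, whose proof is exactly the computation carried out in the paragraphs preceding the theorem: eliminate $t_3$ via $\real\kappa=\tfrac12(\beta-1)$ and $|\eta|^2=t^2+(\imag\eta)^2=t_1t_2t_3$ to get \eqref{eq:surfaceS}, read off the inequalities \eqref{eq:surfaceineq} from Sylvester's criterion, and conversely reconstruct the Gram matrix $[g_{ij}]$ (unique up to isometry) from a solution $(t_1,t_2,t)$. No substantive difference.
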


Take a strongly regular triple $p_1,p_2,p_3$ with respect to $\pmb\alpha,\pmb\sigma,\tau$. Let $S_{\pmb\alpha,\pmb\sigma,\tau}$ be the corresponding surface given in Theorem \ref{thm:surfaceS}. The {\it vertical\/} and {\it horizontal} slices $V_{r_1}$ and $H_{r_2}$ are defined by the equations $t_1=r_1$ and $t_2=r_2$ for $r_1,r_2\in\mathbb R$. More specifically, the vertical slice $V_{r_1}$ is the conic given by the equation
\begin{equation}\label{eq:conicv}
d_1 t_2^2+d_2 t_2 t+d_3 t^2+d_4 t_2+d_5 t+d_6=0,
\end{equation}
where
\begin{equation*}
\begin{array}{ll}
d_1:=r_1+c_6&
d_4:=r_1^2+c_7r_1+c_9\\
d_2:=-2r_1+c_4&
d_5:=c_3r_1+c_2\\
d_3:=c_1&
d_6:=c_5 r_1^2+c_8r_1+c_{10}\\
\end{array}
\end{equation*}
and the horizontal slice $H_{r_2}$ is the conic given by
\begin{equation}\label{eq:conich}
e_1 t_1^2+e_2 t_1 t+e_3 t^2+e_4 t_1+e_5 t+e_6=0,
\end{equation}
where
\begin{equation}
\begin{array}{ll}
e_1:=r_2+c_5&
e_4:=r_2^2+c_7r_2+c_8\\
e_2:=-2r_2+c_3&
e_5:=c_4r_2+c_2\\
e_3:=c_1&
e_6:=c_6 r_2^2+c_9r_2+c_{10}\\
\end{array}
\end{equation}
(note that, by \eqref{eq:constantsci}, the coefficients $d_3$ and $e_3$ of the quadratic term $t^2$ do not vanish). Vertical and horizontal slices can be empty.
Define
\begin{equation}\label{eq:vemca}
k_1:=\frac{c_1+c_4+\sigma_1\sigma_2\sqrt{c_1(c_1+2c_4+4c_6)}}{2}=\frac{1+4\chi_1\chi_2+
\sigma_1\sigma_2\sqrt{(1+4\chi_1^2)(1+4\chi_2^2)}}2
\end{equation}
and
$$k_2:=\frac{c_1+c_3+\sigma_2\sigma_3\sqrt{c_1(c_1+2c_3+4c_5)}}{2}=\frac{1+4\chi_2\chi_3+
\sigma_2\sigma_3\sqrt{(1+4\chi_2^2)(1+4\chi_3^2)}}2.$$

\begin{rmk}\label{rmk:discriminant}
Writing the discriminants of the conics \eqref{eq:conicv} and \eqref{eq:conich} and observing that $k_1>0$ if $\sigma_1\sigma_2>0$ and $k_1<0$ if $\sigma_1\sigma_2<0$, one can see that $V_{r_1}$ is contained (perhaps, vacuously) in: an ellipse or a single point if $\sigma_1\sigma_2 r_1< \sigma_1\sigma_2 k_1$; a parabola or a pair of parallel lines (not necessarily distinct) if $r_1=k_1$; and a hyperbola or a pair of concurrent lines if $\sigma_1\sigma_2 r_1> \sigma_1\sigma_2 k_1$. Taking $k_2,r_2,\sigma_3$ in place of $k_1,r_1,\sigma_1$ we obtain the analogous facts concerning a horizontal slice $H_{r_2}$. (The exact description of vertical and horizontal slices is presented in the next theorem.)
\end{rmk}

Let $Q\subset\mathbb R(t_1,t_2)$ be the region defined by the inequalities \eqref{eq:surfaceineq}. Considering equation \eqref{eq:surfaceS} as a quadratic equation in $t$ (by (\ref{eq:constantsci}), $c_1\ne0$), we can see that, whenever $Q$ is nonempty, the projection $R\subset\mathbb R(t_1,t_2)$ of the surface $S_{\pmb\alpha,\pmb\sigma,\tau}$ into $Q$ is given by the inequality
$$(-2t_1t_2+c_3t_1+c_4t_2+c_2)^2
-4c_1(t_1^2t_2+t_1t_2^2+c_5t_1^2+c_6t_2^2+c_7t_1t_2+c_8t_1+c_9t_2+c_{10})\geq 0.$$
In this case, let $C$ be the curve in $S_{\pmb\alpha,\pmb\sigma,\tau}$ defined by
\begin{equation}\label{eq:curve}
t=\frac{2t_1t_2-c_3t_1-c_4t_2-c_2}{2c_1}.
\end{equation}
The projection $S_{\pmb\alpha,\pmb\sigma,\tau}\to R$ is a double covering ramified along $C$.

\begin{thm}
\label{thm:sbendingconnect} Let\/ $\pmb\alpha=(\alpha_1,\alpha_2,\alpha_3)$, $\alpha_i\in\mathbb S^1\setminus\Omega$, be a triple of parameters, let\/ $\pmb\sigma=(\sigma_1,\sigma_2,\sigma_3)$, $\sigma_i\in\{-1,1\}$, be a triple of signs such that at most one is positive, $i=1,2,3$, and let\/ $\tau\in\mathbb C$. Consider the semialgebraic surface\/ $S_{\pmb\alpha,\pmb\sigma,\tau}$ parameterizing, modulo conjugation, all strongly regular triples\/ $p_1,p_2,p_3$ with respect to $\pmb\alpha,\pmb\sigma,\tau$ {\rm(}see {\rm Theorem \ref{thm:surfaceS}}{\rm).} The following holds\/{\rm:}

\smallskip

{\rm (}i\/{\rm )} A nonempty vertical slice can intersect a nonempty horizontal slice in at most two points. When they intersect in a single point, this point belongs to\/ $C$.

{\rm (}ii\/{\rm )} Nonempty vertical and horizontal slices of\/ $S_{\pmb\alpha,\pmb\sigma,\tau}$ correspond, respectively, to bendings involving\/ $p_1,p_2$ and\/ $p_2,p_3$.

{\rm (}iii\/{\rm )}  When\/ $p_3$ {\rm(}respectively, $p_1${\rm)} is not orthogonal to any fixed point of\/ $R:=R_{\alpha_2}^{p_2}R_{\alpha_1}^{p_1}$ {\rm(}respectively, of\/ $R:=R_{\alpha_3}^{p_3}R_{\alpha_2}^{p_2}${\rm),} a nonempty vertical {\rm(}respectively, horizontal\/{\rm)} slice is an ellipse if\/ $R$ is regular elliptic, a branch of a hyperbola when\/ $R$ is loxodromic, and a parabola when\/ $R$ is ellipto-parabolic. It intersects\/ $C$ in exactly two points in the first case and in exactly one point in the other cases.

{\rm (}iv\/{\rm )}  When\/ $p_3$ {\rm(}respectively, $p_1${\rm)} is orthogonal to a fixed point of\/ $R$, a nonempty vertical {\rm(}respectively, horizontal\/{\rm)} slice is a single point belonging to\/ $C$ when\/ $R$ is elliptic or ellipto-parabolic and a pair of open rays not lying in a same straight line and sharing a common point in their topological closures when\/ $R$ is loxodromic {\rm(}this common point does not belong to\/ $S_{\pmb\alpha,\pmb\sigma,\tau}$ and the pair of open rays does not intersect\/ $C${\rm).} The surface\/ $S_{\pmb\alpha,\pmb\sigma,\tau}$ contains at most one vertical {\rm(}horizontal\/{\rm)} slice of the last type.
\end{thm}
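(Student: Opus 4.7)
My plan combines the explicit algebraic form of~\eqref{eq:surfaceS} with the geometric interpretation of bendings from Proposition~\ref{prop:bendings} and the conjugacy-class classification preceding Definition~\ref{defi:regular}. For~(i), substituting $t_1=r_1$ and $t_2=r_2$ into~\eqref{eq:surfaceS} leaves a quadratic in $t$ with leading coefficient $c_1\neq0$ (by~\eqref{eq:constantsci}), so at most two intersection points arise; they coincide exactly when the discriminant vanishes, and the double root then matches~\eqref{eq:curve}, placing the point on~$C$. For~(ii), the bending $B(s)$ centralizes $R:=R_{\alpha_2}^{p_2}R_{\alpha_1}^{p_1}$ and therefore preserves both $t_1$ and $\trace R_{\alpha_3}^{p_3}R$, so $(p_1,p_2,p_3)\mapsto(B(s)p_1,B(s)p_2,p_3)$ stays within the vertical slice (equivalently, after conjugating by $B(s)^{-1}$, this is the same as fixing $(p_1,p_2)$ and moving $p_3$ along its $B$-orbit). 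Conversely, two triples with the same $t_1$ yield $R$'s with the same trace and the same eigenvalue $\alpha_1\alpha_2$ on the polar of $\mathrm L(p_1,p_2)$, hence in the same $\SU(2,1)$-conjugacy class; Theorem~\ref{thm:bendingsarebendings} together with the centralizer action on $p_3$ then completes the identification. Horizontal slices are handled by the symmetric argument.

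For~(iii), read the conic type from Remark~\ref{rmk:discriminant} and match it with the type of $R$. The key identification is $f(\trace R)=0 \iff r_1=k_1$, where $f$ is Goldman's polynomial~\eqref{eq:goldmandeltoid}. Since $\trace R$ is affine in $r_1$ by Remark~\ref{rmk:partform}, $f(\trace R)$ is a polynomial of degree~$4$ in $r_1$ whose real roots one computes directly to be $\tfrac12\bigl(c_1+c_4\pm\sqrt{c_1(c_1+2c_4+4c_6)}\bigr)$ (cf.~\eqref{eq:vemca}), the sign branch $\sigma_1\sigma_2$ selecting~$k_1$. This aligns the ellipse/parabola/hyperbola trichotomy of Remark~\ref{rmk:discriminant} with the regular elliptic / on-the-deltoid / loxodromic trichotomy of $R$. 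Under the hypothesis that $p_3$ is orthogonal to no fixed point of $R$, the bending orbit of $p_3$ is genuinely one-parameter and sweeps out the full nondegenerate conic (in the loxodromic case one branch, since the $B(s)$-orbits are connected). The intersection count with $C$ follows from viewing the projection $S_{\pmb\alpha,\pmb\sigma,\tau}\to\mathbb R(t_1,t_2)$ defined by~\eqref{eq:curve} as a double cover ramified along $C$: a closed conic meets the ramification locus twice (at the two extrema of its $t$-coordinate), while a parabolic arc or hyperbolic branch meets it once.

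For~(iv), suppose $p_3$ is orthogonal to a fixed point $u$ of $R$ and work in a basis adapted to $R$ (diagonalizing in the regular elliptic case, the basis $c,v_1,v_2$ in the loxodromic case, and the parabolic basis of Proposition~\ref{prop:bendings} otherwise). In the elliptic or ellipto-parabolic case, the centralizer of $R$ acts on $p_3\in\mathbb Pu^\perp$ by phases that can be absorbed into the projective representative, so the bending orbit reduces modulo conjugation to a single point of $S_{\pmb\alpha,\pmb\sigma,\tau}$, necessarily on $C$ by~(i) since the pair $(t_1,t_2)$ becomes constant. In the loxodromic case with isotropic fixed points $v_1,v_2\in\mathrm L(p_1,p_2)$ and polar $c$, orthogonality forces $p_3=xc+yv_i$ for some $i$; explicit computation of $t_2$ and $t$ yields $(t_2,t)=r^2(A_i,B_i)$ with $r:=|y|>0$, the phase of $y$ being gauged away by the angular part of the centralizer from the loxodromic case of Proposition~\ref{prop:bendings}. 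As $r$ ranges over $(0,\infty)$ this traces an open ray from the origin; the two choices $i=1,2$ give rays in distinct directions sharing the excluded limit $r\to0$ (corresponding to $p_3\to c$, forbidden by strong regularity since $c\perp p_1,p_2$). Uniqueness of the $r_1$ producing the pair-of-rays slice follows from the trace constraint: in the basis $c,v_1,v_2$ the quantity $\trace R_{\alpha_3}^{p_3}R$ depends on $p_3\in\mathbb Pv_i^\perp$ only through the loxodromic eigenvalue $\lambda$ of $R$, yielding a quadratic in $\lambda$ whose two roots correspond to the swap $v_1\leftrightarrow v_2$ and hence to the same conjugacy class of $R$ and the same $r_1$.

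The main obstacle I expect is the algebraic verification $r_1=k_1\iff f(\trace R)=0$: though routine in principle, the expansion is sign-sensitive in $\sigma_1\sigma_2$ and must be cross-checked against~\eqref{eq:surfaceineq} so that exactly one of the two roots of the quartic corresponds to $k_1$ within the admissible domain. A secondary delicate point is making the degenerate cases of~(iv) match Remark~\ref{rmk:discriminant}: the restriction imposed by~\eqref{eq:surfaceineq} can further cut a degenerate conic (e.g.\ a pair of parallel lines may be truncated to a ray), requiring careful case analysis to extract the precise description of the slice as a single point or as a pair of open rays.
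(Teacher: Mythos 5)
Your plan has the right architecture for (i) and for the forward direction of (ii), and the idea of identifying the conic type in (iii) by showing that the discriminant of the slice conic \eqref{eq:conicv} vanishes exactly when $\trace R$ crosses Goldman's deltoid is an attractive alternative to what the paper does (the paper computes the bending orbit explicitly in coordinates adapted to $R$ and only afterwards deduces this trace criterion, as Corollary \ref{cor:tancetype}). But two of your key steps are asserted rather than proved, and they are precisely where the work lies. First, the statement that ``the bending orbit of $p_3$ is genuinely one-parameter and sweeps out the full nondegenerate conic'' is the heart of (ii)--(iii): Remark \ref{rmk:discriminant} only says the slice is \emph{contained} in a conic, so you must show the orbit map $s\mapsto\big(t_2(s),t(s)\big)$ is an immersion (no parameter value at which $t_2'$ and $t'$ vanish simultaneously) and, in the loxodromic and ellipto-parabolic cases, proper. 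The paper establishes the immersion property by writing $t_2(s),t(s)$ explicitly in each of the three cases and showing that a simultaneous critical point would force $p_1,p_2$ onto a geodesic through the fixed points of $R$, collapsing the associated geodesics $G,G_1,G_2$ of Definition \ref{defi:geoassociated} into one; nothing in your sketch substitutes for this. Second, your computation of the real roots of the quartic $f\big(\tau_{\alpha_1,\alpha_2}(r_1)\big)$ is wrong as stated: by Lemma \ref{lemma:tangent} the line $\tau_{\alpha_1,\alpha_2}(\mathbb R)$ is tangent to the deltoid at $r_1=1$, so $r_1=1$ is a (double) root in addition to the two values $\tfrac12\big(c_1+c_4\pm\sqrt{c_1(c_1+2c_4+4c_6)}\big)$; the identity you actually need --- that $f\big(\tau_{\alpha_1,\alpha_2}(r_1)\big)$ is a positive multiple of $(r_1-1)^2$ times the discriminant $d_2^2-4d_1d_3$ of \eqref{eq:conicv} --- is plausible but is a nontrivial verification you have not carried out, and you cannot instead quote Corollary \ref{cor:tancetype}, which the paper derives from this very theorem.

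In (iv) the loxodromic case also has a gap: you assert that both choices $i=1,2$ of isotropic point contribute an open ray to the slice, but the existence of the second component is not automatic --- one must show that a point on the second concurrent line of the degenerate conic actually satisfies the trace condition and the signature inequalities. The paper spends two paragraphs on this (proving $\gamma\setminus C\neq\varnothing$ and then using the quadratic-in-$t$ structure of \eqref{eq:surfaceS} to produce a point off the first ray). Your uniqueness argument for the degenerate slice is likewise different from the paper's (which characterizes degeneracy by $d_4=d_5=d_6=0$ and counts the admissible $r_1$), and the version you give, resting on ``$\trace R_{\alpha_3}^{p_3}R$ depends on $p_3$ only through the loxodromic eigenvalue,'' is too loosely stated to check. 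None of these gaps looks fatal to the strategy, but as written the proposal omits the computations that constitute most of the proof.
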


\begin{proof} We remind the reader that $t_1:=\tance(p_1,p_2)$, $t_2:=\tance(p_2,p_3)$, and $t:=\real\eta$, where $\eta$ is defined in \eqref{eq:chi}. A vertical slice and a horizontal slice have at most two intersection points because equation \eqref{eq:surfaceS} is quadratic in $t$ (by \eqref{eq:constantsci}, $c_1\ne0$) for fixed $t_1,t_2$. By the definition of $C$, if a vertical slice $V_{r_1}$ and a horizontal slice $H_{r_2}$ intersect at a point $(r_1,r_2,t)$ that does not belong to $C$, there exists $t'\ne t$ such that $(r_1,r_2,t),(r_1,r_2,t')\in V_{r_1}\cap H_{r_2}$. This concludes the proof of ({\it i\/}).

Bendings involving $p_1,p_2$ (respectively, $p_2,p_3$) preserve $t_1$ (respectively, $t_2$) as well as the fact that $p_1,p_2,p_3$ is strongly regular with respect to $\pmb\alpha,\pmb\sigma,\tau$. Moreover, they keep the product $R_{\alpha_3}^{p_3}R_{\alpha_2}^{p_2}R_{\alpha_1}^{p_1}$; so, such bendings provide curves inside vertical (respectively, horizontal) slices. More specifically, let $V_r$ be a vertical slice and let $p_1,p_2,p_3$ be a strongly regular triple in $V_r$. Bendings involving $p_1,p_2$ constitute the subset of $V_r$ that corresponds to the strongly regular triples of the form $B(s)p_1,B(s)p_2,p_3$, where $B$ is the one-parameter group introduced in Proposition~\ref{prop:bendings}.

Consider the functions $t_2,t:\mathbb R\to \mathbb R$, $t_2=t_2(s)$, $t=t(s)$, such that the point $\big(r,t_2(s),t(s)\big)\in V_r$ corresponds to the triple $B(s)p_1,B(s)p_2,p_3$.
Let $\widetilde B:\mathbb R\to V_r$ stand for the function $\widetilde B(s):=\big(r,t_2(s),t(s)\big)$. We will prove that the image of $\widetilde B$ is the entire $V_r$. We begin by finding explicit expressions for $t_2(s)$ and $t(s)$. By Lemma \ref{lemma:prodspecreg}, $R:=R_{\alpha_2}^{p_2}R_{\alpha_1}^{p_1}$ is regular; so, there are three cases to consider ($R$ is regular elliptic, loxodromic, or ellipto-parabolic) and the strategy will be to prove the corresponding parts of ({\it ii\/}), ({\it iii}), and ({\it iv\/}) while considering each of these cases.

Case 1: $R:=R_{\alpha_2}^{p_2}R_{\alpha_1}^{p_1}$ is regular elliptic. Let $p,q\in\mathrm{L}(p_1,p_2)$ be respectively a negative and a positive fixed point of $R$ (these points exist because $p_1,p_2,p_3$ is strongly regular). Note that neither $p_1$ nor $p_2$ is fixed by $R$ (otherwise, $\langle p_1,p_2\rangle=0$ or $p_1=p_2$), so $p_1\ne p,q$ and $p_2\ne p,q$. Take representatives such that $\langle p,p\rangle=-1$, $\left<q,q\right>=1$, $\langle p_3,p_3\rangle=\sigma_3$, $p_1=p+\lambda_1 q$, $p_2=p+\lambda_2q$, $|\lambda_1|,|\lambda_2|\ne0,1$.

As in Proposition~\ref{prop:bendings}, $B(s)p=e^{-\frac{s}{2}i}p$ and $B(s)q=e^{\frac{s}{2}i}q$. We write $z_1:=\left<p,p_3\right>$, $z_2:=\left<q,p_3\right>$, $w_1:=\lambda_1\overline z_1 z_2$, and $w_2:=\lambda_2\overline z_1 z_2$. Let us show that $\widetilde B$ is an immersion when $V_r$ is not a single point. Let $\simeq$ stand for ``(nonnull) proportionality modulo a constant factor''. By a straightforward calculation
$$t_2(s)=\tance(B(s)p_2,p_3)\simeq\real(w_2e^{si})$$
and
$$t(s)=\real\frac{\left<B(s)p_1,B(s)p_2\right>\left<B(s)p_2,p_3
\right>\left<p_3,B(s)p_1\right>}{\left<B(s)p_1,B(s)p_1\right>\left<B(s)p_2,B(s)p_2\right>
\left<p_3,p_3\right>}\simeq\real\Big(\big(w_1(|\lambda_2|^2-1)+w_2(|\lambda_1|^2-1)
\big)e^{si}\Big).$$
Clearly, $t_2'(s)=0$ iff $w_2e^{si}\in\mathbb R$ and $t'(s)=0$ iff $\big(w_1(|\lambda_2|^2-1)+w_2(|\lambda_1|^2-1)\big)e^{si}\in\mathbb R$.

When $w_2=0$, we have $w_1=0$ because $\lambda_1,\lambda_2\ne0$. This case corresponds to $p_3$ being orthogonal to a fixed point of $R$; both $t_2(s)$ and $t(s)$ are constant and $V_r$ is, if nonempty, a single point. In particular, this point must lie on $C$.

Assume $w_2\ne0$. We want to prove that $t_2'(s)=t'(s)=0$ never happens. Assuming the contrary, one obtains
$$\frac{\big(w_1(|\lambda_2|^2-1)+w_2(|\lambda_1|^2-1)\big)e^{si}}{w_2e^{si}}\in\mathbb R$$
which implies $w_1/w_2=\lambda_1/\lambda_2\in\mathbb R$. Hence, both $p_1,p_2$ belong to the geodesic $\mathbb Rp+\mathbb R\lambda_1 q$ joining $p$ and $q$. This is impossible because it would lead to $G=G_1=G_2$, where $G,G_1,G_2$ are the geodesics associated to $R_{\alpha_2}^{p_2}R_{\alpha_1}^{p_1}$ (see Definition \ref{defi:geoassociated}). In other words, when $V_r$ is not a point, $\widetilde B$ is an immersion. Together with the fact that $\widetilde B$ is a periodic function, this implies that $V_r$ is an ellipse and that $\widetilde B(\mathbb R)=V_r$.

Looking at the intersections of this ellipse with the straight lines $t_2=\mathrm{constant}$ in the plane $(r,t_2,t)$, it is clear that exactly two distinct such lines will be tangent to the ellipse; these lines give rise to the intersection $V_r\cap C$.

Case 2: $R:=R_{\alpha_2}^{p_2}R_{\alpha_1}^{p_1}$ is loxodromic. Let $v_1,v_2$ be the isotropic fixed points of $R$. We choose representatives such that $\langle v_1,v_2\rangle=-1/2$, $p_1=v_1+\lambda_1 v_2$, $p_2=v_1+\lambda_2 v_2$, and $\langle p_3,p_3\rangle=\sigma_3$. We write $z_1:=\left<v_1,p_3\right>$, $z_2:=\left<v_2,p_3\right>$, $w_1:=\lambda_1\overline z_1 z_2$, and $w_2:=\lambda_2\overline z_1 z_2$. Note that $z_1,z_2$ cannot vanish simultaneously since this would imply that $p_3$ is the polar point of the line $\mathrm{L}(p_1,p_2)$, contradicting the strong regularity of the triple. Moreover, $\langle p_i,p_i\rangle=-\real\lambda_i\ne0$.

As in Proposition \ref{prop:bendings}, $B(s)v_1=e^{s}v_1$ and $B(s)v_2=e^{-s}v_2$.
Then,
$$t_2(s)=-\frac{|z_1|^2e^{2s}+2\real{w_1}+|\lambda_2|^2|z_2|^2e^{-2s}}
{\sigma_3\real{\lambda_2}}$$
and
$$t(s)=-\frac12\frac{|z_1|^2\real(\lambda_1+\lambda_2)e^{2s}+|z_2|^2(
|\lambda_1|^2\real\lambda_2+|\lambda_2|^2\real\lambda_1)e^{-2s}+2\real\big((\overline
\lambda_2+\lambda_1)(\overline w_1+w_2)\big)}{\sigma_3\real\lambda_1\real\lambda_2}.$$
As before, let us show that $\widetilde B$ is an immersion. We can assume $z_1,z_2\ne0$ since, otherwise, $t_2'(s)$ never vanishes. This implies that $t_2'(s)=0$ iff
$s=s_0:=\frac{1}{4}\ln\big(\frac{|\lambda_2|^2|z_2|^2}{|z_1|^2}\big)$.
Requiring $t'(s_0)=0$, and using the fact that $\real\lambda_2\ne0$ (since $\langle p_2,p_2\rangle=-\real\lambda_2$), we arrive at $|\lambda_1|=|\lambda_2|$. But, as we shall see, this implies that the geodesic through $p_1,p_2$ is orthogonal to the geodesic through $v_1,v_2$ (which is impossible because it leads to $G=G_1=G_2$ where $G,G_1,G_2$ are the geodesics associated to $R_{\alpha_2}^{p_2}R_{\alpha_1}^{p_1}$).

Assume $|\lambda_1|=|\lambda_2|$ and consider the curves $\gamma,\psi:\mathbb R\to\mathbb PV$ with lifts $\gamma_0,\psi_0:\mathbb R\to V$ given by $\gamma_0(t)=e^tv_1+e^{-t}v_2$ and $\psi_0(s)=e^{si}v_1+e^{-si}\rho v_2$, where $\rho=|\lambda_1|=|\lambda_2|$. Then $\gamma$ parameterizes the negative part of $\mathrm{G}{\wr}v_1,v_2{\wr}$ and $\psi$ parameterizes $\mathrm{G}{\wr}p_1,p_2{\wr}$ (in order to verify the last claim, one can use the equation of a geodesic in \cite[Subsection 3.4]{SashaGrossi2011}). These curves intersect at $q=\gamma(a)=\psi(0)$, where $e^{-2a}=\rho$. By \cite[Lemma 4.1.4]{SGG2011} we have
$\dot\gamma(a)=\left<-,q\right>(e^{-a}v_2-e^av_1)$ and $\dot\psi(0)=\left<-,q\right>i\big(v_2-\frac{1}{\rho}v_1\big)$
(see the identification in \eqref{eq:tangentspace}).
It follows that
$$\real\langle\dot\gamma(a),\dot\sigma(0)\rangle=\real\bigg(-\langle q,q\rangle\Big\langle e^{-a}v_2-e^av_1,
i\big(v_2-\frac{1}{\rho}v_1\big)\Big\rangle\bigg)=0.$$
We conclude that $\widetilde B$ is an immersion. Therefore, when $z_1,z_2\ne0$, $V_r=\widetilde B(\mathbb R)$ is a branch of a hyperbola when nonempty. Indeed, the terms in $e^{2s}$ and $e^{-2s}$ in $t_2(s)$ are both nonnull. We have $\lim_{s\to\pm\infty}t_2(s)=\infty$ or $\lim_{s\to\pm\infty}t_2(s)=-\infty$; together with the fact that $\widetilde B$ is an immersion, this implies that $\big(t_2(s),t(s)\big)$ cannot parameterize a straight line. So, $V_r$ contains at least one branch of a hyperbola. It cannot contain the other due to the fact that the coefficients of $e^{2s}$ and $e^{-2s}$ in $t_2(s)$ have the same sign: when $|r_2|$ grows, the straight line $t_2=r_2$ in the plane $(r,t_2,t)$ has to intersect $V_r$ in two distinct points. In particular, there is a single value of $r_2$ such that the straight line in question is tangent to the branch of hyperbola; this is exactly the intersection $V_r\cap C$.

Let $z_1=0$ (the reasoning is the same for $z_2=0$) and assume that $V_r$ is nonempty. Take $x\in V_r$. The component of $V_r$ containing $x$ is clearly an open ray $\gamma$ approaching the point $(t_2,t)=(0,0)$. The slice $V_r$ contains at most a second component which must be another open ray approaching the point $(0,0)$ lying in a straight line distinct from the one containing $\gamma$. Indeed, it is easy to see that, if a second such ray exists, it cannot be in the same straight line as $\gamma$ because this would make $t_2>0$ (impossible: $\sigma_3=1$ since $p_3$ is orthogonal to an isotropic point and the triple is strongly regular). Now, the conic \eqref{eq:conicv} has to be a pair of concurrent lines intersecting at the point $(0,0)$ in the plane $t_1=r$ and so there cannot exist further components because this would lead to distinct open rays contained in a same straight line.

We will soon show that $V_r$ necessarily contains a second component but, first, we need to prove that $S_{\pmb\alpha,\pmb\sigma,\tau}$ has at most one vertical slice which is a pair of open rays as above. Assume that $V_{r_1}$ is such a vertical slice for some $r_1=\tance(p_1,p_2)\ne0$. In this case, the conic \eqref{eq:conicv} is a pair of concurrent lines intersecting at the point $(0,0)$ in the plane $t_1=r_1$ and we have $d_4=d_5=d_6=0$ in \eqref{eq:conicv}. We can assume that $c_5=c_8=c_{10}=0$ because, otherwise, there exists at most one possible value of $r_1$ such that $d_6=0$ (note that, by \eqref{eq:constantsci}, $c_8^2-4c_5c_{10}=0$). This implies $c_9=0$ and then it follows from $d_4=0$ that $r_1=-c_7$.

Finally, in order to establish that $V_r$ contains a second component, all we need is to prove that $\gamma\setminus C\ne\varnothing$ since, in this case, there exists $t'\ne t$ with $(r,t_2,t),(r,t_2,t')\in V_r$ and $(r,t_2,t)\in\gamma\setminus C$. Then $(r,t_2,t')\notin\gamma$ due to the fact that $t_2$ does not vanish. So, we obtain a point in $V_r\setminus\gamma$ and, consequently, the mentioned second component of $V_r$.

Note that, once $t_1=r$ is fixed, equation \eqref{eq:curve} allows to express $t$ as a linear function of $t_2$ and this turns equation \eqref{eq:surfaceS} into an equation of degree at most $2$ in $t_2$. Therefore, either $\gamma\cap C$ has at most $2$ points and we are done or we can assume $C\subset\gamma$. In this case, bending $R_{\alpha_3}^{p_3}R_{\alpha_2}^{p_2}$ and keeping $R$ loxodromic, we obtain a nonempty slice $V_{r+\varepsilon}$ for a sufficiently small $\varepsilon$. If this slice is a branch of a hyperbola, then we get an extra point in $C$ as above. Otherwise, it contains an open ray passing through $(r+\varepsilon,0,0)$. By the definition of $C$, a point $(r+\varepsilon,r',t')\in V_{r+\varepsilon}$ belongs to $C$ exactly when the line $t_2=r'$ in the plane $t_1=r+\varepsilon$ intersects $V_{r+\varepsilon}$ only in $(r+\varepsilon,r',t')$. Hence, $V_{r+\varepsilon}\cap C=\varnothing$ implies the existence of a solution of (\ref{eq:surfaceS}) with $t_2=0$ and this contradicts strong regularity.

Case 3: $R:=R_{\alpha_2}^{p_2}R_{\alpha_1}^{p_1}$ is ellipto-parabolic. Let $v_1$ be the isotropic fixed of $R$ and let $v_2\in\mathrm L(p_1,p_2)$, $v_2\ne v_1$, be another isotropic point. We choose representatives such that $\left<v_1,v_2\right>=\frac{1}{2}$, $p_1=v_1+\lambda_1 v_2$, $p_2=v_1+\lambda_2 v_2$, and $\langle p_3,p_3\rangle=\sigma_3$. Note that $\langle p_i,p_i\rangle=\real\lambda_i\ne0$. We write $z_1:=\langle v_1,p_3\rangle$,  $z_2:=\langle v_2,p_3\rangle$. As in Proposition \ref{prop:bendings}, $B(s)v_1=v_1$ and $B(s)v_2=isv_1+v_2$.
Therefore,
$$t_2(s)\simeq|\lambda_2|^2|z_1|^2s^2-2\big(|z_1|^2\imag\lambda_2+|\lambda_2|^2\imag(z_1\overline z_2)\big)s$$
and
$$t(s)\simeq\big(|\lambda_1|^2\real\lambda_2+|\lambda_2|^2\real\lambda_1\big)|z_1|^2s^2
-2\big(|z_1|^2\imag(\lambda_1\lambda_2)+\imag(z_1\overline z_2)(|\lambda_1|^2\real\lambda_2+|\lambda_2|^2\real\lambda_1)\big)s.$$

If $z_1=0$ (that is, $p_3$ is orthogonal to the isotropic fixed point of $R$), $V_r$ is, if nonempty, a single point which must lie on $C$.

Assume $z_1\ne0$. Let us show that, in this case, $\widetilde B$ is an immersion. Indeed, there is a single value
$$s=s_0:=\frac{|z_1|^2\imag\lambda_2+|\lambda_2|^2\imag(z_1\overline z_2)}{|\lambda_2|^2|z_1|^2}$$
such that $t_2'(s)=0$. By a straightforward calculation, $t'(s_0)=0$ implies
$|\lambda_1|^2\imag\lambda_2=|\lambda_2|^2\imag\lambda_1$. But the latter equation implies that $p_1,p_2,v_1$ are in a same geodesic (this can be directly verified using the equation of a geodesic in
\cite[Subsection 3.4]{SashaGrossi2011}) which is impossible because it implies $G_1=G_2=G$, where $G,G_1,G_2$ stand for the geodesics related to $R$.

Since $z_1\ne0$, the coefficient in $s^2$ of $t_2(s)$ is nonnull. We have $\lim_{s\to\pm\infty}t_2(s)=\infty$ or $\lim_{s\to\pm\infty}t_2(s)=-\infty$; considering that $\widetilde B$ is an immersion, this implies that $\big(t_2(s),t(s)\big)$ cannot parameterize a straight line. We obtain that $V_r=\widetilde B(\mathbb R)$ is a parabola. Reasoning as in the previous case, one readily sees that the intersection $V_r\cap C$ is a single point.
\end{proof}

\begin{lemma}\label{lemma:p1ortp3ort}
Let\/ $p_1,p_2,p_3$ be a strongly regular triple with respect to $\pmb\alpha,\pmb\sigma,\tau$. The isometry\/ $R:=R_{\alpha_3}^{p_3}R_{\alpha_2}^{p_2}R_{\alpha_1}^{p_1}$ is regular iff\/ $p_1$ is not orthogonal to any fixed point of\/ $R_2:=R_{\alpha_3}^{p_3}R_{\alpha_2}^{p_2}$ or\/ $p_3$ is not orthogonal to any fixed point of\/ $R_1:=R_{\alpha_2}^{p_2}R_{\alpha_1}^{p_1}$.
\end{lemma}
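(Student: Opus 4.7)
My plan is to prove the biconditional in both directions by contraposition, relying on Lemma \ref{lemma:prodspecreg} that $R_1$ and $R_2$ are regular (strong regularity---at most one positive point---forces $\mathrm{L}(p_1,p_2)$ and $\mathrm{L}(p_2,p_3)$ to be noneuclidean).

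For ``$R$ not regular implies both orthogonality conditions hold,'' let $L$ be a pointwise fixed complex line of $R$ with eigenvalue $\mu$ (if $R$ is a cube root of the identity, take $L$ arbitrary). Any two complex lines in $\mathbb{P}V$ meet, so pick $q\in L\cap\mathbb{P}p_3^{\perp}$. Since $q\perp p_3$, we have $R_{\alpha_3}^{p_3}q=\alpha_3 q$, and writing $R=R_{\alpha_3}^{p_3}R_1$ yields $R_1q=(R_{\alpha_3}^{p_3})^{-1}Rq=\alpha_3^{-1}\mu q$, so $q$ is a fixed point of $R_1$ orthogonal to $p_3$. Symmetrically, any $q'\in L\cap\mathbb{P}p_1^{\perp}$ is a fixed point of $R_2$ orthogonal to $p_1$.

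For ``both orthogonality conditions imply $R$ not regular,'' let $q_1\perp p_1$ with $R_2q_1=\mu_2q_1$ and $q_3\perp p_3$ with $R_1q_3=\mu_1q_3$. Strong regularity forces $q_1\in\mathrm{L}(p_2,p_3)$ (otherwise $q_1$ is the polar $c_{23}$ of $\mathrm{L}(p_2,p_3)$, giving $p_1\in\mathrm{L}(p_2,p_3)$, contradicting non-collinearity) and similarly $q_3\in\mathrm{L}(p_1,p_2)$; the same reasoning gives $q_1\ne p_2\ne q_3$, and hence $q_1\ne q_3$ since $\mathrm{L}(p_1,p_2)\cap\mathrm{L}(p_2,p_3)=\{p_2\}$. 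A direct computation using $q_i\perp p_i$ gives $Rq_1=\alpha_1\mu_2q_1$ and $Rq_3=\alpha_3\mu_1q_3$. If $\alpha_1\mu_2=\alpha_3\mu_1$, then $\mathrm{span}(q_1,q_3)$ is a two-dimensional eigenspace of $R$ and we are done.

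The main difficulty is ruling out $\alpha_1\mu_2\ne\alpha_3\mu_1$. In this case unitarity of $R$ with distinct eigenvalues forces $q_1\perp q_3$. Since $q_1\perp p_1$ and $q_1\perp c_{23}$, one has $q_1^{\perp}=\mathrm{L}(p_1,c_{23})$, so $q_3\in\mathrm{L}(p_1,p_2)\cap\mathrm{L}(p_1,c_{23})$. If these lines meet only at $p_1$, then $q_3=p_1$ is a fixed point of $R_1$; unpacking $R_1p_1\in\mathbb{C}p_1$ via the formula for $R_{\alpha_2}^{p_2}p_1$ forces $g_{12}=0$, contradicting $p_1\not\perp p_2$. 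Otherwise the lines coincide, so $c_{23}\in\mathrm{L}(p_1,p_2)$, which is precisely the orthogonality $\mathrm{L}(p_1,p_2)\perp\mathrm{L}(p_2,p_3)$. Taking the lift $q_3=g_{23}p_1-g_{13}p_2$ (with $g_{ij}:=\langle p_i,p_j\rangle$) and solving $R_1q_3=\mu_1q_3$ yields
$$\mu_1=\frac{\alpha_1\alpha_2^{-2}}{1+(\alpha_2^{-3}-1)X},\qquad X:=\frac{g_{12}g_{23}}{g_{13}g_{22}},$$
and the condition $\mathrm{L}(p_1,p_2)\perp\mathrm{L}(p_2,p_3)$ is exactly $X=1$, substituting which gives $\mu_1=\alpha_1\alpha_2$. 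But $\alpha_1\alpha_2$ is the $R_1$-eigenvalue at the polar $c_{12}$ of $\mathrm{L}(p_1,p_2)$, and noneuclideanity of $\mathrm{L}(p_1,p_2)$ gives $c_{12}\notin\mathrm{L}(p_1,p_2)\ni q_3$, so $q_3$ and $c_{12}$ are linearly independent eigenvectors of $R_1$ sharing an eigenvalue, contradicting Lemma \ref{lemma:prodspecreg}. This chain---converting $q_1\perp q_3$ into an explicit eigenvalue collision of $R_1$ via the $X=1$ identity---is the crux of the argument.
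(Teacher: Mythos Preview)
Your argument is essentially correct, with one gap worth flagging. The step ``unitarity of $R$ with distinct eigenvalues forces $q_1\perp q_3$'' fails when both $q_1$ and $q_3$ are isotropic: a loxodromic isometry has two isotropic fixed points that are \emph{not} orthogonal. You need the observation---which the paper makes explicitly---that if $q_i$ is isotropic then $p_i$ lies in the Euclidean line $\mathbb Pq_i^\perp$ and is therefore positive; strong regularity (at most one $\sigma_i=+1$) then excludes both $q_1,q_3$ being isotropic. With this added, your orthogonality step goes through.

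On the comparison: your forward direction (intersect the pointwise-fixed line $L$ of $R$ with $\mathbb Pp_3^\perp$ to produce an $R_1$-fixed point orthogonal to $p_3$) is more direct than the paper's, which rewrites $R_1=R_{\overline\alpha_3}^{p_3}U$ and argues that a regular isometry stabilizing two complex lines forces them equal or orthogonal. For the converse, both proofs assume $R$ regular and deduce $q_1\perp q_3$. At that point the paper simply observes that $q_1\perp p_1$ together with $q_1\perp q_3\in\mathrm L(p_1,p_2)$ yields $q_1=c_{12}$, whence $c_{12}$ is fixed by $R_{\alpha_3}^{p_3}$, contradicting strong regularity in two lines. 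Your Case~B route via the explicit eigenvalue identity $\mu_1=\alpha_1\alpha_2$ at $X=1$ is valid (it is exactly the obstruction of Lemma~\ref{lemma:eigenvalue}) but circuitous: in Case~B you already have $q_1^\perp=\mathrm L(p_1,p_2)$, i.e.\ $q_1=c_{12}$, so the paper's endgame is available without any computation.
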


\begin{proof}
Assume that $R$ is nonregular. We write $R_{\alpha_3}^{p_3}R_{\alpha_2}^{p_2}R_{\alpha_1}^{p_1}=U$, where $U$ is either $2$-step unipotent or special elliptic. Let $v$ stand for the polar point of the pointwise fixed complex line of $U$. If $v=p_3$, then $U$ has to be special elliptic and we obtain either a relation of length $3$ of the form $R_{\beta}^{p_3}R_{\alpha_2}^{p_2}R_{\alpha_1}^{p_1}=1$ for some parameter $\beta\notin\Omega$ or a relation of length 2 of the form $R_{\alpha_2}^{p_2}R_{\alpha_1}^{p_1}=\delta$, $\delta\in\Omega$. So, it follows from the classification of length $2$ and $3$ relations (see Section \ref{sec:basic}) that the assumption $v=p_3$ contradicts strong regularity.

Assume $v\ne p_3$ and consider the relation
$R_{\alpha_2}^{p_2}R_{\alpha_1}^{p_1}=R_{\overline\alpha_3}^{p_3}U$.
By Lemma \ref{lemma:prodspecreg}, the left side (hence, the right side) of the above equation is regular. So, we obtain a pair of regular isometries that stabilize two complex lines: the noneuclidean line $L_1:=\mathrm{L}(p_1,p_2)$ and the line $L_2:=\mathrm{L}(p_3,v)$, where $v$ stands for the polar point of the pointwise fixed complex line of $U$. This implies that $L_1$ and $L_2$ are equal or orthogonal. In the first case, $p_1,p_2,p_3$ belong to a same complex line (this contradicts strong regularity); in the second case, the polar point of $L_2$, which is a fixed point of $R_1$, is orthogonal to $p_3$. The same reasoning implies that $p_1$ is orthogonal to a fixed point of $R_2$.

Conversely, let $a,b$ stand respectively for fixed points of $R_1$ and $R_2$ such that $\langle p_1,b\rangle=\langle p_3,a\rangle=0$. If $a=b$, then $a$ is a fixed point of each $R_{\alpha_i}^{p_i}$ which implies that $a$ is the polar point of $L_1$ because $p_1\ne p_2$ and $\langle p_1,p_2\rangle\ne0$. But then $p_3\in L_1$ and this is impossible. Hence, $a\ne b$. Moreover, $a$ and $b$ cannot be both isotropic since this implies that $p_1$ and $p_3$ are positive. So, assuming that $R$ is regular, we obtain $\langle a,b\rangle=0$. Take the line $L:=\mathrm{L}(p_1,a)$ whose polar point is $b$. Since $a$ is not the polar point of $L_1$, we have $L=L_1$. So, $\langle p_2,b\rangle=0$. It follows that $b$ is a fixed point of $R_{\alpha_1}^{p_1}$, of $R_{\alpha_2}^{p_2}$, and of $R_2$. Then it is a fixed point of $R_{\alpha_3}^{p_3}$. As above, this leads to a contradiction.
\end{proof}

\begin{defi}\label{defi:degenerate}
Let $S_{\pmb\alpha,\pmb\sigma,\tau}$ be the surface parameterizing, modulo conjugation, all strongly regular triples $p_1,p_2,p_3$ with respect to $\pmb\alpha,\pmb\sigma,\tau$. A vertical/horizontal slice of $S_{\pmb\alpha,\pmb\sigma,\tau}$ is {\it degenerate\/} when it is of the form described in item ({\it iv\/}) of Theorem \ref{thm:sbendingconnect}. A point $(t_1,t_2,t)\in S_{\pmb\alpha,\pmb\sigma,\tau}$ is {\it degenerate\/} when both the vertical slice $V_{t_1}$ and the horizontal slice $H_{t_2}$ through it are degenerate.
\end{defi}

\begin{rmk}\label{rmk:nodegpoints}
There are no degenerate points in $S_{\pmb\alpha,\pmb\sigma,\tau}$ when $f(\tau)\ne0$, where $f$ is the function defined in \eqref{eq:goldmandeltoid}. Indeed, assuming the contrary, take a degenerate point in such a surface $S_{\pmb\alpha,\pmb\sigma,\tau}$. By Theorem \ref{thm:sbendingconnect}, this point provides a strongly regular triple $p_1,p_2,p_3$ such that $p_1$ is orthogonal to a fixed point of $R_{\alpha_3}^{p_3}R_{\alpha_2}^{p_2}$ and $p_3$ is orthogonal to a fixed point of $R_{\alpha_2}^{p_2}R_{\alpha_1}^{p_1}$. By Lemma \ref{lemma:p1ortp3ort}, the isometry $F:=R_{\alpha_3}^{p_3}R_{\alpha_2}^{p_2}R_{\alpha_1}^{p_1}$ is not regular and, therefore, $f(\trace F)=f(\tau)=0$, a contradiction.
\end{rmk}

\begin{defi}\label{defi:charactervariety}
Let $\pmb\alpha=(\alpha_1,\alpha_2,\alpha_3)$, $\alpha_i\in\mathbb S^1\setminus\Omega$, be a triple of parameters, let $\pmb\sigma=(\sigma_1,\sigma_2,\sigma_3)$, $\sigma_i\in\{-1,1\}$, be a triple of signs such that at most one is positive, $i=1,2,3$, and let $[F]$ be the conjugacy class of a regular isometry $F\in\SU(2,1)$. We denote by $\mathcal V_{\pmb\alpha,\pmb\sigma,[F]}$ the relative $\SU(2,1)$-character variety consisting of representations $\rho:\pi_1(\Sigma)\to\SU(2,1)$, modulo conjugation, of the rank $3$ free group $\pi_1(\Sigma):=\langle \iota_1,\iota_2,\iota_3,\iota_4\mid\iota_4\iota_3\iota_2\iota_1=1\rangle$, where $\pi_1(\Sigma)$ stands for the fundamental group of the quadruply punctured sphere $\Sigma$. The conjugacy classes of $\rho(\iota_i)$ correspond to those of the special elliptic isometries $R_{\alpha_i}^{p_i}$, where $\sigma p_i=\sigma_i$, $i=1,2,3$, and the conjugacy class of $\rho(\iota_4)$ is~$[F]$.
\end{defi}

Since isometries with the same trace may belong to distinct conjugacy classes (see Subsection~\ref{subsec:isometries}), the above character varieties are not exactly the same objects as the surfaces constructed in Theorem~\ref{thm:surfaceS}:

\begin{thm}\label{thm:charactervariety}
Let\/ $S_{\pmb\alpha,\pmb\sigma,\tau}$ be the surface parameterizing, modulo conjugation, all strongly regular triples\/ $p_1,p_2,p_3$ with respect to $\pmb\alpha,\pmb\sigma,\tau$.

\smallskip

$\bullet$ When\/ $f(\tau)>0$, $\mathcal V_{\pmb\alpha,\pmb\sigma,[F]}=S_{\pmb\alpha,\pmb\sigma,\tau}$, where $F\in\SU(2,1)$ is a loxodromic isometry such that\/ $\trace(F^{-1})=\tau$.

\smallskip

$\bullet$ When\/ $f(\tau)=0$, $\mathcal V_{\pmb\alpha,\pmb\sigma,[F]}=S_{\pmb\alpha,\pmb\sigma,\tau}\setminus S'$, where $F\in\SU(2,1)$ is an ellipto-parabolic isometry such that\/ $\trace(F^{-1})=\tau$. Here, $S'$ stands for the set of degenerate points in $S_{\pmb\alpha,\pmb\sigma,\tau}$.

\smallskip

$\bullet$ When\/ $f(\tau)<0$, $\mathcal V_{\pmb\alpha,\pmb\sigma,[F]}\subset S_{\pmb\alpha,\pmb\sigma,\tau}$, where $F\in\SU(2,1)$ is a regular elliptic isometry such that\/ $\trace(F^{-1})=\tau$.
\end{thm}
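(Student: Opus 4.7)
The plan is to reduce the statement to the classification of $\SU(2,1)$-conjugacy classes by prescribed trace, using Theorem~\ref{thm:surfaceS}, Lemma~\ref{lemma:p1ortp3ort}, and Remark~\ref{rmk:nodegpoints} as the main ingredients.

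First, I would unravel the definition of $\mathcal V_{\pmb\alpha,\pmb\sigma,[F]}$ in terms of triples. A representation $\rho\in\mathcal V_{\pmb\alpha,\pmb\sigma,[F]}$ is given by a choice of lifts $\rho(\iota_i)=R_{\alpha_i}^{p_i}$ for $i=1,2,3$ (with $\sigma p_i=\sigma_i$), where the parameters $\alpha_i$ and signatures $\sigma_i$ are forced by the prescribed conjugacy classes of $\rho(\iota_i)$ (see Subsection~\ref{subsec:isometries}). From the defining relation $\iota_4\iota_3\iota_2\iota_1=1$ one gets $\rho(\iota_4)^{-1}=R_{\alpha_3}^{p_3}R_{\alpha_2}^{p_2}R_{\alpha_1}^{p_1}$, so that $\trace R_{\alpha_3}^{p_3}R_{\alpha_2}^{p_2}R_{\alpha_1}^{p_1}=\trace F^{-1}=\tau$. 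Since simultaneous $\SU(2,1)$-conjugation of $(p_1,p_2,p_3)$ corresponds to conjugation of $\rho$, Theorem~\ref{thm:surfaceS} gives a natural map $\Phi$ from (the strongly regular locus of) $\mathcal V_{\pmb\alpha,\pmb\sigma,[F]}$ into $S_{\pmb\alpha,\pmb\sigma,\tau}$, and this map is a bijection onto its image.

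The key step is to identify this image by asking which points of $S_{\pmb\alpha,\pmb\sigma,\tau}$ have $R:=R_{\alpha_3}^{p_3}R_{\alpha_2}^{p_2}R_{\alpha_1}^{p_1}$ in the specific class $[F^{-1}]$. Combining Lemma~\ref{lemma:p1ortp3ort} with Definition~\ref{defi:degenerate}, the isometry $R$ is regular at $(t_1,t_2,t)\in S_{\pmb\alpha,\pmb\sigma,\tau}$ if and only if the point is non-degenerate; and by Remark~\ref{rmk:nodegpoints}, degenerate points can only occur when $f(\tau)=0$. Therefore $R$ is regular everywhere on $S_{\pmb\alpha,\pmb\sigma,\tau}$ when $f(\tau)\ne0$, and on the complement $S_{\pmb\alpha,\pmb\sigma,\tau}\setminus S'$ when $f(\tau)=0$.

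Finally, I would split into the three cases and invoke the classification of $\SU(2,1)$-conjugacy classes with prescribed trace recalled in Subsection~\ref{subsec:isometries}. If $f(\tau)>0$, loxodromic is the unique regular type and the single conjugacy class of trace~$\tau$ is $[F^{-1}]$, giving $\mathcal V_{\pmb\alpha,\pmb\sigma,[F]}=S_{\pmb\alpha,\pmb\sigma,\tau}$. If $f(\tau)=0$, the three conjugacy classes of trace~$\tau$ are one ellipto-parabolic (regular) and two special elliptic (non-regular); hence non-degenerate points exactly realize the ellipto-parabolic class $[F^{-1}]$, while degenerate points give the (non-regular) special elliptic ones, which are excluded from $\mathcal V_{\pmb\alpha,\pmb\sigma,[F]}$. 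This yields $\mathcal V_{\pmb\alpha,\pmb\sigma,[F]}=S_{\pmb\alpha,\pmb\sigma,\tau}\setminus S'$. Finally, if $f(\tau)<0$, every point gives $R$ regular elliptic of trace $\tau$, but regular elliptic isometries of trace $\tau$ fall into three conjugacy classes (distinguished by the eigenvalue of the negative fixed point); only those points whose $R$ lies in $[F^{-1}]$ belong to $\mathcal V_{\pmb\alpha,\pmb\sigma,[F]}$, giving the proper inclusion $\mathcal V_{\pmb\alpha,\pmb\sigma,[F]}\subset S_{\pmb\alpha,\pmb\sigma,\tau}$.

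The main obstacle I anticipate is the bookkeeping in the $f(\tau)=0$ case, where one must verify that the non-regular (special elliptic) isometries occurring at degenerate points really do force those conjugacy classes to differ from $[F^{-1}]$; this is precisely where the interaction of Lemma~\ref{lemma:p1ortp3ort} with the trace classification has to be applied carefully, since the two special elliptic conjugacy classes with $f(\tau)=0$ are distinguished by the signature of the centre and not by the trace alone.
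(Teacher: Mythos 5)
Your proposal is correct and follows essentially the same route as the paper: both directions rest on the equivalence (via Lemma~\ref{lemma:p1ortp3ort} and Definition~\ref{defi:degenerate}) between regularity of $R_{\alpha_3}^{p_3}R_{\alpha_2}^{p_2}R_{\alpha_1}^{p_1}$ and non-degeneracy of the corresponding point of $S_{\pmb\alpha,\pmb\sigma,\tau}$, together with Remark~\ref{rmk:nodegpoints} and the trace classification of conjugacy classes. The obstacle you anticipate in the $f(\tau)=0$ case is not actually one: at a degenerate point the product is non-regular (special elliptic or $2$-step unipotent), hence cannot be conjugate to the regular ellipto-parabolic $F^{-1}$, so there is no need to invoke the signature of the centre.
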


\begin{proof}
The proof that $\mathcal V_{\pmb\alpha,\pmb\sigma,[F]}\subset S_{\pmb\alpha,\pmb\sigma,\tau}\setminus S'$ is the same in all cases: a relation $FR_{\alpha_3}^{p_3}R_{\alpha_2}^{p_2}R_{\alpha_1}^{p_1}=1$ provides a point in the surface $S_{\pmb\alpha,\pmb\sigma,\tau}$ which is nondegenerate by Lemma \ref{lemma:p1ortp3ort}. Conjugating the relation does not change the obtained point in $S_{\pmb\alpha,\pmb\sigma,\tau}$. It remains to observe that, by Remark \ref{rmk:nodegpoints}, $S'=\varnothing$ in the first and third cases. Conversely, take a nondegenerate point in $S_{\pmb\alpha,\pmb\sigma,\tau}$ and consider the corresponding isometry $F^{-1}:=R_{\alpha_3}^{p_3}R_{\alpha_2}^{p_2}R_{\alpha_1}^{p_1}$. By Lemma \ref{lemma:p1ortp3ort}, $F$ is regular and, therefore, its conjugacy class $[F]$  is determined by $\tau$ when $f(\tau)\geqslant0$.
\end{proof}

It will be important to have a criterion allowing to determine, under certain circumstances, whether or not two points in $S_{\pmb\alpha,\pmb\sigma,\tau}\setminus S'$ lie in a common connected component (see the above theorem). In order to obtain such criterion, we need the following corollary of Theorem \ref{thm:sbendingconnect}.

\begin{cor}
\label{cor:tancetype}
Let\/ $\alpha_1,\alpha_2\in\mathbb S^1\setminus\Omega$ be parameters and let\/ $p_1,p_2\in\PCV\setminus\SV$ be distinct non\-orthogonal points such that the complex line\/ $L:=\mathrm{L}(p_1,p_2)$ is noneuclidean. Then\/ $R:=R_{\alpha_2}^{p_2}R_{\alpha_1}^{p_1}$~is

\smallskip

$\bullet$ regular elliptic iff either\/ $0<\tance(p_1,p_2)<1$ or\/ {\rm(}$L$ is hyperbolic and\/ $\sigma_1\sigma_2\tance(p_1,p_2)<\sigma_1\sigma_2 k_1${\rm)};

$\bullet$ ellipto-parabolic iff\/ $L$ is hyperbolic and\/ $\tance(p_1,p_2)=k_1$;

$\bullet$ loxodromic iff\/ $L$ is hyperbolic and\/ $\sigma_1\sigma_2\tance(p_1,p_2)>\sigma_1\sigma_2k_1$,

\smallskip

\noindent
where\/ $k_1$ is the constant defined in\/ \eqref{eq:vemca} {\rm(}see also\/ \eqref{eq:chi} for the definition of the terms $\chi_i${\rm)} and\/ $\sigma_i:=\sigma p_i$, $i=1,2$.
\end{cor}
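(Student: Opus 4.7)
The strategy is to extract the statement directly from Theorem~\ref{thm:sbendingconnect} and Remark~\ref{rmk:discriminant}, after a small reduction. Let $R:=R_{\alpha_2}^{p_2}R_{\alpha_1}^{p_1}$; by Lemma~\ref{lemma:prodspecreg}, $R$ is regular and not $3$-step unipotent, so it must be regular elliptic, loxodromic, or ellipto-parabolic. First I would dispose of the spherical case: when $L$ is spherical we have $L\subset\EV$, so $\sigma_1=\sigma_2=1$ and $\tance(p_1,p_2)\in(0,1)$ by the discussion in Section~\ref{sec:hyperbolicgeometry}; moreover the polar point of $L$ lies in $\BV$ and is fixed by $R$, so $R$ has a negative fixed point and is therefore elliptic. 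Combined with regularity this forces $R$ to be regular elliptic, matching the first bullet (and making the other two vacuous here).

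Now suppose $L$ is hyperbolic and that at most one of $p_1,p_2$ is positive. I pick any auxiliary parameter $\alpha_3\in\mathbb S^1\setminus\Omega$ and a negative point $p_3\in\BV$ in ``general position'': not in $L$, not orthogonal to $p_2$, and not orthogonal to any fixed point of $R$. Each of these forbidden loci cuts out a proper projective subvariety of $\BV$, so such $p_3$ exists. Setting $\pmb\alpha=(\alpha_1,\alpha_2,\alpha_3)$, $\pmb\sigma=(\sigma_1,\sigma_2,-1)$, and $\tau:=\trace R_{\alpha_3}^{p_3}R$, the triple $(p_1,p_2,p_3)$ is strongly regular with respect to $\pmb\alpha,\pmb\sigma,\tau$, so the surface $S_{\pmb\alpha,\pmb\sigma,\tau}$ from Theorem~\ref{thm:surfaceS} contains a point in the vertical slice $V_{r_1}$ with $r_1=\tance(p_1,p_2)$. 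By Theorem~\ref{thm:sbendingconnect}(iii), the conic $V_{r_1}$ is an ellipse, a parabola, or a branch of a hyperbola according as $R$ is regular elliptic, ellipto-parabolic, or loxodromic. Combining this with Remark~\ref{rmk:discriminant}, which identifies the same trichotomy of conic types with the sign of $\sigma_1\sigma_2(r_1-k_1)$ being negative, zero, or positive, yields the three bullets.

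The main obstacle is the remaining configuration where $L$ is hyperbolic and $\sigma_1=\sigma_2=1$, since strong regularity of any triple $(p_1,p_2,p_3)$ would require at most one positive entry in $\pmb\sigma$. I would sidestep it by the following duality: inside the signature-$(+-)$ line $L$, let $\tilde p_i\in L$ be the unique point orthogonal to $p_i$; by Sylvester's criterion each $\tilde p_i$ is necessarily negative, and $\tilde p_1\ne\tilde p_2$ with $\langle\tilde p_1,\tilde p_2\rangle\ne0$. A direct Gram-matrix calculation in the basis $\{p_1,\tilde p_1\}$ of $L$ gives $\tance(\tilde p_1,\tilde p_2)=\tance(p_1,p_2)$, so the formula in Remark~\ref{rmk:partform} yields $\trace R_{\alpha_2}^{\tilde p_2}R_{\alpha_1}^{\tilde p_1}=\trace R$. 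Both products are regular by Lemma~\ref{lemma:prodspecreg}, hence they share the same isometry type; furthermore $\sigma\tilde p_1\cdot\sigma\tilde p_2=1=\sigma_1\sigma_2$, so the constant $k_1$ is unchanged. Thus the corollary for $R$ reduces to the ``both negative'' subcase already treated in the previous paragraph, completing the proof.
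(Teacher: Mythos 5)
Your proof is correct and follows essentially the same route as the paper: dispose of the spherical case via regularity, then read off the hyperbolic trichotomy from Theorem~\ref{thm:sbendingconnect}(iii) combined with Remark~\ref{rmk:discriminant} after choosing a generic negative auxiliary point $p_3$. The only (cosmetic) difference is in the both-positive hyperbolic case, where you reduce via the orthogonal points $\tilde p_1,\tilde p_2\in L$ while the paper simply picks arbitrary negative points with the same tance; both rest on the same observation that a regular isometry's type is determined by its trace, which here depends only on $\tance(p_1,p_2)$ and the parameters.
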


\begin{proof}
When $0<\tance(p_1,p_2)<1$, the complex line $L$ is spherical and $R$ is regular elliptic because, by Lemma \ref{lemma:prodspecreg}, $R$ is regular.

Assume that at most one of $p_1,p_2$ is positive. Take a negative point $p_3$ and a parameter $\alpha_3\in\mathbb S^1\setminus\Omega$ such that $p_1,p_2,p_3$ is strongly regular and $p_3$ is not orthogonal to a fixed point of~$R$. Consider the surface $S_{\pmb\alpha,\pmb\sigma,\tau}$ corresponding to $\pmb\alpha=(\alpha_1,\alpha_2,\alpha_3)$, $\pmb\sigma=(\sigma_1,\sigma_2,-1)$, and $\tau:=\trace R_{\alpha_3}^{p_3}R_{\alpha_2}^{p_2}R_{\alpha_1}^{p_1}$. The vertical slice corresponding to the triple $p_1,p_2,p_3$ is determined by item ({\it iii}\/) in Theorem \ref{thm:sbendingconnect}. The result now follows from Remark \ref{rmk:discriminant}.

It remains to consider the case where $p_1,p_2$ are both positive points and the complex line $L$ is hyperbolic. We have $\tance(p_1,p_2)>1$. By Lemma \ref{lemma:prodspecreg}, the isometry $R$ is regular; so, its type depends only on its trace. By the corresponding trace formula on Remark \ref{rmk:partform}, the trace in question is determined by $\tance(p_1,p_2)$. Taking negative points $q_1,q_2$ such that $\tance(q_1,q_2)=\tance(p_1,p_2)$ we reduce the fact to a case already considered.
\end{proof}

\begin{cor}
\label{cor:connectSlox}
Let\/ $p_1,p_2,p_3$ and\/ $q_1,q_2,q_3$ be strongly regular triples with respect to the same $\pmb\alpha,\pmb\sigma,\tau$. Assume that\/ $R_{\alpha_3}^{p_3}R_{\alpha_2}^{p_2}R_{\alpha_1}^{p_1}$ and\/ $R_{\alpha_3}^{q_3}R_{\alpha_2}^{q_2}R_{\alpha_1}^{q_1}$ are in the same conjugacy class. If at least one of\/ $R_{\alpha_3}^{p_3}R_{\alpha_2}^{p_2},R_{\alpha_2}^{p_2}R_{\alpha_1}^{p_1}$ and at least one of\/ $R_{\alpha_3}^{q_3}R_{\alpha_2}^{q_2},R_{\alpha_2}^{q_2}R_{\alpha_1}^{q_1}$ is loxodromic, these triples can be connected, modulo conjugacy, by finitely many bendings.
\end{cor}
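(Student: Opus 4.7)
The plan is to translate the statement into a connectivity problem on the semialgebraic surface $S:=S_{\pmb\alpha,\pmb\sigma,\tau}$ from Theorem~\ref{thm:surfaceS}, and to exploit the fact that a loxodromic factor forces one of the slices through the corresponding point to be \textit{unbounded}, so that far-flung regions of $S$ become reachable in finitely many bendings.

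After replacing the triple $q_1,q_2,q_3$ by a suitable conjugate, I may assume that $F:=R_{\alpha_3}^{p_3}R_{\alpha_2}^{p_2}R_{\alpha_1}^{p_1}=R_{\alpha_3}^{q_3}R_{\alpha_2}^{q_2}R_{\alpha_1}^{q_1}$; by Theorem~\ref{thm:surfaceS} the two triples then correspond to points $P,Q$ on the same surface~$S$. By item~(ii) of Theorem~\ref{thm:sbendingconnect}, the bendings involving $p_1,p_2$ (respectively $p_2,p_3$) are precisely the motions within the connected components of vertical (respectively horizontal) slices of $S$, so the task reduces to joining $P$ and $Q$ inside $S$ by a finite chain of arcs, each contained in a single such slice.

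Corollary~\ref{cor:tancetype} together with items~(iii)--(iv) of Theorem~\ref{thm:sbendingconnect} translates the loxodromic hypothesis at $P$ into the statement that at least one of $V_{t_1^P}$, $H_{t_2^P}$ is unbounded (a hyperbola branch, or a pair of open rays in the degenerate case), and the same holds at $Q$. Up to interchanging the roles of vertical and horizontal bendings I treat the case where both $V_{t_1^P}$ and $V_{t_1^Q}$ are unbounded in $t_2$. Bending $P$ along $V_{t_1^P}$, I drive its $t_2$-coordinate until $\sigma_2\sigma_3\,t_2>\sigma_2\sigma_3\,k_2$; by Corollary~\ref{cor:tancetype} the ``second factor'' of the bent triple becomes loxodromic, and Theorem~\ref{thm:sbendingconnect}(iii) then guarantees that the horizontal slice through this new point is itself a hyperbola branch, unbounded in~$t_1$. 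Carrying out the symmetric construction from $Q$ produces another such horizontal hyperbola branch.

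To close the loop, equation~\eqref{eq:conich} and the asymptotics of~\eqref{eq:surfaceS} as $t_2\to\infty$ show that, once $t_2$ is taken sufficiently large, the $t_1$-projections of the two horizontal hyperbola branches just obtained cover a common unbounded interval; hence an intermediate vertical slice $V_r$ with $r$ in that common interval intersects both, and concatenating the bendings along these four arcs furnishes the desired finite path from $P$ to $Q$. The main obstacle is precisely this last asymptotic matching: one must estimate the relative sizes of the coefficients of \eqref{eq:conicv}--\eqref{eq:conich} as the bending parameter tends to infinity in order to pin down where the two branches sit, and one must also verify that the at most one degenerate slice per axis supplied by item~(iv) of Theorem~\ref{thm:sbendingconnect} does not obstruct the chain; the latter is handled by perturbing the choice of $r$, since degenerate slices are isolated.
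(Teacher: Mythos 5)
Your proposal is correct and rests on the same pillars as the paper's proof: the identification of bendings with motions along vertical/horizontal slices of $S_{\pmb\alpha,\pmb\sigma,\tau}$, the fact (Corollary~\ref{cor:tancetype} combined with Theorem~\ref{thm:sbendingconnect}) that a loxodromic factor makes the corresponding slice unbounded, and item~({\it iv\/}) of Theorem~\ref{thm:sbendingconnect} to dodge the at most one disconnected slice. The difference is the routing: you push $P$ and $Q$ onto two \emph{different} horizontal hyperbola branches and bridge them with an intermediate vertical slice, which is why you are left with the ``asymptotic matching'' of the two $t_1$-projections as an open obligation. That obligation is harmless and needs no coefficient estimates: by \eqref{eq:surfaceineq} one has $\sigma_1\sigma_2t_1>0$ on all of $S_{\pmb\alpha,\pmb\sigma,\tau}$, and the explicit parameterization in the loxodromic case of the proof of Theorem~\ref{thm:sbendingconnect} tends to the same infinity at both ends of a branch, so every horizontal branch projects onto an interval of the form $[\ast,+\infty)$ or $(-\infty,\ast]$ with the sign dictated by $\sigma_1\sigma_2$; any two such intervals overlap. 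The paper takes a slightly shorter route that avoids the intermediate slice altogether: once $R_{\alpha_2}^{p_2}R_{\alpha_1}^{p_1}$ and $R_{\alpha_2}^{q_2}R_{\alpha_1}^{q_1}$ are both made loxodromic, the two vertical slices are unbounded in $t_2$ toward the \emph{same} infinity (again by the sign constraints), so one drives $\tance(p_2,p_3)$ and $\tance(q_2,q_3)$ to a common value and finishes inside a single connected horizontal slice. One small imprecision to repair in your write-up: the mixed case (unbounded vertical slice at $P$, unbounded horizontal slice at $Q$) is not disposed of merely ``by interchanging the roles of vertical and horizontal''; it is handled because one bending along whichever slice is unbounded makes the other factor loxodromic --- which is in fact the first move of your own construction, so the fix is only a matter of phrasing.
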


\begin{proof}
Suppose that (say) $R_{\alpha_3}^{p_3}R_{\alpha_2}^{p_2}$ is loxodromic. We can bend it in order to make $\big|\tance(p_1,p_2)\big|$ as big as wanted; by Corollary \ref{cor:tancetype}, $R_{\alpha_2}^{p_2}R_{\alpha_1}^{p_1}$ becomes loxodromic. So, we can assume that $R_1:=R_{\alpha_3}^{p_3}R_{\alpha_2}^{p_2}$ and $R_2:=R_{\alpha_3}^{q_3}R_{\alpha_2}^{q_2}$ are loxodromic. Bending $R_1$ and $R_2$, we can send $\tance(p_2,p_3),\tance(q_2,q_3)$ both to $\infty$ or $-\infty$ (depending on the given signs). Hence, we make $t_2:=\tance(p_2,p_3)=\tance(q_2,q_3)$. Now, the points in $S_{\pmb\alpha,\pmb\sigma,\tau}$ (see Theorem \ref{thm:surfaceS}) corresponding to the triples $p_1,p_2,p_3$ and $q_1,q_2,q_3$ lie in a same horizontal slice $H_{t_2}$. By item~({\it iv}\/) in Theorem \ref{thm:sbendingconnect}, $H_{t_2}$ is nonconnected only for a single value of $t_2$. Changing $t_2$ if necessary, we arrive at a connected $H_{t_2}$.
\end{proof}

\subsection{Experimental observations}
\label{subsec:experiments}

Here, we discuss a few experimental observations regarding the semialgebraic surface in Theorem \ref{thm:surfaceS} and the relative $\SU(2,1)$-character in Definition \ref{defi:charactervariety}.

Each picture in this section is given by the algebraic equation \eqref{eq:surfaceS} in Theorem \ref{thm:surfaceS}; parameters and trace are fixed and vertical/horizontal slices are also displayed. A surface $S_{\pmb\alpha,\pmb\sigma,\tau}$ of the type described in the theorem appears when one requires inequalities \eqref{eq:surfaceineq} to hold. When $2\real\kappa+1<0$, there are three possible combinations of signs leading to a surface $S_{\pmb\alpha,\pmb\sigma,\tau}$ and each of these surfaces is marked with a different color in the picture. If $2\real\kappa+1>0$, then all signs must be negative and there is a single surface $S_{\pmb\alpha,\pmb\sigma,\tau}$ in the picture (it is also indicated by a distinguished color). The gray part of the pictures do not satisfy the required inequalities and, therefore, do not correspond to strongly regular triples; however, they are very useful in understanding the dependence of $S_{\pmb\alpha,\pmb\sigma,\tau}$ on the choices involved (signs, parameters, and trace) and may be related to the study of more general character varieties (in this regard they should be compared, say, to those in \cite{ABL2017} and \cite{GB2012}).

\begin{figure}[!h]
\centering
\subcaptionbox{Surfaces for $\tau=0$.\label{fig:tauzero}}[.31\linewidth]{\includegraphics[scale=.3,trim={1.45cm .7cm .4cm 1cm},clip]{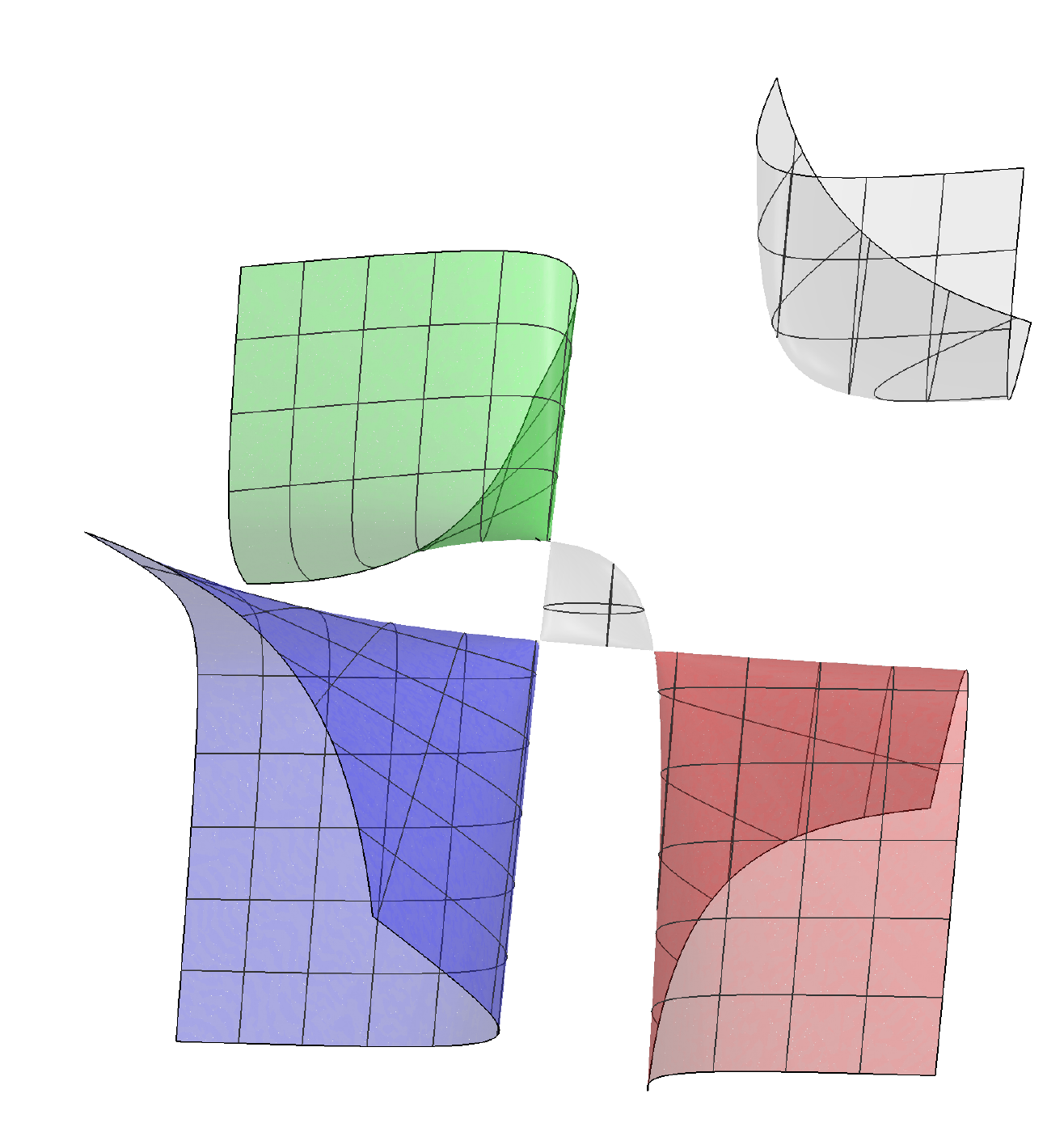}}
\subcaptionbox{Evolution of surfaces passing through an ellipto-parabolic case with a degenerate point.\label{fig:deg}}[.68\linewidth]{\includegraphics[scale=.3,trim={1cm 0 1cm 1cm},clip]{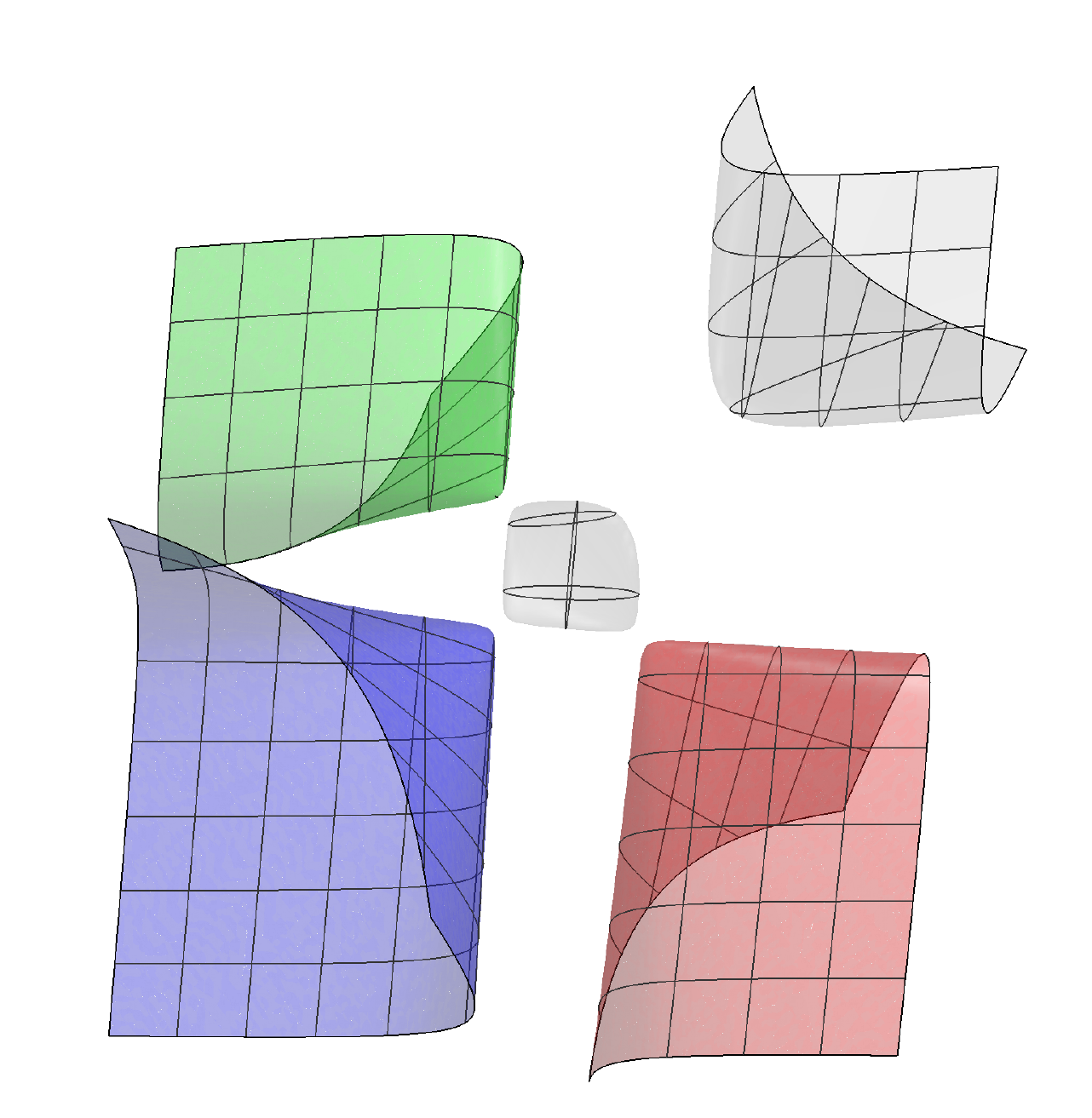}
\hspace{.5cm}
\includegraphics[scale=.3,trim={.5cm .5cm 0 1cm},clip]{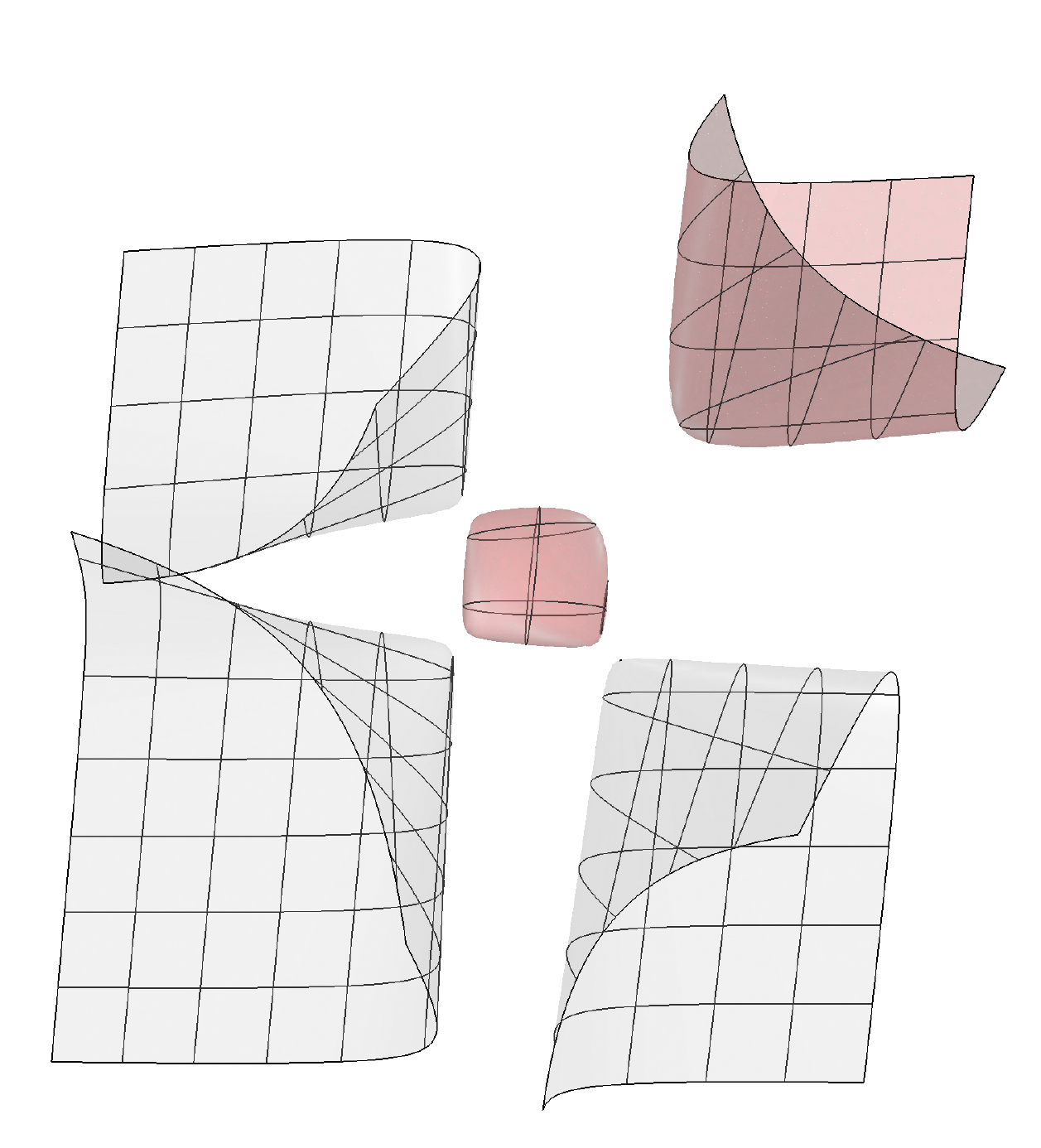}

\vspace{.1cm}

\includegraphics[scale=.3,trim={.5cm .6cm 0 .5cm},clip]{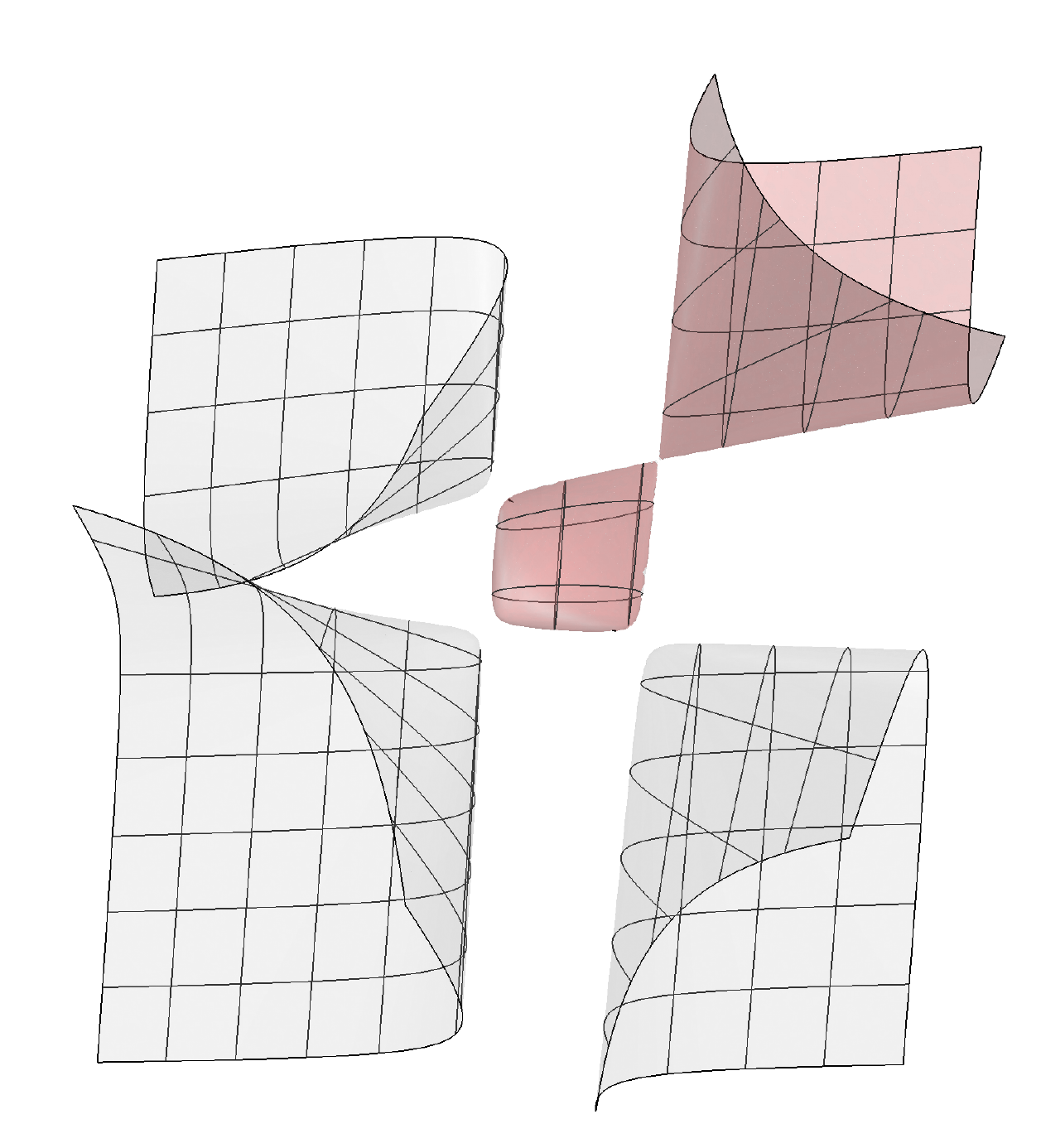}
\hspace{.5cm}
\includegraphics[scale=.3,trim={.5cm .6cm 0 .5cm},clip]{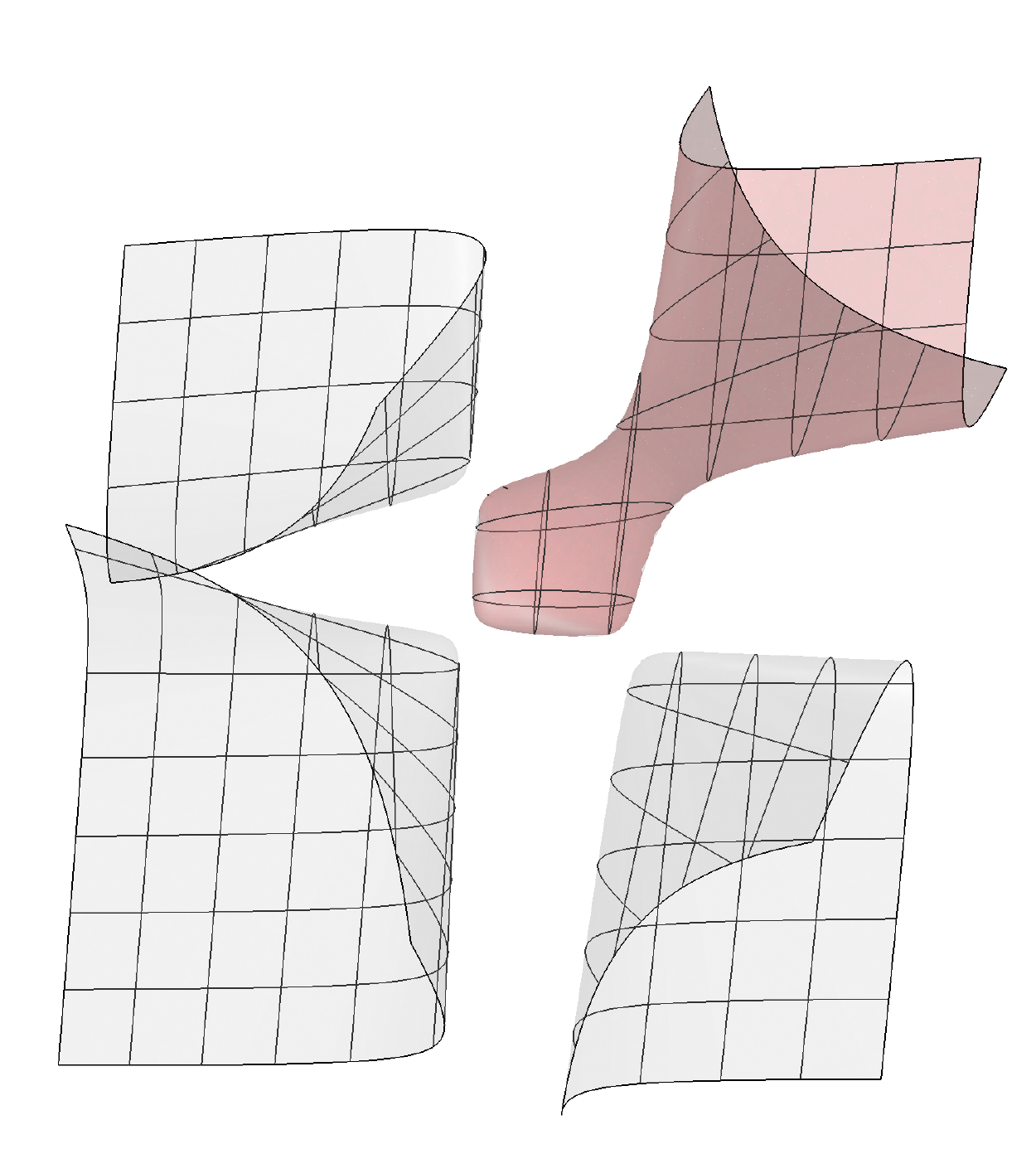}}
\caption{}\label{fig:tau0deg}
\end{figure}

\begin{figure}[!h]
\centering
\includegraphics[scale=.3,trim={.3cm .5cm 0 1.3cm},clip]{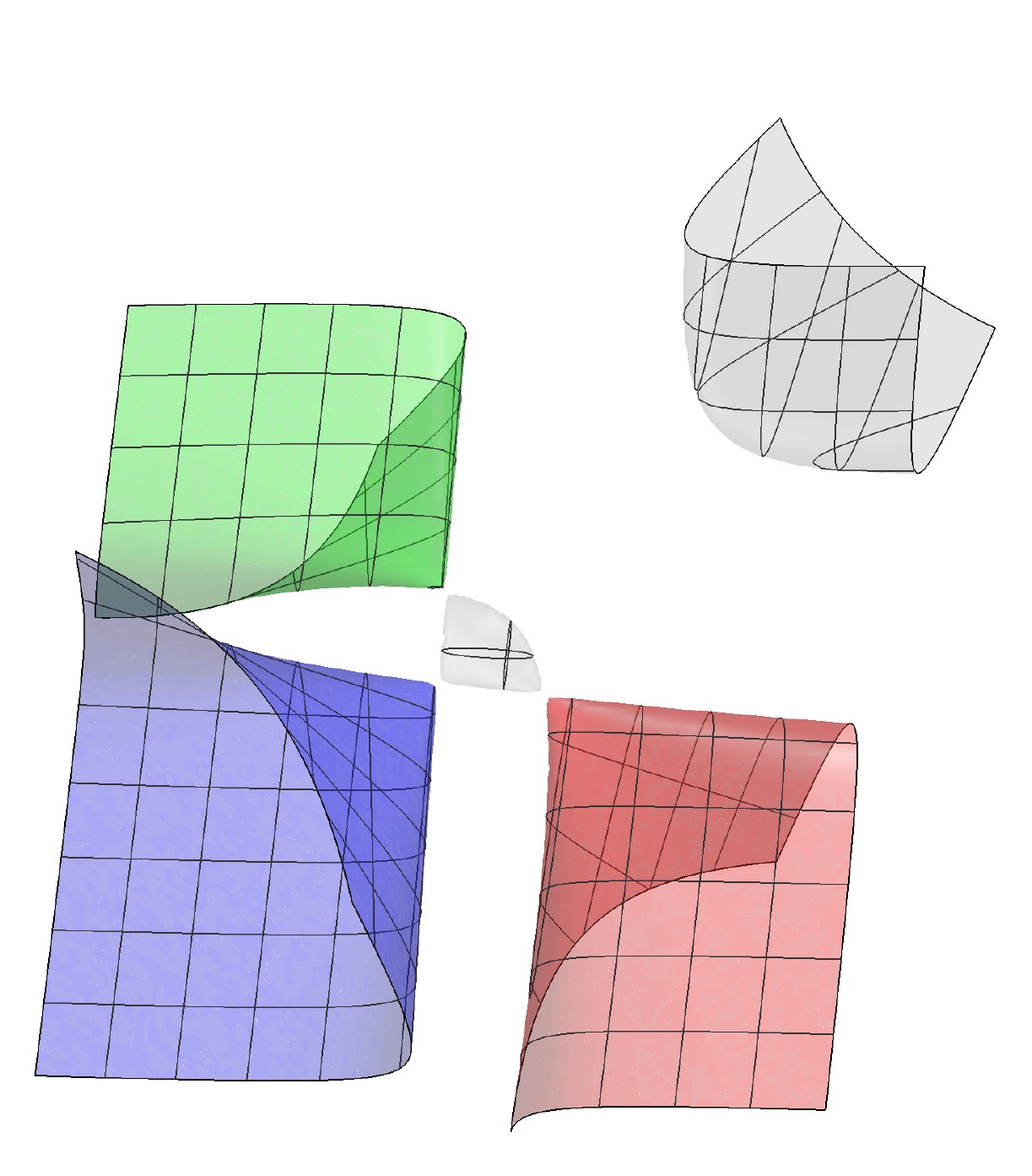}
\hspace{.5cm}
\includegraphics[scale=.3,trim={0 0 0 1.2cm},clip]{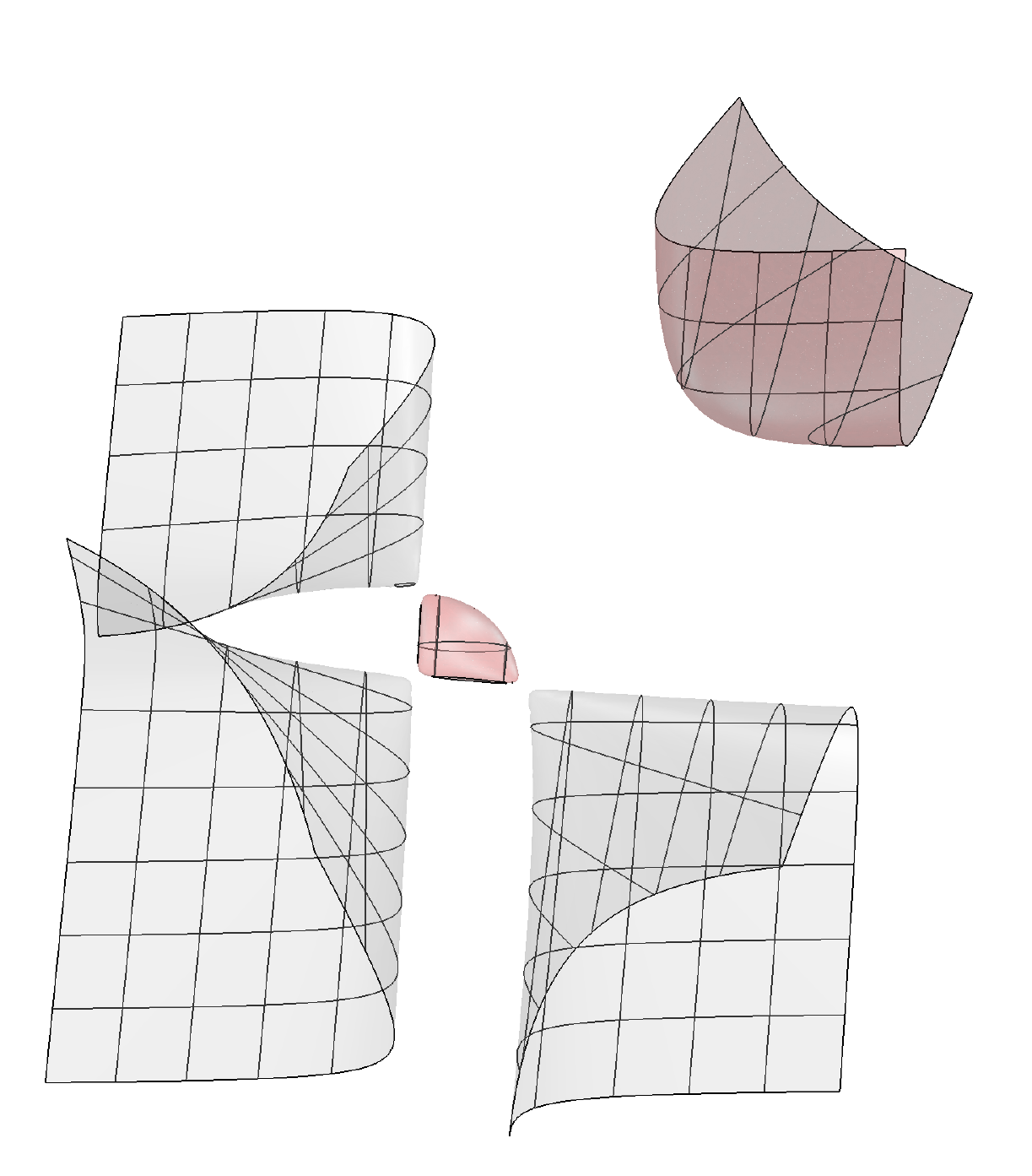}

\vspace{.1cm}

\includegraphics[scale=.3,trim={.3cm .6cm 0 .5cm},clip]{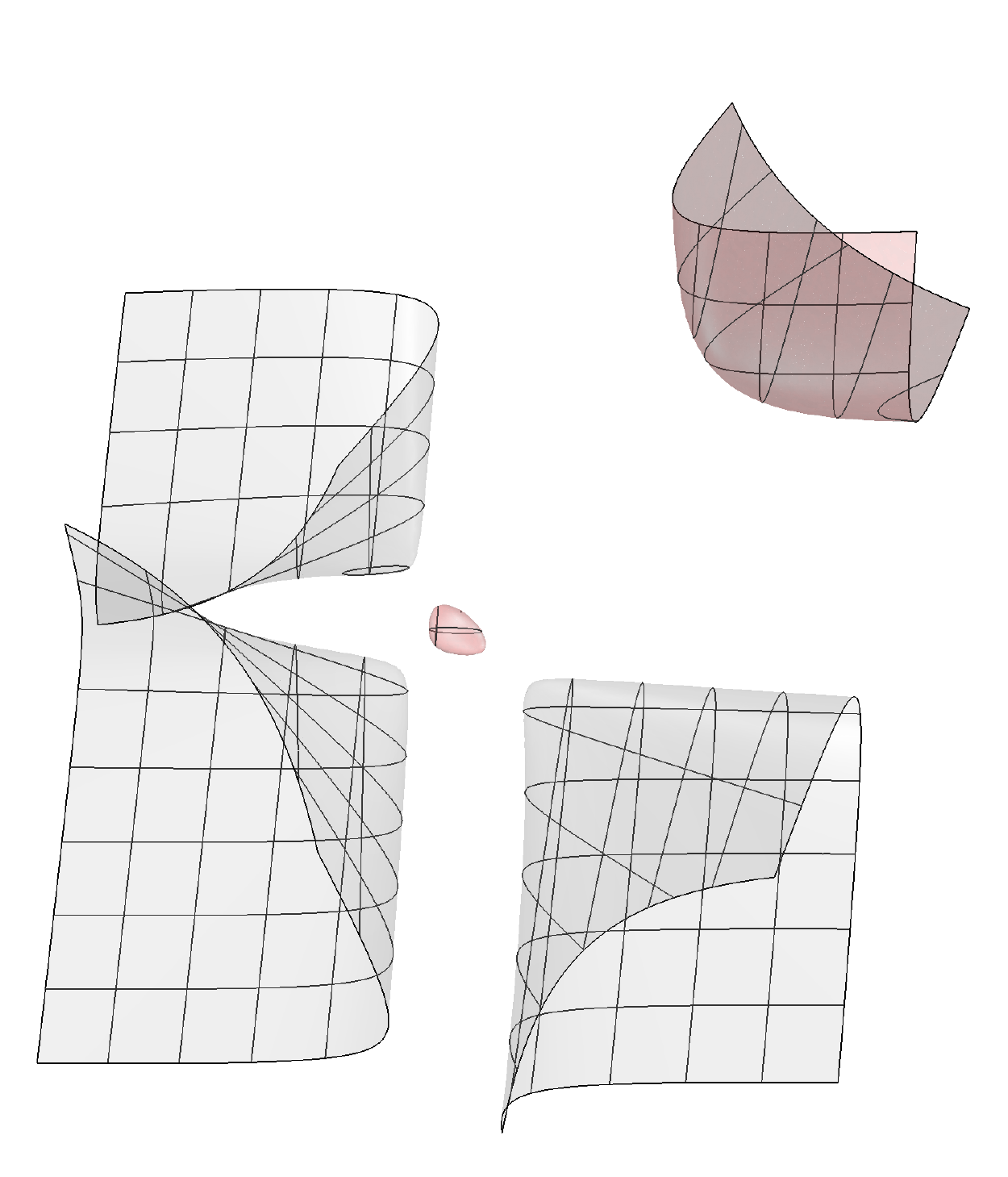}
\hspace{.5cm}
\includegraphics[scale=.3,trim={.3cm .6cm 0 .5cm},clip]{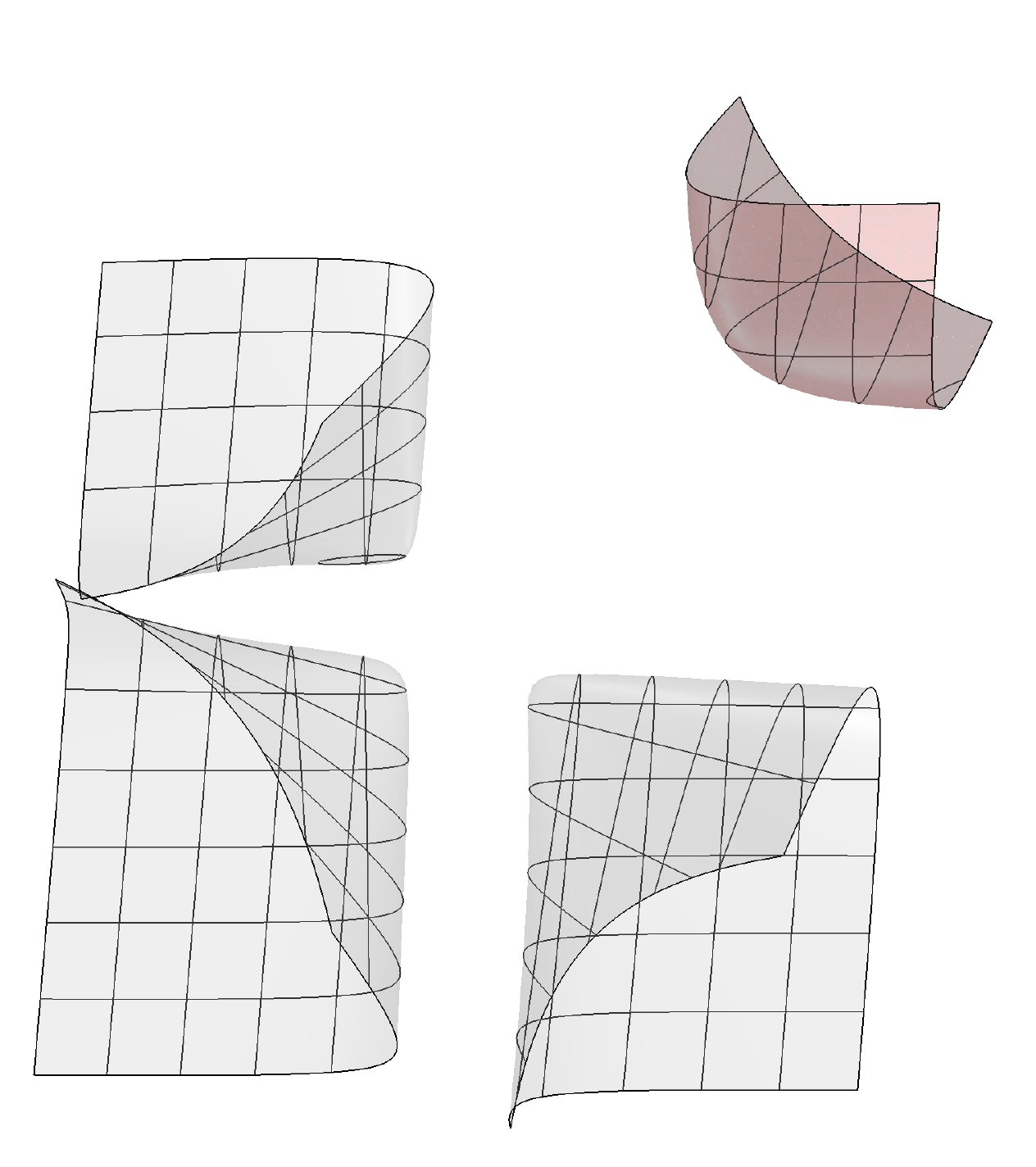}
\caption{Evolution of surfaces reaching an ellipto-parabolic case without a degenerate point.}
\label{fig:parabolicnondeg}
\end{figure}

In Figure \ref{fig:tauzero}, $\tau=0$. All surfaces in the picture correspond to a same $\SU(2,1)$-conjugacy class of a regular elliptic isometry of trace $0$. This case is distinguished: $\tau=0$ is the only trace value inside the deltoid to which corresponds a single $\PU(2,1)$-conjugacy class. Beginning with this case, we choose a direction in the complex plane and slowly change the trace in this direction until it (reaches and) leaves the deltoid. Each one of Figures~\ref{fig:deg}--\ref{fig:realtraces} display the behaviour of the surfaces during the trace deformation. They seem to include, from a qualitative point of view, every possible variant (for every choice of parameters).

The surfaces in the first and second pictures in Figure~\ref{fig:deg} contain the $\SU(2,1)$-character varieties in Theorem \ref{thm:charactervariety} where the class of the isometry $F$ is regular elliptic. In the second picture, points in distinct connected components correspond to distinct $\SU(2,1)$-conjugacy classes of regular elliptic isometries of the same trace. So, the inclusion in the third item of Theorem \ref{thm:charactervariety} is strict. The third picture corresponds to the conjugacy class of an ellipto-parabolic $F$. This surface contains a degenerate point. The remaining picture illustrates the loxodromic case.

Similarly to Figure~\ref{fig:deg}, pictures in Figure~\ref{fig:parabolicnondeg} range from the regular elliptic case (first three pictures) to an ellipto-parabolic one (the loxodromic case is not displayed because it looks almost identical to the ellipto-parabolic one). The situation is quite different from the previous one: the compact component simply vanishes when the trace reaches the deltoid instead of ``merging'' with a noncompact component.

Figure~\ref{fig:unipotent} illustrates a deformation passing through an unipotent class (second picture) instead of an ellipto-parabolic one. Here, the ``compact component'' (not belonging to any of the surfaces) collapses to a point when the trace reaches the deltoid.

Finally, in Figure~\ref{fig:realtraces}, the traces are always real. In this case, the ``compact component'' (not belonging to any of the surfaces) is always linked to the surfaces; it merges with another component in the ellipto-parabolic case (third picture).

\begin{figure}[!t]
\centering
\includegraphics[scale=.3,trim={.3cm .5cm .5cm 1cm},clip]{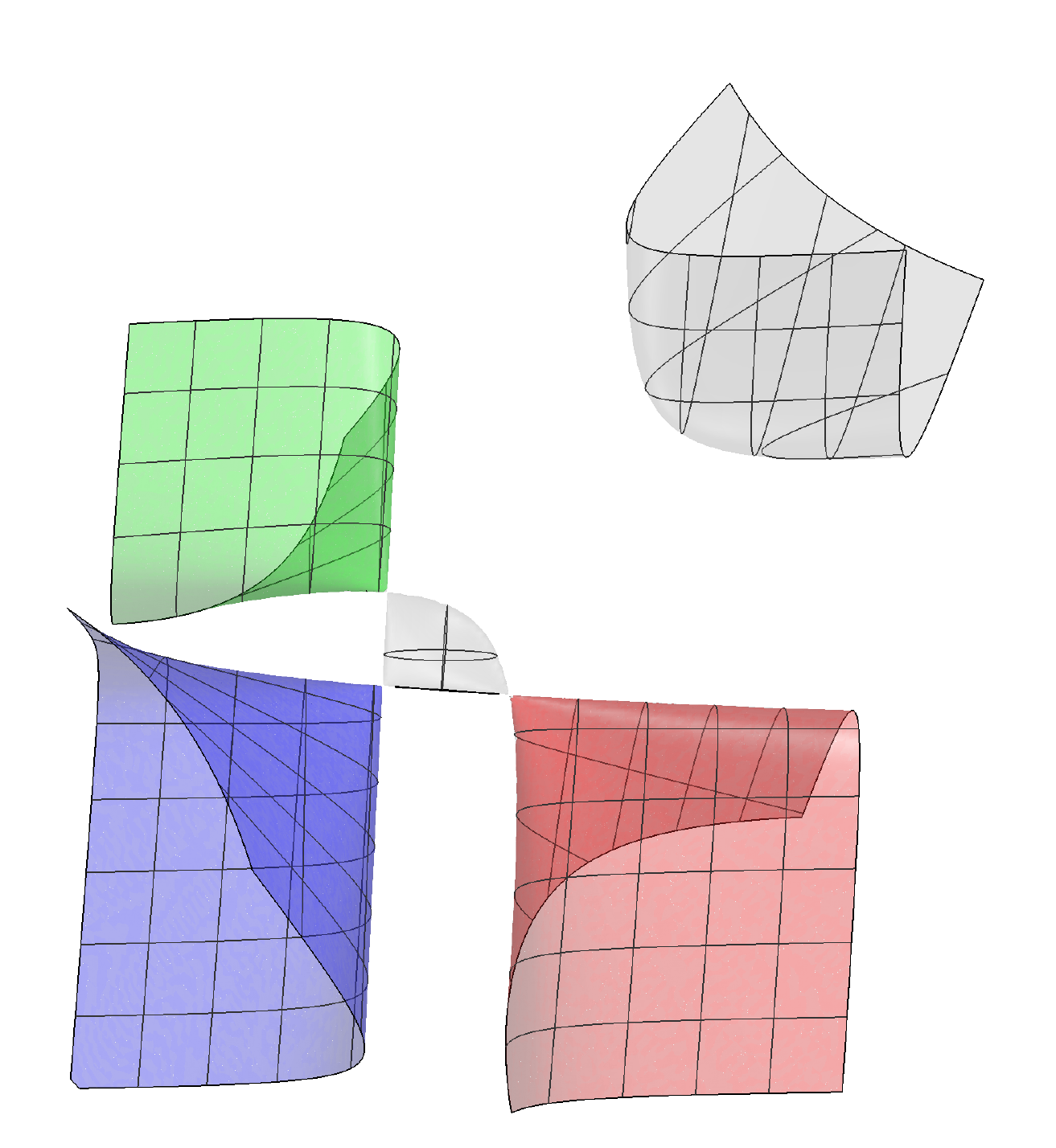}
\hspace{.5cm}
\includegraphics[scale=.3,trim={.2cm .5cm 0 1cm},clip]{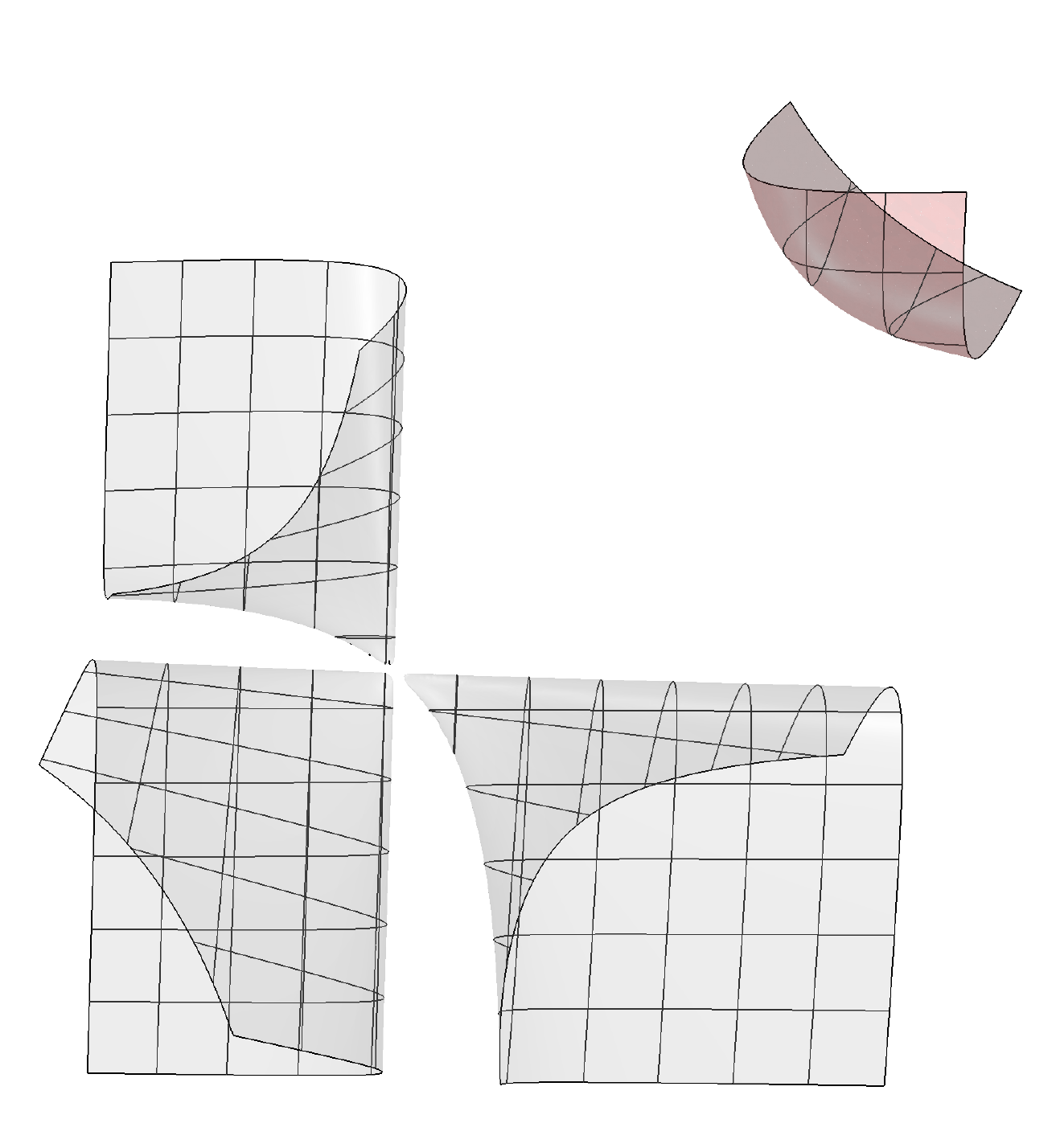}
\hspace{.5cm}
\includegraphics[scale=.3,trim={.2cm .5cm 0 1cm},clip]{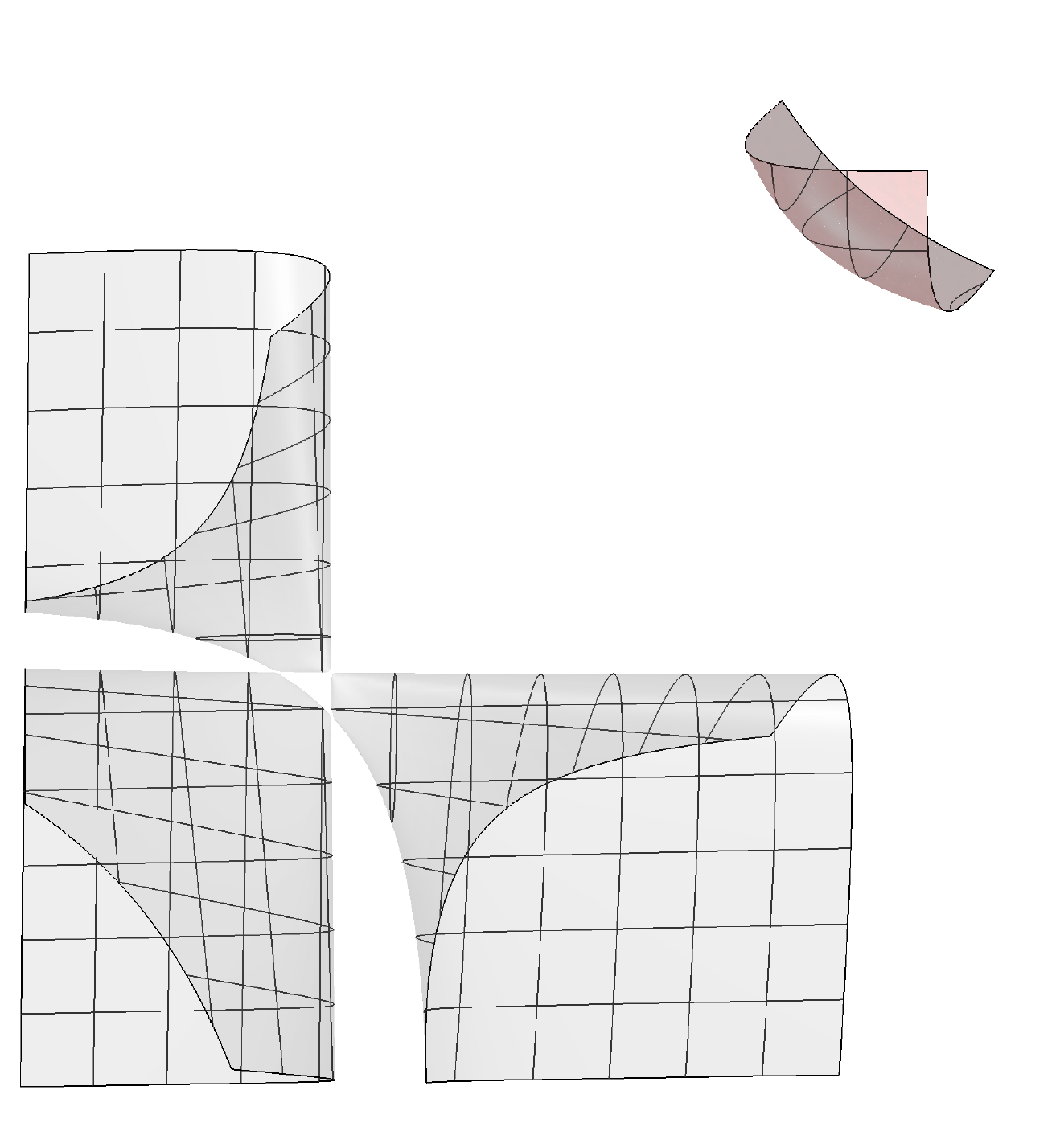}
\caption{Evolution of surfaces passing through a $3$-step unipotent case.}
\label{fig:unipotent}
\end{figure}

\begin{figure}[!t]
\centering
\includegraphics[scale=.3,trim={.4cm 1cm 0 1cm},clip]{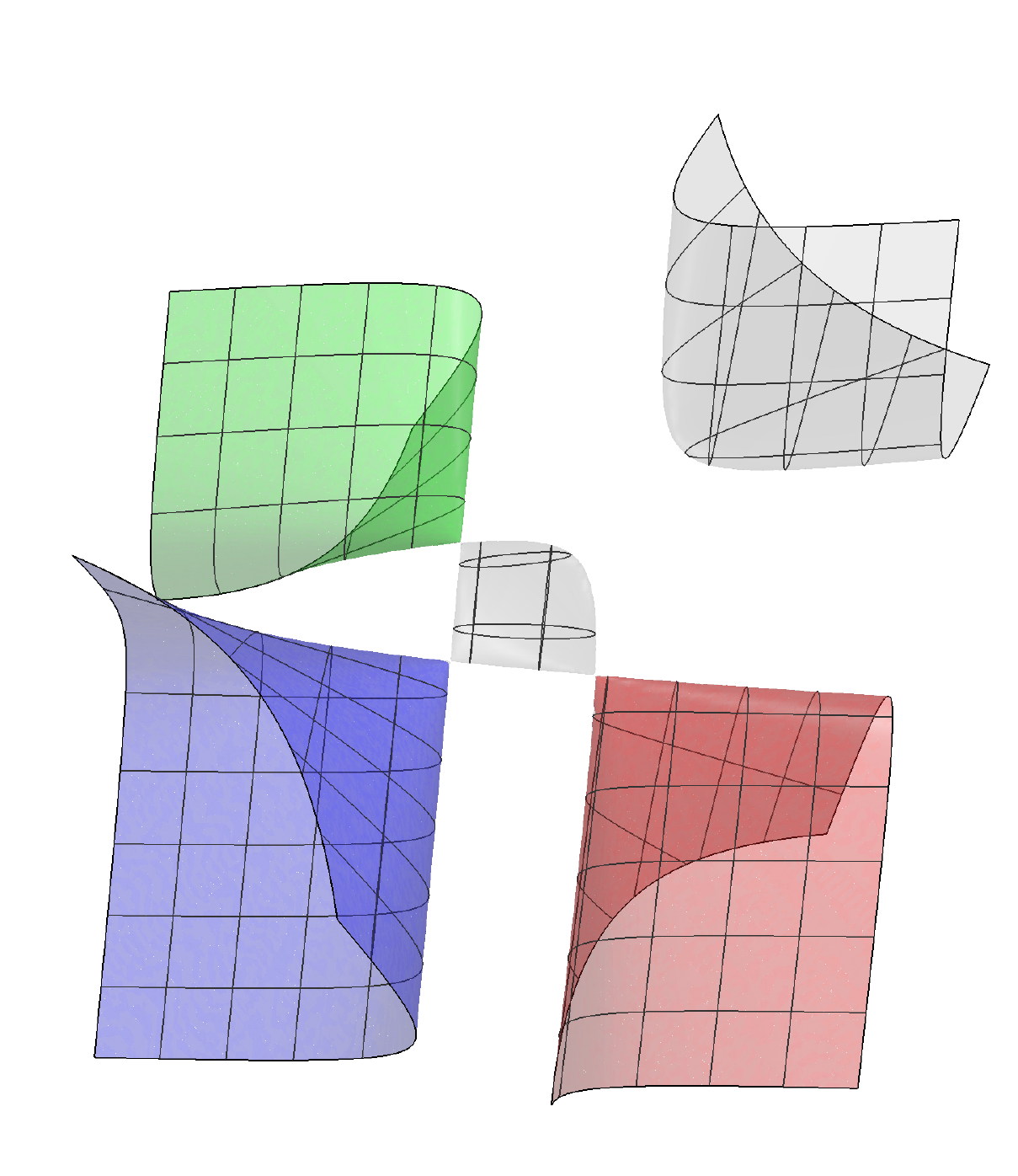}
\hspace{.5cm}
\includegraphics[scale=.3,trim={0 .5cm .5cm 1cm},clip]{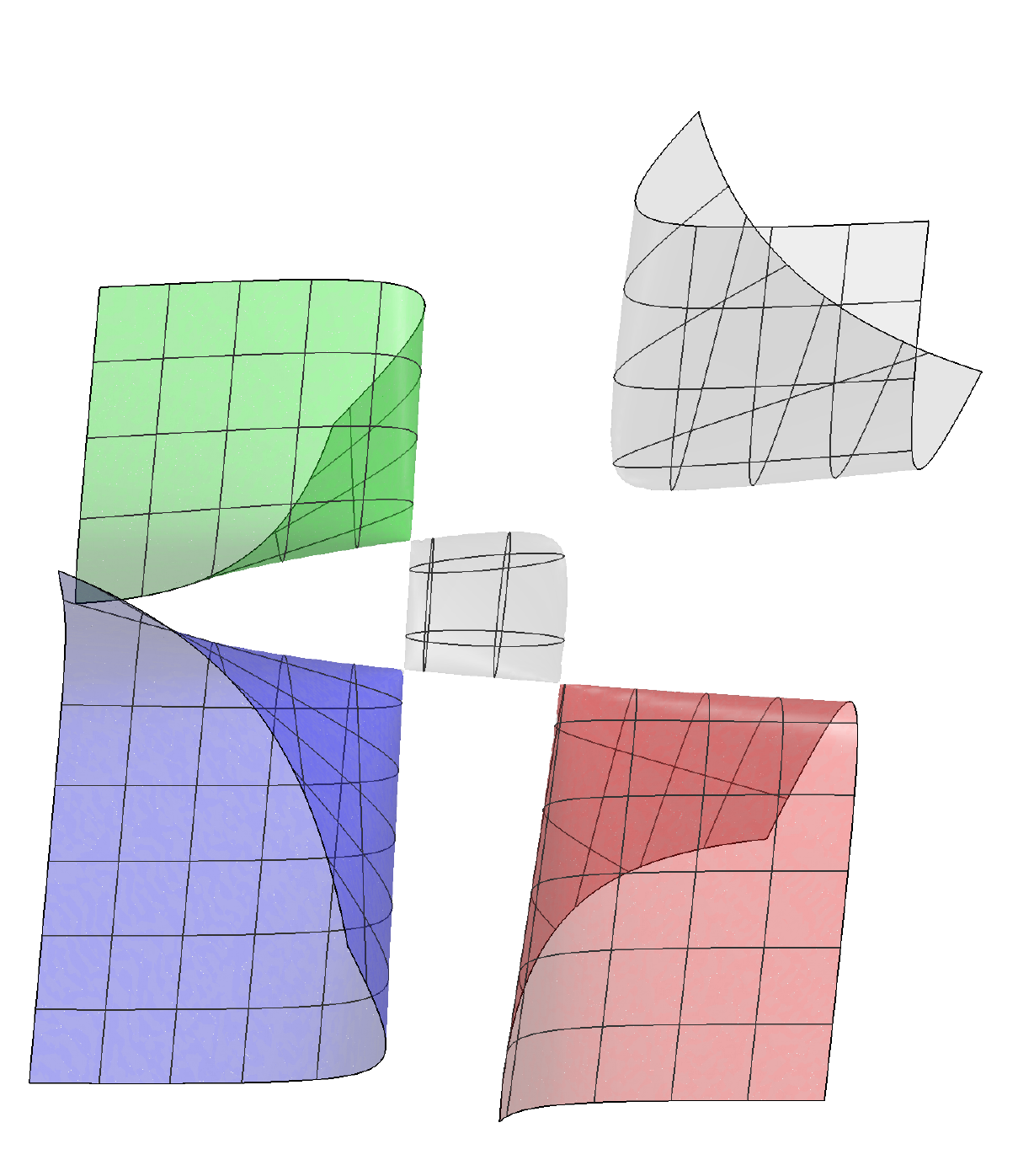}

\includegraphics[scale=.3,trim={.3cm .5cm 0 0},clip]{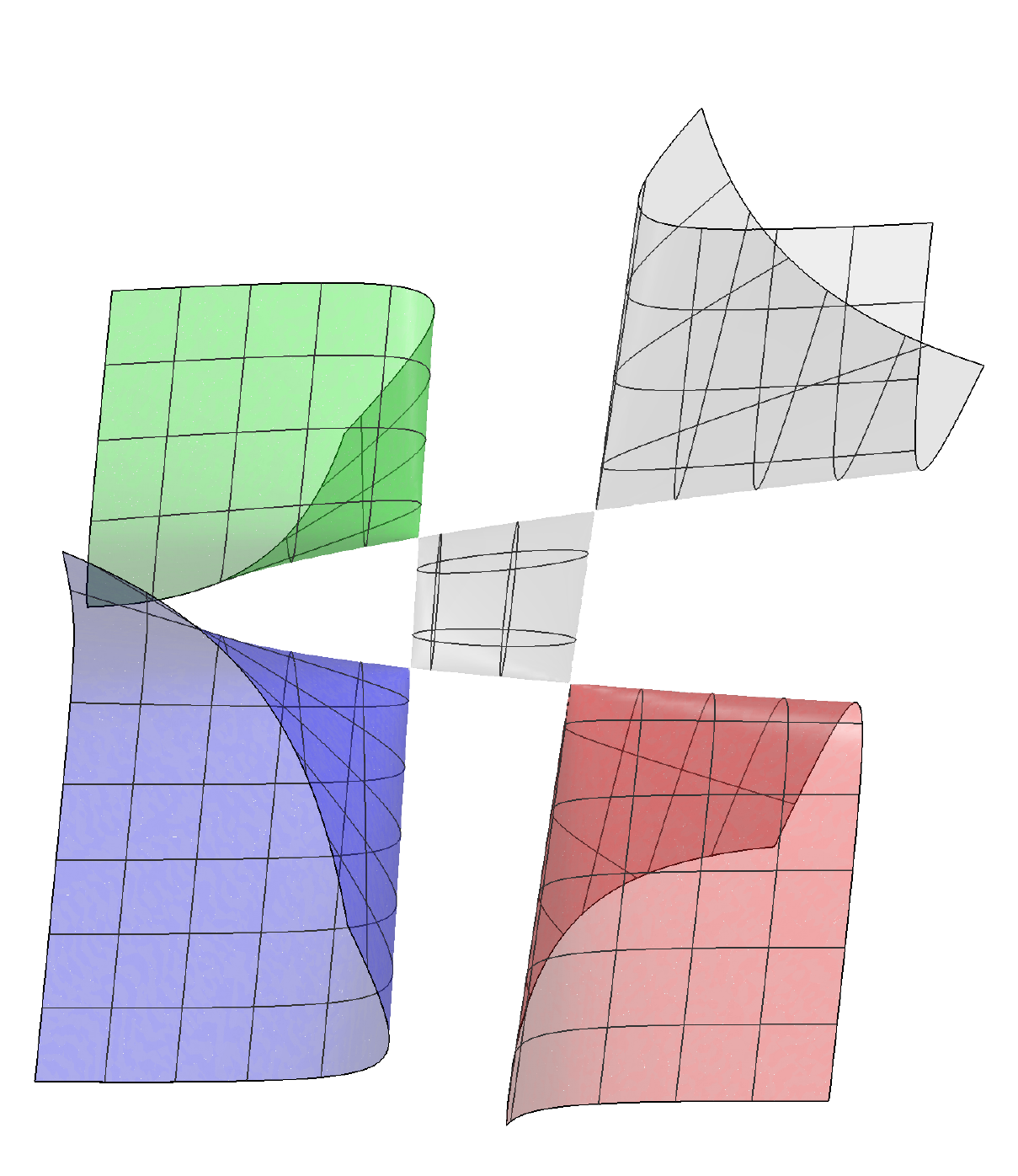}
\hspace{.5cm}
\includegraphics[scale=.3,trim={.3cm .5cm 0 0},clip]{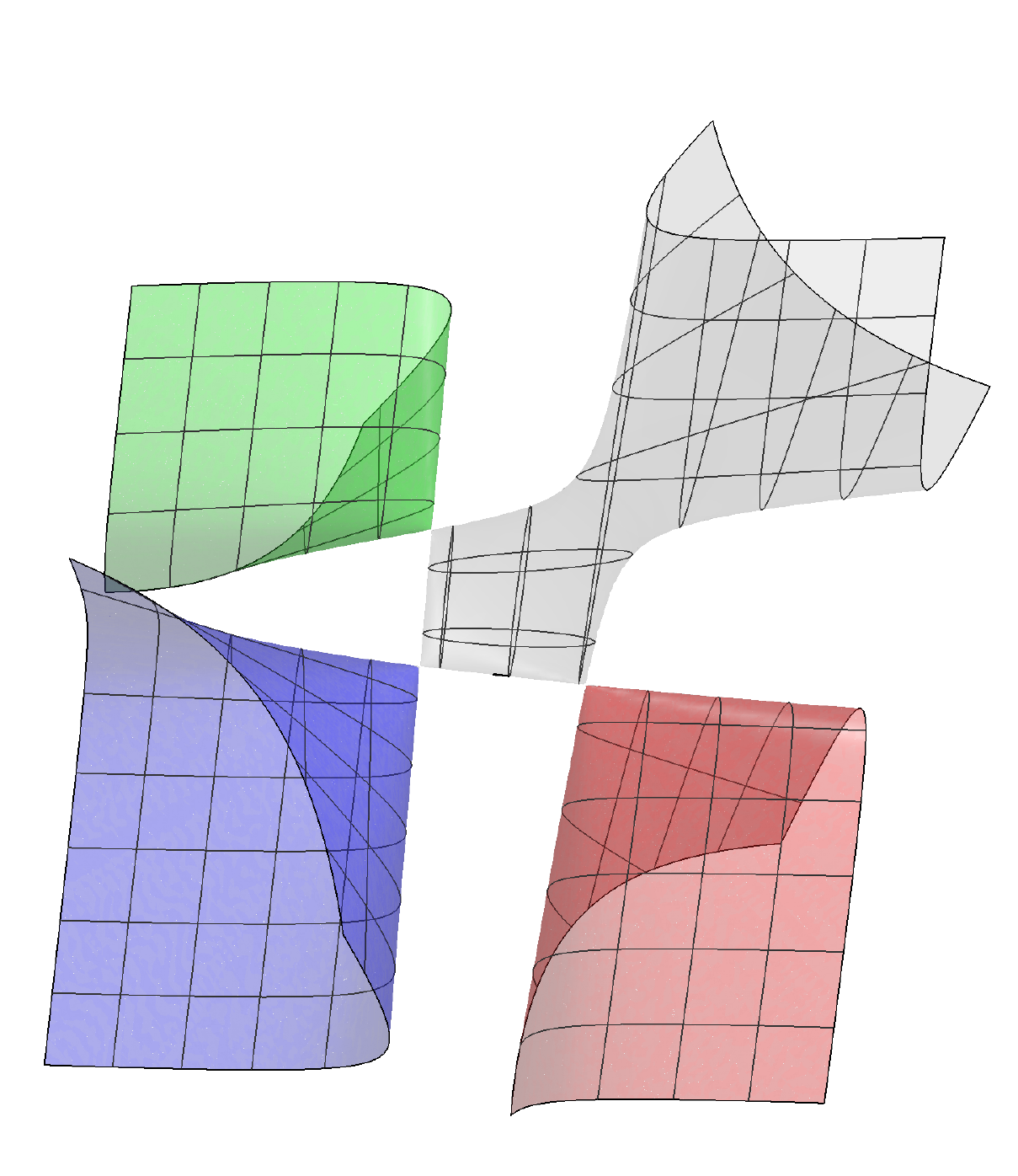}
\caption{Evolution of surfaces corresponding to real traces.}
\label{fig:realtraces}
\end{figure}

\section{Relations of length $4$: {\it f\/}-bendings and others}\label{sec:n-o-solutions}

If we are to keep the conjugacy classes of the special elliptic isometries, bendings provide all nonorthogonal generic length $4$ relations $R_{\alpha_2}^{p_2}R_{\alpha_1}^{p_1}=R_{\alpha_2}^{q_2}R_{\alpha_1}^{q_1}$ with $\delta=1$ and $\sigma p_i=\sigma q_i$ (see Theorem \ref{thm:bendingsarebendings}). Here, we describe all the remaining generic length $4$ relations.

\subsection{{\it f\/}-bendings}\label{subsec:fbendings}
First, let us describe the nonorthogonal generic length $4$ relations with $\delta=1$ and $\sigma p_i=\sigma q_i$ which are not bendings. Roughly speaking, they are constructed as follows: given a product $R_{\alpha_2}^{p_2}R_{\alpha_1}^{p_1}$, we find a one-parameter family of products $R_{\beta_2(t)}^{q_2(t)}R_{\beta_1(t)}^{q_1(t)}$ acting on the line $\mathrm L(p_1,p_2)$ as well as on its polar point in the same way as $R_{\alpha_2}^{p_2}R_{\alpha_1}^{p_1}$ does. As the next lemma shows, this in principle does not guarantee that we arrive at new length $4$ relations; however, as we show in Proposition~\ref{prop:constructfbendings}, a continuity argument settles the question.

\begin{lemma}
\label{lemma:regularprodspecreflec}
Let\/ $R,S\in\SU(2,1)$ be isometries that stabilize a noneuclidean line\/
$L:=\mathbb Pc^\bot$. Suppose that the actions of\/ $R$ and\/ $S$ on\/ $L$ coincide and that\/ $Rc=Sc$ {\rm(}that is, $c$ is an eigenvector of same eigenvalue for both\/ $R$ and\/ $S${\rm)}. Then\/ $R=S$ or\/ $R=R_{-1}^cS$.
\end{lemma}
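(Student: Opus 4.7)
The plan is to analyze $T := R S^{-1} \in \SU(2,1)$ and show it equals either the identity or $R_{-1}^c$. First I would observe that since both $R$ and $S$ stabilize the projective line $L = \mathbb{P}W$, where $W := c^\perp$, they each preserve the $2$-dimensional subspace $W$, and hence so does $T$. The hypothesis that $R$ and $S$ induce the same map on $L$ then says $T$ fixes every point of $L$ projectively, i.e., every nonzero $v \in W$ is an eigenvector of $T|_W$. A standard linear algebra argument (a linear endomorphism all of whose nonzero vectors are eigenvectors is a scalar) gives $T|_W = \lambda \cdot \mathrm{id}_W$ for some $\lambda \in \mathbb{C}^\times$.

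Next I would pin down $\lambda$ using the unitarity and determinant conditions. Since $L$ is noneuclidean, the Hermitian form is nondegenerate on $W$; as $T$ preserves this form, $|\lambda|^2 = 1$. The hypothesis $Rc = Sc$ (with common eigenvalue $\mu$, which is unitary since $c$ is nonisotropic) gives
\[
T c = R S^{-1} c = R(\mu^{-1} c) = \mu^{-1}(\mu c) = c,
\]
so $c$ is an eigenvector of $T$ with eigenvalue $\nu = 1$. Using the orthogonal decomposition $V = W \oplus \mathbb{C}c$, the condition $\det T = 1$ becomes $\lambda^2 \nu = \lambda^2 = 1$, forcing $\lambda \in \{1, -1\}$.

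Finally I would read off the two possibilities. If $\lambda = 1$, then $T$ is the identity on both $W$ and $\mathbb{C}c$, hence $T = \mathrm{id}$ and $R = S$. If $\lambda = -1$, I would compare $T$ with $R_{-1}^c$: using formula \eqref{eq:specialelliptic} with $\alpha = -1$, a direct computation gives $R_{-1}^c c = c$ and $R_{-1}^c v = -v$ for all $v \in W$. So $R_{-1}^c$ also acts as $-1$ on $W$ and fixes $c$, matching $T$ on the decomposition $V = W \oplus \mathbb{C}c$; therefore $T = R_{-1}^c$ and $R = R_{-1}^c S$. There is no real obstacle in this argument; the only delicate point is to remember that the projective agreement of $R|_L$ and $S|_L$ gives a common scalar on all of $W$ (not a potentially point-dependent one), which is immediate once linearity is invoked.
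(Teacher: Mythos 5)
Your argument is correct, and it takes a genuinely different route from the paper. The paper proves the lemma by a three-way case analysis on the type of $R$ and $S$ (elliptic, loxodromic, parabolic), writing each isometry as an explicit matrix in a basis adapted to $c$ and to the fixed points in $L$, and then comparing eigenvalues subject to the constraints coming from the common action on $L$ and from $\det=1$. You instead study the single isometry $T:=RS^{-1}$: the hypothesis that $R$ and $S$ agree projectively on $L$ means $T$ fixes $\mathbb P W$ pointwise, so $T|_W$ is a scalar $\lambda$; the hypothesis $Rc=Sc$ gives $Tc=c$; and $\det T=\lambda^2=1$ forces $\lambda=\pm1$, which matches $T$ against the identity or against $R_{-1}^c$ via formula \eqref{eq:specialelliptic}. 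Your version is uniform in the isometry type, and it sidesteps a point the paper's proof leaves implicit, namely that $R$ and $S$ can be put in a common normal form (same type, simultaneously diagonalizable or triangularizable); what the paper's coordinate computations buy in exchange is consistency with the explicit centralizer descriptions used in Proposition \ref{prop:bendings}. Two minor remarks: the step $|\lambda|=1$ from unitarity is redundant, since $\lambda^2=1$ over $\mathbb C$ already yields $\lambda=\pm1$; and the one point worth stating explicitly (which you do flag) is that "agreeing on $L$" upgrades to $T(Sv)=\mu_v\,Sv$ for every nonzero $v\in W$, whence the point-dependent scalar is in fact constant by the standard linear-algebra fact.
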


\noindent{\it Proof.}
Assume that $R$ and $S$ are elliptic. Let $\lambda_i$
and $\mu_i$, $i=1,2,3$, be their respective eigenvalues with $\mu_1=\lambda_1$. Let $\eta\in\mathbb C$, $|\eta|=1$, be such that $\eta\lambda_2=\mu_2$. The fact that the actions of $R$ and $S$ on $L$ are the same implies that $\lambda_2\overline\lambda_3=\mu_2\overline\mu_3$; hence, $\eta\lambda_3=\mu_3$. Moreover, $\mu_2\mu_3=\lambda_2\lambda_3$ because $R,S\in\SU(2,1)$. It follows that $\eta=\pm1$ which concludes the proof in this case.

If $R$ and $S$ are loxodromic, we can write
$R=\left[\begin{smallmatrix}\lambda^{-1}\overline\lambda & 0 & 0\\
0 & \lambda & 0 \\
0 & 0 & \overline\lambda^{-1}
\end{smallmatrix}\right]$
and
$S=\left[\begin{smallmatrix}\mu^{-1}\overline\mu & 0 & 0 \\ 0 & \mu & 0 \\
0 & 0 & \overline\mu^{-1}
\end{smallmatrix}
\right]$,
where $\lambda^{-1}\overline\lambda$ and $\mu^{-1}\overline\mu$ are, respectively, the (equal) eigenvalues associated to $c$. Let $\eta\in\mathbb C$ be such that $\eta\lambda=\mu$. The fact that the actions of $R$ and $S$ on $L$ are the same implies that $\lambda\overline\lambda=\mu\overline\mu$. So, $|\eta|=1$. It now follows from $\lambda^{-1}\overline\lambda=\mu^{-1}\overline\mu$ that $\eta=\pm1$.

Finally, assume that $R$ and $S$ are parabolic. Since $R$ and $S$ stabilize a noneuclidean complex line, neither of them can be $3$-step unipotent (see Subsection \ref{subsec:isometries}). As in the proof of Proposition~\ref{prop:bendings}, we write
$R=\left[\begin{smallmatrix}\lambda^{-2} & 0 & 0\\
0 & \lambda & ir\lambda \\
0 & 0 & \lambda
\end{smallmatrix}\right]$
and
$S=\left[\begin{smallmatrix}\mu^{-2} & 0 & 0 \\ 0 & \mu & is\mu \\
0 & 0 & \mu
\end{smallmatrix}
\right]$,
where $|\lambda|=1$, $|\mu|=1$, $r,s\in\mathbb R$, and $\lambda^{-2}$ and
$\mu^{-2}$ are, respectively, the (equal) eigenvalues related to $c$. (The $2$-step unipotent case corresponds to $\lambda=1$ or $\mu=1$.)
Hence, $\mu=\pm\lambda$. The above matrices are written in a basis $c,v_1,v_2$, where $v_1,v_2\in L\cap\SV$, $v_1\ne v_2$, and $v_1$ is the isotropic fixed point of both $R$ and $S$. The action of (say) $R$ on $L\setminus\{v_2\}$ is given by $v_1+\tau v_2\mapsto v_1+\big(\frac{\tau}{1+ir\tau}\big) v_2$. By hypothesis, $R$ and $S$ act in the same way on $L$; so, $r=s$.$\hfill\square$

\begin{wrapfigure}{r}{0.2\textwidth}
\centering
\includegraphics[scale=0.7]{figs/fly.mps}
\end{wrapfigure}

\phantom{x}
\vspace{-2ex}

\begin{defi}\label{defi:fbending}
A relation $R_{\alpha_2}^{p_2}R_{\alpha_1}^{p_1}=R_{\beta_2}^{q_2}R_{\beta_1}^{q_1}$ is called an {\it $f$-bending\/} if: ({\it i\/})~$q_i\in G_i$, $i=1,2$, where $G,G_1,G_2$ are geodesics associated to  $R:=R_{\alpha_2}^{p_2}R_{\alpha_1}^{p_1}$ {\rm(}see Definition \ref{defi:geoassociated}{\rm)}; ({\it ii\/}) $\sigma q_i=\sigma p_i$ and $\langle p_i,q_i\rangle\ne0$, $i=1,2$; ({\it iii\/})~$\alpha_1\alpha_2=\beta_1\beta_2$ and $\alpha_i\sim\beta_i$ (see Definition \ref{defi:samecomponent}); ({\it iv\/}) neither $\mathrm G[p_1,q_1]$ nor $\mathrm G[p_2,q_2]$ contains a fixed point of $R$.
\end{defi}

It is worthwhile observing that, when $R_{\alpha_2}^{p_2}R_{\alpha_1}^{p_1}=R_{\beta_2}^{q_2}R_{\beta_1}^{q_1}$ is an $f$-bending relation, the condition $q_i\in G_i$ implies $G_i=H_i$, where $G,G_1,G_2$ and $H,H_1,H_2$ are the geodesics respectively associated to $R_1:=R_{\alpha_2}^{p_2}R_{\alpha_1}^{p_1}$ and to $R_2:=R_{\beta_2}^{q_2}R_{\beta_1}^{q_1}$.

In order to characterize $f$-bendings in Proposition \ref{prop:constructfbendings} and Theorem \ref{thm:geometricinter}, we need the following simple lemma whose proof is straightforward.

\begin{lemma}
\label{lemma:anglesequivalence}
Let $\alpha_i,\beta_i\in\mathbb S^1\setminus\Omega$ with $\alpha_i\sim\beta_i$ and let $a_i,b_i$ be the primitives of $\alpha_i,\beta_i$, $i=1,2$ {\rm(}see\/ {\rm Definition \ref{defi:samecomponent}).} Then $a_1a_2=b_1b_2$ iff $\alpha_1\alpha_2=\beta_1\beta_2$.
\end{lemma}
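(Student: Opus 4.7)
The plan is to exploit the fact that each parameter $\alpha\in\mathbb S^1\setminus\Omega$ lying in the arc $I_j$ has primitive $a$ satisfying $a^2=\omega^{-j}\alpha$. I would first verify this directly from the definition of primitive: since $(a^2)^3=\alpha^3$, the ratio $a^2/\alpha$ is a cube root of unity; and since $a\in J$ implies $a^2\in I_0$ (because $J^2=I_0$), the only cube root of unity sending $\alpha\in I_j$ into $I_0$ is $\omega^{-j}$.

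With this observation in hand, writing $\alpha_i,\beta_i\in I_{j_i}$ gives
$$a_1^2a_2^2=\omega^{-(j_1+j_2)}\alpha_1\alpha_2,\qquad b_1^2b_2^2=\omega^{-(j_1+j_2)}\beta_1\beta_2.$$
Hence $\alpha_1\alpha_2=\beta_1\beta_2$ is equivalent to $(a_1a_2)^2=(b_1b_2)^2$. One direction of the biconditional is then immediate: from $a_1a_2=b_1b_2$, squaring yields $(a_1a_2)^2=(b_1b_2)^2$, whence $\alpha_1\alpha_2=\beta_1\beta_2$.

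The remaining direction requires passing from $(a_1a_2)^2=(b_1b_2)^2$, i.e.\ $a_1a_2=\pm b_1b_2$, back to $a_1a_2=b_1b_2$, so the only real step is ruling out the minus sign. For this I would use that $a_i,b_i\in J$ means $\arg a_i,\arg b_i\in(0,\pi/3)$, so the products $a_1a_2$ and $b_1b_2$ both have arguments in $(0,2\pi/3)$ and thus belong to $I_0$. Consequently $(a_1a_2)/(b_1b_2)$ has argument in $(-2\pi/3,2\pi/3)$, an interval that excludes $-1$ (whose argument is $\pi$). Therefore $a_1a_2=-b_1b_2$ is impossible and the equality $a_1a_2=b_1b_2$ follows.

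The only obstacle worth flagging is precisely this sign ambiguity coming from taking a square root; it is resolved by the narrowness of the arc $J$, which forces $a_1a_2,b_1b_2$ to lie in the single arc $I_0$ and prevents them from differing by $-1$.
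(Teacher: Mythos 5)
Your proof is correct and complete; the paper omits the proof entirely (calling it straightforward), and your argument — identifying $a^2=\omega^{-j}\alpha$ for $\alpha\in I_j$, reducing the claim to $(a_1a_2)^2=(b_1b_2)^2$, and ruling out the sign ambiguity because $a_1a_2,b_1b_2$ both lie in $I_0$ — is exactly the natural way to fill in those details.
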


In what follows, we obtain $f$-bending relations via a certain deformation. Such a deformation will be shown to exist in Theorem \ref{thm:geometricinter}.

An $f$-{\it configuration\/} consists of two tuples $(p_1,p_2,\alpha_1,\alpha_2)$, $(q_1,q_2,\beta_1,\beta_2)$ satisfying the following conditions: ({\it i\/}) $p_1,p_2$ is a pair of distinct nonisotropic nonorthogonal points and the same holds for $q_1,q_2$; ({\it ii\/}) $\sigma p_i=\sigma q_i$ and $\langle p_i,q_i\rangle\ne0$, $i=1,2$; ({\it iii\/}) $\alpha_i,\beta_i\in\mathbb S^1\setminus\Omega$; ({\it iv\/}) $G_i=H_i$, $\angle_{p_1}G_1G=\Arg a_1^3$, $\angle_{p_2}GG_2=\Arg a_2^3$, $\angle_{q_1}G_1H=\Arg b_1^3$, $\angle_{q_2}HG_2=\Arg b_2^3$, where $a_i,b_i$ are the primitive angles of $\alpha_i,\beta_i$ and $G,G_1,G_2$ (respectively, $H,H_1,H_2$) are the geodesics associated to $R_{\alpha_2}^{p_2}R_{\alpha_1}^{p_1}$ (respectively, to $R_{\beta_2}^{q_2}R_{\beta_1}^{q_1}$); and ({\it v\/}) neither $\mathrm G[p_1,q_1]$ nor $\mathrm G[p_2,q_2]$ contains an intersection point of $G_1,G_2$. In other words, an $f$-configuration is essentially a pair of products $R_1:=R_{\alpha_2}^{p_2}R_{\alpha_1}^{p_1}$ and $R_2:=R_{\beta_2}^{q_2}R_{\beta_1}^{q_1}$ whose actions on $\mathrm{L}(p_1,p_2)=\mathrm{L}(q_1,q_2)$ coincide and whose associated geodesics $G_i,H_i$ have been made equal after a bending.

Let $(p_1,p_2,\alpha_1,\alpha_2)$, $(q_1,q_2,\beta_1,\beta_2)$ be an $f$-configuration. We take parameterizations $\gamma_i:[a,b]\to G_i$ such that $\sigma\gamma_i(t)=\sigma p_i$ and $\big\langle\gamma_1(t),\gamma_2(t)\big\rangle\ne0$ for all $t\in[a,b]$. Let $h_i=h_i(t)\in J$ be defined by $\angle_{\gamma_1(t)}G_1G'=\Arg h_1(t)^3$ and $\angle_{\gamma_2(t)}G'G_2=\Arg h_2(t)^3$, where $G'=G'_t:=\mathrm{G}{\wr}\gamma_1(t),\gamma_2(t){\wr}$. Moreover, let $\eta_i=\eta_i(t)\in\mathbb S^1$ be defined by $\eta_i\sim\alpha_i$ and $\eta_i(t)^3=(h_i(t)^2)^3$, $t\in[a,b]$. Hence, $\eta_i(t)$ is the parameter in the same component as $\alpha_i$ whose primitive equals $h_i(t)$.

We say that a given $f$-configuration is $f$-{\it connected\/} if there exist continuous parameterizations $\gamma_i$ as above such that $\gamma_i(a)=p_i$,
$\gamma_i(b)=q_i$, and $\eta_1(t)\eta_2(t)=\alpha_1\alpha_2$ for all $t\in[a,b]$. By Lemma \ref{lemma:anglesequivalence}, it is equivalent to require $h_1(t)h_2(t)=a_1a_2$ for all $t\in[a,b]$. Note that $\alpha_1\alpha_2=\beta_1\beta_2$ and $\alpha_i\sim\beta_i$ if the $f$-configuration is $f$-connected. The converse also holds:

\begin{thm}
\label{thm:geometricinter}
An\/ $f$-configuration\/ $(p_1,p_2,\alpha_1,\alpha_2)$, $(q_1,q_2,\beta_1,\beta_2)$ is\/ $f$-connected iff\/ $\alpha_1\alpha_2=\beta_1\beta_2$ and\/ $\alpha_i\sim\beta_i$, $i=1,2$.
\end{thm}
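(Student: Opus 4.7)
The ``only if'' direction follows directly from the definitions: given an $f$-connection, at $t=b$ the parameter $\eta_i(b)$ is—by construction—the unique element in the component of $\alpha_i$ with primitive $h_i(b)=b_i$, so its coincidence with $\beta_i$ forces $\alpha_i\sim\beta_i$, and then $\eta_1(b)\eta_2(b)=\alpha_1\alpha_2$ reads $\beta_1\beta_2=\alpha_1\alpha_2$.

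For the converse, assume $\alpha_1\alpha_2=\beta_1\beta_2$ and $\alpha_i\sim\beta_i$; Lemma \ref{lemma:anglesequivalence} converts these into the single identity $a_1a_2=b_1b_2=:c_0\in I_0=J^2$. My plan is to realize the desired family as a graph inside the level set of the smooth map
\[
F:\mathcal D\longrightarrow I_0,\qquad F(x_1,x_2):=h_1(x_1,x_2)\,h_2(x_1,x_2),
\]
where $\mathcal D\subset G_1\times G_2$ is the open real $2$-manifold of admissible pairs, i.e.\ $\sigma x_i=\sigma p_i$, $x_1\ne x_2$, $\langle x_1,x_2\rangle\ne 0$. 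Both $(p_1,p_2)$ and $(q_1,q_2)$ lie in $F^{-1}(c_0)$, so it suffices to connect them by a continuous path inside this level set.

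I would apply the implicit function theorem locally and then continue globally. Let $x_1$ traverse $\mathrm G[p_1,q_1]$ monotonically; for each such $x_1$, solve $F(x_1,x_2)=c_0$ continuously as $x_2=f(x_1)\in G_2$ via IFT. The non-degeneracy hypothesis reduces to the nonvanishing of the angular derivative of $\angle_{x_2}G'G_2$ as $x_2$ slides along $G_2$, which fails only where $x_2$ hits an intersection point of $G_1\cap G_2$—a possibility excluded on $\mathrm G[p_2,q_2]$ by condition (v) of the $f$-configuration.

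The main obstacle is to show that the maximal continuation of $f$ lands on $q_2$ (rather than on some spurious second preimage of $c_0$ under $F(q_1,\cdot)$). My plan here is a topological confinement argument: condition (v) applied to $\mathrm G[p_2,q_2]$ traps the graph of $f$ in a simply-connected tubular neighbourhood of $\mathrm G[p_1,q_1]\times \mathrm G[p_2,q_2]$ inside $\mathcal D$, and the monotonicity of $h_2(q_1,\cdot)$ on the relevant arc of $G_2$, together with the fixed component choice $\eta_2\sim\alpha_2\sim\beta_2$, singles out the correct branch of $F^{-1}(c_0)$ and forces $f(q_1)=q_2$. The auxiliary verifications—preservation of signatures, which is automatic since $\sigma p_i=\sigma q_i$ and the segment $\mathrm G[p_i,q_i]$ lies in the sign-constant arc of $G_i$; and non-orthogonality, an open condition valid at both endpoints, hence, by shrinking the parameterizing arc if necessary, on the constructed path—then complete the construction.
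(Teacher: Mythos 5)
Your reformulation of the problem --- realizing the deformation as a path in the level set of $F(x_1,x_2)=h_1h_2$, which by Lemma \ref{lemma:anglesequivalence} is the angle-sum condition $\Arg h_1^3+\Arg h_2^3=\Arg a_1^3+\Arg a_2^3$ --- is essentially the same reduction the paper makes. The ``only if'' direction is fine (modulo the small point that the definition of $f$-connectedness only gives $\eta_i(b)^3=\beta_i^3$ and $\eta_i(b)\sim\alpha_i$ rather than $\eta_i(b)=\beta_i$ outright, but this is at the same level of detail as the paper). The problem is the converse, and it sits exactly where you flag ``the main obstacle.'' Your resolution --- that condition (\emph{v}) ``traps the graph of $f$ in a simply-connected tubular neighbourhood of $\mathrm G[p_1,q_1]\times\mathrm G[p_2,q_2]$'' --- is asserted, not proved, and I do not see why it should hold as stated: condition (\emph{v}) only says the two segments avoid $G_1\cap G_2$; it gives no a priori bound keeping the IFT continuation $x_2=f(x_1)$ inside (or near) $\mathrm G[p_2,q_2]$, nor does it prevent $f(x_1)$ from drifting towards an isotropic point of $G_2$, a point orthogonal to $x_1$, or a point where $h_2$ leaves $J$. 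Since the whole content of the theorem is that the branch through $(p_1,p_2)$ terminates at $(q_1,q_2)$, this is a genuine gap rather than a routine verification.

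What the paper supplies at this point is a concrete monotonicity mechanism that simultaneously confines the path and forces the correct endpoint: in the equal-signature case it first shows that $\mathrm G[p_1,p_2]$ and $\mathrm G[q_1,q_2]$ meet in a point $y$ with $\Delta(p_1,y,q_1)$ and $\Delta(q_2,y,p_2)$ of opposite orientation and equal area, and then parameterizes the deformation by requiring equality of these areas, which (via Gauss--Bonnet) is exactly the condition $h_1h_2=a_1a_2$ and is \emph{strictly monotone} in the second endpoint; in the mixed-signature case it passes to the orthogonal points $\tilde p_2,\tilde q_2$, proves $\mathrm G[p_1,\tilde p_2]\cap\mathrm G[q_1,\tilde q_2]=\varnothing$, and uses the oriented area of the quadrilateral $p_1,\gamma_1(s_1),\tilde\gamma_2(s_2),\tilde p_2$ in the same way. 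Note also that your nondegeneracy claim (``fails only where $x_2$ hits $G_1\cap G_2$'') is not quite right: at those points what breaks is that the angle degenerates to $0$ or $\pi$ so $h_i$ leaves $J$, while in the spherical case the derivative of the angle sum can genuinely vanish elsewhere, and in the mixed-signature case the angles must be measured at $\tilde p_2$ rather than $p_2$. These case distinctions are not cosmetic; the paper's proof treats them separately precisely because the area/monotonicity argument takes a different form in each. To repair your argument you would need to prove a quantitative monotonicity statement for $t\mapsto F(x_1,\cdot)$ along $G_2$ in each geometric case --- at which point you would essentially be reconstructing the paper's area computation.
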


\noindent{\it Proof.} We already know that $f$-connectedness implies $\alpha_1\alpha_2=\beta_1\beta_2$ and $\alpha_i\sim\beta_i$.
Assume that $\alpha_1\alpha_2=\beta_1\beta_2$ and $\alpha_i\sim\beta_i$. Clearly, we can assume $p_i\ne q_i$, $i=1,2$, since $p_i=q_i$ is equivalent to $\alpha_i=\beta_i$.

Suppose that $\sigma p_1=\sigma p_2$. It follows from $\alpha_1\alpha_2=\beta_1\beta_2$ and Lemma \ref{lemma:anglesequivalence} that $A_1+A_2=B_1+B_2$, where $A_i:=\Arg a_i^3$, $B_i:=\Arg b_i^3$, and $a_i,b_i$ are the respective primitives of $\alpha_i,\beta_i$. It is now easy to see that the geodesic segments $\mathrm{G}[p_1,p_2]$ and $\mathrm{G}[q_1,q_2]$ intersect at a point $y$ and that the triangles $\Delta(p_1,y,q_1)$ and $\Delta(q_2,y,p_2)$ have opposite orientations and the same area (see item ({\it a\/}) in the figure below).

Let $\gamma_i:[0,\ell_i]\to\mathrm G[p_i,q_i]$ be the arc length parameterization of the geodesic segment $\mathrm G[p_i,q_i]$, $i=1,2$. Taking into account condition ({\it v\/}) in the definition of $f$-configuration, it follows from simple considerations of plane hyperbolic (or, according to the nature of the projective line $\mathrm L(p_1,p_2)$, of spherical) geometry that, given $s\in[0,\ell_1]$, there exists a unique $t=t(s)\in[0,\ell_2]$ such that the geodesic segments $\mathrm G[p_1,p_2]$ and $\mathrm G\big[\gamma_1(s),\gamma_2(t)\big]$ intersect at a point $y_s$ and the triangles $\Delta\big(p_1,y_s,\gamma_1(s)\big)$ and $\Delta\big(\gamma_2(t),y_s,p_2\big)$ have opposite orientations and the same area (see item~({\it b\/}) in the figure below). In this way, we obtain a strictly increasing bijection (therefore, continuous) $[0,\ell_1]\to[0,\ell_2]$ which leads to a new parameterization $\gamma_2:[0,\ell_1]\to\mathrm G[p_2,q_2]$. Now, $\gamma_1,\gamma_2$ satisfy the conditions in the definition of $f$-connectedness because $\eta_1(t)\eta_2(t)=\alpha_1\alpha_2$ follows from an argument analogous to the one in the previous paragraph.

\begin{figure}[!ht]
\centering
\includegraphics[scale=0.8]{figs/fbend01.mps}
\hspace{.3cm}
\includegraphics[scale=0.8]{figs/fbend02.mps}
\hspace{.3cm}
\includegraphics[scale=0.8]{figs/fbend03.mps}
\hspace{.3cm}
\includegraphics[scale=0.8]{figs/fbend04.mps}
\end{figure}

Suppose that $\sigma p_1\ne\sigma p_2$ (in particular, $L:=\mathrm L(p_1,p_2)$ is hyperbolic) and take the points $\tilde p_2,\tilde q_2\in L$ orthogonal to $p_2,q_2$. Let us show that $\mathrm{G}[p_1,\tilde p_2]\cap\mathrm{G}[q_1,\tilde q_2]=\varnothing$. Assume that the mentioned segments intersect at a point $y$. The areas of the triangles $\Delta(y,p_1,q_1)$ and $\Delta(y,\tilde p_2,\tilde q_2)$ equal $\pm(A_1-B_1)-C>0$ and $\pm(A_2-B_2)-C>0$ (depending on the orientation of the triangles) where $A_i,B_i$ are defined as above and $C$ is the interior angle of the triangles at $y$. It follows from Lemma~\ref{lemma:anglesequivalence} that $A_1+A_2=B_1+B_2$; this leads to $C<0$, a contradiction. Therefore, the quadrilateral of vertices $p_1,q_1,\tilde q_2,\tilde p_2$ is simple and its oriented area equals $\pm2(B_1-A_1)=\pm2(A_2-B_2)$.

Assume $\Arg a_1^3<\Arg b_1^3$ and take $[a,b]:=[\Arg a_1^3,\Arg b_1^3]$. Let $\gamma_1:[0,\ell_1]\to\mathrm G[p_1,q_1]$ and $\tilde\gamma_2:[0,\ell_2]\to\mathrm G[\tilde p_2,\tilde q_2]$ be arc length parameterizations. Given $t\in[a,b]$, it follows from usual arguments in plane hyperbolic geometry that: (1) there exists unique $s_1=s_1(t)\in[0,\ell_1]$, $s_2=s_2(t)\in[0,\ell_2]$ such that the oriented area of the quadrilateral of vertices $p_1,\gamma_1(s_1),\tilde\gamma_2(s_2),\tilde p_2$ equals $\pm2(A_1-t)$ (in order to see this, consider the geodesic $\Gamma_{t,s}$ through the point $\gamma_1(s)$ such that $\angle_{\gamma_1(s)}G_1\Gamma_{t,s}=t$ and vary $s\in[0,\ell_1]$); (2) the bijections $t\mapsto s_i(t)$, $i=1,2$, are strictly increasing (hence, continuous). We obtain new parameterizations $\gamma_1:[a,b]\to\mathrm G[p_1,q_1]$ and $\tilde\gamma_2:[a,b]\to\mathrm G[\tilde p_2,\tilde q_2]$ that imply the $f$-connectedness of the $f$-configurations in question because $\Area\big(p_1,\gamma_1(s_1),\tilde\gamma_2(s_2),\tilde p_2\big)=\pm2(A_1-t)$ is equivalent to $\eta_1(t)\eta_2(t)=\alpha_1\alpha_2$, where $\Area\big(p_1,\gamma_1(s_1),\tilde\gamma_2(s_2),\tilde p_2\big)$ denotes the oriented area of the oriented quadrilateral of vertices $p_1,\gamma_1(s_1),\tilde\gamma_2(s_2),\tilde p_2$.$\hfill\square$

\begin{prop}
\label{prop:constructfbendings}
An\/ $f$-configuration\/ $(p_1,p_2,\alpha_1,\alpha_2),(q_1,q_2,\beta_1,\beta_2)$ is\/ $f$-connected iff\/ $R_{\alpha_2}^{p_2}R_{\alpha_1}^{p_1}=R_{\beta_2}^{q_2}R_{\beta_1}^{q_1}$ is an\/ $f$-bending relation.
\end{prop}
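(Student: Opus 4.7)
The strategy is to combine Theorem \ref{thm:geometricinter} with Lemmas \ref{lemma:construction} and \ref{lemma:regularprodspecreflec}. The forward implication is essentially immediate: if the relation is an $f$-bending, Definition \ref{defi:fbending}(iii) provides $\alpha_1\alpha_2=\beta_1\beta_2$ and $\alpha_i\sim\beta_i$; the remaining $f$-configuration data follow from Definition \ref{defi:fbending}(i), (ii), (iv) together with the observation $G_i=H_i$ noted after that definition (and from the fact that fixed points of $R:=R_{\alpha_2}^{p_2}R_{\alpha_1}^{p_1}$ lying on $L:=\mathrm L(p_1,p_2)$ are exactly the intersection points of $G_1$ and $G_2$, so $f$-bending condition (iv) matches $f$-configuration condition (v)). Applying Theorem \ref{thm:geometricinter} then yields $f$-connectedness.

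For the converse, assume the $f$-configuration is $f$-connected, with continuous parameterizations $\gamma_i:[a,b]\to G_i$ satisfying $\gamma_i(a)=p_i$, $\gamma_i(b)=q_i$, and continuous $\eta_i(t)\in\mathbb S^1\setminus\Omega$ with $\eta_i(a)=\alpha_i$, $\eta_i(b)=\beta_i$, and $\eta_1(t)\eta_2(t)=\alpha_1\alpha_2$ for every $t$. Form the continuous path
\[
R_t:=R_{\eta_2(t)}^{\gamma_2(t)}R_{\eta_1(t)}^{\gamma_1(t)}\in\SU(2,1),
\]
which connects $R_a=R:=R_{\alpha_2}^{p_2}R_{\alpha_1}^{p_1}$ to $R_b=R_{\beta_2}^{q_2}R_{\beta_1}^{q_1}$. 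The primitives $h_i(t)$ appearing in the definition of $f$-connectedness are set up precisely so that, at each $t$, the geodesics associated (in the sense of Definition \ref{defi:geoassociated}) to $R_t$ via $\gamma_1(t),\gamma_2(t)$ are exactly $G_1,G_2$. Hence Lemma \ref{lemma:construction} gives that $R_t$ acts on $L$ as the product of reflections $r_2\circ r_1$ in $G_2$ and $G_1$, which is precisely the action of $R$ on $L$. Moreover, $R_t$ fixes the polar point $c$ of $L$ with eigenvalue $\eta_1(t)\eta_2(t)=\alpha_1\alpha_2$, matching the $R$-eigenvalue of $c$. Lemma \ref{lemma:regularprodspecreflec} therefore applies at every $t$ and yields
\[
R_t\in\{R,\;R_{-1}^c R\}.
\]

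The main obstacle is to choose the correct alternative uniformly in $t$. The two candidate isometries are distinct (since $R_{-1}^c\neq\mathrm{id}$), so the subset $\{R,R_{-1}^c R\}\subset\SU(2,1)$ is discrete; as $t\mapsto R_t$ is continuous and $R_a=R$, the path must remain on the component $\{R\}$ throughout, giving $R_t=R$ for every $t\in[a,b]$. In particular $R_b=R_{\beta_2}^{q_2}R_{\beta_1}^{q_1}=R$, establishing the relation. That this relation satisfies the $f$-bending conditions is immediate from the $f$-configuration/$f$-connectedness data: $q_i=\gamma_i(b)\in G_i$ gives (i); the signature and non-orthogonality conditions (ii) are built into the definition of $f$-configuration; (iii) follows from $f$-connectedness; and (iv) is the translation of $f$-configuration condition (v) via the identification of fixed points of $R$ on $L$ with intersection points of $G_1$ and $G_2$.
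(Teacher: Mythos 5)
Your proof is correct and follows essentially the same route as the paper's: the forward direction via Theorem \ref{thm:geometricinter}, and the converse by forming the continuous path $R_t=R_{\eta_2(t)}^{\gamma_2(t)}R_{\eta_1(t)}^{\gamma_1(t)}$, invoking Lemmas \ref{lemma:construction} and \ref{lemma:regularprodspecreflec} to get $R_t\in\{R,R_{-1}^cR\}$, and using continuity together with $R_a=R$ to conclude $R_t=R$ for all $t$. The only difference is cosmetic: you spell out the identification of fixed points of $R$ on $L$ with intersection points of $G_1,G_2$, which the paper leaves implicit.
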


\begin{proof}
If $R_{\alpha_2}^{p_2}R_{\alpha_1}^{p_1}=R_{\beta_2}^{q_2}R_{\beta_1}^{q_1}$ is an $f$-bending relation, one can readily see that $(p_1,p_2,\alpha_1,\alpha_2)$, $(q_1,q_2,\beta_1,\beta_2)$ is an $f$-configuration with $\alpha_1\alpha_2=\beta_1\beta_2$ and $\alpha_i\sim\beta_i$. It follows from Theorem \ref{thm:geometricinter} that the $f$-configuration is $f$-connected. Conversely, let $\gamma_i:[a,b]\to\mathrm{G}[p_i,q_i]$, be the parameterizations associated to the $f$-connectedness of the $f$-configuration. By construction, $G'_t,G_1,G_2$, where $G'_t:=\mathrm{G}{\wr}\gamma_1(t),\gamma_2(t){\wr}$, are the geodesics associated to $S=S(t):=R_{\eta_2(t)}^{\gamma_2(t)}R_{\eta_1(t)}^{\gamma_1(t)}$ (see the above definition of $f$-connectedness for the definition of the functions $\eta_i$). Lemma \ref{lemma:construction} implies that the actions of $R:=R_{\alpha_2}^{p_2}R_{\alpha_1}^{p_1}$ and of $S$ on $L:=\mathrm L(p_1,p_2)$ coincide. Furthermore, it follows from $\eta_1(t)\eta_2(t)=\alpha_1\alpha_2$ that $Rc=S(t)c$, where $c$ is the polar point of $L$. So, by Lemma~\ref{lemma:regularprodspecreflec} and by continuity, either $R=S(t)$ or $R=R_{-1}^cS(t)$ for all $t$. It remains to take $t=a,b$.
\end{proof}

In view of Theorem \ref{thm:geometricinter}, it is natural to ask the following. Given a product $R_{\alpha_2}^{p_2}R_{\alpha_1}^{p_1}$ and parameters $\beta_i\in\mathbb S^1\setminus\Omega$ satisfying $\alpha_1\alpha_2=\beta_1\beta_2$ and $\alpha_i\sim\beta_i$, do there exist $q_i\in G_i$, $\sigma q_i=\sigma p_i$, such that $R_{\alpha_2}^{p_2}R_{\alpha_1}^{p_1}=R_{\beta_2}^{q_2}R_{\beta_1}^{q_1}$? The answer is affirmative at least in the particular cases dealt with in the next couple of propositions.

\begin{prop}
\label{prop:fbendingangles}
Let\/ $p_i\in\PCV\setminus\SV$ be distinct nonorthogonal points of the same signature such that\/ $\mathrm L(p_1,p_2)$ is hyperbolic and let\/
$\alpha_i,\beta_i\in\mathbb S^1\setminus\Omega$ be parameters, $i=1,2$.
Assume that\/ $\alpha_1\alpha_2=\beta_1\beta_2$ and\/ $\alpha_i\sim\beta_i$, $i=1,2$. There exists an\/ $f$-bending relation\/ $R_{\alpha_2}^{p_2}R_{\alpha_1}^{p_1}=R_{\beta_2}^{q_2}R_{\beta_1}^{q_1}$.
\end{prop}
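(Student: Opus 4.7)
The plan is to reduce the construction to producing an $f$-configuration, and then to invoke the results already established. By Proposition~\ref{prop:constructfbendings}, an $f$-bending relation is precisely an $f$-connected $f$-configuration, and by Theorem~\ref{thm:geometricinter}, every $f$-configuration whose parameters satisfy $\alpha_1\alpha_2=\beta_1\beta_2$ and $\alpha_i\sim\beta_i$ is automatically $f$-connected. Thus it would suffice to exhibit points $q_1\in G_1$ and $q_2\in G_2$ with $\sigma q_i=\sigma p_i$ and $\langle p_i,q_i\rangle\ne0$ such that the geodesic $H:=\mathrm{G}{\wr}q_1,q_2{\wr}$ satisfies $\angle_{q_1}G_1H=B_1$ and $\angle_{q_2}HG_2=B_2$, and such that neither segment $\mathrm{G}[p_i,q_i]$ crosses $G_1\cap G_2$. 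Here $G,G_1,G_2$ are the geodesics associated to $R:=R_{\alpha_2}^{p_2}R_{\alpha_1}^{p_1}$ by Lemma~\ref{lemma:construction}, and writing $A_i:=\Arg a_i^3$ and $B_i:=\Arg b_i^3$, Lemma~\ref{lemma:anglesequivalence} translates the assumption $\alpha_1\alpha_2=\beta_1\beta_2$ into the angle-sum constraint $A_1+A_2=B_1+B_2$.

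This reduces the proposition to a question of plane hyperbolic geometry on $L$: since $L$ is hyperbolic and the $p_i$ have the same signature, both lie in a common Poincar\'e disc (either $L\cap\BV$ or $L\cap\EV$). By Lemma~\ref{lemma:prodspecreg}, $R$ is regular, so I would split into three subcases according to the type of $R$: regular elliptic (so that $G_1,G_2$ intersect at the rotation centre), loxodromic (so that $G_1,G_2$ are ultraparallel with common perpendicular $G$), or ellipto-parabolic (so that $G_1,G_2$ are asymptotic at a common ideal point). In each subcase I would parameterize $q_1$ along the connected component of $p_1$ in $G_1\setminus(G_1\cap G_2)$ among nonisotropic points of signature $\sigma p_1$, consider the unique geodesic $H_{q_1}$ through $q_1$ making oriented angle $B_1$ with $G_1$ on the same side as $G$, and set $q_2(q_1):=H_{q_1}\cap G_2$ when this intersection exists. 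The angle $\angle_{q_2}H_{q_1}G_2$ then depends continuously on $q_1$, and by Gauss--Bonnet applied to the oriented quadrilateral $p_1,q_1,q_2,p_2$ its deviation from $B_2$ is controlled by the signed area of that quadrilateral; the identity $A_1+A_2=B_1+B_2$ forces this signed area to vanish precisely at the desired position, whose existence would then follow by an intermediate-value argument.

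The main obstacle will be the loxodromic and ellipto-parabolic subcases, in which not every $q_1$ on the admissible component of $G_1$ produces an intersection $H_{q_1}\cap G_2$, so I must show that the admissible range realizes every target angle $B_2\in(0,\pi)$ compatible with $A_1+A_2=B_1+B_2$. I would handle this by a direct analysis of the endpoint behavior of $\angle_{q_2}H_{q_1}G_2$ as $q_1$ approaches the boundary of its admissible component (an isotropic point of $L$, or the position where $H_{q_1}$ becomes asymptotic or ultraparallel to $G_2$) in each of the three cases. The nondegeneracy conditions $\sigma q_i=\sigma p_i$, $\langle p_i,q_i\rangle\ne0$, and the avoidance of $G_1\cap G_2$ by the segments $\mathrm{G}[p_i,q_i]$ would then hold automatically, since the construction confines $q_i$ to the connected component of $p_i$ in $G_i$ among nonisotropic nonorthogonal points.
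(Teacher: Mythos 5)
Your reduction is exactly the paper's: both arguments convert the problem into exhibiting $q_i\in G_i$ with prescribed oriented angles and then invoke Theorem~\ref{thm:geometricinter} together with Proposition~\ref{prop:constructfbendings} to upgrade the planar configuration to an identity in $\SU(2,1)$, and both settle the planar problem by a continuity/intermediate-value argument along $G_1,G_2$. The difference is the choice of one-parameter family. You freeze the angle $B_1$ at a moving $q_1$ and chase the induced angle at $q_2:=H_{q_1}\cap G_2$ via Gauss--Bonnet (your identity ``signed area $=C-B_2$'' is correct, provided one remembers the quadrilateral is the self-intersecting one of Theorem~\ref{thm:geometricinter}). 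The paper instead runs the \emph{constrained} deformation of Theorem~\ref{thm:geometricinter} itself -- the one keeping $h_1(t)h_2(t)=a_1a_2$ -- and simply extends it until $q_1$ (resp.\ $q_2$) reaches the absolute; this shows the primitive $h_1$ sweeps all of $\arc(1,a_1)$ in one direction and all of $\arc(a_1,a_1a_2)$ in the other (with a two-case split according to $\Arg(a_1a_2)^3\lessgtr\pi$), so $h_1$ attains $b_1$ and one reads off $q_1,q_2$. The payoff of the paper's parameterization is that every intermediate position is a valid $f$-configuration by construction, so the only thing to compute is the limit of one angle.

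The gap in your version is that the step you defer as the ``main obstacle'' is the entire content of the proof, and your parameterization makes it strictly harder than necessary. For fixed $B_1$ the intersection $H_{q_1}\cap G_2$ can fail to exist (loxodromic and ellipto-parabolic cases), so the admissible range of $q_1$ is an interval whose endpoints you must locate before you can evaluate limits of the signed area, and you must then still verify that the zero of the signed area is attained at an \emph{admissible} position. Moreover, your closing claim that $\sigma q_2=\sigma p_2$, $\langle p_2,q_2\rangle\ne0$ and the avoidance of $G_1\cap G_2$ by $\mathrm G[p_2,q_2]$ ``hold automatically'' is unjustified: your construction confines only $q_1$ to the component of $p_1$, while $q_2$ is produced as an intersection point and nothing a priori prevents it from crossing $G_1\cap G_2$, becoming isotropic, or changing signature as $q_1$ varies. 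These are exactly the degenerations that the paper's constrained deformation rules out for free. I would either switch to the paper's family, or add to your plan an explicit monotonicity statement for $q_2(q_1)$ along $G_2$ and a verification that the target position is reached before any of these degenerations occur.
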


\begin{proof}
Let $a_i,b_i$ be the primitives of $\alpha_i,\beta_i$. We divide the proof in two cases:

\smallskip

\noindent
(1) $\Arg (a_1a_2)^3\leqslant\pi$, or, equivalently, $a_1a_2$ lie in the arc $(1,-\omega^2]$ (if $\Arg(a_1a_2)^3=0$, then $a_1=a_2=-\omega^2$)

\noindent
(2) $\Arg(a_1a_2)^3>\pi$, or, equivalently, $a_1a_2$ lie in the arc $(-\omega^2,\omega)$.

\smallskip

Assume that the first case holds. Proceeding as in the proof of Theorem \ref{thm:geometricinter}, we continuously move the point $p_i$ along $G_i$. Abusing notation, we denote the new obtained points by $q_i$ and new corresponding parameters by $\eta_i$. The deformation is performed in such a way that the $f$-configurations $(p_1,p_2,\alpha_1,\alpha_2),(q_1,q_2,\eta_1,\eta_2)$ are $f$-connected. Due to $a_1a_2\in\arc(1,-\omega^2]$, sending $q_1$ to the absolute makes the angle $\Arg h_1^3$ as small as desired; here, $h_i$ stands for the primitive of $\eta_i$. So, $h_1$ assumes every value in $\arc(1,a_1)$. Since $h_1h_2=a_1a_2$ during the deformation, $h_2$ assumes every value in  $\arc(a_2,a_1a_2)$ (and the point $q_2$ tends to some limit point in $G_2$). Changing the roles of $q_1,q_2$ in the deformation, that is, sending $q_2$ to the absolute, we can see that there exists $q_1,q_2$ corresponding to every value of $h_1$ in the arc $(1,a_1a_2)$. Since $\alpha_1\alpha_2=\beta_1\beta_2$ and $\alpha_i\sim\beta_i$, Lemma~\ref{lemma:anglesequivalence} implies that $a_1a_2=b_1b_2$ and, therefore,
$b_1,b_2\in\arc(1,a_1a_2)$. It remains to take $h_1=b_1$ (which implies $h_2=b_2)$: the corresponding $q_1,q_2$ are the points we are looking for.

The second case is similar. Sending $q_1$ to the absolute makes the angle $\Arg h_1^3$ tend to $\pi$ and, consequently, $h_1$ assumes every value in $\arc(a_1,-\omega^2)$; correspondingly, $h_2$ assumes every value in $\arc(-\omega a_1a_2,a_2)$. So, there exist $q_1,q_2$ corresponding to every value of $h_1$ in $\arc(-\omega a_1a_2,-\omega^2)$. Again, $a_1a_2=b_1b_2$ implies that $b_1\in\arc(-\omega a_1a_2,-\omega^2)$.
\end{proof}

\begin{prop}
\label{prop:hypfbendingangles}
Let\/ $p_1,p_2\in\mathbb PV\setminus\SV$ be distinct nonorthogonal points of distinct signatures, $\sigma p_1\ne\sigma p_2$, and let\/ $\alpha_1,\alpha_2\in\mathbb S^1\setminus\Omega$ be such that\/ $R_{\alpha_2}^{p_2}R_{\alpha_1}^{p_1}$ is loxodromic. Let\/ $\beta_1,\beta_2\in\mathbb S^1$ be parameters satisfying\/ $\alpha_1\alpha_2=\beta_1\beta_2$ and\/ $\alpha_i\sim\beta_i$, $i=1,2$. There exists an\/ $f$-bending relation\/ $R_{\alpha_2}^{p_2}R_{\alpha_1}^{p_1}=R_{\beta_2}^{q_2}R_{\beta_1}^{q_1}$.
\end{prop}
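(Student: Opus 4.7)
\noindent\emph{Proof plan.} By Proposition \ref{prop:constructfbendings} combined with Theorem \ref{thm:geometricinter}, it suffices to exhibit points $q_i\in G_i$ with $\sigma q_i=\sigma p_i$ and $\langle p_i,q_i\rangle\ne0$ such that the primitives $h_i$ produced by the $f$-connectedness relation $h_1h_2=a_1a_2$ satisfy $h_1=b_1$; the identity $a_1a_2=b_1b_2$ of Lemma \ref{lemma:anglesequivalence} then automatically forces $h_2=b_2$, so that the corresponding parameters are exactly $\beta_1,\beta_2$. Here $a_i,b_i$ denote the primitives of $\alpha_i,\beta_i$ and $G,G_1,G_2$ are the geodesics associated to $R:=R_{\alpha_2}^{p_2}R_{\alpha_1}^{p_1}$.

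Assume without loss of generality that $\sigma p_1=-1$ and $\sigma p_2=+1$. Since $R$ is loxodromic, $L:=\mathrm L(p_1,p_2)$ is hyperbolic and $G_1,G_2$ are ultraparallel. The plan is to deform $q_1$ continuously along $G_1\cap\BV$ starting at $p_1$; for each such position, the $\sigma p_1\ne\sigma p_2$ portion of the proof of Theorem \ref{thm:geometricinter}, applied with the polar points $\tilde p_2,\tilde q_2\in G_2\cap\BV$, furnishes a unique $\tilde q_2$ (equivalently, a unique $q_2\in G_2\cap\EV$) for which the oriented area of the quadrilateral $p_1,q_1,\tilde q_2,\tilde p_2$ equals the prescribed amount $\pm2(\Arg a_1^3-\Arg h_1^3)$ dictated by the constraint $h_1h_2=a_1a_2$. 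This produces a continuous map $q_1\mapsto h_1$ with $h_1(p_1)=a_1$.

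The argument then mirrors Proposition \ref{prop:fbendingangles}. As $q_1$ approaches an isotropic endpoint of $G_1\cap\BV$, $\Arg h_1^3$ tends to $0$ or $\pi$; dually, pushing $\tilde q_2$ towards an isotropic endpoint of $G_2\cap\BV$ forces $h_1$ to tend to $a_1a_2$ or $-\omega\,a_1a_2$. Splitting into the two cases $a_1a_2\in\arc(1,-\omega^2]$ and $a_1a_2\in\arc(-\omega^2,\omega)$ exactly as in Proposition \ref{prop:fbendingangles}, and using that $b_1\in J$ together with $b_1b_2=a_1a_2$ places $b_1$ in the relevant sub-arc, one concludes that $b_1$ lies in the image of $q_1\mapsto h_1$. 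The corresponding pair $(q_1,q_2)$ is the required $f$-configuration and Proposition \ref{prop:constructfbendings} delivers the $f$-bending relation.

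The main obstacle is the limit/monotonicity analysis for the mixed-signature case: one must verify that throughout the deformation the quadrilateral $p_1,q_1,\tilde q_2,\tilde p_2$ remains simple, that $\langle q_1,q_2\rangle$ never vanishes, and that the two isotropic endpoints of $G_1\cap\BV$ truly produce the claimed angular limits $0$ and $\pi$ for $\Arg h_1^3$. The loxodromicity of $R$ is essential here: it guarantees $G_1\cap G_2\cap\BV=\varnothing$, so the quadrilateral cannot self-intersect, and it rules out accidental degenerations (such as $\langle q_1,q_2\rangle=0$) along the sweep.
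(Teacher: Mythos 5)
Your proposal is correct and is essentially the paper's own argument: the paper proves this proposition simply by declaring it ``a direct adaptation of Proposition \ref{prop:fbendingangles}, replacing the deformation by the one from the $\sigma p_1\ne\sigma p_2$ case of Theorem \ref{thm:geometricinter}'', which is exactly the adaptation you carry out (quadrilateral-area parameterization with $\tilde p_2,\tilde q_2$, the two-case limit analysis for $h_1$, and Proposition \ref{prop:constructfbendings} to close). The verification issues you flag at the end are real but are left implicit in the paper as well.
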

\begin{proof}
The proof is a direct adaptation of that of Proposition \ref{prop:fbendingangles}. The only change that is needed is to replace the deformation to the appropriate one describe in the proof of Theorem \ref{thm:geometricinter}.
\end{proof}

\begin{rmk}\label{rmk:sameangles}
In the conditions of Propositions \ref{prop:fbendingangles} or \ref{prop:hypfbendingangles}, there always exist an $f$-bending relation $R_{\alpha_2}^{p_2}R_{\alpha_1}^{p_1}=R_{\beta_2}^{q_2}R_{\beta_1}^{q_1}$ such that the primitives of $\beta_1$ and $\beta_2$ are the same, i.e., $\beta_1=\delta\beta_2$, $\delta\in\Omega$ (this is a direct consequence of the corresponding proofs). Moreover, when $p_1,p_2\in\mathbb PV$ are distinct nonorthogonal points such that $\mathrm L(p_1,p_2)$ is spherical, and given parameters $\alpha_1,\alpha_2\in\mathbb S^1\setminus\Omega$, it is easy to see that there exists an $f$-bending relation $R_{\alpha_2}^{p_2}R_{\alpha_1}^{p_1}=R_{\beta_2}^{q_2}R_{\beta_1}^{q_1}$ such that the primitives of $\beta_1$ and $\beta_2$ are the same.
\end{rmk}

\begin{rmk}\label{rmk:flimits}
Let $\alpha_1,\alpha_2\in\mathbb S^1\setminus\Omega$ be parameters and let $p_1,p_2\in\mathbb PV\setminus\SV$ be distinct nonisotropic nonorthogonal points. Assume that $\sigma p_1=\sigma p_2$ with $\mathrm{L}(p_1,p_2)$ hyperbolic or that $R_{\alpha_2}^{p_2}R_{\alpha_1}^{p_1}$ is loxodromic. Let $I_j=\arc(\omega^{j-1},\omega^j)$ be the component of $\alpha_1$ and let $\delta\in\Omega$ be such that $\delta(\alpha_1\alpha_2)\in I_j$. Among the arcs $\arc\big(\omega^{j-1},\delta(\alpha_1\alpha_2)\big)$ and $\arc\big(\delta(\alpha_1\alpha_2),\omega^j\big)$, let $I_j^0$ be the one containing $\alpha_1$. A~corollary to the proof of Proposition \ref{prop:fbendingangles} is that, by $f$-bending $R_{\alpha_2}^{p_2}R_{\alpha_1}^{p_1}$, it is possible to vary $\alpha_1$ inside the entire $I_j^0$.
\end{rmk}

We can now give a description of all nonorthogonal generic length $4$ relations in terms of bendings and $f$-bendings (there is actually a nongeneric requirement on the signatures of points; the remaining case is dealt with in Subsection \ref{subsec:otherrel}).

\begin{thm}\label{thm:all4relations}
Let\/ $p_1,p_2$ and\/ $q_1,q_2$ be pairs of distinct nonisotropic nonorthogonal points with\/ $\sigma p_i=\sigma q_i$ and let\/ $\alpha_i,\beta_i\in\mathbb S^1\setminus\Omega$ be parameters, $\alpha_i\sim\beta_i$, $i=1,2$. Assume that\/ $\mathrm{L}(p_1,p_2)$ and\/ $\mathrm{L}(q_1,q_2)$ are noneuclidean and nonorthogonal. A length\/ $4$ relation\/ $R_{\alpha_2}^{p_2}R_{\alpha_1}^{p_1}=R_{\beta_2}^{q_2}R_{\beta_1}^{q_1}$ follows from bending and $f$-bending relations.
\end{thm}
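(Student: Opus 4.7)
The plan is to reduce the given relation to one $f$-bending followed by one bending. First, by Lemma~\ref{lemma:ortnotortl4}, the hypotheses that $L_1:=\mathrm L(p_1,p_2)$ and $L_2:=\mathrm L(q_1,q_2)$ are noneuclidean and nonorthogonal force $L_1=L_2=:L$ and $\alpha_1\alpha_2=\beta_1\beta_2$. Combined with $\sigma p_i=\sigma q_i$ and $\alpha_i\sim\beta_i$, this places us in exactly the setting required by the $f$-bending existence results.

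The key step is to produce points $q_1',q_2'\in L$ with $\sigma q_i'=\sigma p_i$ such that $R_{\alpha_2}^{p_2}R_{\alpha_1}^{p_1}=R_{\beta_2}^{q_2'}R_{\beta_1}^{q_1'}$ is an $f$-bending relation. Such points are furnished by Proposition~\ref{prop:fbendingangles} when $\sigma p_1=\sigma p_2$ and $L$ is hyperbolic, and by Proposition~\ref{prop:hypfbendingangles} when $\sigma p_1\neq\sigma p_2$ and $R_{\alpha_2}^{p_2}R_{\alpha_1}^{p_1}$ is loxodromic. Once the $q_i'$ are in hand, the resulting identity $R_{\beta_2}^{q_2}R_{\beta_1}^{q_1}=R_{\beta_2}^{q_2'}R_{\beta_1}^{q_1'}$ has matching parameters on both sides, matching signatures $\sigma q_i=\sigma q_i'$, and is supported on the noneuclidean line $L$; Theorem~\ref{thm:bendingsarebendings} then identifies it as a bending relation. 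Composing the $f$-bending with this bending yields the desired factorization of the original relation.

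The main obstacle lies in the configurations not covered by Propositions~\ref{prop:fbendingangles}--\ref{prop:hypfbendingangles}: namely $L$ spherical (in which case both centres are positive), and $\sigma p_1\neq\sigma p_2$ with $R_{\alpha_2}^{p_2}R_{\alpha_1}^{p_1}$ regular elliptic or ellipto-parabolic. For $L$ spherical, I would invoke the second half of Remark~\ref{rmk:sameangles} to $f$-bend both $(p_1,p_2,\alpha_1,\alpha_2)$ and $(q_1,q_2,\beta_1,\beta_2)$ into a common normal form where the two parameters share the same primitive, and then close the gap by a bending using the compact one-parameter subgroup of Proposition~\ref{prop:bendings}. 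For the remaining hyperbolic mixed-sign subcases, a parallel scheme applies: Remark~\ref{rmk:sameangles} (together with the $f$-connectedness criterion of Theorem~\ref{thm:geometricinter}) still allows passage to a normal form, and Lemma~\ref{lemma:regularprodspecreflec} combined with the continuity argument used in Proposition~\ref{prop:constructfbendings} ensures that the intermediate product equals $R_{\alpha_2}^{p_2}R_{\alpha_1}^{p_1}$ rather than its twist by $R_{-1}^c$, so that the composition of the two $f$-bendings with a final bending gives the required decomposition.
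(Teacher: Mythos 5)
Your reduction is sound where it applies: Lemma~\ref{lemma:ortnotortl4} does give $\mathrm L(p_1,p_2)=\mathrm L(q_1,q_2)$ and $\alpha_1\alpha_2=\beta_1\beta_2$, and the comparison trick — build an auxiliary $f$-bending relation $R_{\alpha_2}^{p_2}R_{\alpha_1}^{p_1}=R_{\beta_2}^{q_2'}R_{\beta_1}^{q_1'}$ and then feed $R_{\beta_2}^{q_2}R_{\beta_1}^{q_1}=R_{\beta_2}^{q_2'}R_{\beta_1}^{q_1'}$ (equal parameters, equal signatures) into Theorem~\ref{thm:bendingsarebendings} — is a legitimate way to split the relation into an $f$-bending followed by a bending; your spherical-case normalization via Remark~\ref{rmk:sameangles} also matches what the paper does. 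The genuine gap is the case $\sigma p_1\neq\sigma p_2$ with $R:=R_{\alpha_2}^{p_2}R_{\alpha_1}^{p_1}$ regular elliptic or ellipto-parabolic (this case is nonvacuous: for mixed signatures, $k_1<\tance(p_1,p_2)<0$ gives regular elliptic $R$ by Corollary~\ref{cor:tancetype}). Proposition~\ref{prop:hypfbendingangles} requires $R$ loxodromic, Proposition~\ref{prop:fbendingangles} requires equal signatures, and Remark~\ref{rmk:sameangles} is stated only under the hypotheses of those two propositions or for spherical $\mathrm L(p_1,p_2)$; so your ``parallel scheme'' for this subcase has no supporting statement, and the claim you would need — that prescribed target parameters $\beta_1,\beta_2$ with $\beta_1\beta_2=\alpha_1\alpha_2$, $\beta_i\sim\alpha_i$ are always realizable by points on the associated geodesics — is exactly the question the paper explicitly answers only ``in the particular cases'' of Propositions~\ref{prop:fbendingangles}--\ref{prop:hypfbendingangles}. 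Citing Lemma~\ref{lemma:regularprodspecreflec} and the continuity argument of Proposition~\ref{prop:constructfbendings} addresses how to certify a deformation once you have one, not whether the deformation reaching $\beta_1,\beta_2$ (or a common normal form) exists in the mixed-sign elliptic/ellipto-parabolic situation.

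The paper's proof avoids this entirely by never prescribing target parameters: since the two given products are the same isometry, their actions on $L$ coincide, so after bending $q_1,q_2$ one can make the associated geodesics satisfy $H_i=G_i$, and (arguing as in the proof of Theorem~\ref{thm:bendingsarebendings}) arrange that no fixed point of $R$ lies in $\mathrm G[p_1,q_1]$ or $\mathrm G[p_2,q_2]$; the given relation, thus bent, then satisfies all the conditions of Definition~\ref{defi:fbending} verbatim — for every hyperbolic $L$, every signature pattern, and every type of $R$ — with the spherical case handled separately by the orientation/congruence argument. If you want to salvage your route, you should either supply an existence statement for $f$-bendings with prescribed parameters in the mixed-sign non-loxodromic case, or switch in that case to the paper's direct verification that the bent relation is itself an $f$-bending relation.
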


\begin{proof}
Let $G,G_1,G_2$ and $H,H_1,H_2$ be the geodesics associated to
$R_{\alpha_2}^{p_2}R_{\alpha_1}^{p_1}$ and $R_{\beta_2}^{q_2}R_{\beta_1}^{q_1}$, respectively.
By Lemma~\ref{lemma:ortnotortl4}, we have $L:=\mathrm L(p_1,p_2)=\mathrm L(q_1,q_2)$ and $\alpha_1\alpha_2=\beta_1\beta_2$. Bending $q_1,q_2$ (if necessary), we make $G_i=H_i$.

Assume that $L$ is hyperbolic. It follows from $\alpha_1\alpha_2=\beta_1\beta_2$ that $A_1+A_2=B_1+B_2$, where $A_1=\angle_{p_1}G_1G$, $A_2=\angle_{p_2}GG_2$, $B_1=\angle_{q_1}G_1H$, $A_2=\angle_{q_2}HG_2$. Now, proceeding as in the proof of Theorem \ref{thm:bendingsarebendings}, we can see that the previous bending can be made in a such a way that there is no fixed point of $R:=R_{\alpha_2}^{p_2}R_{\alpha_1}^{p_1}$ neither in $\mathrm G[p_1,q_1]$ nor in $\mathrm G[p_2,q_2]$. We just arrived at an $f$-bending relation.

Suppose that $L$ is spherical. If the orientations of the triangles $\Delta(x,p_1,p_2)$ and $\Delta(x,q_1,q_2)$, where $x$ is a fixed point of $R$, are the same, then the above bending can be made in such a way that we obtain an $f$-bending relation. So, assume that the triangles have opposite orientations. By Remark \ref{rmk:sameangles}, we can assume $A_1=A_2$ and $B_1=B_2$ modulo $f$-bendings. It follows from $\alpha_1\alpha_2=\beta_1\beta_2$ that $A_1=A_2=B_1=B_2$ and can now proceed as in the proof of Theorem \ref{thm:bendingsarebendings}.
\end{proof}

\subsection{Other length 4 relations}
\label{subsec:otherrel}

In what follows, we introduce and discuss the remaining length $4$ relations. The nonorthogonal ones
come in four flavours: changes of orientation, changes of components, simultaneous changes of
signs, and single changes of sign.

Let $p_1,p_2$ be distinct nonisotropic nonorthogonal points with noneuclidean $\mathrm{L}(p_1,p_2)$ and let $\alpha_1,\alpha_2\in\mathbb S^1\setminus\Omega$. Note that
$$R_{\alpha_2}^{p_2}R_{\alpha_1}^{p_1}=R_{\alpha_2}^{p_2}R_{\alpha_1}^{p_1}
R_{\overline\alpha_2}^{p_2}R_{\alpha_2}^{p_2}=R_{\alpha_1}^{R_{\alpha_2}^{p_2}p_1}
R_{\alpha_2}^{p_2},$$
where we are using a cancellation (see Definition~\ref{defi:cancellations}) in the first equality. We arrive at the following definition.

\begin{defi}
In the conditions above, the relation $R_{\alpha_2}^{p_2}R_{\alpha_1}^{p_1}=R_{\alpha_1}^{R_{\alpha_2}^{p_2}p_1}R_{\alpha_2}^{p_2}$ is called a {\it change of orientation.}
\end{defi}

\begin{defi}
The relations $R_{\alpha_2}^{p_2}R_{\alpha_1}^{p_1}=R_{\omega\alpha_2}^{p_2}R_{\omega^{-1}\alpha_1}^{p_1}=
R_{\omega^{-1}\alpha_2}^{p_2}R_{\omega\alpha_1}^{p_1}$ are called {\it changes of components,} where $\omega=e^{2\pi i/3}$.
\end{defi}

A change of components simultaneously changes the arcs $I_j$, $j=0,1,2$, to which the parameters $\alpha_1,\alpha_2$ belong (see Definition \ref{defi:samecomponent}).

\begin{prop}\label{prop:nochangesofsign} Let\/ $p_1,p_2$ and\/ $q_1,q_2$ be pairs of distinct nonisotropic nonorthogonal points and let\/ $\alpha_1,\alpha_2\in\mathbb S^1\setminus\Omega$ be parameters. Assume that\/ $\sigma p_i=-\sigma q_i$, $i=1,2$, and that\/ $\tance(p_1,p_2)=\tance(q_1,q_2)$ {\rm(}in particular, $\mathrm{L}(p_1,p_2)$ is hyperbolic{\rm).} The isometries\/ $R_1:=R_{\alpha_2}^{p_2}R_{\alpha_1}^{p_1}$ and\/ $R_2:=R_{\alpha_2}^{q_2}R_{\alpha_1}^{q_1}$ are in the same conjugacy class iff they are not regular elliptic.
\end{prop}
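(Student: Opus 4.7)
The plan is to split by whether $R_1, R_2$ are regular elliptic, using the $\SU(2,1)$-classification of Subsection~\ref{subsec:isometries}. By Remark~\ref{rmk:partform} and the hypothesis $\tance(p_1,p_2)=\tance(q_1,q_2)$, one gets $\trace R_1 = \trace R_2$ at once; since this common tance lies in $(-\infty,0)\cup(1,\infty)$, both lines $L:=\mathrm{L}(p_1,p_2)$ and $L':=\mathrm{L}(q_1,q_2)$ are hyperbolic, so Lemma~\ref{lemma:prodspecreg} makes $R_1, R_2$ regular and not $3$-step unipotent. If they are not regular elliptic, the classification determines each uniquely by its trace (the loxodromic and ellipto-parabolic classes being singletons among regular, non-$3$-step-unipotent isometries of a given trace), so $R_1 \sim R_2$.

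For the harder direction, suppose $R_1, R_2$ are regular elliptic. Both share the eigenvalue $\alpha_1\alpha_2$ at the polar points of $L, L'$; having the same trace and determinant $1$, they share the remaining pair $\{\lambda,\mu\}$ with $\lambda\mu = (\alpha_1\alpha_2)^{-1}$ and $\lambda \neq \mu$ (by regularity). The conjugacy class is then refined by which of $\lambda, \mu$ is the eigenvalue of the negative fixed point. I would introduce the $\mathbb{C}$-linear map $J:L\to L$ that swaps a positive and a negative vector of an orthonormal basis of $L$; it satisfies $\langle Jv,Jw\rangle=-\langle v,w\rangle$, and a direct substitution into \eqref{eq:specialelliptic} (whose only geometry is the ratio $\langle v,p\rangle/\langle p,p\rangle$, invariant under form negation) gives the key intertwining $J R_\alpha^p J^{-1}=R_\alpha^{Jp}$ on $L$. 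Hence $J R_1|_L J^{-1} = R_{\alpha_2}^{Jp_2}R_{\alpha_1}^{Jp_1}|_L$, and the pair $(Jp_1,Jp_2)$ has $\sigma(Jp_i)=-\sigma p_i=\sigma q_i$ together with $\tance(Jp_1,Jp_2)=\tance(p_1,p_2)=\tance(q_1,q_2)$. A unitary identification $T:L\to L'$ matching $Jp_i$ to $q_i$ then conjugates $R_{\alpha_i}^{Jp_i}|_L$ to $R_{\alpha_i}^{q_i}|_{L'}$, yielding $(TJ)R_1|_L(TJ)^{-1}=R_2|_{L'}$.

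The decisive point is that $TJ$ negates the Hermitian form on $L$ (since $T$ preserves and $J$ negates it), so it sends a negative eigenvector of $R_1|_L$ with eigenvalue $\lambda$ to a positive eigenvector of $R_2|_{L'}$ with the same eigenvalue $\lambda$. Consequently the negative-fixed-point eigenvalue of $R_2$ must be $\mu$, distinct from $\lambda$, producing distinct $\SU(2,1)$-conjugacy classes. The hard part will be the careful verification of $J R_\alpha^p J^{-1}=R_\alpha^{Jp}$ together with the bookkeeping showing that $J$ interchanges positive and negative eigenspaces in exactly the pattern needed; by contrast, the existence of the matching isometry $T$ and the trace/tance identities are routine consequences of the hyperbolicity of $L, L'$.
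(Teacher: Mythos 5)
Your argument is correct, but the regular elliptic direction takes a genuinely different route from the paper's. The paper works geometrically inside $L$: using length~$3$ orthogonal relations it produces $\widetilde R_1:=R_{\alpha_1}^{\tilde p_1}R_{\alpha_2}^{\tilde p_2}$, which commutes with $R_1$ and has the same fixed points; by Lemma \ref{lemma:construction} the actions of $R_1$ and $\widetilde R_1$ on $L$ are $r_2r_1$ and $r_1r_2$, so conjugacy would force the associated geodesics to meet at angle $\pi/2$; that borderline case is then excluded by a continuity argument along a deformation $R(t)$ of the eigenvalues (yielding $\lambda_1(0)=-\lambda_2(0)$); finally $q_1,q_2$ are carried to $\tilde p_1,\tilde p_2$ using the equal tance and \eqref{eq:conjugate}. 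You instead argue linearly on the $2$-dimensional subspaces $W,W'$ underlying $L,L'$: the $\mathbb C$-linear anti-isometry $J$ (which exists exactly because $L$ is hyperbolic) satisfies $JR_\alpha^pJ^{-1}=R_\alpha^{Jp}$ on $W$, and a unitary $T$ matching $(Jp_1,Jp_2)$ to $(q_1,q_2)$ exists because $\sigma(Jp_i)=\sigma q_i$ and $\tance(Jp_1,Jp_2)=\tance(q_1,q_2)$ allow the Gram matrices to be equalized after rescaling representatives; the composite $TJ$ then intertwines $R_1|_W$ with $R_2|_{W'}$ while negating the form, so (since the negative fixed points of $R_1,R_2$ lie in $L,L'$, their eigenvectors being orthogonal to the polar points, and regular ellipticity makes the two eigenvalues on $W$ distinct) the negative-fixed-point eigenvalues of $R_1$ and $R_2$ are interchanged, hence distinct, and the $\SU(2,1)$-classes differ. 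Your route is shorter, avoids the associated-geodesics machinery and the delicate right-angle/continuity step, and pinpoints which of the three regular elliptic classes $R_2$ occupies---which is essentially what the paper's byproduct $R_2\sim\widetilde R_1$ records and reuses later; the paper's proof, in exchange, stays within the toolkit (associated geodesics, bendings, \eqref{eq:conjugate}) it exploits elsewhere and exhibits an explicit commuting element. When writing yours up, do carry out the two checks you flagged: the identities $\langle Jx,p\rangle=-\langle x,Jp\rangle$ and $\langle p,p\rangle=-\langle Jp,Jp\rangle$, which make the ratio in \eqref{eq:specialelliptic} invariant, and the choice of representatives (including the phase adjustment) needed to construct $T$.
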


\begin{proof}
By Remark \ref{rmk:partform},
$\trace R_1=\trace R_2$ and, by Lemma~\ref{lemma:prodspecreg}, $R_1,R_2$ are regular. Therefore, these isometries have the same type. If $R_1,R_2$ are not regular elliptic, they are conjugate (see Subsection~\ref{subsec:isometries}).

Conversely, suppose that $R_1,R_2$ are regular elliptic. Using orthogonal relations of length $3$ plus the fact that special elliptic isometries with orthogonal centres commute, we have
$$R_{\alpha_1}^{\tilde p_1}R_{\alpha_2}^{\tilde p_2}(R_{\alpha_2}^{p_2}R_{\alpha_1}^{p_1})R_{\overline\alpha_2}^{\tilde p_2}R_{\overline\alpha_1}^{\tilde p_1}=R_{\alpha_1}^{\tilde p_1}R_{\overline\alpha_2}^cR_{\alpha_1}^{p_1}R_{\overline\alpha_2}^{\tilde p_2}R_{\overline\alpha_1}^{\tilde p_1}=R_{\overline\alpha_1}^cR_{\overline\alpha_2}^c
R_{\overline\alpha_2}^{\tilde p_2}R_{\overline\alpha_1}^{\tilde p_1}=R_{\alpha_2}^{p_2}R_{\alpha_1}^{p_1},$$
where $\tilde p_i$ stands for the point in $L:=\mathrm{L}(p_1,p_2)$ orthogonal to $p_i$. In other words, $\widetilde R_1:=R_{\alpha_1}^{\tilde p_1}R_{\alpha_2}^{\tilde p_2}$ commutes with $R_1$. Since $\tance(\tilde p_1,\tilde p_2)=\tance(p_1,p_2)$, the isometry $\widetilde R_1$ is regular elliptic and, by
\hbox{\cite[Corollary~8.2]{BasmajianMiner1998}}, $\mathop\mathrm{fix}\widetilde R_1=\mathop\mathrm{fix}R_1$. Hence, if $R_1,\widetilde R_1$ are conjugate, their actions on $L$ must coincide.
Let $G,G_1,G_2$ be the geodesics associated to $R_1$ and let $\widetilde G,\widetilde G_1,\widetilde G_2$ be the geodesics associated to~$\widetilde R_1$. Clearly, $\widetilde G=G$, $\widetilde G_1=G_2$, and $\widetilde G_2=G_1$.
By Lemma \ref{lemma:construction}, the actions of $R_1$ and of $\widetilde R_1$ on $L\cap\BV$ are respectively given by the products $r_2r_1$ and $r_1r_2$, where $r_i$ denotes the reflection in the geodesic $G_i$. This implies that the actions of $R_1,\widetilde R_1$ coincide on $L$ iff $\angle_x G_1G_2=\angle_x G_2G_1=\frac\pi2$, where $x\in\BV$ stands for the intersection of $G_1,G_2$, a fixed point of both isometries.

Assume $\angle_xG_2G_1=\frac\pi 2$.
Let $\gamma:(-a,a)\to G$ be a parameterization of an open geodesic segment such that $\gamma(0)=p_2$. Let $R(t):=R_{\alpha_2}^{\gamma(t)}R_{\alpha_1}^{p_1}$; we can suppose that $R(t)$ is regular elliptic for all $t$.
Let $G,G_1,G_2(t)$ be the geodesics associated to $R(t)$ and note that $\angle_{\gamma(t)}GG_2(t)=\angle_{p_2}GG_2$; so, $\angle_{x(t)}G_2G_1=\frac{\pi}2$ iff $t=0$. Let $x(t),\tilde x(t)$ be respectively the negative and the positive intersections of $G_1$ and $G_2(t)$. Hence, $x(0)=x$ and $\tilde x(0)=\tilde x$ (clearly, $\tilde x(t)$ is the orthogonal to $x(t)$ in $L$ for all $t$). We denote by $\lambda_1(t),\lambda_2(t)$ the eigenvalues of $R(t)$ corresponding to the eigenvectors $x(t),\tilde x(t)$. It is easy to see that $\lambda_i(t)$ varies continuously with $t$. Moreover, by Lemma \ref{lemma:prodspecreg}, $\lambda_1(t)\ne\lambda_2(t)$ for all $t$.

Consider the isometry $\widetilde R(t):=R_{\alpha_1}^{\tilde p_1}R_{\alpha_2}^{\tilde\gamma(t)}$, $t\in(-a,a)$, where $\tilde\gamma(t)$ stands for the orthogonal to $\gamma(t)$ in $L$ for all $t$. As above, $\widetilde R(t)$ and $R(t)$ have the same set of eigenvalues and $\mathop\mathrm{fix}\widetilde R(t)=\mathop\mathrm{fix}R(t)$. Hence, by the argument in the previous paragraph, we obtain that, for all $t\ne0$, the isometries $R(t)$ and $\widetilde R(t)$ are not conjugate due to $\angle_{x(t)}G_2G_1\ne\frac{\pi}2$ for $t\ne0$. In other words, $\lambda_2(t)$ must be the eigenvalue of $\widetilde R(t)$ corresponding to $x(t)$. By continuity, the eigenvalues of the negative fixed point $x$ of $R=R(0)$ and of $\widetilde R=\widetilde R(0)$ are distinct and, therefore, these isometries cannot be conjugate. (We have $\lambda_1(0)=-\lambda_2(0)$, as it is easy to see.)

Let $I\in\SU(2,1)$ be such that $Iq_i=\tilde p_i$. Thus
$IR_{\alpha_2}^{q_2}R_{\alpha_1}^{q_1}I^{-1}=
R_{\alpha_2}^{\tilde p_2}R_{\alpha_1}^{\tilde p_1}$. Also note that
\begin{equation}\label{eq:conjugate}
R_{\overline\alpha_2}^{\tilde p_2}\big(R_{\alpha_2}^{\tilde p_2}
R_{\alpha_1}^{\tilde p_1}\big)R_{\alpha_2}^{\tilde p_2}
=R_{\alpha_1}^{\tilde p_1}R_{\alpha_2}^{\tilde p_2}.
\end{equation}
Therefore, $R_{\alpha_2}^{q_2}R_{\alpha_1}^{q_1}$ and
$R_{\alpha_1}^{\tilde p_1}R_{\alpha_2}^{\tilde p_2}$ are in the same conjugacy class, which is not that of $R_{\alpha_2}^{p_2}R_{\alpha_1}^{p_1}$.
\end{proof}

\begin{defi}
Let $p_1,p_2$ and $q_1,q_2$ be pairs of distinct nonisotropic nonorthogonal points with $\sigma p_i=-\sigma q_i$ and $\tance(p_1,p_2)=\tance(q_1,q_2)$. Let $\alpha_1,\alpha_2\in\mathbb S^1\setminus\Omega$ be parameters and assume that $R_{\alpha_2}^{p_2}R_{\alpha_1}^{p_1}$ is not regular elliptic. Let $I\in\SU(2,1)$ be such that $R_{\alpha_2}^{p_2}R_{\alpha_1}^{p_1}=
IR_{\alpha_2}^{q_2}R_{\alpha_1}^{q_1}I^{-1}=R_{\alpha_2}^{Iq_2}R_{\alpha_1}^{I q_1}$. We call such relations {\it simultaneous changes of signs\/} and their existence is a consequence of Proposition~\ref{prop:nochangesofsign}. Note that, by Lemma \ref{lemma:ortnotortl4}, $p_1,p_2,Iq_1,Iq_2$ lie in the same complex line.
\end{defi}

\begin{wrapfigure}{r}{0.33\textwidth}\label{fig:deltoid}
\centering
\includegraphics[scale=0.7]{figs/goldmantangent.mps}
\end{wrapfigure}

Given parameters $\alpha_1,\alpha_2\in\mathbb S^1\setminus\Omega$, define
$\tau_{\alpha_1,\alpha_2}:\mathbb R\to\mathbb C$,
$$\tau_{\alpha_1,\alpha_2}(t):=\alpha_1\alpha_2+\alpha_1^{-2}\alpha_2+\alpha_1\alpha_2^{-2}+
(\alpha_1^{-2}-\alpha_1)(\alpha_2^{-2}-\alpha_2)t.$$
By Remark \ref{rmk:partform},
$\trace R_{\alpha_2}^{p_2}R_{\alpha_1}^{p_1}=\tau_{\alpha_1,\alpha_2}(\tance(p_1,p_2))$ for nonisotropic $p_1,p_2$.

\begin{lemma}\label{lemma:tangent}
Let\/ $\alpha_1,\alpha_2\in\mathbb S^1\setminus\Omega$. The line\/ $\ell:=\tau_{\alpha_1,\alpha_2}(\mathbb R)$ is tangent to Goldman's deltoid at\/ $t=1$.
\end{lemma}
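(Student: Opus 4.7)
My plan is to reduce the statement to two concrete verifications: the point $\tau_{\alpha_1,\alpha_2}(1)$ sits on Goldman's deltoid, and the direction vector of $\ell$ is a real multiple of the deltoid's tangent direction at that point.

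First, I would compute $\tau_{\alpha_1,\alpha_2}(1)$ explicitly. Expanding
\[
(\alpha_1^{-2}-\alpha_1)(\alpha_2^{-2}-\alpha_2)
=\alpha_1^{-2}\alpha_2^{-2}-\alpha_1^{-2}\alpha_2-\alpha_1\alpha_2^{-2}+\alpha_1\alpha_2
\]
and adding it to the constant term of $\tau_{\alpha_1,\alpha_2}$ causes the two cross terms to cancel, leaving
\[
\tau_{\alpha_1,\alpha_2}(1)=2\alpha_1\alpha_2+(\alpha_1\alpha_2)^{-2}=\zeta^{-2}+2\zeta,\qquad\zeta:=\alpha_1\alpha_2.
\]
Since the deltoid is parameterized as $\zeta^{-2}+2\zeta$ for $\zeta\in\mathbb S^1$, this puts $\tau_{\alpha_1,\alpha_2}(1)$ on $f^{-1}(0)$.

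Second, I would verify the tangency of directions. Parameterizing the deltoid by $\zeta=e^{i\theta}\in\mathbb S^1$, its tangent vector at $\zeta$ is $\frac{d}{d\theta}(e^{-2i\theta}+2e^{i\theta})=2i(\zeta-\zeta^{-2})$. The direction vector of $\ell$ is $(\alpha_1^{-2}-\alpha_1)(\alpha_2^{-2}-\alpha_2)$. Using the factorizations $\alpha_i^{-2}-\alpha_i=\alpha_i^{-2}(1-\alpha_i^3)$ and $\zeta-\zeta^{-2}=-\zeta^{-2}(1-\zeta^3)$ together with the identity $1-e^{3i\phi}=-2ie^{3i\phi/2}\sin(3\phi/2)$ applied to $\alpha_i=e^{i\phi_i}$ and to $\zeta=e^{i(\phi_1+\phi_2)}$, I obtain
\[
\frac{(\alpha_1^{-2}-\alpha_1)(\alpha_2^{-2}-\alpha_2)}{2i(\zeta-\zeta^{-2})}=\frac{\sin(3\phi_1/2)\sin(3\phi_2/2)}{\sin\big(3(\phi_1+\phi_2)/2\big)}\in\mathbb R,
\]
which settles tangency whenever $\zeta\notin\Omega$. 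The hypothesis $\alpha_i\in\mathbb S^1\setminus\Omega$ guarantees both that the numerator of the ratio is nonzero (so $\ell$ is a genuine line) and that the point $\tau_{\alpha_1,\alpha_2}(1)$ is a smooth point of the deltoid in the generic subcase.

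The main subtlety is the cusp case $\zeta=\alpha_1\alpha_2\in\Omega$, where $\zeta^{-2}+2\zeta=3\zeta$ is a cusp of the deltoid and the first-derivative tangent vector vanishes. To handle this I would appeal to the second-derivative cusp tangent: $\frac{d^{2}}{d\theta^{2}}(e^{-2i\theta}+2e^{i\theta})\big|_{\zeta}=-2(2\zeta^{-2}+\zeta)=-6\zeta$, so the cusp tangent direction is $\mathbb R\zeta$. A parallel computation using $\alpha_1\alpha_2=\zeta\in\Omega$ in the factorization $\alpha_1^{-2}\alpha_2^{-2}(1-\alpha_1^3)(1-\alpha_2^3)$ shows the direction of $\ell$ is again a real multiple of $\zeta$, completing the proof.
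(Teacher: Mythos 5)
Your proof is correct, but it takes a genuinely different route from the paper's. You verify tangency by direct computation: after checking that $\tau_{\alpha_1,\alpha_2}(1)=\zeta^{-2}+2\zeta$ with $\zeta=\alpha_1\alpha_2$ lies on the deltoid, you show that the direction $(\alpha_1^{-2}-\alpha_1)(\alpha_2^{-2}-\alpha_2)$ of $\ell$ is a real multiple of the deltoid's velocity $2i(\zeta-\zeta^{-2})$ at a smooth point, and at a cusp ($\zeta\in\Omega$) you match it against the second-derivative direction $\mathbb R\zeta$; all the trigonometric identities check out, and the hypothesis $\alpha_i^3\ne1$ is used exactly where needed. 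The paper instead argues geometrically: it observes that $\tau_{\alpha_1,\alpha_2}(1)$ satisfies the deltoid equation and then shows that for $t$ in a punctured neighbourhood of $1$ the isometry $R_{\alpha_2}^{p_2}R_{\alpha_1}^{p_1}$ with $\tance(p_1,p_2)=t$ is regular elliptic (for $0<t<1$ because $\mathrm L(p_1,p_2)$ is spherical, and for $t>1$ close to $1$ because the associated geodesics $G_1,G_2$ remain concurrent when $\alpha_1\alpha_2\notin\Omega$), so the line stays in the region $f\leqslant0$ on both sides of $t=1$ and must be tangent; in the cusp case $\alpha_1\alpha_2\in\Omega$ it simply notes that $\ell$ passes through the vertex $3\delta$. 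Your computation is more self-contained and is actually more careful at the cusps, where you verify that $\ell$ coincides with the genuine cusp tangent line rather than merely passing through the vertex; the paper's argument, on the other hand, avoids trigonometry entirely and connects the tangency to the isometry-type analysis (spherical versus hyperbolic stable lines, concurrent versus ultraparallel associated geodesics) that drives the rest of Section 6, which is why results like Corollary \ref{cor:tancetype} come along for free there.
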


\begin{proof}
Note that $\tau_{\alpha_1,\alpha_2}(1)=2\alpha_1\alpha_2+(\alpha_1\alpha_2)^{-2}$ satisfies the equation of the deltoid. Moreover, given $0<t<1$, the isometry $R_{\alpha_2}^{p_2}R_{\alpha_1}^{p_1}$, where $p_1,p_2$ are points such that $\tance(p_1,p_2)=t$, is regular elliptic because the stable line $\mathrm{L}(p_1,p_2)$ is spherical. Now, consider the isometry $R:=R_{\alpha_2}^{p_2}R_{\alpha_1}^{p_1}$, $\tance(p_1,p_2)=t$, for $t>1$. Taking $t$ sufficiently close to $1$, the associated geodesics $G,G_1,G_2$ are such that $G_1,G_2$ are concurrent in the hyperbolic line $\mathrm{L}(p_1,p_2)$ except when $\alpha_1\alpha_2\in\Omega$; in the latter case, they are always ultraparallel. In conclusion, if $\alpha_1\alpha_2\notin\Omega$, $R$ is regular elliptic and we are done; if $\alpha_1\alpha_2=\delta\in\Omega$, $R$ is loxodromic and, since the line $\ell$ contains the vertex $3\delta$, it is tangent to the deltoid at this vertex. (Regarding this fact, see also Corollary \ref{cor:tancetype}.)
\end{proof}

\begin{rmk}\label{rmk:lines}
Let $\alpha_i,\beta_i\in\mathbb S^1\setminus\Omega$ be parameters, $i=1,2$, and consider the lines $\ell_1:=\tau_{\alpha_1,\alpha_2}(\mathbb R)$ and $\ell_2:=\tau_{\beta_1,\beta_2}(\mathbb R)$. The previous lemma, the definition of $\tau$, and the injectivity of the function $\zeta\mapsto2\zeta+\zeta^{-2}$, $\zeta\in\mathbb S^1$, immediately imply that $\ell_1=\ell_2$ iff $\alpha_1\alpha_2=\beta_1\beta_2$. Equivalently, $\alpha_1\alpha_2=\beta_1\beta_2$ iff $\tau_{\alpha_1,\alpha_2}(1)=\tau_{\beta_1,\beta_2}(1)$.
\end{rmk}

\begin{lemma}\label{lemma:dd}
Let\/ $\alpha_1,\alpha_2\in\mathbb S^1\setminus(\Omega\cup-\Omega)$ be parameters, $\alpha_1\alpha_2\notin\Omega$, such that there exists no\/ $\delta\in\Omega$ satisfying\/ $\delta\alpha_1\sim-\alpha_1$ and\/ $\delta^{-1}\alpha_2\sim-\alpha_2$. Consider the line\/ $\ell:=\tau_{\alpha_1,\alpha_2}(\mathbb R)=\tau_{-\alpha_1,-\alpha_2}(\mathbb R)$ {\rm(see Remark\/ \ref{rmk:lines}).} Then\/ $\tau_1(t):=\tau_{\alpha_1,\alpha_2}(t)$ and\/ $\tau_2(s):=\tau_{-\alpha_1,-\alpha_2}(s)$, $t,s\in\mathbb R$, parameterize\/ $\ell$ in opposite directions.
\end{lemma}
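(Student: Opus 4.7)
The plan is to read off the direction of each parameterization from the coefficient of its real parameter, reduce the sign comparison to a trigonometric one, and finally translate the resulting sign condition into the combinatorial hypothesis involving $\delta\in\Omega$.

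First, from the explicit formula for $\tau_{\alpha_1,\alpha_2}$ one reads off
$$\tau_1'(t)=A:=(\alpha_1^{-2}-\alpha_1)(\alpha_2^{-2}-\alpha_2),\qquad \tau_2'(s)=B:=(\alpha_1^{-2}+\alpha_1)(\alpha_2^{-2}+\alpha_2),$$
so the two parameterizations traverse the common line $\ell$ in opposite directions iff the ratio $B/A$ is a negative real number. I would then compute this ratio factor-by-factor: writing $\alpha_k=e^{i\theta_k}$ and using the standard identity $\frac{1+e^{i\varphi}}{1-e^{i\varphi}}=i\cot(\varphi/2)$ applied to $\varphi=3\theta_k$, one gets
$$\frac{\alpha_k^{-2}+\alpha_k}{\alpha_k^{-2}-\alpha_k}=\frac{1+\alpha_k^3}{1-\alpha_k^3}=i\cot\!\bigl(3\theta_k/2\bigr),$$
and hence
$$\frac{B}{A}=-\cot\!\bigl(3\theta_1/2\bigr)\cot\!\bigl(3\theta_2/2\bigr).$$
The two cotangents are well defined because $\alpha_k\notin\Omega$ (so $3\theta_k/2\notin\pi\mathbb Z$) and nonzero because $\alpha_k\notin-\Omega$ (so $3\theta_k/2\notin\pi/2+\pi\mathbb Z$); the ratio is therefore a nonzero real, and opposite directions are equivalent to $\cot(3\theta_1/2)\cot(3\theta_2/2)>0$, i.e.\ to the two cotangents having the same sign.

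The last step is the combinatorial translation. For $\alpha\in I_j$, split $I_j$ into its two halves by the midpoint, which is the unique element of $-\Omega$ in $I_j$; a direct check on each of the six half-arcs shows that $\cot(3\theta/2)>0$ on the counterclockwise-first half and $\cot(3\theta/2)<0$ on the counterclockwise-second half. The same enumeration shows that, writing $\alpha\in I_{j(\alpha)}$ and $-\alpha\in I_{c(\alpha)}$, one has $c(\alpha)-j(\alpha)\equiv+1\pmod 3$ exactly on the first halves and $c(\alpha)-j(\alpha)\equiv-1\pmod 3$ exactly on the second halves. Consequently, the existence of $\delta\in\Omega$ with $\delta\alpha_1\sim-\alpha_1$ and $\delta^{-1}\alpha_2\sim-\alpha_2$, which forces $\delta=\omega^{c(\alpha_1)-j(\alpha_1)}=\omega^{-(c(\alpha_2)-j(\alpha_2))}$, is equivalent to
$$\bigl(c(\alpha_1)-j(\alpha_1)\bigr)+\bigl(c(\alpha_2)-j(\alpha_2)\bigr)\equiv 0\pmod 3,$$
and since each summand is $\pm 1$, this happens exactly when $\alpha_1,\alpha_2$ lie in halves of \emph{opposite} sign, i.e.\ when $\cot(3\theta_1/2)$ and $\cot(3\theta_2/2)$ have opposite signs. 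The hypothesis rules this out, so the cotangents have the same sign, $B/A<0$, and $\tau_1,\tau_2$ parameterize $\ell$ in opposite directions.

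The main obstacle is bookkeeping: the combinatorial identification of the halves of each $I_j$ on which $\cot(3\theta/2)$ is positive/negative with the halves on which the "shift" $c(\alpha)-j(\alpha)$ takes a given value $\pm 1\pmod 3$ is a case analysis that has to be done carefully and uniformly, since only then does the hypothesis on the nonexistence of $\delta$ match exactly the sign assertion produced by the trigonometric computation.
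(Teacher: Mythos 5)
Your proof is correct, but it takes a genuinely different route from the paper's. The paper first treats the special case $\alpha_1=\alpha_2$ by observing that $\tau_1(1)=\tau_2(1)$ is the tangency point of $\ell$ with Goldman's deltoid while $\tau_1(0)$ and $\tau_2(0)$ are two further, pairwise distinct points of the deltoid on $\ell$; the general case is then handled by contradiction: assuming the directions agree, one manufactures a relation $R_{\alpha_2}^{p_2}R_{\alpha_1}^{p_1}=R_{-\alpha_2}^{q_2}R_{-\alpha_1}^{q_1}$ between loxodromic products and uses $f$-bendings (Remark \ref{rmk:sameangles}) to equalize primitives, which forces the existence of $\delta\in\Omega$ with $\delta\alpha_1\sim-\alpha_1$ and $\delta^{-1}\alpha_2\sim-\alpha_2$. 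You instead read off the direction vectors $A=(\alpha_1^{-2}-\alpha_1)(\alpha_2^{-2}-\alpha_2)$ and $B=(\alpha_1^{-2}+\alpha_1)(\alpha_2^{-2}+\alpha_2)$, compute, with $\alpha_k=e^{i\theta_k}$, that $B/A=-\cot(3\theta_1/2)\cot(3\theta_2/2)$, and translate the nonexistence of $\delta$ into the two cotangents having a common sign, using that $-\Omega$ consists precisely of the midpoints of the arcs $I_j$ and that $\alpha\mapsto-\alpha$ shifts the component index by $+1$ on the first half-arcs (where $\cot(3\theta/2)>0$) and by $-1$ on the second ones (where it is negative). I checked the trigonometric identity, the half-arc bookkeeping, and the reduction of ``opposite directions'' to $B/A<0$; all are right, and the nonvanishing of $A$ and $B$ is exactly where $\alpha_k\notin\Omega\cup-\Omega$ enters. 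Your argument is more elementary and self-contained (no appeal to $f$-bendings or to the deltoid), it yields the sharper statement that the directions are opposite \emph{iff} no such $\delta$ exists, and it shows in passing that the hypothesis $\alpha_1\alpha_2\notin\Omega$ is redundant, since $\alpha_1\alpha_2\in\Omega$ makes the cotangents have opposite signs and hence such a $\delta$ exist. What the paper's route buys is coherence with the surrounding machinery: the same deformation argument is reused immediately afterwards in the proof of Proposition \ref{prop:gdd}.
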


\begin{proof}
First, consider the case $\alpha_1=\alpha_2=:\alpha$. We have $\tau_1(1)=\tau_2(1)$ and $\ell$ is tangent to the deltoid at this point by Lemma \ref{lemma:tangent}. The result follows from the observation that $\tau_1(1)=\tau_2(1)$, $\tau_1(0)=\alpha^2-2\alpha^{-1}$, and $\tau_2(0)=\alpha^2+2\alpha^{-1}$ are pairwise distinct points in the deltoid.

Back to the general case, assume that the lines are parameterized in the same direction. This means that we can take $s_0\gg1$ and $t_0\gg1$ such that $\tau_1(t_0)=\tau_2(s_0)$. Let $p_i,q_i\in\BV$ be such that $\tance(p_1,p_2)=t_0$ and $\tance(q_1,q_2)=s_0$. We obtain the relation $R_{\alpha_2}^{p_2}R_{\alpha_1}^{p_1}=R_{-\alpha_2}^{q_2}R_{-\alpha_1}^{q_1}$ between the loxodromic isometries $R_{\alpha_2}^{p_2}R_{\alpha_1}^{p_1}$ and $R_{-\alpha_2}^{q_2}R_{-\alpha_1}^{q_1}$ (possibly after conjugating, say, the second one by an isometry $I\in\SU(2,1)$; abusing notation, we write $q_1,q_2$ instead of $Iq_1,Iq_2$). Applying an \hbox{$f$-bending} to $R_{\alpha_2}^{p_2}R_{\alpha_1}^{p_1}$, we make the primitives of $\alpha_1$ and $\alpha_2$ equal (see Remark \ref{rmk:sameangles}). Similarly, we assume that the primitives of $-\alpha_1,-\alpha_2$ are equal. In other words, we arrive at a relation of the form
$R_{\delta_1\alpha}^{p_2'}R_{\alpha}^{p_1'}=R_{\delta_2\beta}^{q_2'}R_{\beta}^{q_1'}$,
where $\alpha\sim\alpha_1$; $\delta_1\alpha\sim\alpha_2$ and $\beta\sim-\alpha_1$; $\delta_2\beta\sim-\alpha_2$. Since $f$-bendings preserve the products of parameters, we have $\alpha^2=\delta_2\delta_1^{-1}\beta^2$ which implies $\alpha=\pm\delta^2\beta$, where $\delta:=\delta_2\delta_1^{-1}$. Now, it follows from the relation $R_{\alpha}^{p_2'}R_{\alpha}^{p_1'}=R_{\delta\beta}^{q_2'}R_{\beta}^{q_1'}$ that
$\tau_{\alpha,\alpha}(t_0')=\tau_{\delta\beta,\beta}(s_0')$ for some $t_0'\gg1,s_0'\gg1$. Hence,
$$\tau_{\alpha,\alpha}(t_0')=\tau_{\delta\beta,\beta}(s_0')=
\delta\tau_{\pm\delta\alpha,\pm\delta\alpha}(s_0')=\tau_{\pm\alpha,\pm\alpha}(s_0').$$
It follows from the previously considered case that the sign in the above expression must be $+$, that is, $\beta=\delta\alpha$. This leads to $\delta\alpha_1\sim-\alpha_1$ and $\delta^{-1}\alpha_2\sim-\alpha_2$, a contradiction.
\end{proof}

\begin{prop}\label{prop:gdd}
Let\/ $\alpha_i,\beta_i\in\mathbb S^1\setminus\Omega$ be parameters, $i=1,2$, such that\/ $\alpha_1\alpha_2=\beta_1\beta_2$ and\/ consider the line\/ $\ell:=\tau_{\alpha_1,\alpha_2}(\mathbb R)=\tau_{\beta_1,\beta_2}(\mathbb R)$. Then\/ $\tau_1(t):=\tau_{\alpha_1,\alpha_2}(t)$ and\/ $\tau_2(s):=\tau_{\beta_1,\beta_2}(s)$, $t,s\in\mathbb R$, parameterize\/ $\ell$ in the same direction iff there exists\/ $\delta\in\Omega$ satisfying\/ $\delta\alpha_1\sim\beta_1$ and\/ $\delta^{-1}\alpha_2\sim\beta_2$.
\end{prop}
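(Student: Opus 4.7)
The plan is to reduce the statement to a direct computation in the component/primitive decomposition. For each $\alpha \in I_j$ with primitive $a = e^{i\theta_a} \in J$ (so $\theta_a \in (0, \pi/3)$), Definition~\ref{defi:samecomponent} gives $\alpha = \omega^j a^2$. Using $\omega^{-2j} = \omega^j$ and the factorization $e^{-4i\theta_a} - e^{2i\theta_a} = -2i e^{-i\theta_a}\sin(3\theta_a)$, a short manipulation yields
$$\alpha^{-2} - \alpha \,=\, -2i\,\omega^j\,\bar{a}\,\sin(3\theta_a),$$
in which the sine is strictly positive. Writing $\alpha_i \in I_{j_i}$ with primitive $a_i$ and $\beta_i \in I_{k_i}$ with primitive $b_i$, I then compute
$$(\alpha_1^{-2}-\alpha_1)(\alpha_2^{-2}-\alpha_2) \,=\, -4\,\omega^{j_1+j_2}\,\overline{a_1 a_2}\,\sin(3\theta_{a_1})\sin(3\theta_{a_2}),$$
and the analogous expression for $(\beta_1,\beta_2)$; their unit-modulus factors differ by precisely the positive real scalar relating the leading coefficients of $\tau_1$ and $\tau_2$.

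Setting $\rho := b_1 b_2/(a_1 a_2)$, the sine factors being positive, $\tau_1$ and $\tau_2$ parameterize $\ell$ in the same direction if and only if the two unit-modulus factors coincide, that is, if and only if $\rho = \omega^{j_1+j_2-k_1-k_2}$, and in particular $\rho \in \Omega$. On the other hand, rewriting the hypothesis $\alpha_1\alpha_2 = \beta_1\beta_2$ as $\omega^{j_1+j_2}(a_1a_2)^2 = \omega^{k_1+k_2}(b_1b_2)^2$ gives $\rho^2 = \omega^{j_1+j_2-k_1-k_2}$. Because $\theta_{a_i},\theta_{b_i}\in(0,\pi/3)$, the argument of $\rho$ lies in $(-2\pi/3, 2\pi/3)$, placing $\rho$ in the open arc of $\mathbb{S}^1$ from $\omega^2$ to $\omega$ containing $1$; the only element of $\Omega$ in this arc is $1$. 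Therefore $\rho\in\Omega$ iff $\rho=1$ iff $a_1a_2=b_1b_2$, which combined with the product hypothesis is equivalent to $j_1+j_2 \equiv k_1+k_2 \pmod 3$.

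It remains to match this congruence with the existence of the desired $\delta$. Writing $\delta = \omega^\ell$, the conditions $\delta\alpha_1 \sim \beta_1$ and $\delta^{-1}\alpha_2 \sim \beta_2$ translate into $\ell \equiv k_1-j_1$ and $\ell \equiv j_2-k_2 \pmod 3$; this system has a solution exactly when $k_1-j_1 \equiv j_2-k_2 \pmod 3$, i.e., when $j_1+j_2 \equiv k_1+k_2 \pmod 3$. This coincides with the criterion of the previous paragraph, yielding both implications. The only nontrivial step is the range argument forcing $\rho = 1$ to be the unique admissible cube root; everything else amounts to bookkeeping in $\mathbb{Z}/3\mathbb{Z}$.
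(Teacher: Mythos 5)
Your proof is correct, and it takes a genuinely different route from the paper. The paper argues geometrically: for the ``if'' direction it invokes Proposition \ref{prop:fbendingangles} to produce an $f$-bending relation between loxodromic products realizing points far out on both parameterizations, and for the ``only if'' direction it runs a contradiction argument that combines $f$-bendings, Remark \ref{rmk:flimits}, and Lemma \ref{lemma:dd} (itself proved by a deformation/orthogonality argument), with a careful case analysis over components. You instead compute the velocity vector of each parameterization directly: writing $\alpha=\omega^j a^2$ with $a=e^{i\theta_a}\in J$, the identity $\alpha^{-2}-\alpha=-2i\,\omega^j\bar a\sin(3\theta_a)$ (which I checked) shows the leading coefficient of $\tau_{\alpha_1,\alpha_2}$ is $-4\,\omega^{j_1+j_2}\overline{a_1a_2}$ times a positive number, so the whole question collapses to when two unit complex numbers coincide; the hypothesis $\alpha_1\alpha_2=\beta_1\beta_2$ gives $\rho^2=\omega^{j_1+j_2-k_1-k_2}$ for $\rho=b_1b_2/(a_1a_2)$, and the range restriction $\arg\rho\in(-2\pi/3,2\pi/3)$ forces $\rho=1$, yielding the criterion $j_1+j_2\equiv k_1+k_2\pmod 3$, which matches the solvability of the congruences for $\delta=\omega^\ell$. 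One small slip: coincidence of the unit factors gives $\bar\rho=\omega^{j_1+j_2-k_1-k_2}$, i.e.\ $\rho=\omega^{k_1+k_2-j_1-j_2}$, not $\rho=\omega^{j_1+j_2-k_1-k_2}$ as you wrote; this is harmless, since your subsequent reasoning only uses $\rho\in\Omega$ together with the constraint and the range argument, and either sign leads to the same equivalence. What each approach buys: yours is shorter, self-contained (no appeal to the existence results for $f$-bendings or to Lemma \ref{lemma:dd}), and gives an explicit formula for the direction of the tangent line; the paper's route stays inside the bending/$f$-bending framework that it needs anyway for the single-change-of-sign construction and Theorem \ref{thm:alll4}, so the machinery it exercises is not wasted, but for this proposition alone your computation is the more economical argument.
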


\begin{proof}
Suppose that there exists $\delta\in\Omega$ such that $\delta\alpha_1\sim\beta_1$ and $\delta^{-1}\alpha_2\sim\beta_2$. Take $t_0\gg1$ and let $p_1,p_2$ be such that $\tance(p_1,p_2)=t_0$. By Proposition \ref{prop:fbendingangles}, there exists an $f$-bending relation
$R_{\delta^{-1}\alpha_2}^{p_2}R_{\delta\alpha_1}^{p_1}=
R_{\beta_2}^{q_2}R_{\beta_1}^{q_1}$.
Taking $s_0:=\tance(q_1,q_2)>1$, we have $\tau_{\alpha_1,\alpha_2}(t_0)=\tau_{\beta_1,\beta_2}(s_0)$ which implies that $\tau_1(t)$ and $\tau_2(s)$ parameterize $\ell$ in the same direction.

Conversely, assume that there does not exist $\delta\in\Omega$ satisfying $\delta\alpha_1\sim\beta_1$ and $\delta^{-1}\alpha_2\sim\beta_2$. Note that, if $\alpha_1\alpha_2\in\Omega$, then the $\delta\in\Omega$ such that $\delta\alpha_1\sim\beta_1$ also satisfies $\delta^{-1}\alpha_2\sim\beta_2$. Therefore, we have $\alpha_1\alpha_2\notin\Omega$.

Suppose that $\tau_1(t)$ and $\tau_2(s)$ parameterize $\ell$ in the same direction. As in the proof of the previous lemma, we obtain a relation $R_{\alpha_2}^{p_2}R_{\alpha_1}^{p_1}=R_{\beta_2}^{q_2}R_{\beta_1}^{q_1}$ with $\tance(p_1,p_2)=t_0\gg1$ and $\tance(q_1,q_2)=s_0\gg1$. Applying $f$-bendings if necessary, we can assume that $\alpha_i,\beta_i\notin\Omega\cup-\Omega$.

Take $\delta_1\in\Omega$ such that $\delta_1\alpha_1\sim\beta_1$ and consider the relation $R_{\delta_1^{-1}\alpha_2}^{p_2}R_{\delta_1\alpha_1}^{p_1}=
R_{\beta_2}^{q_2}R_{\beta_1}^{q_1}$. An \hbox{$f$-bending} of $R_{\delta_1^{-1}\alpha_2}^{p_2}R_{\delta_1\alpha_1}^{p_1}$ sending $\delta_1\alpha_1$ to $\beta_1$ does not exist since, otherwise, the equality $\alpha_1\alpha_2=\beta_1\beta_2$ would imply $\delta_1^{-1}\alpha_2\sim\beta_2$.

Take $\delta_2\in\Omega$ such that $\delta_2\alpha_1\sim-\beta_1$ and consider the relation $R_{\delta_2^{-1}\alpha_2}^{p_2}R_{\delta_2\alpha_1}^{p_1}=
R_{\beta_2}^{q_2}R_{\beta_1}^{q_1}$. Assume that there exists an $f$-bending of $R_{\delta_2^{-1}\alpha_2}^{p_2}R_{\delta_2\alpha_1}^{p_1}$ sending $\delta_2\alpha_1$ to $-\beta_1$. This $f$-bending sends $\delta_2^{-1}\alpha_2$ to $-\beta_2$ and gives rise to the relation $R_{-\beta_2}^{p_2'}R_{-\beta_1}^{p_1'}=R_{\beta_2}^{q_2}R_{\beta_1}^{q_1}$. A cube root of unity $\delta\in\Omega$ such that $\delta\beta_1\sim-\beta_1$ and $\delta^{-1}\beta_2\sim-\beta_2$ does not exist because, otherwise, $\delta_2\delta^{-1}\alpha_1\sim\beta_1$ and $\delta\delta_2^{-1}\alpha_2\sim\beta_2$. Hence, by Lemma \ref{lemma:dd}, $\sigma p_1'\sigma p_2'=-\sigma q_1\sigma q_2$, a contradiction.

Finally, take $\delta_3\in\Omega$ such that $\delta_3(-\alpha_1)\sim\beta_1$. We will show that there must exist an $f$-bending of $R_{\beta_2}^{q_2}R_{\beta_1}^{q_1}$ sending $\beta_1$ to $\delta_3(-\alpha_1)$.

Let $I_j=\arc(\omega^{j-1},\omega^j)$ stand for the component of $\beta_1$. Given an arbitrary parameter $\eta\in\mathbb S^1\setminus\Omega$, we denote by $\langle\eta\rangle$ the representative of $\eta$ in $I_j$. The point $\langle\alpha_1\alpha_2\rangle$ divides $I_j$ in the open arcs $\arc\big(\omega^{j-1},\langle\alpha_1\alpha_2\rangle\big)$ and $\arc(\langle\alpha_1\alpha_2\rangle,\omega^j)$. Let $I_j^0$ be the one containing $\beta_1$ and, $I_j^1$, the other. Similarly, the point $-\omega^{j+1}$ divides $I_j$ in the open arcs $\arc(\omega^{j-1},-\omega^{j+1})$ and $\arc(-\omega^{j+1},\omega^j)$; we call $J_0$ the one containing $\beta_1$ and, $J_1$, the other. It is easy to see that, given an arbitrary $\eta\in\mathbb S^1\setminus(\Omega\cup-\Omega)$, the points $\langle\eta\rangle$ and $\langle-\eta\rangle$ lie in distinct arcs among $J_0,J_1$.

Since there is no $f$-bending of $R_{\delta_1^{-1}\alpha_2}^{p_2}R_{\delta_1\alpha_1}^{p_1}$ sending $\delta_1\alpha_1$ to $\beta_1$ and no $f$-bending of $R_{\delta_2^{-1}\alpha_2}^{p_2}R_{\delta_2\alpha_1}^{p_1}$ sending $\delta_2\alpha_1$ to $-\beta_1$, we have $\beta_1,\langle-\beta_1\rangle\in I_j^0$ and $\langle\alpha_1\rangle\in I_j^1$ (see Remark \ref{rmk:flimits}). So, $\langle-\alpha_1\rangle$ also belongs to $I_j^0$ which leads to the required $f$-bending. We obtain
$R_{\beta_2}^{q_2}R_{\beta_1}^{q_1}=
R_{-\delta_3^{-1}\alpha_2}^{q_2'}R_{-\delta_3\alpha_1}^{q_1'}$, that is,
$R_{\alpha_2}^{p_2}R_{\alpha_1}^{p_1}=R_{-\alpha_2}^{q_2'}R_{-\alpha_1}^{q_1'}$. Again, Lemma \ref{lemma:dd} provides a contradiction.
\end{proof}

\begin{defi}
Let $\alpha_i,\beta_i\in\mathbb S^1\setminus\Omega$, $i=1,2$, be parameters such that $\alpha_1\alpha_2=\beta_1\beta_2$. Assume that there does not exist $\delta\in\Omega$ satisfying $\delta\alpha_1\sim\beta_1$ and $\delta^{-1}\alpha_2\sim\beta_2$. Consider the line\/ $\ell:=\tau_1(\mathbb R)=\tau_2(\mathbb R)$, where $\tau_1(t):=\tau_{\alpha_1,\alpha_2}(t)$ and $\tau_2(s):=\tau_{\beta_1,\beta_2}(s)$ and take a pair $(t_0,s_0)$ with $t_0>1$ and $s_0<0$ such that $\tau_1(t_0)=\tau_2(s_0)$ does not belong to the interior of Goldman's deltoid. Let $p_1,p_2$ and $q_1,q_2$ be pairs of nonisotropic nonorthogonal distinct points satisfying $\tance(p_1,p_2)=t_0$ and $\tance(q_1,q_2)=s_0$. Clearly, $\sigma q_1\sigma q_2=-1$ and $\sigma p_1\sigma p_2=1$. Since $R_{\alpha_2}^{p_2}R_{\alpha_1}^{p_1}$ and $R_{\beta_2}^{q_2}R_{\beta_1}^{q_1}$ are loxodromic, there exists $I\in\SU(2,1)$ such that $R_{\alpha_2}^{p_2}R_{\alpha_1}^{p_1}=R_{\beta_2}^{Iq_2}R_{\beta_1}^{Iq_1}$. We call such relation a {\it single change of sign.}
\end{defi}

Let $\alpha_i,\beta_i\in\mathbb S^1\setminus\Omega$ be parameters satisfying $\alpha_1\alpha_2\neq \beta_1\beta_2$. Then, by Remark \ref{rmk:lines}, the lines
$\ell_1:=\tau_{\alpha_1,\alpha_2}(\mathbb R)$ and $\ell_2:=\tau_{\beta_1,\beta_2}(\mathbb R)$
are distinct. So, they intersect inside the deltoid (including the boundary, i.e., they intersect in
the set $f(\tau)\leqslant0$, where $f$ is defined in
Subsection \ref{subsec:isometries}). Let $t_0,s_0\in\mathbb R$ be such that $\tau_{\alpha_1,\alpha_2}(t_0)=
\tau_{\beta_1,\beta_2}(s_0)$. Take pairs $p_1,p_2$ and $q_1,q_2$ of distinct nonisotropic nonorthogonal points such that $\tance(p_1,p_2)=t_0$ and $\tance(q_1,q_2)=s_0$. Assume that $t_0,s_0\ne1$ and let $\tilde p_1,\tilde p_2$ be the points in $\mathrm L(p_1,p_2)$ respectively orthogonal to $p_1,p_2$; similarly, let $\tilde q_1,\tilde q_2$ be the points in $\mathrm L(q_1,q_2)$ respectively orthogonal to $q_1,q_2$.
The isometries $R_{\alpha_2}^{p_2}R_{\alpha_1}^{p_1}$, $R_{\beta_2}^{q_2}R_{\beta_1}^{q_1}$, by construction, are regular elliptic and have the same trace.

If $t_0\notin[0,1]$ and $s_0\notin[0,1]$, by Proposition \ref{prop:nochangesofsign} and formula \eqref{eq:conjugate}, one of the isometries $R_1:=R_{\alpha_2}^{p_2}R_{\alpha_1}^{p_1}$,
$\widetilde R_1:=R_{\alpha_2}^{\tilde p_2}R_{\alpha_1}^{\tilde p_1}$ is conjugate to one of the isometries $R_2:=R_{\beta_2}^{q_2}R_{\beta_1}^{q_1}$, $\widetilde R_2:=R_{\beta_2}^{\tilde q_2}R_{\beta_1}^{\tilde q_1}$ since there are three distinct conjugacy classes of regular elliptic isometries with the same trace. It follows that either
$R_{\alpha_2}^{p_2}R_{\alpha_1}^{p_1}=R_{\beta_2}^{Iq_2}R_{\beta_1}^{Iq_1}$ or
$R_{\alpha_2}^{p_2}R_{\alpha_1}^{p_1}=R_{\beta_2}^{I\tilde q_2}R_{\beta_1}^{I\tilde q_1}$ holds for some $I\in\SU(2,1)$.

When $t_0\in[0,1]$, then $\mathrm{L}(p_1,p_2)$ is spherical and $R_1$ is conjugate to $\widetilde R_1$. So, it can happen that none of $R_1,\widetilde R_1$ is conjugate to one of $R_2,\widetilde R_2$. In this case, we take $-\alpha_1,-\alpha_2$ instead of $\alpha_1,\alpha_2$ in the above construction. Now, by Lemma \ref{lemma:dd}, the parameter $t_0'$ corresponding to the intersection point of the lines $\ell_1,\ell_2$ satisfy $t_0'\notin[0,1]$. If it is still the case that none of $R_1,\widetilde R_1$ is conjugate to one of $R_2,\widetilde R_2$, this means that $s_0\in[0,1]$ and we also take $-\beta_1,-\beta_2$ instead of $\beta_1,\beta_2$ thus obtaining an orthogonal relation.

\begin{defi} A relation constructed as above is a {\it {\rm(}length\/ $4${\rm)} orthogonal relation\/} (due to Lemma \ref{lemma:ortnotortl4}, the lines $\mathrm L(p_1,p_2)$ and $\mathrm L(Iq_1,Iq_2)$ are orthogonal).
\end{defi}

As the next theorem shows, every (generic) length $4$ relation is a consequence of the previously introduced ones.

\begin{thm}
\label{thm:alll4}
Let\/ $p_1,p_2$ and\/ $q_1,q_2$ be pairs of distinct nonisotropic nonorthogonal points and let\/ $\alpha_i,\beta_i\in\mathbb S^1\setminus\Omega$ be parameters, $i=1,2$. Assume that\/ $\mathrm{L}(p_1,p_2)$ and\/ $\mathrm{L}(q_1,q_2)$ are noneuclidean. A length\/ $4$ relation\/ $R_{\alpha_2}^{p_2}R_{\alpha_1}^{p_1}=\delta R_{\beta_2}^{q_2}R_{\beta_1}^{q_1}$ follows from bending, $f$-bending, change of orientation, change of components, simultaneously change of signs, single change of sign, and orthogonal relations.
\end{thm}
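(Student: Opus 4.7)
I would reduce an arbitrary length-$4$ relation $R^{p_2}_{\alpha_2}R^{p_1}_{\alpha_1}=\delta R^{q_2}_{\beta_2}R^{q_1}_{\beta_1}$ to the listed primitive moves in two stages. First I absorb the cube root $\delta$ via $R^{q_1}_{\delta\beta_1}=\delta R^{q_1}_{\beta_1}$, reducing to the form $R^{p_2}_{\alpha_2}R^{p_1}_{\alpha_1}=R^{q_2}_{\beta'_2}R^{q_1}_{\beta'_1}$ (so $\delta=1$). Then Lemma~\ref{lemma:ortnotortl4} splits the analysis: either $L_1:=\mathrm L(p_1,p_2)=L_2:=\mathrm L(q_1,q_2)$ (in which case $\alpha_1\alpha_2=\beta'_1\beta'_2$) or $L_1$ is orthogonal to $L_2$.

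In the orthogonal case, Remark~\ref{rmk:lines} yields $\tau_{\alpha_1,\alpha_2}(\mathbb R)\neq\tau_{\beta'_1,\beta'_2}(\mathbb R)$, and the common trace locates $\tance(p_1,p_2)$ and $\tance(q_1,q_2)$ as an intersection point of these two lines; applying bendings on each side independently (which preserve tance and each respective product) brings the configuration into the canonical form defining an orthogonal length-$4$ relation. In the non-orthogonal case $L_1=L_2$, I compare signatures. (a) If $\sigma p_i=\sigma q_i$ for $i=1,2$, I combine absorption of $\delta$ (into $\beta_1$ or $\beta_2$) with changes of components $(\alpha_1,\alpha_2)\mapsto(\omega^t\alpha_1,\omega^{-t}\alpha_2)$ and, if needed, a change of orientation on one side, to arrange $\alpha_i\sim\beta'_i$; Theorem~\ref{thm:all4relations} then writes the residual relation as a composition of bendings and $f$-bendings. (b) If $\sigma p_i=-\sigma q_i$ for both $i$, then, after matching parameters, Remark~\ref{rmk:partform} forces $\tance(p_1,p_2)=\tance(q_1,q_2)$, and Proposition~\ref{prop:nochangesofsign} yields a simultaneous change of signs, reducing to subcase~(a). (c) If exactly one pair of signatures differs, the equality $\tau_{\alpha_1,\alpha_2}(\tance(p_1,p_2))=\tau_{\beta'_1,\beta'_2}(\tance(q_1,q_2))$ on a common line places us in the opposite-direction scenario characterized by Proposition~\ref{prop:gdd}, and the configuration is precisely that defining a single change of sign.

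\textbf{Main obstacle.} The most delicate step is the component matching in subcase~(a): one must verify that the freedom in absorption of $\delta$ (three choices of which parameter to absorb into, together with an additional $\omega$-shift from changes of components on the right) combined with changes of components on the left spans the full $\mathbb Z/3$-obstruction to $\alpha_i\sim\beta'_i$ consistent with $\alpha_1\alpha_2=\delta\beta_1\beta_2$. When direct matching fails, a change of orientation swaps indices on one side and supplies the remaining degree of freedom. A secondary subtlety is verifying in subcase~(b) that Proposition~\ref{prop:nochangesofsign} applies: since the given relation makes the two products equal (hence trivially conjugate), the proposition's exclusion of the regular-elliptic case must be compatible with the hypothesis, which can be checked using Corollary~\ref{cor:tancetype} to track the type of $R^{p_2}_{\alpha_2}R^{p_1}_{\alpha_1}$ and, if necessary, performing a preliminary bending before applying the simultaneous change of signs.
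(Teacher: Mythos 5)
Your outline does follow the paper's general route (absorb $\delta$; the dichotomy of Lemma~\ref{lemma:ortnotortl4}; orthogonal lines give an orthogonal relation; equal lines reduced to Theorem~\ref{thm:all4relations} after adjusting components and signs, with Proposition~\ref{prop:nochangesofsign} producing simultaneous changes of signs and Proposition~\ref{prop:gdd} governing single changes of sign), but the step you yourself flag as the ``main obstacle'' is a genuine gap, and the fix you propose does not work. All the moves you list preserve the difference of component sums: a change of components sends $(\beta_1,\beta_2)\mapsto(\omega^{\pm1}\beta_1,\omega^{\mp1}\beta_2)$, absorbing $\delta$ shifts one component sum by the same amount no matter where you absorb it, and a change of orientation merely permutes the two slots on one side. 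This difference need not vanish under the sole constraint $\alpha_1\alpha_2=\delta\beta_1\beta_2$: for instance $\alpha_1=\alpha_2=i$, $\beta_1=\omega e^{i\pi/6}$, $\beta_2=e^{i\pi/6}$, $\delta=1$ give $\alpha_1\alpha_2=\beta_1\beta_2=-1$, yet no $\zeta\in\Omega$ satisfies $\zeta\alpha_1\sim\beta_1$ and $\zeta^{-1}\alpha_2\sim\beta_2$, and swapping slots does not help (moreover, a change of orientation inside your subcase~(a) generally breaks the alignment $\sigma p_i=\sigma q_i$ that Theorem~\ref{thm:all4relations} requires). The matching is instead forced by the relation itself, which is the idea you are missing: since $\mathrm L(p_1,p_2)=\mathrm L(q_1,q_2)$ and $\sigma p_1\sigma p_2=\sigma q_1\sigma q_2$, the quantities $\tance(p_1,p_2)-1$ and $\tance(q_1,q_2)-1$ have the same sign, and equality of traces, read on the common line $\ell=\tau_{\alpha_1,\alpha_2}(\mathbb R)=\tau_{\beta_1,\beta_2}(\mathbb R)$ of Remark~\ref{rmk:lines}, shows the two parameterizations run in the same direction, so Proposition~\ref{prop:gdd} yields the desired $\zeta$. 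This is exactly how the paper argues; it also underlies your subcase~(c): when no $\zeta$ exists, Proposition~\ref{prop:gdd} forces one tance $>1$ and the other $<0$, and one still needs the common isometry to be loxodromic (as the paper notes) for the definition of a single change of sign to apply.

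There is a second gap in your subcase~(b). When $\sigma p_i=-\sigma q_i$ for both $i$ but $\sigma p_1\neq\sigma p_2$ (equivalently $\sigma p_1=\sigma q_2$, $\sigma p_2=\sigma q_1$), your phrase ``after matching parameters'' hides an unjustified step: Proposition~\ref{prop:nochangesofsign} needs literally equal parameters on the two sides, and upgrading $\alpha_i\sim\beta_i$ to $\alpha_i=\beta_i$ is done by $f$-bendings via Proposition~\ref{prop:fbendingangles} (which requires same-signature centres) or Proposition~\ref{prop:hypfbendingangles} (which requires a loxodromic product); neither is available for a mixed-signature pair whose product may be regular elliptic, and bendings cannot change the tance, so you cannot first make the product loxodromic. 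The paper removes this case beforehand by a change of orientation followed by a change of components, so that the simultaneous-change-of-signs argument is only run when $\sigma p_1=\sigma p_2$; then the common line is hyperbolic, Proposition~\ref{prop:fbendingangles} applies, and equal parameters together with equal traces force $\tance(p_1,p_2)=\tance(q_1,q_2)$, as required by Proposition~\ref{prop:nochangesofsign}. With these two repairs your plan becomes essentially the paper's proof.
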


\begin{proof}
Taking (say) $\delta\beta_2$ in place of $\beta_2$, we can suppose that $\delta=1$.

Assume $\alpha_1\alpha_2=\beta_1\beta_2$. Suppose that there does not exist a cube root of unity $\zeta\in\Omega$ such that $\zeta\alpha_1\sim\beta_1$ and $\zeta^{-1}\alpha_2\sim\beta_2$. By Proposition \ref{prop:gdd} and Lemma \ref{lemma:ortnotortl4}, we have $\tance(p_1,p_2)>1$ and $\tance(q_1,q_2)<0$ (or vice-versa); moreover, $R:=R_{\alpha_2}^{p_2}R_{\alpha_1}^{p_1}=R_{\beta_2}^{q_2}R_{\beta_1}^{q_1}$ is loxodromic. Therefore, the relation is a single change of sign.

Suppose that there exists $\zeta\in\Omega$ such that $\zeta\alpha_1\sim\beta_1$ and $\zeta^{-1}\alpha_2\sim\beta_2$. Hence, modulo change of components, we can assume $\alpha_i\sim\beta_i$, $i=1,2$. By Theorem \ref{thm:all4relations} we can assume that, modulo bending and $f$-bending, $\sigma p_1\ne\sigma q_1$ or $\sigma p_2\ne\sigma q_2$. By Proposition \ref{prop:gdd}, $\sigma p_1\sigma p_2=\sigma q_1\sigma q_2$.

We arrive at the following cases:

\smallskip

$\bullet$ $\sigma p_1=\sigma q_2$ and $\sigma p_2=\sigma q_1$. Modulo a change of orientation, we can assume that the relation has the form $R_{\alpha_1}^{p_2'}R_{\alpha_2}^{p_1'}=R_{\beta_2}^{q_2}R_{\beta_1}^{q_1}$ with $\sigma p_i'=\sigma q_i$ and $\alpha_i\sim\beta_i$. Let $\zeta\in\Omega$ be such that $\zeta\alpha_1\sim\beta_2$. We have $\zeta^{-1}\alpha_2\sim\zeta^{-1}\beta_2\sim\alpha_1\sim\beta_1$. Now, a change of components reduces this case to an already considered one.

$\bullet$ $\sigma p_i=-\sigma q_i$. Since, in this case, $\sigma p_1=\sigma p_2$ and $\sigma q_1=\sigma q_2$, it follows from Proposition \ref{prop:fbendingangles} that, modulo $f$-bending, we can take $\alpha_i=\beta_i$, $i=1,2$. Now, by Proposition \ref{prop:nochangesofsign}, the product $R_{\beta_2}^{q_2}R_{\beta_1}^{q_1}$ cannot be regular elliptic and therefore a simultaneous change of signs (say, of the form $R_{\beta_2}^{q_2}R_{\beta_1}^{q_1}=R_{\beta_2}^{q_2'}R_{\beta_1}^{q_1'}$ where $\sigma q_i'=\sigma p_i$) reduces the relation to bendings.

\smallskip

Finally, when $\alpha_1\alpha_2\ne\beta_1\beta_2$, we have an orthogonal relation.
\end{proof}

\section{Modifying \textit{n}-gons}
\label{sec:modifying}

A {\it special elliptic $n$-gon} is a configuration of $n$ nonisotropic points $p_1,\ldots,p_n\in\PCV\setminus\SV$ along with $n$ parameters $\alpha_1,\ldots,\alpha_n\in\mathbb S^1\setminus\Omega$ satisfying the following properties:

\smallskip

(P1) at most one point $p_i$ is positive;

(P2) $p_i$ is not equal nor orthogonal to $p_{i+1}$ (index mod $n$);

(P3) $R_{\alpha_n}^{p_n}\ldots R_{\alpha_1}^{p_1}=\delta$ for some $\delta\in\Omega$;

(P4) $\Pi\alpha_i\neq\delta$.

\smallskip

We sometimes say that a relation $R_{\alpha_n}^{p_n}\ldots R_{\alpha_1}^{p_1}=\delta$ is a special elliptic $n$-gon (or, simply, an $n$-gon) if the points $p_1,\dots,p_n$ and parameters $\alpha_1,\dots,\alpha_n$ satisfy the above properties.

Given an $n$-gon $R_{\alpha_n}^{p_n}\ldots R_{\alpha_1}^{p_1}=\delta$, we can use the length 4 relations obtained in Sections \ref{sec:bendings} and \ref{sec:n-o-solutions} to modify it into a different relation. Note that parameters and signatures of points are invariants of bendings; the product of the parameters, their components, and the signatures of the points are invariants of bendings and $f$-bendings. We are interested in the following problem: given an $n$-gon, can we obtain, by bending (or bending and $f$-bending) such $n$-gon, every other $n$-gon with the same invariants? We focus on the case $n=5$.

\begin{prop}
\label{prop:pentsrt}
Let\/ $R:=R_{\alpha_5}^{p_5}R_{\alpha_4}^{p_4}R_{\alpha_3}^{p_3}R_{\alpha_2}^{p_2}
R_{\alpha_1}^{p_1}=\delta$ be a special elliptic pentagon. Then there exists\/ $i$ such that\/ $R_{\alpha_{i+1}}^{p_{i+1}}R_{\alpha_{i}}^{p_{i}}R_{\alpha_{i-1}}^{p_{i-1}}$
is strongly regular {\rm (}indices\/ {\rm mod} $5${\rm )}.
\end{prop}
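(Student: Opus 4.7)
The plan is to argue by contradiction: suppose that for every $i$ (indices mod $5$) the triple $(p_{i-1}, p_i, p_{i+1})$, with parameters $(\alpha_{i-1}, \alpha_i, \alpha_{i+1})$, signs $(\sigma p_{i-1}, \sigma p_i, \sigma p_{i+1})$, and target trace $\tau_i := \trace R_{\alpha_{i+1}}^{p_{i+1}}R_{\alpha_i}^{p_i}R_{\alpha_{i-1}}^{p_{i-1}}$, fails to be strongly regular in the sense of Definition~\ref{defi:stronglyregular}. First I would check which conditions in that definition can actually be violated: the ``at most one positive'' condition on signs holds by (P1); the distinctness of $p_i$ from $p_{i\pm1}$ and the nonorthogonality of $p_i$ to $p_{i\pm1}$ both follow from (P2); and the trace condition is automatic by our choice of $\tau_i$. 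Thus the only two possible failures are $p_{i-1}=p_{i+1}$, or $\{p_{i-1},p_i,p_{i+1}\}$ lying in a common complex line. The former is subsumed by the latter: when $p_{i-1}=p_{i+1}$, the three points all lie on $\mathrm L(p_{i-1},p_i)$, which is well defined because $p_{i-1}\neq p_i$ by (P2).

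Hence, under the contradiction hypothesis, for each $i$ there is a complex line $L_i$ containing $p_{i-1},p_i,p_{i+1}$. Since $p_i\neq p_{i+1}$ and $\langle p_i,p_{i+1}\rangle\neq 0$ by (P2), these two points determine a unique complex line, so $L_i=\mathrm L(p_i,p_{i+1})=L_{i+1}$. Iterating around the pentagon yields a single complex line $L:=L_1=\cdots=L_5$ containing all five centres $p_1,\ldots,p_5$.

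To close the argument I would evaluate the pentagon relation at the polar point $c$ of $L$. Because $c\in p_j^{\perp}$ for every $j$, formula \eqref{eq:specialelliptic} gives $R_{\alpha_j}^{p_j}c=\alpha_j c$; applying the relation in (P3) to $c$ produces $\delta c=(\alpha_1\alpha_2\alpha_3\alpha_4\alpha_5)c$, so $\prod_i\alpha_i=\delta$, contradicting (P4). I do not foresee any real obstacle: the whole argument hinges on the single observation that, once (P1)--(P2) are in force, the only failure mode for strong regularity of a consecutive triple is collinearity of its three centres, and such collinearity automatically propagates around the cycle to obstruct (P4).
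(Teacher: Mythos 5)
Your proof is correct and follows essentially the same route as the paper's: observe that failure of strong regularity for every consecutive triple forces all five centres into one complex line, then apply the relation to the polar point of that line to get $\Pi\alpha_i=\delta$, contradicting (P4). Your write-up merely makes explicit the details (which failure modes of Definition \ref{defi:stronglyregular} are possible and how collinearity propagates) that the paper leaves implicit.
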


\begin{proof}
By Definition~\ref{defi:stronglyregular}, if none of the
$R_{\alpha_{i+1}}^{p_{i+1}}R_{\alpha_{i}}^{p_{i}}R_{\alpha_{i-1}}^{p_{i-1}}$
is strongly regular, then every $p_i$ lies in the same
complex line $L$. Applying $R$ to the polar point of $L$ we obtain $\Pi\alpha_i=\delta$ and this contradicts~(P4).
\end{proof}

\begin{prop}
\label{prop:pentsrtlox}
Let\/ $R_{\alpha_5}^{p_5}R_{\alpha_4}^{p_4}R_{\alpha_3}^{p_3}R_{\alpha_2}^{p_2}R_{\alpha_1}^{p_1}=\delta$
be a special elliptic pentagon such that\/ $R_{\alpha_{i+1}}^{p_{i+1}}R_{\alpha_{i}}^{p_{i}}$ is loxodromic for some\/ $i$. Then we can bend this pentagon in such a way that the new obtained configuration satisfies {\rm (}indices\/ {\rm mod} $5${\rm):}

\smallskip
$\bullet$ $R_{\alpha_{i+1}}^{p_{i+1}}R_{\alpha_{i}}^{p_{i}}
R_{\alpha_{i-1}}^{p_{i-1}}$ is strongly regular for all\/ $i$

\smallskip

$\bullet$ $R_{\alpha_{i+1}}^{p_{i+1}}R_{\alpha_{i}}^{p_{i}}$ is loxodromic\/ for all $i$.
\end{prop}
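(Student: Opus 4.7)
The plan is to propagate loxodromicity cyclically via bendings with sufficiently large real parameters, preserving the pentagon relation at each step, and then to invoke a genericity argument for strong regularity. Each bending of a loxodromic pair preserves its product (hence the pentagon relation) by Proposition~\ref{prop:bendings}, and drives the pair of involved centres along hypercycles towards isotropic fixed points, making adjacent tances blow up with a controlled sign.

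Without loss of generality, take $R_{\alpha_2}^{p_2}R_{\alpha_1}^{p_1}$ as the loxodromic pair, with associated bending $B_1:\mathbb R\to\SU(2,1)$ acting, in the basis $c,v_1,v_2$ (with $v_1,v_2\in\SV$ the isotropic fixed points in $L_1:=\mathrm L(p_1,p_2)$), as $\mathrm{diag}(1,e^s,e^{-s})$. Writing $p_2=v_1+\lambda v_2$ with $\lambda\neq 0$, we have $\langle B_1(s)p_2,B_1(s)p_2\rangle=\sigma p_2$ constant, while
$$\bigl|\langle B_1(s)p_2,p_3\rangle\bigr|^2=\bigl|e^s\langle v_1,p_3\rangle+\lambda e^{-s}\langle v_2,p_3\rangle\bigr|^2$$
tends to $+\infty$ for a suitable sign of $s\to\pm\infty$; the alternative $\langle v_1,p_3\rangle=\langle v_2,p_3\rangle=0$ would force $p_3=c$, hence $p_3\perp p_2$, contradicting~(P2). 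Therefore $\sigma p_2\sigma p_3\,\tance(B_1(s)p_2,p_3)\to+\infty$ and Corollary~\ref{cor:tancetype} yields loxodromicity of $R_{\alpha_3}^{p_3}R_{\alpha_2}^{B_1(s)p_2}$ for $|s|$ large. Iterating, once $(p_j,p_{j+1})$ and $(p_{j+1},p_{j+2})$ yield loxodromic products, the same estimate applied to a large-parameter bending of $(p_{j+1},p_{j+2})$ makes $(p_{j+2},p_{j+3})$ loxodromic; and since $p_j\not\perp p_{j+1}$ by~(P2), the analogous asymptotic estimate for $|\langle p_j,B(s)p_{j+1}\rangle|^2$ shows that $(p_j,p_{j+1})$ remains loxodromic. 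Four such consecutive bendings yield all five pair products loxodromic.

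Finally, strong regularity of the triple at position $i$ reduces, by Definition~\ref{defi:stronglyregular} together with the pentagon properties~(P1),~(P2), to the requirement that $p_{i-1},p_i,p_{i+1}$ do not lie in a common complex line. Collinearity of three such consecutive points is a closed algebraic condition of positive codimension in the four real bending parameters used in the propagation step, while loxodromicity of all pairs is an open condition on the tances. Hence by choosing the bending parameters generically within their cofinite open sets of sufficiently large values (and, if needed, composing with a final small bending of any loxodromic pair) one avoids all five collinearity loci simultaneously while preserving loxodromicity. The main obstacle is the asymptotic analysis of $\tance(B(s)p_j,p_k)$ under bending, which is a direct variant of Case~2 in the proof of Theorem~\ref{thm:sbendingconnect}; once the correct sign of the blowup is confirmed via~(P2), the cyclic iteration and the final genericity step are routine.
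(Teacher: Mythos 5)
Your overall strategy for the loxodromicity part (bend a loxodromic pair so that the adjacent tances blow up with the correct sign and invoke Corollary~\ref{cor:tancetype}) is the same as the paper's, but the strong-regularity part has a genuine gap. You assert that collinearity of a consecutive triple is "a closed algebraic condition of positive codimension in the bending parameters" and finish by genericity. That is precisely the point that cannot be taken for granted: every bending of a pair $(p_j,p_{j+1})$ stabilizes the complex line $\mathrm L(p_j,p_{j+1})$ and fixes the other three centres, so if $p_{j-1},p_j,p_{j+1}$ all lie in one complex line $L$, then bending $(p_{j-1},p_j)$ or $(p_j,p_{j+1})$ keeps them in $L$; and if all five centres lie in a common line, \emph{no} sequence of bendings ever destroys any collinearity --- the "collinearity locus" is the whole parameter space. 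Ruling this out is exactly the role of property (P4), via Proposition~\ref{prop:pentsrt}, which you never invoke. The paper's proof first extracts one strongly regular triple from Proposition~\ref{prop:pentsrt} and then breaks the remaining collinearities one at a time by a transversality observation: if $p_1,p_2,p_3$ are not collinear but $p_2,p_3,p_4$ lie in a line $L$, then $\mathrm L(p_1,p_2)\cap L=\{p_2\}$, so a generic bending of the pair $(p_1,p_2)$ moves $p_2$ off $L$. Your "final small bending of any loxodromic pair" needs the same care: the pair must be chosen so that its stable line is transverse to the offending line, and the existence of such a pair is again a consequence of (P4).

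There is a second, smaller gap in the propagation step. When you bend the pair $(p_{j+1},p_{j+2})$ you must simultaneously make $\tance(B(s)p_{j+2},p_{j+3})$ blow up and keep $\tance(p_j,B(s)p_{j+1})$ outside the elliptic range, and the two asymptotic estimates may demand opposite signs of $s$: writing $p_{j+1}=v_1+\lambda v_2$, the quantity $\bigl|\langle p_j,B(s)p_{j+1}\rangle\bigr|^2$ tends to $0$, not $\infty$, as $s\to+\infty$ when $\langle v_1,p_j\rangle=0$, in which case the pair $(p_j,p_{j+1})$ becomes regular elliptic rather than "remaining loxodromic"; the hypothesis $p_j\not\perp p_{j+1}$ from (P2) only excludes the vanishing of both coefficients, not of the one that matters for your chosen direction. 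The conflict can be resolved, but by (P1) rather than (P2): $\langle v_1,p_j\rangle=0$ forces $p_j$ to be positive (it is orthogonal to an isotropic point, hence lies in a Euclidean line), and the obstruction to the other sign of $s$ on the other side forces $p_{j+3}$ to be positive, so the two obstructions cannot occur simultaneously in a pentagon. This is an argument you need to supply, not a consequence of nonorthogonality.
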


\begin{proof}
By Proposition~\ref{prop:pentsrt} we can assume that, say, $R_{\alpha_3}^{p_3}R_{\alpha_2}^{p_2}R_{\alpha_1}^{p_1}$ is strongly regular.
If $R_{\alpha_4}^{p_4}R_{\alpha_3}^{p_3}R_{\alpha_2}^{p_2}$ is not strongly
regular, then $p_2,p_3,p_4$ lie in a same complex line $L$. The intersection between $L$ and $\mathrm L(p_1,p_2)$ is only the point $p_2$ since $p_1,p_2,p_3$ do not lie in a same complex line. Without losing the strong regularity of $R_{\alpha_3}^{p_3}R_{\alpha_2}^{p_2}R_{\alpha_1}^{p_1}$, we bend $R_{\alpha_2}^{p_2}R_{\alpha_1}^{p_1}$ thus
removing the point $p_2$ from the line $\mathrm L$ and obtaining new points $p_1',p_2'$ such that $R_{\alpha_4}^{p_4}R_{\alpha_3}^{p_3}R_{\alpha_2}^{p_2'}$ is strongly regular. Clearly, we may now assume that every $R_{\alpha_{i+1}}^{p_{i+1}}R_{\alpha_{i}}^{p_{i}}R_{\alpha_{i-1}}^{p_{i-1}}$
is strongly regular.

The rest is analogous to the corresponding part of the proof of Corollary \ref{cor:connectSlox}.
\end{proof}

\begin{thm}
\label{thm:connectpent}
Let\/ $R_{\alpha_5}^{p_5}R_{\alpha_4}^{p_4}R_{\alpha_3}^{p_3}R_{\alpha_2}^{p_2}R_{\alpha_1}^{p_1}=\delta$
and\/
$R_{\alpha_5}^{q_5}R_{\alpha_4}^{q_4}R_{\alpha_3}^{q_3}R_{\alpha_2}^{q_2}R_{\alpha_1}^{q_1}=\delta$
be special elliptic pentagons. Suppose that\/ $\sigma p_i=\sigma q_i$ for all\/ $i$ and that at least one of\/ $R_{\alpha_{i+1}}^{p_{i+1}}R_{\alpha_i}^{p_i}$, as well as at least one of\/ $R_{\alpha_{j+1}}^{q_{j+1}}R_{\alpha_j}^{q_j}$, is loxodromic. Then, up to conjugacy, the pentagons can be connected by finitely many bendings.
\end{thm}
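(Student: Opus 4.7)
The plan is to reduce Theorem~\ref{thm:connectpent} to Corollary~\ref{cor:connectSlox} and Theorem~\ref{thm:bendingsarebendings} by means of Proposition~\ref{prop:pentsrtlox}. First I apply Proposition~\ref{prop:pentsrtlox} to both pentagons so that, on each side, every consecutive pair product $R_{\alpha_{i+1}}^{\ast_{i+1}}R_{\alpha_i}^{\ast_i}$ is loxodromic and every consecutive triple is strongly regular (here $\ast$ stands for $p$ or $q$). Set $F:=R_{\alpha_3}^{p_3}R_{\alpha_2}^{p_2}R_{\alpha_1}^{p_1}$ and $G:=R_{\alpha_3}^{q_3}R_{\alpha_2}^{q_2}R_{\alpha_1}^{q_1}$; using cancellations, the pentagon relations rewrite as $F=\delta\bigl(R_{\alpha_5}^{p_5}R_{\alpha_4}^{p_4}\bigr)^{-1}$ and $G=\delta\bigl(R_{\alpha_5}^{q_5}R_{\alpha_4}^{q_4}\bigr)^{-1}$, so both $F$ and $G$ are loxodromic.

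The second step brings $F$ and $G$ into the same $\SU(2,1)$-conjugacy class, which for loxodromic elements is equivalent to $\trace F=\trace G$. By Remark~\ref{rmk:partform}, $\trace R_{\alpha_5}^{p_5}R_{\alpha_4}^{p_4}$ is an affine, nonconstant function of $\tance(p_4,p_5)$ (the slope $(\alpha_4^{-2}-\alpha_4)(\alpha_5^{-2}-\alpha_5)$ is nonzero), and the same affine formula governs the $q$-side, so $\trace F=\trace G$ iff $\tance(p_4,p_5)=\tance(q_4,q_5)$. Bending $(p_3,p_4)$ preserves $R_{\alpha_4}^{p_4}R_{\alpha_3}^{p_3}$ and hence the pentagon relation, but moves $p_4$ along a hypercycle; by the unboundedness argument from the proof of Corollary~\ref{cor:connectSlox}, it sweeps $|\tance(p_4,p_5)|$ through arbitrarily large values inside the loxodromic regime of $R_{\alpha_5}^{p_5}R_{\alpha_4}^{p_4}$. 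Because $\sigma p_4\sigma p_5=\sigma q_4\sigma q_5$, Corollary~\ref{cor:tancetype} places the two loxodromic ranges on the same half-line, so combining suitable bendings of $(p_3,p_4)$ and $(q_3,q_4)$ equates the two tances. The normalization of Step~1 can then be restored, if necessary, by Proposition~\ref{prop:pentsrtlox} applied through pair bendings that preserve the value of $F$.

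With $F$ and $G$ conjugate, Corollary~\ref{cor:connectSlox} applies to the strongly regular triples $(p_1,p_2,p_3)$ and $(q_1,q_2,q_3)$ and connects them, modulo conjugacy, by finitely many bendings of $(p_1,p_2)$ and $(p_2,p_3)$; both kinds preserve $F$ and hence the pentagon relation, so they are genuine pentagon bendings. After a global conjugation, I may assume $p_i=q_i$ for $i=1,2,3$; the two pentagon equations then force $R_{\alpha_5}^{p_5}R_{\alpha_4}^{p_4}=R_{\alpha_5}^{q_5}R_{\alpha_4}^{q_4}$. This is a nonorthogonal length-$4$ relation with hyperbolic $\mathrm L(p_4,p_5)$ and $\sigma p_i=\sigma q_i$, so Theorem~\ref{thm:bendingsarebendings} supplies a single closing bending of $(p_4,p_5)$ onto $(q_4,q_5)$.

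The main obstacle is the trace-matching step: I need that bending $(p_3,p_4)$ sweeps $\tance(p_4,p_5)$ through enough of its loxodromic range to reach the common target, and that the auxiliary applications of Proposition~\ref{prop:pentsrtlox} can be organized so as not to disturb the achieved value of $\trace F$. Both are continuity-and-unboundedness checks that follow from the explicit hypercycle picture in Proposition~\ref{prop:bendings} and the threshold description in Corollary~\ref{cor:tancetype}; everything else is a clean composition of the tools already developed.
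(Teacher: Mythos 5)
Your proposal is correct and takes essentially the same route as the paper's proof: normalize both pentagons with Proposition~\ref{prop:pentsrtlox}, equate $\tance(p_4,p_5)=\tance(q_4,q_5)$ by bendings so that the triple products $R_{\alpha_3}^{p_3}R_{\alpha_2}^{p_2}R_{\alpha_1}^{p_1}$ and $R_{\alpha_3}^{q_3}R_{\alpha_2}^{q_2}R_{\alpha_1}^{q_1}$ become conjugate, apply Corollary~\ref{cor:connectSlox} to identify $p_1,p_2,p_3$ with $q_1,q_2,q_3$ up to conjugation, and close the remaining length-$4$ relation with Theorem~\ref{thm:bendingsarebendings}. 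Your extra remarks (the affine dependence of the trace on the tance, and re-establishing strong regularity after the tance-matching bendings by pair bendings that preserve the triple product) merely spell out steps the paper leaves implicit.
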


\begin{proof}
By Proposition \ref{prop:pentsrtlox}, we can assume $R_{\alpha_{i+1}}^{p_{i+1}}R_{\alpha_{i}}^{p_{i}}R_{\alpha_{i-1}}^{p_{i-1}}$ strongly regular and $R_{\alpha_{i+1}}^{p_{i+1}}R_{\alpha_{i}}^{p_{i}}$ loxodromic for all $i$ (indices mod $5$). The same is true if we take the $q_i$'s in place of the $p_i$'s. Together with the fact that $\sigma p_i=\sigma q_i$, this implies that we can make $\tance(p_4,p_5)=\tance(q_4,q_5)$ by means of bendings.
Now, since $R_{\alpha_3}^{p_3}R_{\alpha_2}^{p_2}R_{\alpha_1}^{p_1}$ and
$R_{\alpha_3}^{q_3}R_{\alpha_2}^{q_2}R_{\alpha_1}^{q_1}$ are strongly regular
and $\trace(\delta R_{\overline\alpha_4}^{p_4}R_{\overline\alpha_5}^{p_5})
=\trace(\delta R_{\overline\alpha_4}^{q_4}R_{\overline\alpha_5}^{q_5})$ (see Remark \ref{rmk:partform}), we can assume, by Corollary~\ref{cor:connectSlox}, that there exists $I\in\SU(2,1)$ such that $Iq_i=p_i$ for $i=1,2,3$. Now, $R_{\alpha_5}^{p_5}R_{\alpha_4}^{p_4}=R_{\alpha_5}^{Iq_5}R_{\alpha_4}^{Iq_4}$ due to $R_{\alpha_5}^{p_5}R_{\alpha_4}^{p_4}R_{\alpha_3}^{p_3}R_{\alpha_2}^{p_2}R_{\alpha_1}^{p_1}=
R_{\alpha_5}^{Iq_5}R_{\alpha_4}^{Iq_4}R_{\alpha_3}^{Iq_3}R_{\alpha_2}^{Iq_2}R_{\alpha_1}^{Iq_1}$. It~remains to observe that the former relation is a bending relation by Theorem \ref{thm:bendingsarebendings}.
\end{proof}

The following proposition is a kind of generalization of Propositions \ref{prop:fbendingangles} and \ref{prop:hypfbendingangles} to the case of relations of arbitrary length.

\begin{prop}
\label{prop:fbendtangles}
Let\/ $p_1,\dots,p_n\in\mathbb PV\setminus\SV$ with\/ $p_i$ distinct from and nonorthogonal to\/ $p_{i+1}$, $i=1,\dots,n-1$ and such that at most one of the\/ $p_i$'s is positive. Let\/ $\alpha_1,\dots,\alpha_n\in\mathbb S^1\setminus\Omega$ be parameters such that at least one of the isometries\/ $R_{\alpha_{i+1}}^{p_{i+1}}R_{\alpha_i}^{p_i}$ is loxodromic. Given parameters\/ $\beta_1,\dots,\beta_n\in\mathbb S^1\setminus\Omega$ satisfying\/ $\beta_i\sim\alpha_i$ for all\/ $i$ and\/ $\Pi\alpha_i=\Pi\beta_i$, there exist\/ $q_1,\dots q_n$ with\/ $\sigma p_i=\sigma q_i$ for all\/ $i$ such that\/ $R_{\beta_n}^{q_n}\ldots R_{\beta_1}^{q_1}$ can be obtained from\/ $R_{\alpha_n}^{p_n}\ldots R_{\alpha_1}^{p_1}$ by finitely many bendings and\/ $f$-bendings. Furthermore, at least one of the $R_{\beta_{i+1}}^{q_{i+1}}R_{\beta_i}^{q_i}$ is loxodromic.
\end{prop}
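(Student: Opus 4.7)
The plan is two-staged. First I will use bendings to ensure that every adjacent product $R_{\alpha_{i+1}}^{p_{i+1}}R_{\alpha_i}^{p_i}$ is loxodromic. Then I will use $f$-bendings, interleaved with further bendings if needed, to deform the parameters from $(\alpha_1,\dots,\alpha_n)$ to $(\beta_1,\dots,\beta_n)$ within their prescribed components.

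For the first stage, I extend the argument of Proposition~\ref{prop:pentsrtlox} to arbitrary $n$: beginning with the single loxodromic adjacent pair, bend it along its one-parameter centralizer subgroup so that $|\tance(p_i,p_{i+1})|$ grows arbitrarily large. Since $p_i,p_{i+1}$ travel along hypercycles approaching the ideal fixed points of the loxodromic, both $\tance(p_{i-1},p_i)$ and $\tance(p_{i+1},p_{i+2})$ blow up in absolute value, and by Corollary~\ref{cor:tancetype} these neighbouring pairs become loxodromic. Iterating around the chain (preserving distinctness and nonorthogonality as in the proof of Proposition~\ref{prop:pentsrtlox}), I may assume all $n$ adjacent products are loxodromic.

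For the second stage, Proposition~\ref{prop:hypfbendingangles} (or Proposition~\ref{prop:fbendingangles}, according to signatures) is now available at every position. Let $I_{j_i}$ denote the component arc of $\alpha_i$ (equal to that of $\beta_i$ since $\alpha_i\sim\beta_i$); both $\alpha$ and $\beta$ lie in
\begin{equation*}
\mathcal{P}:=\big\{(\gamma_1,\dots,\gamma_n)\in I_{j_1}\times\cdots\times I_{j_n}\;\big|\;\textstyle\prod_i\gamma_i=\prod_i\alpha_i\big\}.
\end{equation*}
Lifting to $\mathbb R^n$ (each arc lifts to an interval of length $2\pi/3$), the relevant connected component of $\mathcal{P}$ is realized as the intersection of a convex box with an affine hyperplane $\sum x_i=c$, hence is connected. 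Its tangent space is spanned by the vectors $e_i-e_{i+1}$, and by Remark~\ref{rmk:flimits} a sufficiently small move along $e_i-e_{i+1}$ starting from $(\gamma_1,\dots,\gamma_n)\in\mathcal{P}$ is realized by a single $f$-bending at position $(i,i+1)$ (small steps stay within the reachable arc bounded by $\delta\gamma_i\gamma_{i+1}$). Approximating a continuous path from $\alpha$ to $\beta$ in $\mathcal{P}$ by a piecewise-linear curve with sufficiently small segments along these basis directions yields the desired finite sequence of $f$-bendings.

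The main obstacle will be keeping the loxodromic hypothesis needed by Proposition~\ref{prop:hypfbendingangles} available throughout the entire $f$-bending sequence, since each $f$-bending at position $(i,i+1)$ modifies the pairs $(i-1,i),(i,i+1),(i+1,i+2)$ and could in principle destroy their loxodromicity. Because loxodromicity is an open condition on $\tance$ by Corollary~\ref{cor:tancetype}, sufficiently small $f$-bending steps preserve it; if necessary between steps, pure bendings (which leave parameters untouched) can restore loxodromicity at the upcoming target position exactly as in the first stage. A final bending then guarantees that at least one of the $R_{\beta_{i+1}}^{q_{i+1}}R_{\beta_i}^{q_i}$ is loxodromic, yielding the last assertion.
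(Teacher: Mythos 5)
Your architecture is genuinely different from the paper's: the paper argues by induction on $n$, first driving $\alpha_1$ onto $\beta_1$ by an iterated scheme that alternates $f$-bendings of the pair $(1,2)$ with bendings and $f$-bendings replenishing $\alpha_2$ from the right, and closes with an arc-length bookkeeping based on the identity $\alpha_1\langle\alpha_2\rangle\cdots\langle\alpha_n\rangle=\beta_1\langle\beta_2\rangle\cdots\langle\beta_n\rangle$; you instead propose a global connectivity argument in the lifted parameter space followed by a small-step realization of a path. Your stage $1$ (making every adjacent product loxodromic by bendings, as in Corollary \ref{cor:connectSlox} and Proposition \ref{prop:pentsrtlox}) and your local step (a short move along $e_i-e_{i+1}$ realized via Remark \ref{rmk:flimits} and Proposition \ref{prop:hypfbendingangles}, with loxodromicity preserved by openness through Corollary \ref{cor:tancetype} and restored by interleaved bendings) are in order, modulo routine care with distinctness and nonorthogonality of consecutive centres.

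The gap is the sentence asserting that the relevant piece of $\mathcal P$ is ``the intersection of a convex box with an affine hyperplane $\sum x_i=c$'': you never check that $(\alpha_1,\dots,\alpha_n)$ and $(\beta_1,\dots,\beta_n)$ lie on the \emph{same} hyperplane. The hypothesis $\Pi\alpha_i=\Pi\beta_i$ fixes the lifted sums only modulo $2\pi$, while each lifted component interval has length $2\pi/3$; so for $n\geqslant4$ the box meets several parallel slices $\sum x_i=c+2\pi k$, these slices project to disjoint pieces of $\mathcal P$ (the box embeds in the torus), and the two tuples may lie on different ones. Concretely, for $n=4$ take $\alpha_i=e^{i/10}$ and $\beta_i=i\,\alpha_i$ for all $i$ (with, say, all $p_i$ negative and adjacent products loxodromic): all parameters lie in $I_0$, $\Pi\beta_i=i^4\,\Pi\alpha_i=\Pi\alpha_i$, yet the lifted sums are $2/5$ and $2/5+2\pi$. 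In that situation no path in $\mathcal P$ joins the two tuples, and your own bookkeeping shows the defect cannot be circumvented: in any $f$-bending the two lifts stay in intervals of total length $4\pi/3<2\pi$, so $x_i+x_{i+1}$ is preserved exactly, and bendings do not move parameters; hence the exact lifted sum is an invariant of the allowed moves and the different-slice case is unreachable at all. So your argument needs the stronger (and evidently intended) hypothesis $\sum_i\Arg\langle\alpha_i\rangle=\sum_i\Arg\langle\beta_i\rangle$, not merely equality of the products in $\mathbb S^1$; for $n\leqslant3$ this is automatic (the sum ranges over an interval of length at most $2\pi$), which is why the issue is invisible in Propositions \ref{prop:fbendingangles} and \ref{prop:hypfbendingangles}. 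This is exactly the point the paper's proof handles with its final arc argument (which itself tacitly excludes the wrap-around case above). Granting the same-slice condition, your convexity-plus-small-steps scheme is a clean alternative to the paper's induction.
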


\noindent{\it Proof.} The proof is by induction on $n$. The case $n=2$ follows from Propositions \ref{prop:fbendingangles} and \ref{prop:hypfbendingangles}. Assume that the fact holds for $n-1$ and suppose that we have shown that, by means of finitely many bendings and $f$-bendings, one is able to deform $R_{\alpha_n}^{p_n}\ldots R_{\alpha_1}^{p_1}$ into $R_{\alpha_n'}^{p_n'}\ldots R_{\alpha_2'}^{p_2'}R_{\beta_1}^{q_1}$ with at least one of the $R_{\alpha_{i+1}'}^{p_{i+1}'}R_{\alpha_i'}^{p_i'}$ loxodromic. The parameters $\alpha_i'$ and the points $p_i'$ will therefore satisfy the conditions of the proposition and we are done.

\begin{wrapfigure}{r}{0.22\textwidth}
\centering
\includegraphics[scale=0.8,trim={0 0 0 0},clip]{figs/fbendangles-vf.mps}
\end{wrapfigure}

In what follows, $d(-,-)$ stands for the distances measured along the circle. For simplicity, consider the case $\Arg\beta_1>\Arg\alpha_1$. Let $\omega_i\in\{1,\omega,\omega^2\}$ be such that $\omega_i=1$ when $\alpha_i\in I_0$, $\omega_i=\omega$ when $\alpha_i\in I_1$, and
$\omega_i=\omega^2$ when $\alpha_i\in I_2$ (see the paragraph above Definition~\ref{defi:samecomponent} for the definitions of $\omega,I_0,I_1,I_2$). As in the proof of Proposition \ref{prop:pentsrtlox}, we can assume that $R_{\alpha_2}^{p_2}R_{\alpha_1}^{p_1}$ is loxodromic. Let $\varepsilon>0$ be small (say, smaller than $d(\beta_1,\omega\omega_1)$). Given $\eta\in I_0$ such that $\eta\alpha_1\sim\alpha_1$ and $\overline\eta\alpha_2\sim\alpha_2$, there exists by Proposition~\ref{prop:hypfbendingangles} an $f$-bending sending $\alpha_1$ to $\eta\alpha_1$ and $\alpha_2$ to $\overline\eta\alpha_2$. In this way, we ``increase'' $\alpha_1$ in the direction of $\beta_1$ (which is the same as that of $\omega\omega_1$); however, this process ``decreases'' $\alpha_2$ and one may not be able to reach $\beta_1$ before $\alpha_2$ arrives at $\omega_2$. We obtain new parameters $\alpha_1',\alpha_2'$ and it is possible to assume that $d(\alpha_2',\omega_2)<\varepsilon$ and $d(\alpha_1',\omega\omega_1)>\varepsilon$. (Indeed, if $d(\alpha_1',\omega\omega_1)$ becomes smaller than $\varepsilon$ during the $f$-bending, this suffices to make $\alpha_1'=\beta_1$ and we are done.) The next step is to move $\alpha_3$ in the direction of $\omega_3$. Bending the loxodromic isometry $R_{\alpha_2'}^{q_2}R_{\alpha_1'}^{q_1}$ (this is the isometry obtained after the $f$-bending) we make $R_{\alpha_3}^{p_3}R_{\alpha_2'}^{q_2}$ loxodromic. By taking $\varepsilon$ small enough, we may assume $d(\alpha_3,\omega\omega_3)>\varepsilon$ and proceed as before so as to obtain $d(\alpha_2',\omega_2)<\varepsilon$ (from now on, we abuse notation and always write $\alpha_i',q_i$ for the new parameters and points that are obtained after bendings and $f$-bendings). Now, $f$-bending $R_{\alpha_3}^{p_3}R_{\alpha_2'}^{q_2}$, we (obtain new parameters $\alpha_2',\alpha_3'$ as well as new points $q_2,q_3$ and) can assume that $d(\alpha_3',\omega_3)<\varepsilon$. Bending $R_{\alpha_3'}^{q_3}R_{\alpha_2'}^{q_2}$ if necessary, we make $R_{\alpha_2'}^{q_2}R_{\alpha_1'}^{q_1}$ loxodromic and, as before, it is possible to assume that $d(\alpha_2',\omega_2)<\varepsilon$ and $d(\alpha_1',\omega\omega_1)>\varepsilon$.

Iterating this procedure, we reach the situation where $d(\alpha_i',\omega_i)<\varepsilon$ for $i=2,\dots,n$ and $d(\alpha_1',\omega\omega_1)>\varepsilon$. Let $\langle\alpha_i\rangle$ stand for the representative of $\alpha_i$ that lies in $I_0$, $i=2,\dots,n$ (see Definition~\ref{defi:samecomponent}). The existence of such a configuration of parameters $\alpha_i'$ for arbitrarily small $\varepsilon$ implies that $\beta_1$ does not belong to $\arc\big(\alpha_1,\alpha_1\langle\alpha_2\rangle\langle\alpha_3\rangle
\dots\langle\alpha_n\rangle\big)$ because $d\big(1,\langle\alpha_i\rangle\big)=d(\omega_i,\alpha_i)$, $i=2,\dots,n$. It is easy to see that $\Pi\alpha_i=\Pi\beta_i$ and $\alpha_i\sim\beta_i$ imply $\alpha_1\langle\alpha_2\rangle\langle\alpha_3\rangle\dots\langle\alpha_n\rangle=
\beta_1\langle\beta_2\rangle\langle\beta_3\rangle\dots\langle\beta_n\rangle$. In other words, $\beta_1$ does not belong to $\arc\big(\alpha_1,\beta_1\langle\beta_2\rangle\langle\beta_3\rangle
\dots\langle\beta_n\rangle\big)$. This contradicts $\Arg\alpha_1<\Arg\beta_1$.$\hfill\square$

\medskip

The next theorem follows directly from Propositions~\ref{prop:pentsrtlox},~\ref{prop:fbendtangles},
and Theorem~\ref{thm:connectpent}:

\begin{thm}
\label{thm:connectspecpent}
Let\/ $R_{\alpha_5}^{p_5}R_{\alpha_4}^{p_4}R_{\alpha_3}^{p_3}R_{\alpha_2}^{p_2}R_{\alpha_1}^{p_1}=\delta$ and\/ $R_{\beta_5}^{q_5}R_{\beta_4}^{q_4}R_{\beta_3}^{q_3}R_{\beta_2}^{q_2}R_{\beta_1}^{q_1}=\delta$ be special elliptic pentagons with\/ $\sigma p_i=\sigma q_i$ and such that at least one of the isometries\/ $R_{\alpha_{i+1}}^{p_{i+1}}R_{\alpha_i}^{p_i}$, as well as at least one of the isometries\/ $R_{\beta_{i+1}}^{q_{i+1}}R_{\beta_i}^{q_i}$, is loxodromic. Assume that the corresponding parameters are in the same components, $\alpha_i\sim\beta_i$, and that the products of parameters are the same, $\Pi\alpha_i=\Pi\beta_i$. Then, up to conjugacy, the pentagons are connected by finitely many bendings and\/ $f$-bendings.
\end{thm}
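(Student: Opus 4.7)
The plan is to combine the three cited results in a direct sequence: first normalize each pentagon so that all consecutive products are loxodromic, then use $f$-bendings (plus bendings) to match parameters, and finally close the argument by invoking the earlier pentagon-connecting theorem.

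More precisely, I would start with the two pentagons $R_{\alpha_5}^{p_5}\ldots R_{\alpha_1}^{p_1}=\delta$ and $R_{\beta_5}^{q_5}\ldots R_{\beta_1}^{q_1}=\delta$. By Proposition~\ref{prop:pentsrtlox} applied to each of them, I may assume (after suitable bendings, which keep the parameters $\alpha_i,\beta_i$, the signatures $\sigma p_i,\sigma q_i$, and the relation intact) that \emph{every} consecutive product $R_{\alpha_{i+1}}^{p_{i+1}}R_{\alpha_i}^{p_i}$ and $R_{\beta_{i+1}}^{q_{i+1}}R_{\beta_i}^{q_i}$ is loxodromic, and that every triple $R_{\alpha_{i+1}}^{p_{i+1}}R_{\alpha_i}^{p_i}R_{\alpha_{i-1}}^{p_{i-1}}$ (indices mod $5$) is strongly regular. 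In particular, the hypotheses of Proposition~\ref{prop:fbendtangles} are met by $p_1,\ldots,p_5$ (property (P1) supplies the condition on positive points, property (P2) supplies the nonequality/nonorthogonality of consecutive points, and at least one consecutive product is loxodromic).

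Next, since $\alpha_i\sim\beta_i$ and $\Pi\alpha_i=\Pi\beta_i$ by hypothesis, I invoke Proposition~\ref{prop:fbendtangles} to produce, via finitely many bendings and $f$-bendings, new points $p'_1,\ldots,p'_5$ with $\sigma p'_i=\sigma p_i$ such that
\[
R_{\beta_5}^{p'_5}R_{\beta_4}^{p'_4}R_{\beta_3}^{p'_3}R_{\beta_2}^{p'_2}R_{\beta_1}^{p'_1}
=R_{\alpha_5}^{p_5}R_{\alpha_4}^{p_4}R_{\alpha_3}^{p_3}R_{\alpha_2}^{p_2}R_{\alpha_1}^{p_1}=\delta,
\]
because bendings and $f$-bendings, by construction, preserve the product of the two special elliptic factors they act on. The same proposition guarantees that at least one consecutive product $R_{\beta_{i+1}}^{p'_{i+1}}R_{\beta_i}^{p'_i}$ is loxodromic. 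Thus $R_{\beta_5}^{p'_5}\ldots R_{\beta_1}^{p'_1}=\delta$ is again a special elliptic pentagon (property (P1) transports across the deformation because signatures are preserved and $\sigma q_i=\sigma p_i$ holds in the target; property (P4) is automatic since $\Pi\beta_i=\Pi\alpha_i\ne\delta$; property (P2) follows from the fact that consecutive products being loxodromic forces each consecutive pair to be distinct and nonorthogonal, so one last bending if necessary disposes of the cyclic pair $p'_5,p'_1$).

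At this point, the pentagons $R_{\beta_5}^{p'_5}\ldots R_{\beta_1}^{p'_1}=\delta$ and $R_{\beta_5}^{q_5}\ldots R_{\beta_1}^{q_1}=\delta$ have the same parameters $\beta_i$, the same signatures $\sigma p'_i=\sigma p_i=\sigma q_i$, and each has at least one loxodromic consecutive product. Theorem~\ref{thm:connectpent} then connects them, modulo conjugacy, by finitely many bendings, completing the chain of bendings and $f$-bendings from the original $p_i$-pentagon to the $q_i$-pentagon. The only delicate point I expect is confirming that the configuration produced by Proposition~\ref{prop:fbendtangles} is a genuine pentagon (i.e., verifying property (P2) for the cyclic pair $p'_5,p'_1$, which is not controlled by that proposition since it deals with a linear chain); this should be handled by a preliminary bending exploiting the loxodromic character of some consecutive product, exactly in the manner of the proof of Proposition~\ref{prop:pentsrtlox}.
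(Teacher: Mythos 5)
Your proposal is correct and takes essentially the same route as the paper, which proves this theorem in a single line by citing exactly the chain you spell out: normalize both pentagons via Proposition~\ref{prop:pentsrtlox}, transfer the parameters $\alpha_i$ to $\beta_i$ via Proposition~\ref{prop:fbendtangles}, and conclude with Theorem~\ref{thm:connectpent}. Your remark about verifying property (P2) for the cyclic pair $p'_5,p'_1$, fixed by a generic preliminary bending, addresses a detail the paper leaves implicit.
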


\bibliographystyle{plain}
\bibliography{references}

\noindent
{\sc Felipe A.~Franco}

\noindent
{\sc Departamento de Matem\'atica, IMECC, Universidade Estadual de Campinas, Brasil}

\noindent
\url{felipefranco@ime.unicamp.br}, \url{f.franco.math@gmail.com}

\medskip

\noindent
{\sc Carlos H.~Grossi}

\noindent
{\sc Departamento de Matem\'atica, ICMC, Universidade de S\~ao Paulo, S\~ao Carlos, Brasil}

\noindent
\url{grossi@icmc.usp.br}

\end{document}